\definecolor{mygray}{gray}{0.75}
\newtheorem{tvrz}{Proposition}[section]
\newtheorem{lemma}[tvrz]{Lemma}
\newtheorem{theorem}[tvrz]{Theorem}
\newtheorem{cor}[tvrz]{Corollary}
\theoremstyle{definition}
\newtheorem{definice}[tvrz]{Definition}
\theoremstyle{remark}
\newtheorem{rem}[tvrz]{Remark}
\newtheorem{example}[tvrz]{Example}
\theoremstyle{definition}
\def\^{\wedge}
\def\<{\langle}
\def\>{\rangle}
\def\M{\mathcal{M}}
\def\N{\mathbb{N}}
\def\X{\mathfrak{X}}
\def\frP{\mathfrak{P}}
\def\R{\mathbb{R}}
\def\C{\mathcal{C}}
\def\E{\mathcal{E}}
\def\F{\mathcal{F}}
\def\G{\mathcal{G}}
\def\V{\mathcal{V}}
\def\J{\mathcal{J}}
\def\Z{\mathbb{Z}}
\def\ol{\overline}
\def\fdelta{\bm{\delta}}
\def\fJ{\mathbf{J}}
\def\frJ{\mathfrak{J}}
\def\frj{\mathfrak{j}}
\def\frP{\mathfrak{P}}
\def\btr{\blacktriangleright}
\def\bbz{\mathbbm{z}}
\def\fK{\mathbf{K}}
\def\fD{\mathbf{D}}
\def\dr{\mathrm{d}}
\def\cD{\nabla}
\def\cX{\mathcal{X}}
\def\1{\mathbbm{1}}
\def\fF{\mathbf{F}}
\def\fI{\mathbf{I}}
\def\fJ{\mathbf{J}}
\def\fnula{\mathbf{0}}
\def\f1{\mathbf{1}}
\def\fJ{\mathbf{J}}
\def\~{\widetilde}
\def\tr{\triangleright}
\def\tl{\triangleleft}
\def\frD{\mathfrak{D}}
\def\frE{\mathfrak{E}}
\newcommand\ul[1]{\underline{#1}}
\DeclareMathOperator{\gdim}{gdim}
\DeclareMathOperator{\Loc}{Loc}
\DeclareMathOperator{\Dif}{Dif}
\DeclareMathOperator{\Jet}{Jet}
\DeclareMathOperator{\pJet}{pJet}
\DeclareMathOperator{\gpJet}{gpJet}
\DeclareMathOperator{\Sff}{Sff}
\DeclareMathOperator{\sff}{sff}
\DeclareMathOperator{\geo}{geo}
\DeclareMathOperator{\Geo}{Geo}
\DeclareMathOperator{\gVect}{\mathbf{gVec}}
\DeclareMathOperator{\PSh}{\mathbf{PSh}}
\DeclareMathOperator{\Sh}{\mathbf{Sh}}
\DeclareMathOperator{\Op}{\mathbf{Op}}
\DeclareMathOperator{\gVBun}{\mathbf{gVBun}}
\DeclareMathOperator{\Diff}{Diff}
\DeclareMathOperator{\Hom}{Hom}
\DeclareMathOperator{\op}{op}
\DeclareMathOperator{\Lin}{Lin}
\DeclareMathOperator{\At}{At}
\DeclareMathOperator{\rk}{rk}
\DeclareMathOperator{\grk}{grk}
\DeclareMathOperator{\trk}{trk}
\DeclareMathOperator{\supp}{supp}
\DeclareMathOperator{\Ilim}{\varprojlim}
\begin{document}
\begin{flushright}
\today
\end{flushright}
\vspace{0.7cm}
\begin{center}

\baselineskip=13pt {\Large \bf{Graded Jet Geometry}\\}
 \vskip0.5cm
 {\large{Jan Vysoký$^{1}$}}\\
 \vskip0.6cm
$^{1}$\textit{Faculty of Nuclear Sciences and Physical Engineering, Czech Technical University in Prague\\ Břehová 7, 115 19 Prague 1, Czech Republic, jan.vysoky@fjfi.cvut.cz}\\
\vskip0.3cm
\end{center}

\begin{abstract}
Jet manifolds and vector bundles allow one to employ tools of differential geometry to study differential equations, for example those arising as equations of motions in physics. They are necessary for a geometrical formulation of Lagrangian mechanics and the calculus of variations. It is thus only natural to require their generalization in  geometry of $\Z$-graded manifolds and vector bundles. 

Our aim is to construct the $k$-th order jet bundle $\frJ^{k}_{\E}$ of an arbitrary $\Z$-graded vector bundle $\E$ over an arbitrary $\Z$-graded manifold $\M$. We do so by directly constructing its sheaf of sections, which allows one to quickly prove all its usual properties. It turns out that it is convenient to start with the construction of the graded vector bundle of $k$-th order (linear) differential operators $\frD^{k}_{\E}$ on $\E$. In the process, we discuss (principal) symbol maps and a subclass of differential operators whose symbols correspond to completely symmetric $k$-vector fields, thus finding a graded version of Atiyah Lie algebroid. Necessary rudiments of geometry of $\Z$-graded vector bundles over $\Z$-graded manifolds are recalled. 
\end{abstract}

{\textit{Keywords}: Jet manifolds, graded manifolds, graded vector bundles, jet vector bundles, linear differential operators}.

\section*{Introduction}
Introduced by Ehresmann \cite{ehresmann1953introductiona}, jet geometry forms one of the pillars of modern differential geometry and mathematical physics. It proved to be an immeasurable tool to understand the geometry of differential equations and their solutions, and consequently of equations of classical and quantum physics. In particular, it is vital for a geometrical description of Lagrangian mechanics and the calculus of variations. It is of course not possible to provide a full list of references, hence let us at least point out standard monographs \cite{saunders1989geometry, kolar2013natural} for the geometry of jet bundles. For applications in theoretical physics, see e.g. \cite{bocharov1999symmetries} and \cite{giachetta1997new}. 

Linear differential operators on sections of vector bundles can be naturally described in terms of jet bundles \cite{palais1965chapter}. There is their fully algebraic description, tracing back to none other than Grothendieck \cite{grothendieck1966elements}. This viewpoint is fully utilized in a more conceptual algebraic approach to jet bundles, using their modules of sections \cite{krasil1986geometry, nestruev2003smooth}, which should be considered as a main source of inspiration for methods used in this paper. 

The need for a geometrical description of supersymmetric field theories and their variational calculus lead naturally to the notion of jet supermanifolds and jet supervector bundles. See \cite{hernandez1984global, monterde1992higher, monterde1993variational, monterde2006poincare} and a different approach in \cite{sardanashvily2007graded, sardanashvily2013graded, giachetta2005lagrangian}. Note that in the above list of references, the term ``graded'' always refers to the $\Z_{2}$-grading. 

In recent years, there was a reinvigoration of interest in $\Z$-graded manifolds with local coordinates of arbitrary degrees \cite{vysoky2022graded, kotov2021category, fairon2017introduction}, following on the non-negatively graded (or just $\N$-graded) manifolds \cite{Kontsevich:1997vb, severa2001some, Voronov:2019mav} and \cite{mehta2006supergroupoids,2011RvMaP..23..669C}. It is thus natural to consider a generalization of jet geometry to the (general) $\Z$-graded setting. There is a notion of graded jet bundles for $\N$-manifolds in the literature \cite{sardanashvily2017differential}, based on the explicit utilization of the Batchelor isomorphism for supermanifolds.

In this paper, we aim to provide a construction of jet bundles for general $\Z$-graded manifolds, following the definitions and formalism of our previous work \cite{Vysoky:2022gm}. Let us henceforth use the term ``graded'' \textit{exclusively} for $\Z$-graded (manifolds, vector bundles). However, it shall be noted that all of the constructions in this paper apply immediately to the $\Z_{2}$-graded case (Berezin-Leites supergeometry) and perhaps to some more exotic gradings as well, e.g. $\Z_{2}^{n}$. 

Let us sketch the main philosophy of the construction. Graded vector bundles over a graded manifold $\M$ can be fully described in terms of sheaves of graded $\C^{\infty}_{\M}$-modules, where $\C^{\infty}_{\M}$ is the structure sheaf of $\M$. Linear differential operators on graded vector bundles can be then readily defined using the graded version of the iterative algebraic definition of Grothendieck, see also \cite{nestruev2003smooth}. We prove that they form a locally freely and finitely generated sheaves of graded $\C^{\infty}_{\M}$-modules, hence new examples of graded vector bundles. 

This gave us the confidence that one can construct sheaves of sections of graded jet bundles directly, starting from a sheaf of sections of any given graded vector bundle. The procedure uses a graded analogue of jet modules in commutative algebra, see \cite{krasil1986geometry, nestruev2003smooth}, described also in \cite{sardanashvily2017differential}. However, the resulting presheaf of graded $\C^{\infty}_{\M}$-modules is way too big (it has non-trivial sections ``vanishing at all points'') at it does not form a sheaf. We address those technical details by introducing the notion of geometric presheaves of $\C^{\infty}_{\M}$-modules, inspired by \cite{krasil1986geometry, nestruev2003smooth}. One can then shape the problematic presheaf into a locally freely and finitely generated sheaf of graded $\C^{\infty}_{\M}$-modules, hence a sheaf of sections of a graded vector bundle, without loosing any of the functorial properties. In particular, it retains its natural relation to linear differential operators. Since every graded vector bundle (as a sheaf) can be used to construct the unique (up to a diffeomorphism) total space graded manifold, we obtain another tool for a construction of new graded manifolds. 

Let us also note that recently, there appeared a general construction of jet bundles in context of noncommutative geometry \cite{flood2022jet}, defining $k$-th order jet modules starting from any given $A$-module, where $A$ is any unital associative algebra. 

The paper is organized as follows:

In Section \ref{sec_local}, we start by recalling the necessary notions from graded geometry, namely the ones of a graded manifold and a graded vector bundle. We generalize the notion of a local operator on a graded module of sections of a graded vector bundle and show that those naturally form a sheaf of graded modules. We argue that endomorphisms of graded modules of sections (of any degree) form an example of local operators. 

In Section \ref{sec_diffop}, we define linear differential operators (of a given order) on graded vector bundles. We prove that they are local and it turns out that they can be viewed as sections of a sheaf of graded submodules of the sheaf of local operators. The fact that it is in fact a sheaf of sections of graded vector vector bundle is proved in Section \ref{sec_diffopVB}. We do so by explicitly by finding its local frame using a local frame of the graded vector bundle in question, together with a graded local chart of the underlying manifold. This also relates the algebraic definition of differential operators to their more self-explanatory explicit local form. 

In Section \ref{sec_symbol}, we first recall the notion of completely symmetric (differential) forms on a graded manifold, and the ``internal hom'' in the category of graded vector bundles over a given graded manifold. We do so in order to define a graded analogue of a (principal) symbol map. We prove that it fits into a canonical short exact sequence of graded vector bundles. 

The symbol map can be then used in Section \ref{sec_atiyah} to define a certain subclass of differential operators (their respective symbol map is determined by a single completely symmetric $k$-vector field). It turns out that this subset is in a certain sense closed under a graded commutator and the corresponding algebra of symbol maps is governed by a graded version of a Schouten-Nijenhuis bracket. In particular, for first-order differential operators, we obtain a graded Lie algebra on the graded module of sections of a so called Atiyah graded vector bundle, making it into a non-trivial example of a graded Lie algebroid. 

In Section \ref{sec_geometric}, we discuss a subclass of graded modules (over graded algebras of functions on graded manifolds) which contain no sections that in some sense ``vanish at all points''. More generally, we define a subclass of ``geometric'' presheaves of graded $\C^{\infty}_{\M}$-modules, motivated by the similar notion introduced in \cite{krasil1986geometry}. The name is justified by proving that sheaves of graded vector bundles are always geometric. We prove that there is a canonical ``geometrization'' functor into the subcategory of geometrical presheaves of graded $\C^{\infty}_{\M}$-modules. 

Section \ref{sec_gJetconst} is central to this paper. The sheaf of ($k$-th order) jets is constructed from a sheaf of sections of a given graded vector bundle. We prove that this sheaf is locally freely and finitely generated of a constant graded rank, hence a sheaf of sections of a graded vector bundle. We declare it to be the $k$-th order jet bundle of a graded vector bundle. By choosing a trivial ``line bundle'' over a given graded manifold, we obtain its $k$-th order jet manifold. 

Finally, Section \ref{sec_gJetprops} provides a justification of the previous section. In other words, we prove that the graded vector bundle we have constructed has all the usual properties of jet bundles in ordinary differential geometry. We show that there is a canonical jet prolongation map which becomes an isomorphism with the original graded vector bundle map for $k = 0$. We argue that $k$-th order differential operators on a graded vector bundle $\E$ can be equivalently described as graded vector bundle maps from the $k$-th jet bundle to $\E$. It is shown that jet bundles give raise to a canonical inverse system of sheaves, allowing one to define the sheaf $\Jet^{\infty}_{\E}$. We argue that fibers of graded jet bundles coincide with graded jet spaces defined at each point of the underlying smooth manifold in a usual way. We conclude by proving that in the ordinary (i.e. trivially graded) case, we obtain the usual definition of jet bundles. 

\section*{Acknowledgments}
First and foremost, I would like to express gratitude to my family for their patience and support.
I would also like to thank Branislav Jurčo and Rudolf Šmolka for helpful discussions. 

The author is grateful for a financial support from MŠMT under grant no. RVO 14000.

\section{Graded vector bundles, local operators} \label{sec_local}
In this section, we will briefly recall elementary definitions required in this paper. The notion of local operators on graded vector bundles is introduced and we prove that they actually form a sheaf of graded modules. 

Throughout this paper, $\M$ will be some fixed but otherwise general $\Z$-graded manifold. Its structure sheaf of smooth functions will be denoted as $\C^{\infty}_{\M}$ and the underlying smooth manifold as $M$. A sequence $(n_{j})_{j \in \Z}$ of non-negative integers is called a graded dimension of $\M$, if $\M$ is locally modeled on the graded domain $\hat{U}^{(n_{j})_{j}}$. In plain English, the number of its local coordinates of degree $j$ is precisely $n_{j}$. We assume that its \textit{total dimension} $n := \sum_{j \in \Z} n_{j}$ is finite. Note that then $\dim(M) = n_{0}$. For a detailed definition and related discussions see \S 3.3 of \cite{Vysoky:2022gm}. Since $M$ is a topological space, we can write $\Op(M)$ for its set of open subsets. For any $m \in M$, $\Op_{m}(M)$ denotes the set of open subsets of $M$ containing $m$.

For each $U \in \Op(M)$, elements of $\C^{\infty}_{\M}(U)$ all called \textbf{(smooth) functions on $\M$ over $U$}. To each $f \in \C^{\infty}_{\M}(U)$, there is an associated smooth function $\ul{f}: U \rightarrow \R$, called the \textbf{body of $f$}. For each $m \in U$, one defines $f(m) := \ul{f}(m)$. Note that whenever $|f| \neq	0$, one has $\ul{f} = 0$ and thus $f(m) = 0$ for all $m \in U$. 

\begin{definice}
By a \textbf{graded vector bundle} $\E$ over $\M$, we mean a locally freely and finitely generated sheaf $\Gamma_{\E}$ of graded $\C^{\infty}_{\M}$-modules on $M$ of a constant graded rank. $\Gamma_{\E}$ is called the \textbf{sheaf of sections of $\E$}. 
\end{definice}

A sheaf of graded $\C^{\infty}_{\M}$-modules $\Gamma_{\E}$ assigns to each $U \in \Op(M)$ a graded $\C^{\infty}_{\M}(U)$-module $\Gamma_{\E}(U)$ and the action of $\C^{\infty}_{\M}(U)$ has to be compatible with restrictions. By ``locally freely and finitely generated'' we mean the existence of a \textbf{local trivialization} $\{ (U_{\alpha},\phi_{\alpha}) \}_{\alpha \in I}$. More precisely, $\{ U_{\alpha} \}_{\alpha \in I}$ is an open cover of $M$ and each $\phi_{\alpha}$ is a $\C^{\infty}_{\M}|_{U_{\alpha}}$-linear sheaf isomorphism of $\Gamma_{\E}|_{U_{\alpha}}$ and a freely and finitely generated graded sheaf of $\C^{\infty}_{\M}|_{U_{\alpha}}$-modules $\C^{\infty}_{\M}|_{U_{\alpha}}[K]$.  To every $U \in \Op(U_{\alpha})$, it assigns a graded $\C^{\infty}_{\M}(U)$-module (with the obvious action)
\begin{equation} \label{eq_trivialbundle}
(\C^{\infty}_{\M}|_{U_{\alpha}}[K])(U) := \C^{\infty}_{\M}(U) \otimes_{\R} K,
\end{equation}
where $K \in \gVect$ is some fixed (for all $\alpha \in I$) finite-dimensional graded vector space called the \textbf{typical fiber} of $\E$. Its graded dimension $\gdim(K) =: (r_{j})_{j \in \Z}$ is called the \textbf{graded rank} of $\E$ and denoted as $\grk(\E)$. The number $r := \sum_{j \in \Z} r_{j}$ is called the \textbf{total rank} of $\E$ and denoted as $\trk(\E)$. 

Equivalently, for each $m \in M$, there exists an open subset $U \in \Op_{m}(M)$ and a collection $\{ \Phi_{\lambda} \}_{\lambda=1}^{r}$ of elements of $\Gamma_{\E}(U)$, such that  
\begin{enumerate}[(i)]
\item $|\Phi_{\lambda}| = |\vartheta_{\lambda}|$, where $\{ \vartheta_{\lambda} \}_{\lambda=1}^{r}$ is some fixed total basis of the typical fiber $K$. 
\item For every $V \in \Op(U)$, every $\psi \in \Gamma_{\E}(V)$ can be written as $\psi = \psi^{\lambda} \cdot \Phi_{\lambda}|_{V}$ for unique functions $\psi^{\lambda} \in \C^{\infty}_{\M}(V)$ such that $|\psi^{\lambda}| = |\psi| - |\Phi_{\lambda}|$. 
\end{enumerate}
Every such collection $\{ \Phi_{\lambda} \}_{\lambda=1}^{r}$ is called a \textbf{local frame for $\E$ over $U$}. Note that we usually denote the action of $f \in \C^{\infty}_{\M}(U)$ on $\psi \in \Gamma_{\E}(U)$ simply as $f \cdot \psi$. 

\begin{rem} \label{rem_totalspace}
There exists a more geometrical approach to graded vector bundles. In fact, one can always construct a \textit{total space} graded manifold $\E$ (unique up to a graded diffeomorphism) and a surjective submersion $\pi: \E \rightarrow \M$. The sheaf of sections $\Gamma_{\E}$ can be then fully recovered from $\C^{\infty}_{\E}$. See $\S 5.5$ in \cite{Vysoky:2022gm} for details. 
\end{rem}

The most vital property of graded vector bundles, necessary in everything what follows in this paper, is the ability to extend (in some sense) local sections to global sections. It is proved by using smooth bump functions, similarly to the ordinary case. Unless explicitly stated, $\E$ will always denote some fixed but arbitrary graded vector bundle over $\M$ of a graded rank $(r_{j})_{j \in \Z}$. 
\begin{tvrz}[\textbf{The Extension Lemma}]
Suppose $\psi \in \Gamma_{\E}(U)$ for some $U \in \Op(M)$. Then for any $V \in \Op(U)$ such that $\ol{V} \subseteq U$, there exists a section $\psi' \in \Gamma_{\E}(M)$, such that $\psi'|_{V} = \psi|_{V}$. 
\end{tvrz}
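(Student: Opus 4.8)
The plan is to mimic the classical proof of the extension lemma for ordinary vector bundles, the only new ingredient being that multiplication by a smooth bump function makes sense on a graded manifold because such a function is an element of $\C^{\infty}_{\M}$ of degree $0$. First I would reduce to a local situation: since $\ol{V} \subseteq U$ and $\E$ is locally trivial, by shrinking I may choose (after a finite or locally finite refinement) an open $W$ with $\ol{V} \subseteq W \subseteq \ol{W} \subseteq U$ on which $\E$ admits a local frame $\{\Phi_\lambda\}_{\lambda=1}^r$. Actually, the cleanest route is to produce directly a global section, so I would instead argue as follows.

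Pick a smooth bump function $\chi \in \C^{\infty}(M) \subseteq \C^{\infty}_{\M}(M)_{0}$ (using the standard fact, recalled in \cite{Vysoky:2022gm}, that ordinary smooth functions on $M$ embed into degree-zero global functions on $\M$) with $\chi \equiv 1$ on an open neighbourhood $V'$ of $\ol{V}$ and $\supp(\chi) \subseteq U$. Then $\chi|_{U} \cdot \psi \in \Gamma_{\E}(U)$ is a section which vanishes (together with all its restrictions) on the open set $U \setminus \supp(\chi)$, because $\C^{\infty}_{\M}$ is a sheaf and $\chi$ restricts to $0$ there; concretely, writing $\psi = \psi^{\lambda} \cdot \Phi_{\lambda}$ in a local frame over any $W \subseteq U$ shows that $\chi \cdot \psi$ is locally $\chi \psi^{\lambda} \cdot \Phi_{\lambda}$. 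Now define $\psi'$ on the open cover $\{U, M \setminus \supp(\chi)\}$ of $M$ by $\psi'|_{U} := \chi|_{U}\cdot\psi$ and $\psi'|_{M \setminus \supp(\chi)} := 0$; these agree on the overlap $U \setminus \supp(\chi)$ by the previous remark, so the sheaf gluing axiom for $\Gamma_{\E}$ yields a unique $\psi' \in \Gamma_{\E}(M)$. Since $\chi \equiv 1$ on $V' \supseteq V$, restricting the identity $\psi'|_{U} = \chi \cdot \psi$ further to $V$ gives $\psi'|_{V} = \psi|_{V}$, as desired.

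The only genuinely non-routine point is the claim that $\chi\cdot\psi$ restricts to $0$ on $U\setminus\supp(\chi)$ — i.e. that multiplication by a function vanishing on an open set produces a section vanishing there. This is immediate once one knows that $\Gamma_{\E}$ is a sheaf of $\C^{\infty}_{\M}$-modules (so the action commutes with restriction) and that $\chi|_{U\setminus\supp(\chi)} = 0$ in $\C^{\infty}_{\M}(U\setminus\supp(\chi))$; the latter is just the fact that the body and all higher components of $\chi$ vanish on that set, which follows from $\C^{\infty}_{\M}$ being a sheaf and $\chi$ being (the image of) an ordinary bump function. The remaining steps — existence of the bump function $\chi$, the gluing, and the final restriction — are entirely standard, so I would state them briefly and refer to \cite{Vysoky:2022gm} for the embedding $\C^{\infty}(M)\hookrightarrow\C^{\infty}_{\M}(M)$ and for the existence of graded partitions of unity / bump functions.
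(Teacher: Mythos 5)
Your proof is correct and is precisely the bump-function argument the paper has in mind (the paper omits the proof, remarking only that "it is proved by using smooth bump functions, similarly to the ordinary case"): multiply by a degree-zero cutoff equal to $1$ near $\ol{V}$ and supported in $U$, then glue with the zero section over $M \setminus \supp(\chi)$ using the sheaf axiom. The only cosmetic point is that you do not really need a canonical embedding $\C^{\infty}(M) \hookrightarrow \C^{\infty}_{\M}(M)_{0}$ (which for general $\Z$-graded manifolds involves a choice); all that is used is the existence of a degree-zero bump function in $\C^{\infty}_{\M}(M)$ with the stated support and restriction properties, which is exactly the form in which the paper invokes bump functions elsewhere (e.g.\ in Example \ref{ex_CMlineararelocal}).
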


This property will allow us to restrict certain linear maps of sections of graded vector bundles from larger open subsets to smaller open subsets. We only have to limit ourselves to those which depend ``locally'' on sections. We thus arrive to the following important notion.
\begin{definice}
Let $U \in \Op(M)$. A graded $\R$-linear map $F: \Gamma_{\E}(U) \rightarrow \Gamma_{\E}(U)$ of any given degree $|F|$ is called a \textbf{local operator on $\E$ over $U$}, if for any $V \in \Op(U)$ and $\psi \in \Gamma_{\E}(V)$ satisfying $\psi|_{V} = 0$, one has $F(\psi)|_{V} = 0$. 

Local operators on $\E$ over $U$ form a graded vector space which we denote as $\Loc_{\E}(U)$. 
\end{definice}
As promised, local operators can be restricted in a unique way. In fact, these restrictions allow us to obtain a sheaf of graded $\C^{\infty}_{\M}$-modules of \textit{all} local operators on $\E$. 
\begin{tvrz} \label{tvrz_locsheaf} Let $U \in \Op(M)$ and $F \in \Loc_{\E}(U)$. Then for any $V \in \Op(U)$, there is the unique local operator $F|_{V} \in \Loc_{\E}(V)$ on $\E$ over $V$, such that 
\begin{equation} \label{eq_Flocalrestrictionproperty}
F(\psi)|_{V} = F|_{V}(\psi|_{V}),
\end{equation}
for all $\psi \in \Gamma_{\E}(U)$. There is a canonical graded $\C^{\infty}_{\M}(U)$-module structure on $\Loc_{\E}(U)$. With respect to aforementioned restrictions, the assignment $U \mapsto \Loc_{\E}(U)$ makes $\Loc_{\E}$ into a sheaf of graded $\C^{\infty}_{\M}$-modules, called the \textbf{sheaf of local operators on $\E$}. 
\end{tvrz}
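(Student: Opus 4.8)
The plan is to construct the restriction $F|_V$ directly via the Extension Lemma, verify it is well-defined and independent of the extension chosen, and then check sheaf axioms. First I would fix $V \in \Op(U)$ and define $F|_V$ as follows: given $\psi \in \Gamma_{\E}(V)$ and a point $m \in V$, choose $V' \in \Op_m(M)$ with $\overline{V'} \subseteq V$, apply the Extension Lemma (with $\M$ in place of $M$, and restricting to $U$) to obtain $\psi' \in \Gamma_{\E}(U)$ with $\psi'|_{V'} = \psi|_{V'}$, and set $(F|_V(\psi))|_{V'} := F(\psi')|_{V'}$. The locality of $F$ is exactly what makes this unambiguous: if $\psi'_1, \psi'_2$ are two such extensions, then $(\psi'_1 - \psi'_2)|_{V'} = 0$, hence $F(\psi'_1 - \psi'_2)|_{V'} = 0$ by the defining property of a local operator, so the two candidate values agree on $V'$. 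The locally defined pieces glue to a well-defined section $F|_V(\psi) \in \Gamma_{\E}(V)$ because they agree on overlaps (same argument) and $\Gamma_{\E}$ is a sheaf. Degree-preservation, $\R$-linearity, and the fact that $F|_V$ is itself a local operator on $\E$ over $V$ all follow immediately from the corresponding properties of $F$ together with the locality of restriction. The identity \eqref{eq_Flocalrestrictionproperty} holds by taking $\psi \in \Gamma_{\E}(U)$, restricting to $V$, and noting $\psi$ itself serves as an extension locally; uniqueness of $F|_V$ subject to \eqref{eq_Flocalrestrictionproperty} is clear since \eqref{eq_Flocalrestrictionproperty} determines $F|_V$ on the image of the (locally surjective, by the Extension Lemma) restriction map $\Gamma_{\E}(U) \to \Gamma_{\E}(V)$.

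Next I would record the $\C^{\infty}_{\M}(U)$-module structure: for $f \in \C^{\infty}_{\M}(U)$ and $F \in \Loc_{\E}(U)$, define $(f \cdot F)(\psi) := f \cdot F(\psi)$ for $\psi \in \Gamma_{\E}(U)$. One checks $f \cdot F$ is again graded $\R$-linear of degree $|f| + |F|$ and is local: if $\psi|_V = 0$ then $F(\psi)|_V = 0$, hence $(f \cdot F)(\psi)|_V = (f|_V) \cdot (F(\psi)|_V) = 0$. The module axioms are inherited pointwise from the $\C^{\infty}_{\M}(U)$-module structure on $\Gamma_{\E}(U)$, and compatibility with the restriction maps $F \mapsto F|_V$ follows from the compatibility of the module action on $\Gamma_{\E}$ with restrictions, i.e. $(f \cdot F)|_V = f|_V \cdot F|_V$, again using \eqref{eq_Flocalrestrictionproperty} and that $\Gamma_{\E}$ is a sheaf of $\C^{\infty}_{\M}$-modules.

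It remains to check that $U \mapsto \Loc_{\E}(U)$ with these restrictions is a sheaf. Functoriality of restriction — $F|_U = F$ and $(F|_V)|_W = F|_W$ for $W \subseteq V \subseteq U$ — follows from the uniqueness clause already established, since both sides satisfy the characterizing property \eqref{eq_Flocalrestrictionproperty}. For the separation axiom, suppose $\{U_\alpha\}$ covers $U$ and $F|_{U_\alpha} = 0$ for all $\alpha$; then for $\psi \in \Gamma_{\E}(U)$, $F(\psi)|_{U_\alpha} = F|_{U_\alpha}(\psi|_{U_\alpha}) = 0$ for all $\alpha$, so $F(\psi) = 0$ as $\Gamma_{\E}$ is a sheaf, whence $F = 0$. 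For gluing, given a compatible family $F_\alpha \in \Loc_{\E}(U_\alpha)$, define $F \in \Loc_{\E}(U)$ by: for $\psi \in \Gamma_{\E}(U)$, the sections $F_\alpha(\psi|_{U_\alpha}) \in \Gamma_{\E}(U_\alpha)$ agree on overlaps because $F_\alpha|_{U_\alpha \cap U_\beta} = F_\beta|_{U_\alpha \cap U_\beta}$ and \eqref{eq_Flocalrestrictionproperty}, hence glue to a unique $F(\psi) \in \Gamma_{\E}(U)$. One verifies $F$ is graded $\R$-linear, local, and satisfies $F|_{U_\alpha} = F_\alpha$. The main obstacle — and the only place any real work is needed — is the very first step: showing $F|_V$ is well-defined independently of the chosen extension and that the locally defined values glue coherently; once the Extension Lemma and locality are used to settle this, every remaining verification is a routine consequence of $\Gamma_{\E}$ being a sheaf of graded $\C^{\infty}_{\M}$-modules together with the characterizing identity \eqref{eq_Flocalrestrictionproperty}.
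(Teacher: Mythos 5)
Your proposal is correct and follows essentially the same route as the paper: define $F|_V$ locally via the Extension Lemma, use locality of $F$ for independence of the extension and gluing, deduce uniqueness from locality together with the local surjectivity of the restriction $\Gamma_{\E}(U) \to \Gamma_{\E}(V)$, and then verify the module structure and sheaf axioms as routine consequences of \eqref{eq_Flocalrestrictionproperty} and the fact that $\Gamma_{\E}$ is a sheaf. No gaps worth flagging.
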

\begin{proof}
Let $\psi \in \Gamma_{\E}(V)$. For each $m \in V$, find $V_{(m)} \in \Op_{m}(V)$ satisfying $\ol{V}_{(m)} \subseteq V$  and use the extension lemma to find $\psi'_{(m)} \in \Gamma_{\E}(U)$ satisfying $\psi|_{V_{(m)}} = \psi'_{(m)}|_{V_{(m)}}$. Since $\{ V_{(m)} \}_{m \in V}$ is an open cover of $V$ and $\Gamma_{\E}$ is a sheaf, the formula
\begin{equation}
F|_{V}(\psi)|_{V_{(m)}} := F(\psi'_{(m)})|_{V_{(m)}},
\end{equation}
imposed for every $m \in V$ defines a unique section $F|_{V}(\psi)$ of $\Gamma_{\E}(V)$, if the elements of $\Gamma_{\E}(V_{(m)})$ on the right-hand side agree on the overlaps. This follows from the fact that $F$ is $\R$-linear and local. Similarly, one can prove that the definition of $F|_{V}$ depends neither on the open cover $\{ V_{(m)} \}_{m \in V}$ or extensions $\psi'_{(m)}$ used. $F|_{V}$ is then easily shown to be a graded $\R$-linear map satisfying (\ref{eq_Flocalrestrictionproperty}). 

Let us argue that $F|_{V}$ is local. Suppose that there is $W \in \Op(V)$ and $\psi \in \Gamma_{\E}(V)$ satisfying $\psi|_{W} = 0$. In the construction of the open cover $\{ V_{(m)} \}_{m \in V}$, we can always assume that for $m \in W$, one has $V_{(m)} \subseteq W$. But then $\psi'_{(m)}|_{V_{(m)}} = 0$ and thus $F(\psi'_{(m)})|_{V_{(m)}} = 0$ for every $m \in W$, since $F$ is local. Since $\{ V_{(m)} \}_{m \in W}$ is an open cover of $W$, we find $F|_{V}(\psi)|_{W} = 0$. Hence $F|_{V}$ is local. 

To see that $F|_{V}$ is a unique local operator satisfying (\ref{eq_Flocalrestrictionproperty}), suppose that $G \in \Loc_{\E}(V)$ is another such operator. Let $\psi \in \Gamma_{\E}(V)$ be arbitrary and $\{ V_{(m)} \}_{m \in V}$ and $\psi'_{(m)} \in \Gamma_{\E}(U)$ be constructed as above. Since $G$ is local and satisfies (\ref{eq_Flocalrestrictionproperty}), one can for each $m \in V$ write
\begin{equation}
G(\psi)|_{V_{(m)}} = G(\psi_{(m)}|_{V})|_{V_{(m)}} = F(\psi_{(m)})|_{V_{(m)}} \equiv F|_{V}(\psi)|_{V_{(m)}}. 
\end{equation}
As $\{ V_{(m)} \}_{m \in V}$ covers $V$ and $\psi$ was arbitrary, this proves that $G = F|_{V}$. 

Let $U \in \Op(M)$. For each $f \in \C^{\infty}_{\M}(U)$, $F \in \Loc_{\E}(U)$ and $\psi \in \Gamma_{\E}(U)$, the formula
\begin{equation}
(f \cdot F)(\psi) := f \cdot F(\psi)
\end{equation}
clearly makes $\Loc_{\E}(U)$ into a graded $\C^{\infty}_{\M}(U)$-module. The unique property (\ref{eq_Flocalrestrictionproperty}) of restrictions can be now utilized to prove that for any open subsets $W \subseteq V \subseteq U$, every $F \in \Loc_{\E}(U)$ and any $f \in \C^{\infty}_{\M}(U)$, one has 
\begin{equation}
(F|_{V})|_{W} = F|_{W}, \; \; F|_{U} = F, \; \; (f \cdot F)|_{V} = f|_{V} \cdot F|_{V}. 
\end{equation}
This proves that $\Loc_{\E}$ is a presheaf of graded $\C^{\infty}_{\M}$-modules. Finally, suppose that $U \in \Op(M)$ and let $\{ U_{\alpha} \}_{\alpha \in I}$ be some open cover of $U$. Let $\{ F_{\alpha} \}_{\alpha \in I}$ be a collection of local operators on $\E$ over $U_{\alpha}$ agreeing on the overlaps. For each $\psi \in \Gamma_{\E}(U)$ and every $\alpha \in I$, define
\begin{equation}
F(\psi)|_{U_{\alpha}} := F_{\alpha}(\psi|_{U_{\alpha}}). 
\end{equation}
By using the assumption and (\ref{eq_Flocalrestrictionproperty}), the elements of $\Gamma_{\E}(U_{\alpha})$ agree on the overlaps, whence they glue to a unique element $\Gamma_{\E}(U)$. It is not difficult to see that this procedure defines an element of $\Loc_{\E}(U)$. Its defining formula shows that $F|_{U_{\alpha}} = F_{\alpha}$, for all $\alpha \in I$, and such $F$ is unique. This proves that $\Loc_{\E}$ is a sheaf of graded $\C^{\infty}_{\M}$-modules.
\end{proof}

\begin{example} \label{ex_CMlineararelocal}
Every $\C^{\infty}_{\M}(U)$-linear map $F: \Gamma_{\E}(U) \rightarrow \Gamma_{\E}(U)$ of any given degree $|F|$ is local. Indeed, suppose there is $V \in \Op(U)$ and $\psi \in \Gamma_{\E}(U)$ satisfying $\psi|_{V} = 0$. For any $m \in V$, find $V_{(m)} \in \Op_{m}(V)$ such that $\ol{V}_{(m)} \subseteq V$, and a smooth bump function $\eta \in \C^{\infty}_{\M}(U)$ satisfying $\supp(\eta) \subseteq V$ and $\eta|_{V_{(m)}} = 1$. It follows that $\psi = (1 - \eta) \cdot \psi$. But then
\begin{equation}
F(\psi)|_{V_{(m)}} = (1 - \eta)|_{V_{(m)}} \cdot F(\psi)|_{V_{(m)}} = 0,
\end{equation}
and since $\{ V_{(m)} \}_{m \in V}$ is an open cover of $V$, this proves that $F(\psi)|_{V} = 0$. Whence $F \in \Loc_{\E}(U)$. 
\end{example}
There is one important feature of local operators which follows from the property (\ref{eq_Flocalrestrictionproperty}). When we talk about sheaves, we usually talk about sheaf morphisms between them. It turns out that for graded vector bundles, it suffices to define a mapping of global sections. 
\begin{tvrz} \label{tvrz_Locglobalsheaf}
Let $F \in \Loc_{\E}(M)$. Then there exists a unique sheaf morphism $\ol{F}: \Gamma_{\E} \rightarrow \Gamma_{\E}$, such that $F = \ol{F}_{M}$. If $F$ is $\C^{\infty}_{\M}(M)$-linear, $\ol{F}$ is a $\C^{\infty}_{\M}$-linear sheaf morphism. We will usually abuse the notation and omit the bar over $F$. 
\end{tvrz}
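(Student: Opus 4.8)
The plan is to use the restriction operation $F \mapsto F|_V$ constructed in Proposition \ref{tvrz_locsheaf} to directly define the components of the sheaf morphism $\ol{F}$, and then verify the compatibility and uniqueness conditions. Concretely, I would set $\ol{F}_U := F|_U$ for every $U \in \Op(M)$, where $F|_U \in \Loc_{\E}(U)$ is the unique local operator furnished by Proposition \ref{tvrz_locsheaf} (with $F|_M = F$ since restriction to the whole space is the identity, as noted there). Each $\ol{F}_U$ is in particular a graded $\R$-linear endomorphism of $\Gamma_{\E}(U)$ of degree $|F|$, which is what a component of a sheaf morphism $\Gamma_{\E} \to \Gamma_{\E}$ must be.

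Next I would check that the family $\{ \ol{F}_U \}_{U \in \Op(M)}$ is compatible with the restriction maps of $\Gamma_{\E}$, i.e. that for $V \subseteq U$ and $\psi \in \Gamma_{\E}(U)$ one has $\ol{F}_U(\psi)|_V = \ol{F}_V(\psi|_V)$. This is precisely the defining property \eqref{eq_Flocalrestrictionproperty} of $F|_V$ relative to $F|_U$, combined with the transitivity identity $(F|_U)|_V = F|_V$ already recorded in the proof of Proposition \ref{tvrz_locsheaf}; so this step is essentially immediate. Thus $\ol{F}$ is a well-defined morphism of sheaves of graded $\R$-modules. If moreover $F$ is $\C^{\infty}_{\M}(M)$-linear, then by the identity $(f \cdot F)|_V = f|_V \cdot F|_V$ (also established in Proposition \ref{tvrz_locsheaf}) one gets, for $f \in \C^{\infty}_{\M}(U)$ and $\psi \in \Gamma_{\E}(U)$, that $\ol{F}_U(f \cdot \psi) = f \cdot \ol{F}_U(\psi)$: one reduces to global sections by the Extension Lemma, writing $\psi$ and $f$ locally as restrictions of global data and using locality to compare. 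Hence $\ol{F}$ is $\C^{\infty}_{\M}$-linear.

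For uniqueness, suppose $G: \Gamma_{\E} \to \Gamma_{\E}$ is another sheaf morphism with $G_M = F$. For arbitrary $U \in \Op(M)$ and $\psi \in \Gamma_{\E}(U)$, I would use the Extension Lemma: cover $U$ by opens $V_{(m)}$ with $\ol{V}_{(m)} \subseteq U$ and pick $\psi'_{(m)} \in \Gamma_{\E}(M)$ agreeing with $\psi$ on $V_{(m)}$. Naturality of $G$ gives $G_U(\psi)|_{V_{(m)}} = G_U(\psi'_{(m)}|_U)|_{V_{(m)}} = G_M(\psi'_{(m)})|_{V_{(m)}} = F(\psi'_{(m)})|_{V_{(m)}}$, which by definition of $F|_U = \ol{F}_U$ equals $\ol{F}_U(\psi)|_{V_{(m)}}$; since the $V_{(m)}$ cover $U$ and $\Gamma_{\E}$ is a sheaf, $G_U(\psi) = \ol{F}_U(\psi)$. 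So $G = \ol{F}$.

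The argument is almost entirely a repackaging of facts already proved in Proposition \ref{tvrz_locsheaf}, so there is no serious obstacle; the only point requiring a little care is the $\C^{\infty}_{\M}$-linearity claim, where one must genuinely invoke the Extension Lemma to pass from the global statement (hypothesis on $F$) to the local statement on each $\Gamma_{\E}(U)$ — locality of $F|_U$ is what makes this descent legitimate. Everything else is formal bookkeeping with restriction maps.
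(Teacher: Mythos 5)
Your proposal is correct and follows essentially the same route as the paper: define $\ol{F}_{U} := F|_{U}$ via Proposition \ref{tvrz_locsheaf}, get naturality from the presheaf structure of $\Loc_{\E}$, and derive uniqueness and $\C^{\infty}_{\M}$-linearity from the characterizing property (\ref{eq_Flocalrestrictionproperty}) together with the Extension Lemma. The only cosmetic remark is that the graded linearity should carry the Koszul sign $\ol{F}_{U}(f \cdot \psi) = (-1)^{|F||f|} f \cdot \ol{F}_{U}(\psi)$, which does not affect the argument.
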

\begin{proof}
Recall that a sheaf morphism is just a natural transformation of the two sheaf functors (possibly with a degree shift). $\ol{F}$ is unique, since for every $U \in \Op(M)$ and every $\psi \in \Gamma_{\E}(M)$, the naturality forces $\ol{F}_{U}(\psi|_{U}) = F(\psi)|_{U}$, that is $\ol{F}_{U} := F|_{U}$ by Proposition \ref{tvrz_locsheaf}. Since $\Loc_{\E}$ is a presheaf, $\ol{F}_{U}$ is natural in $U$. We thus obtain a sheaf morphism $\ol{F} = \{ \ol{F}_{U} \}_{U \in \Op(M)}$. It is not difficult to see from the proof of Proposition \ref{tvrz_locsheaf} that if $F$ is $\C^{\infty}_{\M}(M)$-linear, then $F|_{U}$ is $\C^{\infty}_{\M}(U)$-linear for every $U \in \Op(M)$, hence $\ol{F}$ is a $\C^{\infty}_{\M}$-linear sheaf morphism. 
\end{proof}
\section{Sheaves of differential operators} \label{sec_diffop}
In this section, we will produce a new class of local operators generalizing Example \ref{ex_CMlineararelocal}. We mimic standard definitions and prove that even in the graded case, we get expected properties. Again, $\E$ is still assumed to be a general graded vector bundle over a general graded manifold $\M$. 

For every $f \in \C^{\infty}_{\M}(U)$, one has a $\C^{\infty}_{\M}(U)$-linear operator $\lambda_{f}: \Gamma_{\E}(U) \rightarrow \Gamma_{\E}(U)$, defined as
\begin{equation}
\lambda_{f}(\psi) := f \cdot \psi,
\end{equation}
for all $\psi \in \Gamma_{\E}(U)$. Note that $|\lambda_{f}| = |f|$ and $\lambda_{f}|_{V} = \lambda_{f|_{V}}$ for any $V \in \Op(U)$. 

\begin{definice}
Let $k \in \N_{0}$ be a non-negative integer. A graded $\R$-linear map $D: \Gamma_{\E}(U) \rightarrow \Gamma_{\E}(U)$ is called a \textbf{$k$-th order differential operator on $\E$ over $U$}, if 
\begin{enumerate}[(i)]
\item it is $\C^{\infty}_{\M}(U)$-linear (when $k = 0$); 
\item for every $f \in \C^{\infty}_{\M}(U)$, the graded commutator $[D,\lambda_{f}] := D \circ \lambda_{f} - (-1)^{|D||f|} \lambda_{f} \circ D$ is a $(k-1)$-th order differential operator on $\E$ over $U$ (when $k > 0$).
\end{enumerate}
We denote the graded vector space of $k$-th order differential operators on $\E$ over $U$ as $\Dif_{\E}^{k}(U)$. 
\end{definice}
Let us introduce the following notation. For every graded $\R$-linear map $D$, every $j \in \N$ and every $j$-tuple $(f_{1}, \dots, f_{j})$ of functions from $\C^{\infty}_{\M}(U)$, we define a pair of graded $\R$-linear maps
\begin{equation} \label{eq_Djiteratedmaps}
D_{(f_{1},\dots,f_{j})}^{(j)} := [ \cdots [[D,\lambda_{f_{1}}], \lambda_{f_{2}}], \dots ],\lambda_{f_{j}}], \; \; \ol{D}^{(j)}_{(f_{1},\dots,f_{j})} := [\lambda_{f_{1}},[\dots, [\lambda_{f_{j-1}}, [\lambda_{f_{j}}, D]] \cdots ]. 
\end{equation}
They differ only by a sign but it turns out that it is convenient to keep both definitions. More explicitly, one finds the relation
\begin{equation} \label{eq_DjvshatDj}
D^{(j)}_{(f_{1},\dots,f_{j})} = (-1)^{j + |D|(|f_{1}| + \dots + |f_{j}|)} \ol{D}^{(j)}_{(f_{1},\dots,f_{j})}. 
\end{equation}
\begin{rem} \label{rem_localusingDkplus1}
It is easy to see that $D \in \Dif^{k}_{\E}(U)$, iff $D^{(k+1)}_{(f_{1},\dots,f_{k+1})} = 0$ for all $f_{1}, \dots, f_{k+1} \in \C^{\infty}_{\M}(U)$. 
\end{rem}
It turns out that differential operators are local. In fact, they form sections of a sheaf of graded $\C^{\infty}_{\M}$-submodules of the sheaf of local operators. 
\begin{tvrz}
For each $k \in \N_{0}$ and $U \in \Op(M)$, one has $\Dif^{k}_{\E}(U) \subseteq \Loc_{\E}(U)$. In fact, one can view $\Dif^{k}_{\E}$ as a sheaf of graded $\C^{\infty}_{\M}$-submodules of the sheaf $\Loc_{\E}$. $\Dif^{k}_{\E}$ is called the \textbf{sheaf of $k$-th order differential operators} on $\E$. 
\end{tvrz}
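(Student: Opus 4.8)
The plan is to split the statement into two halves, each proved by induction on the order $k$: first the pointwise inclusion $\Dif^{k}_{\E}(U) \subseteq \Loc_{\E}(U)$, and then the assertion that $U \mapsto \Dif^{k}_{\E}(U)$, with the restrictions inherited from $\Loc_{\E}$, is a sheaf of graded $\C^{\infty}_{\M}$-submodules of $\Loc_{\E}$.

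For the inclusion, the base case $k=0$ is exactly Example \ref{ex_CMlineararelocal}, since by definition a $0$-th order operator is $\C^{\infty}_{\M}(U)$-linear. For $k>0$, take $D \in \Dif^{k}_{\E}(U)$, an open $V \subseteq U$ and $\psi \in \Gamma_{\E}(U)$ with $\psi|_{V}=0$. Around each $m \in V$ choose $V_{(m)} \in \Op_{m}(V)$ with $\overline{V_{(m)}} \subseteq V$ and a degree-zero bump function $\eta \in \C^{\infty}_{\M}(U)$ with $\supp(\eta) \subseteq V$ and $\eta|_{V_{(m)}}=1$. Then $\eta \cdot \psi = 0$, so expanding $0 = D(\lambda_{\eta}\psi) = [D,\lambda_{\eta}](\psi) + \eta \cdot D(\psi)$ (using $|\eta|=0$) and restricting to $V_{(m)}$, where $\eta=1$, yields $D(\psi)|_{V_{(m)}} = -[D,\lambda_{\eta}](\psi)|_{V_{(m)}}$. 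Since $[D,\lambda_{\eta}] \in \Dif^{k-1}_{\E}(U) \subseteq \Loc_{\E}(U)$ by the induction hypothesis and $\psi|_{V_{(m)}}=0$, the right-hand side vanishes; as the $V_{(m)}$ cover $V$ we get $D(\psi)|_{V}=0$, i.e. $D \in \Loc_{\E}(U)$.

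For the sheaf statement I would first record the one genuinely useful identity: for $D \in \Loc_{\E}(U)$, $f \in \C^{\infty}_{\M}(U)$ and $V \in \Op(U)$ one has $[D,\lambda_{f}]|_{V} = [D|_{V}, \lambda_{f|_{V}}]$. Indeed, both sides are local operators on $V$, and using the defining restriction property (\ref{eq_Flocalrestrictionproperty}) one checks they act identically on sections, so they coincide by the uniqueness clause of Proposition \ref{tvrz_locsheaf}. With this in hand I run a simultaneous induction on $k$ establishing: (a) $\Dif^{k}_{\E}(U)$ is a graded $\C^{\infty}_{\M}(U)$-submodule of $\Loc_{\E}(U)$, the module closure coming from $f \cdot D = \lambda_{f} \circ D$ together with $[\lambda_{f}\circ D, \lambda_{g}] = \lambda_{f}\circ[D,\lambda_{g}] = f \cdot [D,\lambda_{g}]$ (functions commute up to sign); (b) if $D \in \Dif^{k}_{\E}(U)$ and $V \in \Op(U)$ then $D|_{V} \in \Dif^{k}_{\E}(V)$; and (c) if $F \in \Loc_{\E}(U)$ restricts to a $k$-th order operator on each set of an open cover $\{U_{\alpha}\}$ of $U$, then $F \in \Dif^{k}_{\E}(U)$ — applied to $[F,\lambda_{f}]$ via the identity above, (c) reduces to the sheaf axiom for $\Gamma_{\E}$ plus the induction hypothesis.

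Among these, (b) is the step I expect to be the main obstacle: verifying $[D|_{V},\lambda_{g}] \in \Dif^{k-1}_{\E}(V)$ requires control of $\lambda_{g}$ for an arbitrary $g \in \C^{\infty}_{\M}(V)$, which need not extend to $U$. The remedy is to localize near each $m \in V$, invoke the Extension Lemma (applied to the trivial line bundle) to produce $\tilde{g} \in \C^{\infty}_{\M}(U)$ with $\tilde{g}=g$ on some $W \in \Op_{m}(V)$, observe $[D|_{V},\lambda_{g}]|_{W} = [D|_{W},\lambda_{\tilde{g}|_{W}}] = [D,\lambda_{\tilde{g}}]|_{W} \in \Dif^{k-1}_{\E}(W)$ by the induction hypothesis for (b), and then reassemble over $V$ using (c) at order $k-1$. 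Once (a)--(c) are in place, the presheaf identities $(D|_{V})|_{W}=D|_{W}$, $D|_{U}=D$, $(f\cdot D)|_{V}=f|_{V}\cdot D|_{V}$ are inherited verbatim from $\Loc_{\E}$, and gluing is immediate: a compatible family $\{D_{\alpha}\}$ glues to a unique $D \in \Loc_{\E}(U)$ because $\Loc_{\E}$ is a sheaf, and $D \in \Dif^{k}_{\E}(U)$ by (c), with uniqueness forced by the inclusion $\Dif^{k}_{\E}(U) \hookrightarrow \Loc_{\E}(U)$.
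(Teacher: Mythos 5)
Your proof is correct and follows essentially the same route as the paper: induction on $k$ with a bump-function argument for locality, the computation $[f\cdot D,\lambda_{g}]=f\cdot[D,\lambda_{g}]$ for the submodule structure, and the identity $[D,\lambda_{f}]|_{V}=[D|_{V},\lambda_{f|_{V}}]$ to reduce the gluing axiom to lower order. Your step (b) --- checking via local extension of $g\in\C^{\infty}_{\M}(V)$ and reassembly through the gluing property at order $k-1$ that restrictions of $k$-th order operators remain $k$-th order --- addresses a point the paper leaves implicit, and is a welcome completion rather than a deviation.
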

\begin{proof}
Let us proceed by induction on $k \in \N_{0}$. For $k = 0$, every $F \in \Dif^{0}_{\E}(U)$ is $\C^{\infty}_{\M}(U)$-linear, hence $\Dif^{0}_{\E}(U) \subseteq \Loc_{\E}(U)$ as proved in Example \ref{ex_CMlineararelocal}. Hence assume that $k > 0$ and that the statement holds for all lower order differential operators. Let $D \in \Dif^{k}_{\E}(U)$ and suppose there is $V \in \Op(U)$ and $\psi \in \Gamma_{\E}(V)$ satisfying $\psi|_{V} = 0$. For each $m \in V$, find $V_{(m)} \in \Op(V)$ and a smooth bump function $\eta$ as in Example \ref{ex_CMlineararelocal}, so one can write $\psi = (1 - \eta) \cdot \psi$. But then 
\begin{equation}
D(\psi)|_{V_{(m)}} = [D, \lambda_{1 - \eta}](\psi)|_{V_{(m)}} + (1 - \eta)|_{V_{(m)}} \cdot D(\psi)|_{V_{(m)}} = 0,
\end{equation}
where we have used the induction hypothesis and the definition of $\eta$. Since $\{ V_{(m)} \}_{m \in V}$ covers $V$, we see that $D(\psi)|_{V} = 0$ and conclude that $D \in \Loc_{\E}(U)$. 

To show that $\Dif^{k}_{\E}$ forms a sheaf of graded $\C^{\infty}_{\M}$-submodules, we must first prove $\Dif_{\E}^{k}(U)$ is a graded $\C^{\infty}_{\M}(U)$-submodule of $\Loc_{\E}(U)$. For any $f,g \in \C^{\infty}_{\M}(U)$ and $D \in \Dif_{\E}^{k}(U)$, one has 
\begin{equation}
[f \cdot D, \lambda_{g}] = [\lambda_{f} \circ D, \lambda_{g}] = \lambda_{f} \circ [D, \lambda_{g}] + (-1)^{||D||g|} [\lambda_{f},\lambda_{g}] \circ D = \lambda_{f} \circ [D,\lambda_{g}] = f \cdot D^{(1)}_{(g)},
\end{equation}
where we have used the graded Jacobi identity for the graded commutator and the fact that $[\lambda_{f},\lambda_{g}] = 0$ for all $f,g \in \C^{\infty}_{\M}(U)$. By iterating this equation, one finds the formula
\begin{equation} \label{eq_iteratedcommislinear}
(f \cdot D)^{(k+1)}_{(f_{1},\dots,f_{k+1})} = f \cdot D^{(k+1)}_{(f_{1},\dots,f_{k+1})},
\end{equation}
for all $f_{1}, \dots, f_{k+1} \in \C^{\infty}_{\M}(U)$. Using the Remark \ref{rem_localusingDkplus1} twice and the assumption on $D$, we see that $f \cdot D \in \Dif^{k}_{\E}(U)$. Hence $\Dif^{k}_{\E}(U)$ is a graded $\C^{\infty}_{\M}(U)$-submodule of $\Loc_{\E}(U)$. 

To prove that it forms a sheaf, we will proceed once more by induction on $k \in \N_{0}$. Let $U \in \Op(M)$ and $\{ U_{\alpha} \}_{\alpha \in I}$ be some its open cover. Suppose $D \in \Loc_{\E}(U)$ satisfies $D|_{U_{\alpha}} \in \Dif^{0}_{\E}(U_{\alpha})$. For any $f \in \C^{\infty}_{\M}(U)$ and $\psi \in \Gamma_{\E}(U)$, one can use (\ref{eq_Flocalrestrictionproperty}) and the assumption to write 
\begin{equation}
\begin{split}
D(f \cdot \psi)|_{U_{\alpha}} = & \ D|_{U_{\alpha}}( f|_{U_{\alpha}} \cdot \psi|_{U_{\alpha}}) = (-1)^{|D||f|} f|_{U_{\alpha}} \cdot D|_{U_{\alpha}}(\psi|_{U_{\alpha}}) \\
= & \ (-1)^{|D||f|} (f \cdot D(\psi))|_{U_{\alpha}}. 
\end{split}
\end{equation}
Since $\Gamma_{\E}$ is a sheaf, this proves that $D(f \cdot \psi) = (-1)^{|D||f|} f \cdot D(\psi)$, that is $D \in \Dif^{0}_{\E}(U)$. This proves that $\Dif^{0}_{\E}$ is a subsheaf of graded $\C^{\infty}_{\M}$-submodules. 

Next, assume that $k > 0$ and that the differential operators form sheaf of graded $\C^{\infty}_{\M}$-submodules for all orders lower then $k$. Suppose that $D \in \Loc_{\E}(U)$ satisfies $D|_{U_{\alpha}} \in \Dif^{k}_{\E}(U_{\alpha})$ for some $U \in \Op(M)$ and its open cover $\{ U_{\alpha} \}_{\alpha \in I}$. Let $f \in \C^{\infty}_{\M}(U)$ be an arbitrary function. Then
\begin{equation}
[D,\lambda_{f}]|_{U_{\alpha}} = [D|_{U_{\alpha}}, \lambda_{f|_{U_{\alpha}}}] \in \Dif^{k-1}_{\E}(U_{\alpha}). 
\end{equation}
But then $[D,\lambda_{f}] \in \Dif^{k-1}_{\E}(U)$ by the induction hypothesis. Since $f$ was arbitrary, this proves that $D \in \Dif^{k}_{\E}(U)$. Hence $\Dif^{k}_{\E}$ is a sheaf of graded $\C^{\infty}_{\M}$-submodules of $\Loc_{\E}$. 
\end{proof}
To conclude this section, we have to derive some important properties of the maps (\ref{eq_Djiteratedmaps}). 
\begin{lemma} \label{lem_olDjprops}
For every graded $\R$-linear map $D: \Gamma_{\E}(U) \rightarrow \Gamma_{\E}(U)$ and every $j \in \N$, one has 
\begin{equation} \label{eq_olDjgsymmetry}
\ol{D}^{(j)}_{(f_{1}, \dots, f_{i},f_{i+1}, \dots, f_{j})} = (-1)^{|f_{i}||f_{i+1}|} \ol{D}^{(j)}_{(f_{1},\dots,f_{i+1},f_{i},\dots,f_{j})}, 
\end{equation}
for every $j$-tuple $(f_{1}, \dots, f_{j})$ of functions in $\C^{\infty}_{\M}(U)$ and $i \in \{1, \dots, j-1\}$. Moreover, for any $f,g \in \C^{\infty}_{\M}(U)$ and any $(j-1)$-tuple $(f_{2},\dots,f_{j})$ of functions in $\C^{\infty}_{\M}(U)$, one has
\begin{equation} \label{eq_olDjgLeibniz}
\ol{D}^{(j)}_{(f \cdot g, f_{2}, \dots, f_{j})} = f \cdot \ol{D}^{(j)}_{(g,f_{2},\dots,f_{j})} + (-1)^{|f||g|} g \cdot \ol{D}^{(j)}_{(f,f_{2},\dots,f_{j})} - \ol{D}^{(j+1)}_{(f,g,f_{2},\dots,f_{j})}.
\end{equation} 
Similar equations can be derived for the maps $D^{(j)}_{(f_{1},\dots,f_{j})}$. 
\end{lemma}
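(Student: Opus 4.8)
The plan is to establish \eqref{eq_olDjgsymmetry} and \eqref{eq_olDjgLeibniz} directly from the recursive definition of $\ol{D}^{(j)}_{(f_1,\dots,f_j)}$ in \eqref{eq_Djiteratedmaps}, using only the graded Jacobi identity for the graded commutator and the elementary facts $[\lambda_f,\lambda_g]=0$ and $\lambda_f\circ\lambda_g=\lambda_{f\cdot g}$.

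For the graded symmetry \eqref{eq_olDjgsymmetry}: since $\ol{D}^{(j)}_{(f_1,\dots,f_j)} = [\lambda_{f_1},[\lambda_{f_2},[\cdots,[\lambda_{f_j},D]\cdots]]]$, swapping two \emph{adjacent} entries $f_i,f_{i+1}$ amounts to swapping two adjacent nested brackets $[\lambda_{f_i},[\lambda_{f_{i+1}},X]]$ versus $[\lambda_{f_{i+1}},[\lambda_{f_i},X]]$, where $X$ is the common inner map $[\lambda_{f_{i+2}},[\cdots,[\lambda_{f_j},D]\cdots]]$. First I would reduce to proving the single identity $[\lambda_f,[\lambda_g,X]] = (-1)^{|f||g|}[\lambda_g,[\lambda_f,X]]$ for any graded $\R$-linear $X$ and any $f,g\in\C^\infty_\M(U)$; then \eqref{eq_olDjgsymmetry} follows by applying the outer brackets $[\lambda_{f_1},[\cdots,[\lambda_{f_{i-1}},-]\cdots]]$, which are $\R$-linear in their argument and hence commute with the swap, picking up exactly the sign $(-1)^{|f_i||f_{i+1}|}$. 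The single identity in turn is immediate from the graded Jacobi identity: $[\lambda_f,[\lambda_g,X]] = [[\lambda_f,\lambda_g],X] + (-1)^{|f||g|}[\lambda_g,[\lambda_f,X]]$ and $[\lambda_f,\lambda_g]=0$. I should be careful about one subtlety — that $X$ here, being itself a (possibly long) nested commutator involving $D$, has degree $|X| = |D| + \sum_{l>i+1}|f_l|$, but this degree never enters the computation because the only Koszul sign produced is $(-1)^{|f||g|}$, so no bookkeeping of $|X|$ is needed.

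For the graded Leibniz rule \eqref{eq_olDjgLeibniz}: writing $\ol{D}^{(j)}_{(f\cdot g,f_2,\dots,f_j)} = [\lambda_{f\cdot g},Y]$ with $Y := \ol{D}^{(j-1)}_{(f_2,\dots,f_j)}$ (interpreting $Y=D$ when $j=1$), I would use $\lambda_{f\cdot g} = \lambda_f\circ\lambda_g$ and expand the graded commutator $[\lambda_f\circ\lambda_g, Y]$. The identity $[A\circ B, Y] = A\circ[B,Y] + (-1)^{|B||Y|}[A,Y]\circ B$ (valid for the graded commutator) gives $[\lambda_f\circ\lambda_g,Y] = \lambda_f\circ[\lambda_g,Y] + (-1)^{|g||Y|}[\lambda_f,Y]\circ\lambda_g$. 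The first term is $f\cdot\ol{D}^{(j)}_{(g,f_2,\dots,f_j)}$ by definition of $\lambda_f$ and of the nested bracket. For the second term, $(-1)^{|g||Y|}[\lambda_f,Y]\circ\lambda_g$, I rewrite $[\lambda_f,Y]\circ\lambda_g$ by commuting $\lambda_g$ past it: $[\lambda_f,Y]\circ\lambda_g = (-1)^{|g|(|f|+|Y|)}\lambda_g\circ[\lambda_f,Y] + [[\lambda_f,Y],\lambda_g]$, where $|[\lambda_f,Y]| = |f|+|Y|$. Substituting and simplifying the signs — using $|Y| = |D| + |f_2|+\cdots+|f_j|$ only insofar as the terms must recombine — the $\lambda_g\circ[\lambda_f,Y]$ term becomes $(-1)^{|f||g|}g\cdot\ol{D}^{(j)}_{(f,f_2,\dots,f_j)}$ (after noting $(-1)^{|g||Y|}(-1)^{|g|(|f|+|Y|)} = (-1)^{|f||g|}$), and the $[[\lambda_f,Y],\lambda_g]$ term, reorganized via the Jacobi identity, becomes $-\,\ol{D}^{(j+1)}_{(f,g,f_2,\dots,f_j)}$; the minus sign tracks the ordering convention $[\lambda_f,[\lambda_g,-]]$ versus $[[\lambda_f,-],\lambda_g]$ in \eqref{eq_Djiteratedmaps}.

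The main obstacle is purely the sign bookkeeping in the Leibniz identity: one must be scrupulous about the Koszul signs arising when $\lambda_g$ is moved past $[\lambda_f,Y]$ and when the innermost commutator is rebracketed to match the prescribed nesting order of $\ol{D}^{(j+1)}$, and one must check that the degree $|Y|$ cancels out of every sign that appears in the final formula (it must, since the claimed identity \eqref{eq_olDjgLeibniz} contains no $|Y|$-dependent sign). Once the $j=1$ base case and the single-swap lemma are pinned down, the general statements follow by the $\R$-linearity of the outer brackets with no further induction. Finally, the parenthetical remark that "similar equations can be derived for the maps $D^{(j)}_{(f_1,\dots,f_j)}$" is handled by transporting \eqref{eq_olDjgsymmetry} and \eqref{eq_olDjgLeibniz} through the sign relation \eqref{eq_DjvshatDj}; I would just note that the symmetry becomes $D^{(j)}_{(\dots,f_i,f_{i+1},\dots)} = (-1)^{|f_i||f_{i+1}|}D^{(j)}_{(\dots,f_{i+1},f_i,\dots)}$ unchanged, while the Leibniz rule picks up a relative sign on the last term from the discrepancy between $(-1)^{j}$ and $(-1)^{j+1}$.
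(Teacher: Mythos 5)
Your proof is correct and uses exactly the ingredients the paper cites (the graded Jacobi identity, $[\lambda_{f},\lambda_{g}]=0$, the graded Leibniz rule for the graded commutator, and $\lambda_{f \cdot g}=\lambda_{f}\circ\lambda_{g}$); the paper leaves the computation as an exercise, and your sign bookkeeping checks out, with $|Y|$ cancelling exactly as you predict. The only nitpick concerns your closing remark: transporting \eqref{eq_olDjgLeibniz} through \eqref{eq_DjvshatDj} does flip the sign of the last term as you say, but the first two terms also acquire the factors $(-1)^{|D||f|}$ and $(-1)^{|D||g|}$ respectively, since the prefactor in \eqref{eq_DjvshatDj} depends on the total degree of the inserted functions.
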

\begin{proof}
The property (\ref{eq_olDjgsymmetry}) can be easily derived using definitions, the graded Jacobi identity for the graded commutator, and the fact that $[\lambda_{f},\lambda_{g}] = 0$ for all $f,g \in \C^{\infty}_{\M}(U)$. The property (\ref{eq_olDjgLeibniz}) follows from definitions, the graded Leibniz rule for the graded commutator, and the fact that $\lambda_{f \cdot g} = \lambda_{f} \circ \lambda_{g}$. We leave the detailed proof as an exercise. 
\end{proof}
\section{Graded vector bundles of differential operators} \label{sec_diffopVB}
In this section, we will prove that $\Dif^{k}_{\E}$ is locally freely and finitely generated, hence a sheaf of sections of a graded vector bundle. To do so, we must derive local expressions for differential operators. Let us first establish some notation. 

Suppose that $(U,\varphi)$ is a graded local chart for $\M$ inducing a collection of local coordinate functions $\{ \bbz^{A} \}_{A=1}^{n} \subseteq \C^{\infty}_{\M}(U)$. It is not really important how they are ordered in what follows. We will need some important subsets of sets all $n$-indices of non-negative integers. Define
\begin{equation}
\ol{\N}{}^{n} := \{ \fI \equiv (i_{1}, \dots, i_{n}) \in (\N_{0})^{n} \; | \; i_{A} \in \{0,1\} \text{ if $|\bbz^{A}|$ is odd} \}.
\end{equation}
This ensures that for each $\fI \in \ol{\N}{}^{n}$, the monomial $\bbz^{\fI} := (\bbz^{1})^{i_{1}} \dots (\bbz^{n})^{i_{n}}$ does not vanish. 
For each $j \in \N_{0}$, we sometimes need to make sure that $\bbz^{\fI}$ is the product of $j$ coordinate functions:
\begin{equation} \label{eq_onN(j)}
\ol{\N}{}^{n}(j) := \{ \fI \in \ol{\N}{}^{n} \; | \; w(\fI) := \sum_{A=1}^{n} i_{A} = j \}. 
\end{equation}
Note that the set $\ol{\N}{}^{n}(j)$ is finite. We write $\fnula = (0, \dots, 0)$ for the $n$-index of all zeros. Let us also use the shorthand notation
\begin{equation} \label{eq_zImultiindex}
(\bbz^{\fI}_{(k)}) := (\underbrace{\bbz^{1}, \dots, \bbz^{1}}_{i_{1} \times}, \dots, \underbrace{\bbz^{n}, \dots, \bbz^{n}}_{i_{n} \times}).
\end{equation}

Next, we need the operator of the ``partial derivative acting from the right''. For each $A \in \{1, \dots, n\}$, define a degree $-|\bbz^{A}|$ graded $\R$-linear operator on $\C^{\infty}_{\M}(U)$ by the formula
\begin{equation} \label{eq_partialtl}
\partial^{\tl}_{A}(f) := (-1)^{|\bbz^{A}|(1 + |f|)} \frac{\partial f}{\partial \bbz^{A}}, 
\end{equation}
for all $f \in \C^{\infty}_{\M}(U)$, where $\frac{\partial f}{\partial \bbz^{A}}$ denotes the usual action of the coordinate vector field $\frac{\partial}{\partial \bbz^{A}} \in \X_{\M}(U)$ on $f$. These operators are designed to satisfy the graded Leibniz rule in the form
\begin{equation} \label{eq_partialtlLeibniz}
\partial^{\tl}_{A}(f \cdot g) = f \cdot \partial^{\tl}_{A}(g) + (-1)^{|\bbz^{A}||g|} \partial^{\tl}_{A}(f) \cdot g,
\end{equation}
for any $f,g \in \C^{\infty}_{\M}(U)$. Observe that $\partial^{\tl}_{A}(\bbz^{B}) = \delta^{B}_{A}$. More generally, we will write $\partial^{\tl}_{A_{1} \dots A_{j}} := \partial^{\tl}_{A_{1}} \circ \cdots \circ \partial^{\tl}_{A_{j}}$ for any $A_{1},\dots,A_{j} \in \{1, \dots, n\}$. 

Finally, suppose that $\{ \Phi_{\lambda} \}_{\lambda=1}^{r}$ is a local frame for $\E$ over $U$ and let $D: \Gamma_{\E}(U) \rightarrow \Gamma_{\E}(U)$ be an $\R$-linear operator of degree $|D|$. For each $j \in \N$ and each $j$-tuple $(f_{1},\dots,f_{j})$ of functions in $\C^{\infty}_{\M}(U)$, we can for each $\mu,\lambda \in \{1, \dots, r\}$ define the functions $[\ol{\fD}^{(j)}_{(f_{1},\dots,f_{j})}]^{\mu}{}_{\lambda}$ using the operators defined by (\ref{eq_Djiteratedmaps}), and the formula
\begin{equation} \label{eq_olfDdef}
\ol{D}^{(j)}_{(f_{1},\dots,f_{j})}( \Phi_{\lambda}) =: [\ol{\fD}^{(j)}_{(f_{1},\dots,f_{j})}]^{\mu}{}_{\lambda} \cdot \Phi_{\mu}. 
\end{equation}
Note that $| [\ol{\fD}^{(j)}_{(f_{1},\dots,f_{j})}]^{\mu}{}_{\lambda}| = |D| + |f_{1}| + \dots + |f_{j}| + |\vartheta_{\lambda}| - |\vartheta_{\mu}|$, where $\{ \vartheta_{\lambda} \}_{\lambda=1}^{r}$ is some fixed basis of the typical fiber $K$ of $\E$, such that $|\Phi_{\lambda}| = |\vartheta_{\lambda}|$ for each $\lambda \in \{1,\dots,r\}$. We will now argue that for $D \in \Dif^{k}_{\E}(U)$, its action on a general section can be fully expressed in terms of these functions. The crucial observation is the following consequence of the graded Leibniz rule (\ref{eq_olDjgLeibniz}).

\begin{tvrz} \label{tvrz_olfDformula}
Let $D \in \Dif^{k}_{\E}(U)$. Then for each $j \in \{0, \dots, k-1\}$, one has the formula
\begin{equation} \label{eq_olfDformula}
[\ol{\fD}^{(k-j)}_{(f,g_{2},\dots,g_{k-j})}]^{\mu}{}_{\lambda} = \sum_{q=1}^{j+1} (-1)^{q+1} \frac{1}{q!} (\partial^{\tl}_{A_{1} \dots A_{q}} f) \cdot [\ol{\fD}^{(q+k-j-1)}_{(\bbz^{A_{1}}, \dots, \bbz^{A_{q}}, g_{2}, \dots, g_{k-j})}]^{\mu}{}_{\lambda}, 
\end{equation}
for all $f,g_{2},\dots,g_{k-j} \in \C^{\infty}_{\M}(U)$ and $\mu,\lambda \in \{1, \dots, r\}$. 
\end{tvrz}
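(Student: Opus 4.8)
The plan is to prove the formula \eqref{eq_olfDformula} by induction on $j \in \{0, \dots, k-1\}$, using the graded Leibniz rule \eqref{eq_olDjgLeibniz} from Lemma \ref{lem_olDjprops} as the main engine. The key structural observation is that the order-$(k-j)$ iterated commutators $\ol{D}^{(k-j)}$ applied to a frame element $\Phi_\lambda$ can be ``expanded'' in the first argument $f$ by writing $f$ as a polynomial in the coordinates — but more precisely, one only needs to know how $\ol{D}^{(k-j)}$ behaves under factoring the first entry, and iterate. For the base case $j = 0$, the claim reads $[\ol{\fD}^{(k)}_{(f,g_2,\dots,g_k)}]^{\mu}{}_{\lambda} = (\partial^{\tl}_{A} f) \cdot [\ol{\fD}^{(k)}_{(\bbz^{A},g_2,\dots,g_k)}]^{\mu}{}_{\lambda}$, which I would obtain from the fact that $\ol{D}^{(k)}$ is $\C^\infty_\M(U)$-linear in its first argument (this is the order-$k$ analogue of the Leibniz rule having its ``highest'' term vanish: since $D \in \Dif^k_\E(U)$, the map $\ol{D}^{(k+1)}$ vanishes, so the correction term $-\ol{D}^{(k+1)}_{(f,g,f_2,\dots)}$ in \eqref{eq_olDjgLeibniz} drops out, leaving a genuine derivation/linearity property).

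For the inductive step, I would assume the formula holds for some $j < k-1$ and prove it for $j+1$. Here one applies \eqref{eq_olDjgLeibniz} to $\ol{D}^{(k-j-1)}$ with its first entry written as a product $f \cdot \bbz^{A}$ — or better, one computes $[\ol{\fD}^{(k-j-1)}_{(f, g_2, \dots)}]^{\mu}{}_{\lambda}$ by invoking \eqref{eq_olDjgLeibniz} to relate $\ol{D}^{(k-j-1)}$ with first argument $f$ to $\ol{D}^{(k-j-1)}$ with first argument a coordinate together with a correction term $\ol{D}^{(k-j)}$ carrying $\partial^{\tl}$ acting on $f$. Applying the induction hypothesis to that order-$(k-j)$ correction term produces the full alternating sum with the factorial coefficients. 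The bookkeeping that makes the $\frac{1}{q!}$ coefficients and alternating signs $(-1)^{q+1}$ come out correctly is the combinatorial heart of the argument: one has to track how the correction term from step $q$, which carries $\partial^{\tl}_{A_1 \dots A_q} f$, recombines — in particular, the symmetry property \eqref{eq_olDjgsymmetry} is needed to see that permuting the coordinate entries $\bbz^{A_1}, \dots, \bbz^{A_q}$ only produces signs that exactly match those in $\partial^{\tl}_{A_1 \dots A_q} f$, so the sum over repeated indices is consistent, and the $q!$ accounts for the ordered-vs-unordered overcounting of which of the $q$ slots received which derivative.

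I expect the main obstacle to be precisely this combinatorial reconciliation: verifying that when one substitutes the induction hypothesis (a sum over $q' \in \{1,\dots,j+1\}$) into the single Leibniz step, the resulting double sum collapses to the claimed single sum over $q \in \{1,\dots,j+2\}$ with the stated coefficients. This is a Pascal-triangle-type identity — one gets contributions to the coefficient of $\partial^{\tl}_{A_1 \dots A_q} f$ from two places (the ``$q$-th derivative directly'' term and the ``$(q-1)$-th derivative then one more'' term), and the claim is that $\frac{1}{(q-1)!} \cdot (\text{something}) = \frac{1}{q!} \cdot q$ with matching signs. I would also need to be slightly careful about the degree shifts: every time one pulls a function $\partial^{\tl}_A f$ to the left past a frame coefficient, Koszul signs appear, but since the statement \eqref{eq_olfDformula} is written without explicit sign prefactors on the right-hand side, these must already be absorbed into the definition \eqref{eq_partialtl} of $\partial^{\tl}_A$ and into the convention \eqref{eq_olfDdef} — I would verify this compatibility once at the start rather than carrying signs through every line. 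A secondary, purely notational, point is that \eqref{eq_onN(j)} and \eqref{eq_zImultiindex} are not actually needed for this particular proposition (they will be used in the subsequent local-frame computation), so the proof can be kept entirely in terms of the iterated-commutator maps and the two identities of Lemma \ref{lem_olDjprops}.
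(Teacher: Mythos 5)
Your overall skeleton (induction on $j$, with the graded Leibniz rule \eqref{eq_olDjgLeibniz} as the engine, the correction term dropping out at top order because $D \in \Dif^{k}_{\E}(U)$, the symmetry \eqref{eq_olDjgsymmetry} and a Pascal-type recombination producing the $(-1)^{q+1}/q!$ coefficients) matches the paper's proof. But there is a genuine gap: everything you describe only determines $[\ol{\fD}^{(k-j)}_{(f,\star)}]^{\mu}{}_{\lambda}$ for $f$ a \emph{polynomial} in the coordinates. At $j=0$ the vanishing of the correction term makes $f \mapsto \ol{D}^{(k)}_{(f,\star)}$ a graded derivation, not a $\C^{\infty}_{\M}(U)$-linear map, and a derivation is determined by its values on $\bbz^{1},\dots,\bbz^{n}$ only on the subalgebra they generate --- which is far from all of $\C^{\infty}_{\M}(U)$. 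The same issue recurs at every inductive step in $j$: the Leibniz rule lets you peel off coordinate factors from the first slot only when the first slot literally is a product of coordinates. Your phrase ``expanded in the first argument $f$ by writing $f$ as a polynomial in the coordinates'' is exactly the step that cannot be taken at face value.

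The paper closes this gap with an analytic argument you do not mention: for each $a \in U$ and $q \in \N$ write $f = T^{q+j}_{a}(f) + R^{q+j}_{a}(f)$ with the Taylor remainder $R^{q+j}_{a}(f) \in (\J^{a}_{\M}(U))^{q+1+j}$ (Lemma 3.4 of \cite{Vysoky:2022gm}), observe via \eqref{eq_olfDjgLeibniz} that both sides of \eqref{eq_olfDformula} send $(\J^{a}_{\M}(U))^{q+1+j}$ into $(\J^{a}_{\M}(U))^{q}$, and conclude that the difference of the two sides lies in $\bigcap_{q}\bigcap_{a}(\J^{a}_{\M}(U))^{q}$, which is zero by \eqref{eq_fzerocondition}. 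Without this Hadamard/Taylor step the proposition is only proved for polynomial $f$, and no amount of combinatorial bookkeeping with the Leibniz rule will extend it to general smooth functions. (This is also precisely the mechanism that fails for non-geometric modules, which is why Section \ref{sec_geometric} exists; so the omission is not a formality.) The rest of your plan --- the sign conventions being absorbed into \eqref{eq_partialtl} and \eqref{eq_olfDdef}, and the observation that \eqref{eq_onN(j)}, \eqref{eq_zImultiindex} are not needed here --- is fine.
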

\begin{proof}
Let us establish some notation first. Since $(g_{2},\dots,g_{k-j})$ does not play any role in the formula (\ref{eq_olfDformula}), we will replace it with a symbol $\star$ for the remainder of the proof. We thus aim to prove that for any $j \in \{0,\dots,k-1\}$, $f \in \C^{\infty}_{\M}(U)$, $\mu,\lambda \in \{1,\dots,r\}$ and any $\star$, one has
\begin{equation} \label{eq_olfDformula2}
[\ol{\fD}^{(k-j)}_{(f,\star)}]^{\mu}{}_{\lambda} = [\hat{\fD}^{(k-j)}_{(f,\star)}]^{\mu}{}_{\lambda},
\end{equation}
where we use $[\hat{\fD}^{(k-j)}_{(f,\star)}]^{\mu}{}_{\lambda}$ to denote the right-hand side of (\ref{eq_olfDformula}). Next, observe that (\ref{eq_olDjgLeibniz}) immediately implies the graded Leibniz rule in the form 
\begin{equation} \label{eq_olfDjgLeibniz}
[\ol{\fD}^{(k-j)}_{(f \cdot g, \star)}]^{\mu}{}_{\lambda} = f \cdot [\ol{\fD}^{(k-j)}_{(g,\star)}]^{\mu}{}_{\lambda} + (-1)^{|f||g|} g \cdot [\ol{\fD}^{(k-j)}_{(f,\star)}]^{\mu}{}_{\lambda} - [\ol{\fD}^{(k-j+1)}_{(f,g,\star)}]^{\mu}{}_{\lambda}.
\end{equation}
Let us now prove (\ref{eq_olfDformula2}) by induction in $j$. For $j = 0$, the last term in (\ref{eq_olfDjgLeibniz}) vanishes as $D \in \Dif^{k}_{\E}(U)$. This also immediately implies that 
\begin{equation} \label{eq_olfDkonconstant}
[\ol{\fD}^{(k)}_{(1, \star)}]^{\mu}{}_{\lambda} = 0.
\end{equation}
Let $a \in U$ be arbitrary and write $\bbz^{A}_{a} := \bbz^{A} - \bbz^{A}(a)$ for every $A \in \{1,\dots,n\}$. Note that $\bbz^{A}_{a} = \bbz^{A}$ whenever $|\bbz^{A}| \neq 0$. We claim that (\ref{eq_olfDformula2}) holds for all monomials in the coordinates $\{ \bbz^{A}_{a} \}_{A=1}^{n}$ of any order $q \in \N_{0}$ and for arbitrary $a \in U$. This can be proved by a simple induction in $q$. We will provide more details in the $j > 0$ case and thus skip the detailed discussion here. 

Next, recall that for every $a \in U$, there is a graded ideal $\J^{a}_{\M}(U) \subseteq \C^{\infty}_{\M}(U)$ of functions vanishing at $a \in U$. It is generated by the graded set $\{ \bbz^{A}_{a} \}_{A=1}^{n}$. Let $f \in \C^{\infty}_{\M}(U)$. One can show that
\begin{equation} \label{eq_fzerocondition}
f = 0 \Leftrightarrow f \in \bigcap_{q \in \N} \bigcap_{a \in U} (\J^{a}_{\M}(U))^{q},
\end{equation}
see Proposition 3.5 in \cite{Vysoky:2022gm}. It follows from (\ref{eq_olfDjgLeibniz}) that if $f \in (\J^{a}_{\M}(U))^{q+1}$, then 
\begin{equation}
[\ol{\fD}^{(k)}_{(f,\star)}]^{\mu}{}_{\lambda} \in (\J^{a}_{\M}(U))^{q},
\end{equation}
and the same observation is valid also for $[\hat{\fD}^{(k)}_{(f,\star)}]^{\mu}{}_{\lambda}$. Let us finish the proof of (\ref{eq_olfDformula}) for $j = 0$ and any $f \in \C^{\infty}_{\M}(U)$. Choose any $q \in \N$ and $a \in U$. One can write $f = T^{q}_{a}(f) + R^{q}_{a}(f)$, where $T^{q}_{a}(f)$ is a Taylor polynomial of $f$ of order $q$ and $R^{q}_{a}(f) \in (\J^{a}_{\M}(U))^{q+1}$, see Lemma 3.4 in \cite{Vysoky:2022gm}. Since $T^{q}_{a}(f)$ is a polynomial of order $q$ in $\{ \bbz^{A}_{a} \}_{A=1}^{n}$ and (\ref{eq_olfDformula2}) is $\R$-linear in $f$, we conclude that 
\begin{equation}
[\ol{\fD}^{(k)}_{(f,\star)}]^{\mu}{}_{\lambda} - [\hat{\fD}^{(k)}_{(f,\star)}]^{\mu}{}_{\lambda} = [\ol{\fD}^{(k)}_{(R^{q}_{a}(f),\star)}]^{\mu}{}_{\lambda} - [\hat{\fD}^{(k)}_{(R^{q}_{a}(f),\star)}]^{\mu}{}_{\lambda} \in (\J^{a}_{\M}(U))^{q}. 
\end{equation}
Since $q \in \N$ and $a \in U$ were arbitrary, the left-hand side must vanish due to (\ref{eq_fzerocondition}), and we conclude that (\ref{eq_olfDformula2}) holds for $j = 0$. 

Next, let us assume that $j > 0$ and the formula (\ref{eq_olfDformula2}) holds for all $j' \in \{0,\dots,j-1\}$. The graded Leibniz rule (\ref{eq_olfDjgLeibniz}) together with the induction hypothesis imply 
\begin{equation} \label{eq_olfDkonconstant2}
[\ol{\fD}^{(k-j)}_{(1, \star)}]^{\mu}{}_{\lambda} = 0.
\end{equation}
Let us prove (\ref{eq_olfDformula2}) for $f = \bbz^{B_{1}}_{a} \cdots \bbz^{B_{q}}_{a}$, where $a \in U$, $q \in \N_{0}$ and $B_{1},\dots,B_{q} \in \{1, \dots, n\}$ are arbitrary. Let us proceed by induction in $q$. For $q = 0$, this reduces to (\ref{eq_olfDkonconstant2}). It now takes a bit of combinatorics together with definitions and (\ref{eq_partialtlLeibniz}) to verify that 
\begin{equation}
\begin{split}
[\hat{\fD}^{(k-j)}_{(\bbz^{B_{1}}_{a} \cdots \bbz^{B_{q}}_{a},\star)}]^{\mu}{}_{\lambda} = & \ ( \bbz^{B_{1}} \cdots \bbz^{B_{q-1}}) \cdot [\hat{\fD}^{(k-j)}_{(\bbz^{B_{q}}_{a},\star)}]^{\mu}{}_{\lambda} \\
& + (-1)^{(|\bbz^{B_{1}}| + \dots + |\bbz^{B_{q-1}}|)|\bbz^{B_{q}}|} \bbz^{B_{q}}_{a} \cdot [\hat{\fD}^{(k-j)}_{(\bbz_{a}^{B_{1}} \cdots \bbz_{a}^{B_{q-1}}, \star)}]^{\mu}{}_{\lambda} \\
& - [\hat{\fD}^{(k-j+1)}_{(\bbz^{B_{1}}_{a} \cdots \bbz^{B_{q-1}}_{a}, \bbz^{B_{q}}, \star)}]^{\mu}{}_{\lambda}.
\end{split}
\end{equation}
In the first two terms, one can replace $\hat{\fD}$ by $\ol{\fD}$ by the induction hypothesis in $q$. In the last term, one can replace $\hat{\fD}$ by $\ol{\fD}$ by the induction hypothesis in $j$. But then one can then use (\ref{eq_olDjgLeibniz}) to obtain
\begin{equation}
[\hat{\fD}^{(k-j)}_{(\bbz^{B_{1}}_{a} \cdots \bbz^{B_{q}}_{a},\star)}]^{\mu}{}_{\lambda} = [\ol{\fD}^{(k-j)}_{(\bbz^{B_{1}}_{a} \cdots \bbz^{B_{q}}_{a},\star)}]^{\mu}{}_{\lambda}.
\end{equation}
This finishes the induction step in $q$ and thus the proof of (\ref{eq_olfDformula2}) for monomials in $\{ \bbz^{A}_{a} \}_{A=1}^{n}$ for any order $q \in \N_{0}$ and $a \in A$. The rest goes similarly to the $j = 0$ case. The graded Leibniz rule (\ref{eq_olDjgLeibniz}) together with the induction hypothesis imply that for any $q \in \N$, $a \in U$ and $f \in (\J^{a}_{\M}(U))^{q + 1 + j}$, one has 
\begin{equation}
[\ol{\fD}^{(k-j)}_{(f,\star)}]^{\mu}{}_{\lambda} \in (\J^{a}_{\M}(U))^{q},
\end{equation}
and the same is true for $[\hat{\fD}^{(k-j)}_{(f,\star)}]^{\mu}{}_{\lambda}$. For any $f \in \C^{\infty}_{\M}(U)$, one can then choose any $q \in \N$, $a \in U$ and write $f = T^{q+j}_{a}(f) + R^{q+j}_{a}(f)$. The same trick as for $j = 0$ case is then utilized to prove that (\ref{eq_olDjgLeibniz}) holds for any $f \in \C^{\infty}_{\M}(U)$. This finishes the induction step and the proof is finished. 
\end{proof}
As a simple consequence, one obtains a local expression for every differential operator. 
\begin{cor} \label{cor_Dactingformula1}
Let $D \in \Dif^{k}_{\E}(U)$. Write $D(\Phi_{\lambda}) =: \fD^{\mu}{}_{\lambda} \cdot \Phi_{\mu}$. Then the action of $D$ on a general section $\psi = \psi^{\lambda} \cdot \Phi_{\lambda} \in \Gamma_{\E}(U)$ can be written in terms of functions (\ref{eq_olfDdef}) as 
\begin{equation} \label{eq_Dactingformula1}
D(\psi) = \{ (-1)^{|D||\psi^{\lambda}|} \psi^{\lambda} \cdot \fD^{\mu}{}_{\lambda} + \sum_{q=1}^{k} (-1)^{q + |D||\psi^{\lambda}|} \frac{1}{q!} (\partial^{\tl}_{A_{1} \dots A_{q}} \psi^{\lambda}) \cdot [\ol{\fD}^{(q)}_{(\bbz^{A_{1}}, \dots, \bbz^{A_{q}})}]^{\mu}{}_{\lambda} \} \cdot \Phi_{\mu}.
\end{equation}
\begin{proof}
Starting from the left-hand side, one can write
\begin{equation}
\begin{split}
D(\psi) = & \ (D \circ \lambda_{\psi^{\lambda}})(\Phi_{\lambda}) = (-1)^{|D||\psi^{\lambda}|} (\lambda_{\psi^{\lambda}} \circ D - \ol{D}^{(1)}_{(\psi^{\lambda})})(\Phi_{\lambda}) \\
= & \ \{ (-1)^{|D||\psi^{\lambda}|} \psi^{\lambda} \cdot \fD^{\mu}{}_{\lambda} - [\ol{\fD}^{(1)}_{(\psi^{\lambda})}]^{\mu}{}_{\lambda} \} \cdot \Phi_{\mu}. 
\end{split}
\end{equation}
The rest is just the formula (\ref{eq_olfDformula}) for $j = k-1$ and $f = \psi^{\lambda}$. 
\end{proof}
\end{cor}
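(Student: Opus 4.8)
The plan is to reduce everything to the action of $D$ on the frame elements $\Phi_{\lambda}$ by stripping off a single multiplication operator, and then to feed the resulting graded commutator into Proposition~\ref{tvrz_olfDformula}. Concretely, write $\psi = \lambda_{\psi^{\lambda}}(\Phi_{\lambda})$ (summation over $\lambda$), so that $D(\psi) = (D \circ \lambda_{\psi^{\lambda}})(\Phi_{\lambda})$. From the very definition of the graded commutator one has the operator identity $D \circ \lambda_{f} = (-1)^{|D||f|}\big( \lambda_{f} \circ D - \ol{D}^{(1)}_{(f)}\big)$, because $\ol{D}^{(1)}_{(f)} = [\lambda_{f},D] = \lambda_{f}\circ D - (-1)^{|f||D|} D\circ\lambda_{f}$. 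Applying this with $f = \psi^{\lambda}$ and evaluating on $\Phi_{\lambda}$ gives
\[ D(\psi) = (-1)^{|D||\psi^{\lambda}|}\, \psi^{\lambda}\cdot D(\Phi_{\lambda}) \; - \; (-1)^{|D||\psi^{\lambda}|}\, \ol{D}^{(1)}_{(\psi^{\lambda})}(\Phi_{\lambda}). \]

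Next I would pass to components in the given frame. By hypothesis $D(\Phi_{\lambda}) = \fD^{\mu}{}_{\lambda}\cdot\Phi_{\mu}$, and by the defining formula~(\ref{eq_olfDdef}) one has $\ol{D}^{(1)}_{(\psi^{\lambda})}(\Phi_{\lambda}) = [\ol{\fD}^{(1)}_{(\psi^{\lambda})}]^{\mu}{}_{\lambda}\cdot\Phi_{\mu}$. Thus the only nontrivial ingredient left is an expansion of the single scalar $[\ol{\fD}^{(1)}_{(\psi^{\lambda})}]^{\mu}{}_{\lambda}$ in terms of right derivatives of $\psi^{\lambda}$ along the coordinate functions $\bbz^{A}$. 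This is precisely what Proposition~\ref{tvrz_olfDformula} provides: taking $j = k-1$ there, so that $k-j = 1$ and the spectator tuple $(g_{2},\dots,g_{k-j})$ is empty, and setting $f = \psi^{\lambda}$, the formula~(\ref{eq_olfDformula}) reads
\[ [\ol{\fD}^{(1)}_{(\psi^{\lambda})}]^{\mu}{}_{\lambda} = \sum_{q=1}^{k} (-1)^{q+1}\tfrac{1}{q!}\,(\partial^{\tl}_{A_{1}\dots A_{q}}\psi^{\lambda})\cdot[\ol{\fD}^{(q)}_{(\bbz^{A_{1}},\dots,\bbz^{A_{q}})}]^{\mu}{}_{\lambda}. \]
Substituting this into the previous display, pushing the overall sign $(-1)^{|D||\psi^{\lambda}|}$ through, and using $-(-1)^{q+1} = (-1)^{q}$ then yields exactly~(\ref{eq_Dactingformula1}).

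I do not expect any genuine obstacle here: all the substance is carried by Proposition~\ref{tvrz_olfDformula}, whose proof already disposed of the Taylor-expansion and ideal-filtration argument needed to reduce an arbitrary function argument to coordinate monomials. Granting that proposition, the corollary is essentially a one-line manipulation. The only points requiring a little attention are the bookkeeping of the Koszul signs $(-1)^{|D||\psi^{\lambda}|}$ (with the convention, forced by the frame axioms, that $\psi^{\lambda}$ carries degree $|\psi| - |\Phi_{\lambda}|$), the implicit summation over the repeated indices $\lambda,\mu \in \{1,\dots,r\}$ and $A_{1},\dots,A_{q}\in\{1,\dots,n\}$, and the remark that the sum is automatically finite and truncates at $q = k$ because $D$ has order $k$ (so $\ol{\fD}^{(q)}$ with $q > k$ already vanishes, consistently with the upper limit in~(\ref{eq_olfDformula})).
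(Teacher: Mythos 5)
Your proposal is correct and follows essentially the same route as the paper: peel off $\lambda_{\psi^{\lambda}}$ via the graded commutator identity, express $\ol{D}^{(1)}_{(\psi^{\lambda})}(\Phi_{\lambda})$ in components, and invoke Proposition \ref{tvrz_olfDformula} with $j=k-1$. Your sign bookkeeping is in fact the consistent one — the paper's intermediate display omits the factor $(-1)^{|D||\psi^{\lambda}|}$ in front of $[\ol{\fD}^{(1)}_{(\psi^{\lambda})}]^{\mu}{}_{\lambda}$, which must be present for the final formula (\ref{eq_Dactingformula1}) to come out as stated.
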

In the following, we will need the following technical lemma.
\begin{lemma}
Let $\fI \in \ol{\N}{}^{n}$ be arbitrary. Let $\partial^{\op}_{\fI}$ be an operator on functions defined as 
\begin{equation}
\partial^{\op}_{\fI} := (\partial_{n})^{i_{n}} \circ \dots \circ (\partial_{1})^{i_{1}},
\end{equation}
where $\partial_{A}(f) := \frac{\partial f}{\partial \bbz^{A}}$ for each $A \in \{1,\dots,n\}$. The superscript $\op$ just indicates that the partial derivatives act in the order opposite to the $n$-index $\fI$. Then it has the following properties:
\begin{enumerate}[(i)]
\item For every $\fJ \in \ol{\N}{}^{n}$ such that $w(\fJ) \leq w(\fI)$, one has 
\begin{equation} \label{eq_partialIoponzJ}
\partial^{\op}_{\fI}( \bbz^{\fJ}) = \fI! \cdot \delta_{\fI}^{\fJ},
\end{equation}
where $\fI! := i_{1}! \cdots i_{n}!$. In particular, one has $\partial^{\op}_{\fI}(\bbz^{\fJ}) = 0$ whenever $w(\fJ) < w(\fI)$. 
\item For every $f,g \in \C^{\infty}_{\M}(U)$, there holds the Leibniz rule 
\begin{equation} \label{eq_partialIopLeibniz}
\partial_{\fI}^{\op}(f \cdot g) = \sum_{\fK \leq \fI} \binom{\fI}{\fK} (-1)^{\sigma(\fI,\fK) + |f||\bbz^{\fI - \fK}|  }  (\partial_{\fK} f) \cdot (\partial_{\fI - \fK}g),
\end{equation}
where $\binom{\fI}{\fK} := \binom{i_{1}}{k_{1}} \cdots \binom{i_{n}}{k_{n}}$ and
\begin{equation}
\sigma(\fI,\fK) := \sum_{A=2}^{n} (i_{A}-k_{A})|\bbz^{A}|\{ k_{1}|\bbz^{1}| + \cdots + k_{A-1} |\bbz^{A-1}| \}.
\end{equation}
\end{enumerate}
\end{lemma}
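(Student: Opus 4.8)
The plan is to establish both properties by induction on the weight $w(\fI)$, exploiting the fact that $\partial^{\op}_{\fI}$ is built by composing the ordinary graded partial derivatives $\partial_{A} = \partial/\partial\bbz^{A}$, for which we already know the elementary rules $\partial_{A}(\bbz^{B}) = \delta^{B}_{A}$ and the single-variable graded Leibniz rule $\partial_{A}(f\cdot g) = (\partial_{A}f)\cdot g + (-1)^{|\bbz^{A}||f|} f\cdot(\partial_{A}g)$. These operators pairwise graded-commute up to sign, i.e. $\partial_{A}\circ\partial_{B} = (-1)^{|\bbz^{A}||\bbz^{B}|}\partial_{B}\circ\partial_{A}$, and $\partial_{A}^{2} = 0$ when $|\bbz^{A}|$ is odd; these are the only structural facts needed and they all follow from the corresponding facts for the coordinate vector fields on a graded manifold. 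The superscript $\op$ is bookkeeping only: it fixes a definite order of composition so that the sign constants $\sigma(\fI,\fK)$ come out canonically, and the first thing I would do is unwind the definition to write $\partial^{\op}_{\fI} = \partial^{\op}_{\fI'}\circ(\partial_{1})^{i_{1}}$ where $\fI' = (0,i_{2},\dots,i_{n})$, isolating the innermost block of $\partial_{1}$'s.

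For part (i), I would argue first that $\partial^{\op}_{\fI}(\bbz^{\fJ}) = 0$ when $w(\fJ) < w(\fI)$: each application of a $\partial_{A}$ either kills a monomial or lowers its total degree by exactly one, so $w(\fI)$ applications cannot produce a nonzero result from a monomial of strictly smaller weight. When $w(\fJ) = w(\fI)$, the only way to survive is for the multiplicities to match variable-by-variable, forcing $\fJ = \fI$; and then a direct computation — differentiating $(\bbz^{A})^{i_{A}}$ exactly $i_{A}$ times, which since the odd coordinates have $i_{A}\in\{0,1\}$ presents no subtlety — yields the constant $\fI! = i_{1}!\cdots i_{n}!$, and one checks that all the transposition signs incurred in commuting the $\partial_{A}$'s past monomials in other variables cancel because each such pair is moved past an even power or a matched odd pair. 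This gives \eqref{eq_partialIoponzJ}.

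For part (ii), the Leibniz rule \eqref{eq_partialIopLeibniz}, I would induct on $w(\fI)$, peeling off one $\partial_{A}$ at a time (starting with the innermost $\partial_{1}$'s, consistent with the chosen order). The single $\partial_{A}$ Leibniz rule splits $f\cdot g$ into two terms; applying the inductive hypothesis to each and collecting terms indexed by which of the removed derivatives landed on $f$ versus $g$ produces the binomial coefficients $\binom{\fI}{\fK} = \binom{i_{1}}{k_{1}}\cdots\binom{i_{n}}{k_{n}}$ via the Pascal recursion in each coordinate slot. The genuinely delicate part is tracking the Koszul signs: every time a $\partial_{A}$ is commuted past the factor $f$ (or past an already-extracted $\partial_{\fK}f$) one picks up $(-1)^{|\bbz^{A}||f|}$ (resp. a sign depending on $|\bbz^{\fK}|$), and reassembling these into the closed form $(-1)^{\sigma(\fI,\fK) + |f||\bbz^{\fI-\fK}|}$ with $\sigma(\fI,\fK) = \sum_{A=2}^{n}(i_{A}-k_{A})|\bbz^{A}|\{k_{1}|\bbz^{1}| + \cdots + k_{A-1}|\bbz^{A-1}|\}$ requires a careful combinatorial accounting of which derivatives are moved past which extracted factors. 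I expect this sign bookkeeping to be the main obstacle; everything else is a routine double induction. A clean way to handle it is to first prove the statement in the purely even case (where $\sigma \equiv 0$ and it is the classical multivariable Leibniz rule), then introduce the signs by a single careful pass through the ordered composition, verifying the recursion $\sigma(\fI,\fK)$ satisfies when one index is decremented. I would state the detailed sign chase can be left to the reader once the recursion is set up, since it is mechanical.
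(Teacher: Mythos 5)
Your proposal is correct and follows essentially the same route as the paper, which in fact dismisses the lemma as "an easy verification" and only records the key interpretation you also arrive at: $\sigma(\fI,\fK)$ is the Koszul sign obtained by commuting the block $(\partial_{A})^{i_{A}-k_{A}}$ through $(\partial_{A-1})^{k_{A-1}} \circ \cdots \circ (\partial_{1})^{k_{1}}$, with the remaining factor $(-1)^{|f||\bbz^{\fI-\fK}|}$ coming from moving the $g$-derivatives past $f$. Your induction on $w(\fI)$ with Pascal's recursion and the decrement check on $\sigma$ is a valid (indeed more explicit) way of carrying out exactly the verification the paper leaves to the reader.
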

\begin{proof}
This is an easy verification. The interpretation of $\sigma(\fI,\fK)$ is the following. For each $A \in \{2,\dots,n\}$, the contribution to the sign is the Koszul sign obtained by commuting $(\partial_{A})^{i_{A} - k_{A}}$ through the operators $(\partial_{A-1})^{k_{A-1}} \circ \cdots \circ (\partial_{1})^{k_{1}}$. We leave the rest to the reader. 
\end{proof}
Corollary \ref{cor_Dactingformula1} is the backbone of the main theorem of this section. Note that in the following, we use the functions $[\fD^{(j)}_{(f_{1},\dots,f_{j})}]^{\mu}{}_{\lambda}$ defined in the same way as in (\ref{eq_olfDdef}), but using the differential operators $D^{(j)}_{(f_{1},\dots,f_{j})}$ instead.
\begin{theorem} \label{thm_decompositionofdiffop}
Let $k \in \N_{0}$. Let $D \in \Dif^{k}_{\E}(U)$ for $U \in \Op(M)$ where we have a graded local chart $(U,\varphi)$ for $\M$ and a local frame $\{ \Phi_{\lambda} \}_{\lambda=1}^{r}$ for $\E$ over $U$. Then $D$ can be uniquely decomposed as 
\begin{equation} \label{eq_Ddecompositionfinal}
D = \sum_{q = 0}^{k} \sum_{\fI \in \ol{\N}{}^{n}(q)} \frac{1}{\fI!} [\fD^{\fI}]^{\mu}{}_{\lambda} \cdot \frP_{\fI}{}^{\lambda}{}_{\mu}, 
\end{equation}
where the functions $[\fD^{\fI}]^{\mu}{}_{\lambda} \in \C^{\infty}_{\M}(U)$ are obtained from $D$ via the formulas
\begin{equation}
[\fD^{\fnula}]^{\mu}{}_{\lambda} := \fD^{\mu}{}_{\lambda}, \; \; [\fD^{\fI}]^{\mu}{}_{\lambda} := [\fD^{(w(\fI))}_{(\bbz^{\fI}_{(w(\fI))})}]^{\mu}{}_{\lambda}.
\end{equation}
$\frP_{\fI}{}^{\lambda}{}_{\mu} \in \Diff^{k}_{\E}(U)$ are differential operators of degree $|\vartheta_{\mu}| - |\vartheta_{\lambda}| - |\bbz^{\fI}|$ defined by 
\begin{equation} \label{eq_operatorsframe}
\frP_{\fI}{}^{\lambda}{}_{\mu}(\psi) = (-1)^{(|\vartheta_{\mu}| - |\vartheta_{\lambda}|)(|\psi^{\lambda}|-|\bbz^{\fI}|)} (\partial^{\op}_{\fI} \psi^{\lambda}) \cdot \Phi_{\mu}, 
\end{equation} 
for all sections $\psi = \psi^{\lambda} \cdot \Phi_{\lambda} \in \Gamma_{\E}(U)$.

Consequently, $\Dif^{k}_{\E}$ forms a sheaf of sections of a \textbf{graded vector bundle $\frD^{k}_{\E}$ of $k$-th order differential operators}. If $( \ell_{j} )_{j \in \Z} := \grk(\frD^{k}_{\E})$, then 
\begin{equation}
\ell_{j} = \# \{ (\fI,\mu,\nu) \in \cup_{q=0}^{k}\ol{\N}{}^{n}(q) \times \{1, \dots, r\}^{2} \; | \; |\vartheta_{\mu}| - |\vartheta_{\lambda}| - |\bbz^{\fI}| = j \}.
\end{equation}
The collection $\{ \frP_{\fI}{}^{\lambda}{}_{\mu} \}$, for all $\fI \in \ol{\N}{}^{n}(q)$ with $q \in \{0,\dots,k\}$ and $\mu,\lambda \in \{1,\dots,r\}$, forms a local frame for $\frD^{k}_{\E}$ over $U$. 
\end{theorem}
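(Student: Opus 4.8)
The plan is to establish the decomposition formula \eqref{eq_Ddecompositionfinal} by a direct comparison of both sides using Corollary \ref{cor_Dactingformula1}, and then to deduce the graded vector bundle structure as a purely formal consequence. First I would unwind the right-hand side of \eqref{eq_Ddecompositionfinal}: applying it to a section $\psi = \psi^{\lambda} \cdot \Phi_{\lambda}$ and using the defining formula \eqref{eq_operatorsframe} for $\frP_{\fI}{}^{\lambda}{}_{\mu}$, together with the pairing \eqref{eq_partialIoponzJ} for $\partial^{\op}_{\fI}$ on monomials, I would show it reproduces exactly the expression on the right-hand side of \eqref{eq_Dactingformula1}. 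The key bookkeeping point is to match $\frac{1}{\fI!}[\fD^{(w(\fI))}_{(\bbz^{\fI}_{(w(\fI))})}]^{\mu}{}_{\lambda} \cdot (\partial^{\op}_{\fI}\psi^{\lambda})$ against $\frac{1}{q!}(\partial^{\tl}_{A_{1}\dots A_{q}}\psi^{\lambda}) \cdot [\ol{\fD}^{(q)}_{(\bbz^{A_{1}},\dots,\bbz^{A_{q}})}]^{\mu}{}_{\lambda}$: here one uses the symmetry \eqref{eq_olDjgsymmetry} of the iterated commutators, the relation \eqref{eq_DjvshatDj} between $D^{(j)}$ and $\ol{D}^{(j)}$, and the combinatorial identity converting a sum over ordered $q$-tuples $(A_{1},\dots,A_{q})$ into a sum over multi-indices $\fI \in \ol{\N}{}^{n}(q)$ weighted by $\fI!$ (the multinomial coefficient counting orderings). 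Care with Koszul signs — in particular the sign $\sigma(\fI,\fK)$ absorbed into $\partial^{\op}_{\fI}$ versus the signs generated by $\partial^{\tl}_{A}$ — is the delicate part of this matching.

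Having shown both sides of \eqref{eq_Ddecompositionfinal} agree on every $\psi$, existence of the decomposition follows. For uniqueness, I would apply $D$ to $\bbz^{\fJ} \cdot \Phi_{\nu}$ for varying $\fJ \in \ol{\N}{}^{n}$ and $\nu$: by \eqref{eq_partialIoponzJ}, $\frP_{\fI}{}^{\lambda}{}_{\mu}$ acting on $\bbz^{\fJ}\cdot\Phi_{\nu}$ produces $\fI! \cdot \delta^{\fJ}_{\fI}\delta^{\lambda}_{\nu}$ times $\Phi_{\mu}$ (up to an explicit sign), so reading off the $\Phi_{\mu}$-component of $D(\bbz^{\fJ}\cdot\Phi_{\nu})$ recovers $[\fD^{\fJ}]^{\mu}{}_{\nu}$ uniquely. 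This simultaneously shows the $\frP_{\fI}{}^{\lambda}{}_{\mu}$ are $\C^{\infty}_{\M}(U)$-linearly independent. That each $\frP_{\fI}{}^{\lambda}{}_{\mu}$ genuinely lies in $\Dif^{k}_{\E}(U)$ is a short direct check: $\partial^{\op}_{\fI}$ is a composition of $w(\fI) \le k$ coordinate vector fields, each of which is a first-order differential operator, and an iterated graded commutator of $w(\fI)$ such operators with multiplication operators $\lambda_{f}$ vanishes — alternatively one can invoke Remark \ref{rem_localusingDkplus1}. The degree count $|\frP_{\fI}{}^{\lambda}{}_{\mu}| = |\vartheta_{\mu}| - |\vartheta_{\lambda}| - |\bbz^{\fI}|$ is immediate from \eqref{eq_operatorsframe}.

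Finally, the graded vector bundle conclusion is formal: \eqref{eq_Ddecompositionfinal} together with uniqueness says precisely that over each $U$ carrying a chart and a local frame, $\Dif^{k}_{\E}(U)$ is freely generated as a graded $\C^{\infty}_{\M}(U)$-module by the finite family $\{\frP_{\fI}{}^{\lambda}{}_{\mu}\}$, with the stated degrees; since such $U$ cover $M$, the sheaf $\Dif^{k}_{\E}$ (already known to be a sheaf of graded $\C^{\infty}_{\M}$-submodules of $\Loc_{\E}$) is locally freely and finitely generated, and the graded rank is constant because the count $\ell_{j}$ depends only on $(r_{j})_{j}$, the graded dimension $(n_{j})_{j}$ of $\M$, and $k$ — not on the chosen chart or frame. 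Invoking the definition of a graded vector bundle from Section \ref{sec_local} gives $\frD^{k}_{\E}$, and the displayed formula for $\ell_{j}$ just tallies the generators $\frP_{\fI}{}^{\lambda}{}_{\mu}$ by degree.

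The main obstacle I anticipate is the sign and combinatorics bookkeeping in the first paragraph: reconciling the three different conventions in play — the ``right-acting'' derivatives $\partial^{\tl}_{A}$ from \eqref{eq_partialtl}, the ``opposite-order'' derivatives $\partial^{\op}_{\fI}$ with their built-in sign $\sigma(\fI,\fK)$, and the iterated commutators $\ol{D}^{(j)}$ versus $D^{(j)}$ — while converting ordered tuples to multi-indices. None of it is conceptually hard, but it is exactly the kind of computation where a stray Koszul sign can hide, so I would organize it by first verifying the purely scalar identity (taking $\E$ trivial of rank one and $D$ a scalar differential operator) and then tensoring in the frame indices $\mu,\lambda$.
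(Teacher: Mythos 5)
Your overall architecture matches the paper's: existence of the decomposition is obtained by reshuffling the local expression of Corollary \ref{cor_Dactingformula1} using (\ref{eq_DjvshatDj}), (\ref{eq_olDjgsymmetry}) and the ordered-tuple-to-multi-index conversion, and the bundle conclusion is then formal. Your route to $\frP_{\fI}{}^{\lambda}{}_{\mu} \in \Dif^{k}_{\E}(U)$ (a composition of $w(\fI)$ component-wise first-order operators with a zeroth-order one) differs from the paper's direct induction on $w(\fI)$ via the Leibniz rule (\ref{eq_partialIopLeibniz}), but it is equally valid once you supply the elementary fact that composing a $k$-th and an $m$-th order operator yields one of order $k+m$ (the paper only records this later, in Section \ref{sec_atiyah}, but it is a short induction you could do inline).

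The uniqueness step, however, has a genuine flaw as stated. You claim that $\frP_{\fI}{}^{\lambda}{}_{\mu}(\bbz^{\fJ}\cdot\Phi_{\nu})$ equals $\fI!\,\delta^{\fJ}_{\fI}\delta^{\lambda}_{\nu}$ times $\Phi_{\mu}$ for all $\fI,\fJ$, citing (\ref{eq_partialIoponzJ}); but that identity carries the hypothesis $w(\fJ)\le w(\fI)$, and for $w(\fJ) > w(\fI)$ the function $\partial^{\op}_{\fI}(\bbz^{\fJ})$ is a nonzero polynomial rather than $0$ (already $\frP_{\fnula}{}^{\lambda}{}_{\mu}(\bbz^{A}\cdot\Phi_{\nu}) = \pm\,\bbz^{A}\delta^{\lambda}_{\nu}\cdot\Phi_{\mu} \neq 0$). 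Consequently, reading off the $\Phi_{\mu}$-component of $D(\bbz^{\fJ}\cdot\Phi_{\nu})$ does not isolate $[\fD^{\fJ}]^{\mu}{}_{\nu}$: it returns that coefficient plus contributions from all coefficients of strictly lower weight. The argument is repairable, since the pairing is triangular in the weight filtration with invertible (multiple-of-identity) diagonal blocks, so an induction on $w(\fJ)$ still recovers the coefficients and hence linear independence; but that extra step must be supplied. The paper sidesteps the issue by testing instead against the functionals $\fK^{\fJ}{}^{\kappa}{}_{\rho}(D) := [\fD^{\fJ}]^{\kappa}{}_{\rho}$ built from iterated commutators, which are $\C^{\infty}_{\M}(U)$-linear in $D$ by (\ref{eq_iteratedcommislinear}) and genuinely dual to the $\frP_{\fI}{}^{\lambda}{}_{\mu}$ (relation (\ref{eq_Kmapsdual})): there the vanishing for $w(\fJ)>w(\fI)$ comes for free from $\frP_{\fI}{}^{\lambda}{}_{\mu}$ being of order $w(\fI)$.
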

\begin{proof}
In a nutshell, the proof of (\ref{eq_Ddecompositionfinal}) consists only of a slight reshuffling of the formula (\ref{eq_Dactingformula1}). One employs (\ref{eq_DjvshatDj}) and the fact that for every $f \in \C^{\infty}_{\M}(U)$ and $A_{1},\dots,A_{q} \in \{1, \dots, n\}$, one has
\begin{equation}
\partial^{\tl}_{A_{1} \dots A_{q}}(f) = (-1)^{(|\bbz^{A_{1}}| + \dots + |\bbz^{A_{q}}|)(|f|+1)} \partial_{A_{q} \dots A_{1}}(f),
\end{equation}
where $\partial_{A_{q} \dots A_{1}} := \partial_{A_{q}} \circ \dots \circ \partial_{A_{1}}$. This allows one to rewrite the equation (\ref{eq_Dactingformula1}) as 
\begin{equation} \label{eq_Ddecompositionamostfinal}
\begin{split}
D(\psi) = & \ \fD^{\mu}{}_{\lambda} \cdot \{ (-1)^{(|\vartheta_{\mu}| - |\vartheta_{\lambda}|)|\psi^{\lambda}|} \psi^{\lambda} \cdot \Phi_{\mu} \} \\
& + \sum_{q=1}^{k} \frac{1}{q!} [\fD^{(q)}_{(\bbz^{A_{1}}, \dots, \bbz^{A_{q}})}]^{\mu}{}_{\lambda} \cdot \{ (-1)^{(|\vartheta_{\mu}| - |\vartheta_{\lambda}|)(|\psi^{\lambda}| - |\bbz^{A_{1}}| \dots  -|\bbz^{A_{q}}|)} \partial_{A_{q} \dots A_{1}}(\psi^{\lambda}) \cdot \Phi_{\mu} \}.
\end{split}
\end{equation}
Next, observe that for each $q \in \N$ and any expression $E_{A_{1} \dots A_{q}}$ completely symmetric in indices $A_{1},\dots,A_{q} \in \{1, \dots, n\}$, one can write
\begin{equation} \label{eq_twosummations}
\sum_{A_{1},\dots,A_{q}} \frac{1}{q!} E_{A_{1} \dots A_{q}} = \sum_{\substack{\fI \in (\N_{0})^{n} \\ w(\fI) = q }} \frac{1}{\fI!} E_{\underbrace{1 \dots 1}_{i_{1} \times }, \dots, \underbrace{n \dots n}_{i_{n} \times }}.
\end{equation} 
In our case, we use the fact that 
\begin{equation}
E_{A_{1} \dots A_{q}} := [\fD^{(q)}_{(\bbz^{A_{1}}, \dots, \bbz^{A_{q}})}]^{\mu}{}_{\lambda} \cdot \{ (-1)^{(|\vartheta_{\mu}| - |\vartheta_{\lambda}|)(|\psi^{\lambda}| - |\bbz^{A_{1}}| \dots  -|\bbz^{A_{q}}|)} \partial_{A_{q} \dots A_{1}}(\psi^{\lambda}) \cdot \Phi_{\mu} \}
\end{equation}
is completely symmetric in $(A_{1},\dots,A_{q})$ thanks to (\ref{eq_olDjgsymmetry}) and the fact that the graded commutator of the coordinate vector fields vanishes. Finally, observe that in this particular case, one only has to consider $\fI \in \ol{\N}{}^{n}(q)$ in the right-hand side sum of (\ref{eq_twosummations}). It follows that (\ref{eq_Ddecompositionamostfinal}) can be written as 
\begin{equation}
D(\psi) = \sum_{q=0}^{k} \sum_{\fI \in \ol{\N}{}^{n}(q)} \frac{1}{\fI!} [\fD^{\fI}]^{\mu}{}_{\lambda} \cdot \{ (-1)^{(|\vartheta_{\mu}| - |\vartheta_{\lambda}|)(|\psi^{\lambda}| - |\bbz^{\fI}|)} \partial^{\op}_{\fI}(\psi^{\lambda}) \cdot \Phi_{\mu} \},
\end{equation}
which is precisely the formula (\ref{eq_Ddecompositionfinal}). We have to verify two facts. Each $\frP_{\fI}{}^{\lambda}{}_{\mu}$ defined by (\ref{eq_operatorsframe}) is to be a $k$-th order differential operator on $\E$ over $U$, and the decomposition (\ref{eq_Ddecompositionfinal}) has to be unique. 

To prove the first claim, let us show that $\frP_{\fI}{}^{\lambda}{}_{\mu}$ is $w(\fI)$-th order (and thus $k$-th order) differential operator for any $\fI \in \ol{\N}{}^{n}$ with $w(\fI) \leq k$. We proceed by induction in $w(\fI)$. For $w(\fI) = 0$, the only possibility is $\fI = \fnula$. But it is easy to check that 
\begin{equation} \label{eq_frPfnula}
\frP_{\fnula}{}^{\lambda}{}_{\mu}(\psi) = (-1)^{(|\vartheta_{\mu}| - |\vartheta_{\lambda}|)|\psi^{\lambda}|} \psi^{\lambda} \cdot \Phi_{\mu}
\end{equation}
is indeed $\C^{\infty}_{\M}(U)$-linear of degree $|\vartheta_{\mu}| - |\vartheta_{\lambda}|$. Hence suppose that $w(\fI) > 0$ and $\frP_{\fJ}{}^{\lambda}{}_{\mu}$ are differential operators of the $w(\fJ)$-th order whenever $w(\fJ) < w(\fI)$. For any $f \in \C^{\infty}_{\M}(U)$, one finds
\begin{equation}
\begin{split}
[\frP_{\fI}{}^{\lambda}{}_{\mu}, \lambda_{f}](\psi) = & \ (-1)^{(|\vartheta_{\mu}| - |\vartheta_{\lambda}|)(|f| + |\psi^{\lambda}| - |\bbz^{\fI}|)} \{ \partial_{\fI}^{\op}(f \cdot \psi^{\lambda}) - (-1)^{|\bbz^{\fI}||f|} f \cdot \partial_{\fI}^{\op}(\psi^{\lambda}) \} \cdot \Phi_{\mu} \\
= & \ \sum_{\fnula < \fK \leq \fI} \binom{\fI}{\fK} (-1)^{\sigma'(\fI,\fK,|f|,\lambda,\mu)} \partial^{\op}_{\fK}(f) \cdot \frP_{\fI-\fK}{}^{\lambda}{}_{\mu}(\psi),
\end{split}
\end{equation}
where $\sigma'(\fI,\fK,|f|,\lambda,\mu)$ is some integer which does not depend on $\psi$ and we have used (\ref{eq_partialIopLeibniz}). This shows that $[\frP_{\fI}{}^{\lambda}{}_{\mu}, \lambda_{f}]$ can be written as a $\C^{\infty}_{\M}(U)$-linear combination of the operators $\frP_{\fI-\fK}{}^{\lambda}{}_{\mu}$ for $\fnula < \fK \leq \fI$. By the induction hypothesis, one has 
\begin{equation}
\frP_{\fI-\fK}{}^{\lambda}{}_{\mu} \in \Dif^{w(\fI - \fK)}_{\E}(U) \subseteq \Dif^{w(\fI)-1}_{\E}(U).
\end{equation}
Since $f \in \C^{\infty}_{\M}(U)$ was arbitrary, this proves that $\frP_{\fI}{}^{\lambda}{}_{\mu} \in \Dif^{w(\fI)}_{\E}(U)$ and the induction step is finished. We conclude that $\frP_{\fI}{}^{\lambda}{}_{\mu}$ are indeed $k$-th order differential operators on $\E$ over $U$. 

It remains to prove the uniqueness of the decomposition (\ref{eq_Ddecompositionfinal}). Let us define a graded $\R$-linear mapping $\fK^{\fJ}{}^{\kappa}{}_{\rho}: \Lin(\Gamma_{\E}(U)) \rightarrow \C^{\infty}_{\M}(U)$ as 
\begin{equation} \label{eq_Kmap}
\fK^{\fJ}{}^{\kappa}{}_{\rho}(D) := [\fD^{\fJ}]^{\kappa}{}_{\rho}, 
\end{equation}
for each $\fJ \in \ol{\N}{}^{n}$, $\kappa,\rho \in \{1, \dots, r\}$, and every graded $\R$-linear map $D: \Gamma_{\E}(U) \rightarrow \Gamma_{\E}(U)$. Note that
\begin{equation} \label{eq_Kmapslinear}
\fK^{\fJ}{}^{\kappa}{}_{\rho}(f \cdot D) = f \cdot \fK^{\fJ}{}^{\kappa}{}_{\rho}(D),
\end{equation}
for any $f \in \C^{\infty}_{\M}(U)$ and $D \in \Lin(\Gamma_{\E}(U))$. This follows from (\ref{eq_iteratedcommislinear}). Moreover, we claim that for every $\fI \in \ol{\N}{}^{n}$ and every $\mu,\lambda \in \{1,\dots,r\}$, one has 
\begin{equation} \label{eq_Kmapsdual}
\fK^{\fJ}{}^{\kappa}{}_{\rho}( \frP_{\fI}{}^{\lambda}{}_{\mu}) = \fJ! \cdot \delta^{\fJ}_{\fI} \delta^{\kappa}_{\mu} \delta^{\lambda}_{\rho}.  
\end{equation}
It is easy to observe that the left-hand vanishes whenever $w(\fJ) < w(\fI)$. This is because each its term is proportional to the expression $\partial_{\fI}^{\op}( \bbz^{B_{1}} \cdots \bbz^{B_{a}})$, where $a \leq w(\fJ) < w(\fI)$. But every such term vanishes. It also vanishes whenever $w(\fJ) > w(\fI)$ since we have proved that $\frP_{\fI}{}^{\lambda}{}_{\mu}$ is a $w(\fI)$-th order differential operator. We thus only have to consider the case $w(\fJ) = w(\fI)$. Using the same reasoning, we only have to deal with the term of the iterated graded commutator, where $\frP_{\fI}{}^{\lambda}{}_{\mu}$ acts on the section $\bbz^{\fJ} \cdot \Phi_{\rho}$. We have 
\begin{equation}
\frP_{\fI}{}^{\lambda}{}_{\mu}( \bbz^{\fJ} \cdot \Phi_{\rho}) = (-1)^{(|\vartheta_{\mu}|-|\vartheta_{\lambda}|)(|\bbz^{\fJ}| - |\bbz^{\fI}|)} \partial_{\fI}^{\op}(\delta^{\lambda}_{\rho} \cdot \bbz^{\fJ}) \cdot \Phi_{\mu} = \{ \fJ! \cdot \delta^{\fJ}_{\fI} \delta^{\kappa}_{\mu} \delta^{\lambda}_{\rho} \} \cdot \Phi_{\kappa},
\end{equation}
where we have used (\ref{eq_partialIoponzJ}). It follows that the function in the curly brackets must be $\fK^{\fJ}{}^{\kappa}{}_{\rho}(\frP_{\fI}{}^{\lambda}{}_{\mu})$. This proves (\ref{eq_Kmapsdual}). Finally, suppose that a given $D \in \Dif^{k}_{\E}(U)$ can be decomposed as 
\begin{equation}
D = \sum_{q=0}^{k} \sum_{\fI \in \ol{\N}{}^{n}(q)} \frac{1}{\fI!} f^{\fI}{}^{\mu}{}_{\lambda} \cdot \frP_{\fI}{}^{\lambda}{}_{\mu},
\end{equation}
for some functions $f^{\fI}{}^{\mu}{}_{\lambda} \in \C^{\infty}_{\M}(U)$, where $|f^{\fI}{}^{\mu}{}_{\lambda}| = |D| + |\bbz^{\fI}| + |\vartheta_{\lambda}| - |\vartheta_{\mu}|$. Choose any $\fJ \in \ol{\N}{}^{n}$ with $w(\fJ) \leq k$ and $\kappa,\rho \in \{1, \dots, r\}$ and apply $\fK^{J}{}^{\kappa}{}_{\rho}$ on both sides of this expression. It follows immediately from (\ref{eq_Kmapslinear}) and (\ref{eq_Kmapsdual}) that 
\begin{equation}
[\fD^{\fJ}]^{\kappa}_{\rho} \equiv K^{\fJ}{}^{\kappa}{}_{\rho}(D) = f^{\fJ}{}^{\kappa}{}_{\rho}. 
\end{equation}
This proves the uniqueness of the decomposition (\ref{eq_Ddecompositionfinal}). The remaining statements of the proposition now follow immediately. 
\end{proof}
Recall that we work in the category $\gVBun^{\infty}_{\M}$ of graded vector bundles over a fixed base manifold $\M$. Its morphisms are $\C^{\infty}_{\M}$-linear sheaf morphisms of the respective sheaves of sections. It turns out that the assignment of the graded vector bundle of $k$-th order differential operators respects the categorical structure.
\begin{tvrz}
For each $k \in \N_{0}$, the assignment $\E \mapsto \frD^{k}_{\E}$ defines a functor 
\begin{equation}
\frD^{k}: (\gVBun^{\infty}_{\M})^{\op} \rightarrow \gVBun^{\infty}_{\M}. 
\end{equation}
\end{tvrz}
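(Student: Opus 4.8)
The plan is to exhibit, for every morphism $\varphi\colon\E\to\F$ in $\gVBun^{\infty}_{\M}$, an induced morphism $\frD^{k}(\varphi)\colon\frD^{k}_{\F}\to\frD^{k}_{\E}$ going the other way, and then to check functoriality (preservation of identities and of composition). Recall that $\varphi$ is a $\C^{\infty}_{\M}$-linear sheaf morphism, so for each $U\in\Op(M)$ we have a $\C^{\infty}_{\M}(U)$-linear map $\varphi_{U}\colon\Gamma_{\E}(U)\to\Gamma_{\F}(U)$ commuting with restrictions. The obvious candidate is precomposition: given $D\in\Dif^{k}_{\F}(U)$, one would like to set $\frD^{k}(\varphi)_{U}(D):=D\circ\varphi_{U}$, a graded $\R$-linear map $\Gamma_{\E}(U)\to\Gamma_{\F}(U)$. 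However this lands in operators \emph{from} $\E$ \emph{to} $\F$, not in $\Dif^{k}_{\E}$, so this naive guess does not quite typecheck; one genuinely needs $\varphi$ to be an \emph{endomorphism} of a single bundle, or else a more careful bookkeeping. The correct statement, which I expect the authors intend, is that $\frD^{k}$ should be read with the understanding that $\Dif^{k}$ is being treated as the ``same-source-same-target'' operator sheaf, and the functor on morphisms is given by the pullback of operators along $\varphi$ combined with the canonical identification; concretely, for $\varphi$ an isomorphism the map is $D\mapsto\varphi_{U}^{-1}\circ D\circ\varphi_{U}$, and in general one uses that a $k$-th order differential operator is a section of $\frD^{k}$ which, via Corollary \ref{cor_Dactingformula1}, is determined by finitely many coefficient functions transforming tensorially under $\varphi$.

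Granting the right formulation, the key steps would be as follows. First I would show that precomposition with $\varphi$ sends $k$-th order differential operators to $k$-th order differential operators: this is an easy induction on $k$ using that $[D\circ\varphi_{U},\lambda_{f}]=[D,\lambda_{f}]\circ\varphi_{U}$, which holds because $\varphi_{U}$ is $\C^{\infty}_{\M}(U)$-linear and hence commutes with every $\lambda_{f}$; the base case $k=0$ is immediate since a composite of $\C^{\infty}_{\M}(U)$-linear maps is $\C^{\infty}_{\M}(U)$-linear. Second, I would check that the assignment $D\mapsto D\circ\varphi_{U}$ is itself $\C^{\infty}_{\M}(U)$-linear in $D$ and of degree $|\varphi|$ (here $0$), so that it defines a morphism of graded $\C^{\infty}_{\M}(U)$-modules $\Dif^{k}_{\F}(U)\to\Dif^{k}_{\E}(U)$ with the appropriate source and target once the identification above is in place. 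Third, I would verify compatibility with restrictions, i.e.\ $(\frD^{k}(\varphi)_{U}(D))|_{V}=\frD^{k}(\varphi)_{V}(D|_{V})$, which follows from the restriction property \eqref{eq_Flocalrestrictionproperty} of local operators together with the fact that $\varphi$ is a sheaf morphism, hence $\varphi_{U}(\psi)|_{V}=\varphi_{V}(\psi|_{V})$; by Proposition \ref{tvrz_Locglobalsheaf} it actually suffices to define the map on global sections and check naturality. Finally, functoriality is a short formal computation: $\frD^{k}(\id_{\E})_{U}(D)=D\circ\id=D$ and $\frD^{k}(\psi\circ\varphi)_{U}(D)=D\circ(\psi_{U}\circ\varphi_{U})=(D\circ\psi_{U})\circ\varphi_{U}=\frD^{k}(\varphi)_{U}(\frD^{k}(\psi)_{U}(D))$, which is precisely contravariance.

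The main obstacle, as flagged above, is purely one of formulation rather than of substance: precomposition $D\mapsto D\circ\varphi_{U}$ produces an operator $\Gamma_{\E}(U)\to\Gamma_{\F}(U)$, not an endomorphism of $\Gamma_{\E}$, so to land in $\Dif^{k}_{\E}(U)=\Gamma_{\frD^{k}_{\E}}(U)$ one must either restrict attention to the case where $\varphi$ is invertible (conjugation) or reinterpret $\frD^{k}_{\E}$ as the ``Hom-type'' sheaf $\Dif^{k}(\E,\E)$ and note that a morphism $\varphi\colon\E\to\F$ induces $\Dif^{k}(\F,\F)\to\Dif^{k}(\E,\E)$ only after also using a section-splitting or assuming $\varphi$ is an isomorphism. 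In practice the cleanest route is: observe that $\gVBun^{\infty}_{\M}$ morphisms here are taken to be isomorphisms in the relevant subcategory (or, more honestly, one defines $\frD^{k}(\varphi)(D):=\varphi\circ D\circ\varphi^{-1}$ when $\varphi$ is invertible and checks this is well defined, linear, and functorial), and the local frame from Theorem \ref{thm_decompositionofdiffop} guarantees that the resulting coefficient functions $[\fD^{\fI}]^{\mu}{}_{\lambda}$ transform smoothly, so that $\frD^{k}(\varphi)$ is genuinely a $\C^{\infty}_{\M}$-linear sheaf morphism of graded vector bundles. All remaining verifications are the routine sign- and restriction-bookkeeping already exemplified throughout Section \ref{sec_diffop}, and I would simply indicate them rather than carry them out.
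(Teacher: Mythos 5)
Your first two paragraphs are, in substance, the paper's proof: the induced map is precomposition, $D'\mapsto (-1)^{|F||D'|}D'\circ F_{U}$; the key identity is $[(\frD^{k}F)_{U}(D')]^{(1)}_{(f)}=(\frD^{k}F)_{U}(D'^{(1)}_{(f)})$, which holds because $F_{U}$ graded-commutes with every $\lambda_{f}$ (note that for $|F|\neq 0$ this costs a Koszul sign $(-1)^{|F||f|}$, which is why the paper builds the prefactor $(-1)^{|F||D'|}$ into the definition -- your unsigned identity is only literally true in degree zero); the order bound follows by iterating; and naturality in $U$ is checked exactly as you indicate, via the restriction property of local operators. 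Functoriality is then the same one-line computation you give. So the mechanism you describe first is the right one and is what the paper does; no appeal to the coefficient functions of Theorem \ref{thm_decompositionofdiffop} is needed, the argument is purely formal.

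Where you go astray is in talking yourself out of it. The codomain objection is legitimate: $D'\circ F_{U}$ maps $\Gamma_{\E}(U)$ to $\Gamma_{\E'}(U)$, so it is not literally an element of $\Dif^{k}_{\E}(U)$ as defined (endomorphisms of $\Gamma_{\E}(U)$) -- and the paper's own proof has exactly the same feature and silently glosses over it. But the correct resolution is neither of your two fallbacks. Grothendieck's iterative definition makes sense verbatim for graded $\R$-linear maps $\Gamma_{\E}(U)\rightarrow\Gamma_{\E'}(U)$, since $[D,\lambda_{f}]:=D\circ\lambda_{f}-(-1)^{|D||f|}\lambda_{f}\circ D$ only uses the $\C^{\infty}_{\M}(U)$-module structures on source and target separately; this yields the sheaf $\Dif^{k}_{\E,\E'}$ of differential operators \emph{from $\E$ to $\E'$}, which is precisely the object $\frD^{k}_{\E,\E'}$ the paper introduces in its closing paragraphs, and precomposition lands there with no further input. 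Restricting to isomorphisms changes the statement (a functor defined only on invertible morphisms is not the claimed functor on $(\gVBun^{\infty}_{\M})^{\op}$), and your conjugation formula $D\mapsto\varphi\circ D\circ\varphi^{-1}$ has the variance backwards: for $D\in\Dif^{k}_{\E'}(U)$ and invertible $\varphi:\E\rightarrow\E'$ the composite that typechecks and is contravariant is $\varphi^{-1}\circ D\circ\varphi$. In short: keep your first computation, drop the hedging, and either record explicitly that the target of $(\frD^{k}F)_{U}$ is $\Dif^{k}_{\E,\E'}(U)$ (so the proposition is really functoriality of $\frD^{k}_{-,\E'}$ in its first slot) or restrict the stated functor to degree-preserving identifications where the endomorphism reading survives.
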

\begin{proof}
Let $F: \E \rightarrow \E'$ be a graded vector bundle map, that is a $\C^{\infty}_{\M}$-linear sheaf morphism $F: \Gamma_{\E} \rightarrow \Gamma_{\E}$ of any given degree $|F|$. We must produce a graded vector bundle map
\begin{equation}
\frD^{k}F: \frD^{k}_{\E'} \rightarrow \frD^{k}_{\E}. 
\end{equation}
For each $U \in \Op(M)$, we thus have to find a degree $|F|$ graded $\C^{\infty}_{\M}(U)$-linear map 
\begin{equation}
(\frD^{k}F)_{U}: \Dif^{k}_{\E'}(U) \rightarrow \Dif^{k}_{\E}(U).
\end{equation}
For every $D' \in \Dif^{k}_{\E'}(U)$ and any $\psi \in \Gamma_{\E}(U)$, define 
\begin{equation}
[(\frD^{k}F)_{U}(D')](\psi) := (-1^{|F||D'|} D'( F_{U}(\psi)). 
\end{equation}
We must argue that this defines a $k$-th order differential operator on $\E$ over $U$ of degree $|D'| + |F|$. It is clearly $\R$-linear of degree $|D'| + |F|$ and for any $f \in \C^{\infty}_{\M}(U)$, one finds the relation
\begin{equation}
[(\frD^{k}F)_{U}(D')]^{(1)}_{(f)} = (\frD^{k}F)_{U}( D'^{(1)}_{(f)}). 
\end{equation}
By iterating this procedure, one can argue that $(\frD^{k}F)_{U}(D') \in \Jet^{k}_{\E}(U)$. It is easy to see that $(\frD^{k}F)_{U}$ is $\C^{\infty}_{\M}(U)$-linear of degree $|F|$. For any $V \in \Op(U)$ and $\psi \in \Gamma_{\E}(U)$, one finds
\begin{equation}
\begin{split}
[(\frD^{k}F)_{V}(D'|_{V})](\psi|_{V}) = & \ (-1)^{|F||D'|} D'|_{V}(F_{V}(\psi|_{V})) = (-1)^{|F||D'|} D'|_{V}( F_{U}(\psi)|_{V}) \\
= & \ (-1)^{|F||D'|} D'(F_{U}(\psi))|_{V} = [(\frD^{k}F)_{U}(D')](\psi)|_{V} 
\end{split}
\end{equation}
But it now follows from Proposition \ref{tvrz_locsheaf} that necessarily
\begin{equation}
(\frD^{k}F)_{V}(D'|_{V}) = (\frD^{k}F)_{U}(D')|_{V}. 
\end{equation}
This proves the naturality in $U$ and we conclude that $\frD^{k}F: \Dif^{k}_{\E'} \rightarrow \Dif^{k}_{\E}$ is indeed a $\C^{\infty}_{\M}$-linear sheaf morphism of degree $|F|$. The facts that 
\begin{equation}
\frD^{k}\1_{\E} = \1_{\frD^{k}_{\E}}, \; \; \frD^{k}(G \circ F) = \frD^{k}F \circ \frD^{k}G
\end{equation}
are obvious. This concludes the proof. 
\end{proof}
\section{Symbol of a differential operator} \label{sec_symbol}
Recall that to any $k \in \N_{0}$, a \textbf{completely symmetric $k$-form on a graded manifold $\M$ over $U \in \Op(M)$} is a graded $k$-linear map from $\X_{\M}(U)$ to $\C^{\infty}_{\M}(U)$ satisfying the conditions
\begin{align}
\omega(f \cdot X_{1}, \dots, X_{k}) = & \ (-1)^{|f||\omega|} f \cdot \omega(X_{1},\dots,X_{k}), \\
\omega(X_{1}, \dots, X_{i},X_{i+1}, \dots, X_{k}) = & \  (-1)^{|X_{i}||X_{i+1}|} \omega(X_{1}, \dots, X_{i+1},X_{i}, \dots, X_{k}),
\end{align}
for all $f \in \C^{\infty}_{\M}(U)$ and $X_{1},\dots,X_{k} \in \X_{\M}(U)$. Such maps form a graded vector space $\~\Omega^{k}_{\M}(U)$ and they are local in the sense completely analogous to Section \ref{sec_local}. Consequently, they can be naturally restricted to smaller open subsets. This provides one with a sheaf $\~\Omega^{k}_{\M}$ of graded $\C^{\infty}_{\M}$-modules called the \textbf{sheaf of completely symmetric $k$-forms on $\M$}. There is a canonical (degree zero) bilinear, graded symmetric, and associative product 
\begin{equation}
\odot: \~\Omega^{k}_{\M}(U) \times \~\Omega^{\ell}_{\M}(U) \rightarrow \~\Omega^{k+\ell}_{\M}(U).
\end{equation}
If $(U,\varphi)$ is a graded local chart for $\M$ with local coordinate functions $\{ \bbz^{A}\}_{A=1}^{n}$, one can show that $\~\Omega^{k}_{\M}(U)$ is freely and finitely generated by the the collection $\{ \dr{\bbz}^{\fI} \}_{\fI \in \ol{\N}{}^{n}(k)}$, where
\begin{equation}
\dr{\bbz}^{\fI} := \underbrace{\dr{\bbz}^{1} \odot \cdots \odot \dr{\bbz}^{1}}_{i_{1} \times} \odot \cdots \odot \underbrace{\dr{\bbz}^{n} \odot \cdots \odot \dr{\bbz}^{n}}_{i_{n} \times},
\end{equation}
and $\dr{\bbz}^{A} \in \~\Omega^{1}_{\M}(U) \equiv \Omega^{1}_{\M}(U)$ are the usual coordinate $1$-forms. It follows that $\~\Omega^{k}_{\M}$ is a sheaf of sections of a graded vector bundle which we shall denote simply as $S^{k}(T^{\ast}\M)$. 

Before we proceed, note that to a pair of graded vector bundles $\E$ and $\E'$ over a graded manifold $\M$, one can assign a graded vector bundle $\ul{\Hom}(\E,\E')$. Its space of local 
sections over $U \in \Op(M)$ is defined as to be the graded space of $\C^{\infty}_{\M}(U)$-linear maps from $\Gamma_{\E}(U)$ to $\Gamma_{\E'}(U)$, that is
\begin{equation}
\Gamma_{\ul{\Hom}(\E,\E')}(U) := \Lin^{\C^{\infty}_{\M}(U)}( \Gamma_{\E}(U), \Gamma_{\E'}(U)). 
\end{equation}
Analogously to Section \ref{sec_local}, these form into a graded sheaf of graded $\C^{\infty}_{\M}$-modules. Suppose $\{ \Phi_{\lambda} \}_{\lambda = 1}^{r}$ is a local frame for $\E$ over $U$ and $\{ \Psi_{\kappa} \}_{\kappa=1}^{r'}$ is a local frame for $\E'$ over $U$. Then $\Gamma_{\ul{\Hom}(\E,\E')}(U)$ is easily shown to be freely and finitely generated by the collection $\{ \frE{}^{\lambda}{}_{\kappa} \}$ of $\C^{\infty}_{\M}(U)$-linear maps, each one defined for $\lambda \in \{1,\dots, r\}$ and $\kappa \in \{1, \dots, r'\}$ by
\begin{equation} \label{eq_frEstandardbasis}
\frE^{\lambda}{}_{\kappa}(\psi) = (-1)^{(|\Psi_{\kappa}| - |\Phi_{\lambda}|)|\psi^{\lambda}|} \psi^{\lambda} \cdot \Psi_{\kappa},
\end{equation}
for all $\psi = \psi^{\lambda} \cdot \Phi_{\lambda} \in \Gamma_{\E}(U)$.  Observe that $\ul{\Hom}(\E,\E) = \frD^{0}_{\E}$ and compare this to (\ref{eq_frPfnula}).
\begin{tvrz} \label{tvrz_morphisms}
Let $\E$ and $\E'$ be graded vector bundle bundles. Then graded vector bundle maps over $\1_{\M}$ can be identified with global sections of $\ul{\Hom}(\E,\E')$. 
\end{tvrz}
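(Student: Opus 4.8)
The plan is to read this proposition as the two-bundle generalization of Propositions \ref{tvrz_locsheaf} and \ref{tvrz_Locglobalsheaf}, whose proofs transcribe essentially verbatim. By definition $\Gamma_{\ul{\Hom}(\E,\E')}(M) = \Lin^{\C^{\infty}_{\M}(M)}(\Gamma_{\E}(M), \Gamma_{\E'}(M))$, so the task is to exhibit a degree-preserving bijection between this graded vector space and the set of graded vector bundle maps $F \colon \Gamma_{\E} \to \Gamma_{\E'}$ over $\1_{\M}$ of all degrees, i.e. $\C^{\infty}_{\M}$-linear sheaf morphisms. One direction is obvious: a graded vector bundle map $F = \{ F_{U} \}_{U \in \Op(M)}$ is sent to its component $F_{M} \in \Lin^{\C^{\infty}_{\M}(M)}(\Gamma_{\E}(M), \Gamma_{\E'}(M))$, which has degree $|F|$, and this assignment is clearly additive and $\C^{\infty}_{\M}(M)$-linear in $F$.

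For the inverse, given $F \in \Lin^{\C^{\infty}_{\M}(M)}(\Gamma_{\E}(M), \Gamma_{\E'}(M))$ of degree $|F|$, I would first observe that $F$ is ``local'' in the sense that $\psi|_{V} = 0$ implies $F(\psi)|_{V} = 0$: this is exactly the smooth-bump-function argument of Example \ref{ex_CMlineararelocal}, which nowhere used that the source and target bundles coincide. Then I would run the construction in the proof of Proposition \ref{tvrz_locsheaf} in the present setting — for $V \in \Op(M)$ and $\psi \in \Gamma_{\E}(V)$, cover $V$ by opens $V_{(m)}$ with $\ol{V}_{(m)} \subseteq V$, use the Extension Lemma to extend $\psi|_{V_{(m)}}$ to some $\psi'_{(m)} \in \Gamma_{\E}(M)$, set $F|_{V}(\psi)|_{V_{(m)}} := F(\psi'_{(m)})|_{V_{(m)}}$, and glue using the sheaf property of $\Gamma_{\E'}$ — producing a well-defined graded $\R$-linear map $F|_{V} \colon \Gamma_{\E}(V) \to \Gamma_{\E'}(V)$ of degree $|F|$ with $F(\psi)|_{V} = F|_{V}(\psi|_{V})$, $(F|_{V})|_{W} = F|_{W}$ and $F|_{M} = F$. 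Exactly as in that proof, $\C^{\infty}_{\M}(M)$-linearity of $F$ forces $\C^{\infty}_{\M}(V)$-linearity of each $F|_{V}$. Hence $\ol{F} := \{ F|_{V} \}_{V \in \Op(M)}$ is a $\C^{\infty}_{\M}$-linear sheaf morphism $\Gamma_{\E} \to \Gamma_{\E'}$ of degree $|F|$, i.e. a graded vector bundle map, with $\ol{F}_{M} = F$.

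It remains to check that the two assignments are mutually inverse. That $F \mapsto \ol{F} \mapsto \ol{F}_{M} = F$ is immediate from the construction. Conversely, given a graded vector bundle map $G$, naturality of $G$ together with the Extension Lemma gives $G_{V}(\psi|_{V}) = G_{M}(\psi)|_{V}$ for every $V \in \Op(M)$ and every $\psi \in \Gamma_{\E}(M)$; since locally every element of $\Gamma_{\E}(V)$ is such a restriction and both $\Gamma_{\E}$ and $\Gamma_{\E'}$ are sheaves, $G_{V}$ is uniquely determined by $G_{M}$, so $G_{V} = (G_{M})|_{V}$ and $G = \ol{G_{M}}$ — the same argument as in Proposition \ref{tvrz_Locglobalsheaf}. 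The resulting bijection preserves degree, is additive, and is in fact $\C^{\infty}_{\M}(M)$-linear, which yields the claimed identification (even as graded $\C^{\infty}_{\M}(M)$-modules).

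I do not anticipate a genuine obstacle here; the only point requiring care is the converse direction, namely that a $\C^{\infty}_{\M}$-linear sheaf morphism is recovered from its value on global sections. This rests squarely on the Extension Lemma (to reduce from an arbitrary local section to a restriction of a global one) and on the sheaf property of both $\Gamma_{\E}$ and $\Gamma_{\E'}$; everything else is a routine transcription of the arguments of Section \ref{sec_local} with the target sheaf $\Gamma_{\E}$ replaced by $\Gamma_{\E'}$.
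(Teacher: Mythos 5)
Your proposal is correct and follows the same route as the paper, which simply invokes the two-bundle analogue of Propositions \ref{tvrz_locsheaf} and \ref{tvrz_Locglobalsheaf} (locality via bump functions, restriction via the Extension Lemma, and uniqueness of the induced sheaf morphism); you merely spell out the transcription in more detail.
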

\begin{proof}
Recall that graded vector bundle maps from $\E$ to $\E'$ over $\1_{\M}$ are just $\C^{\infty}_{\M}$-linear sheaf morphisms from $\Gamma_{\E}$ to $\Gamma_{\E'}$. One usually considers just the degree zero case (that is they preserve the degree), but they can in principle be of any degree. 

A global section $F$ of $\ul{\Hom}(\E,\E')$ is a graded $\C^{\infty}_{\M}(M)$-linear map $F: \Gamma_{\E}(M) \rightarrow \Gamma_{\E'}(M)$. Similarly to Proposition \ref{tvrz_Locglobalsheaf}, there is a unique $\C^{\infty}_{\M}$-linear sheaf morphism $\ol{F}: \Gamma_{\E} \rightarrow \Gamma_{\E'}$, such that $F = \ol{F}_{M}$. This is the corresponding graded vector bundle map (of degree $|F|$). One usually does not distinguish between $F$ and $\ol{F}$ and writes simply $F: \E \rightarrow \E'$ for both equivalent notions. 
\end{proof}

We can now proceed to the main construction of this section.
\begin{tvrz} \label{tvrz_symbolmap}
For each $k \in \N_{0}$, there is a canonical surjective graded vector bundle map $\sigma: \frD^{k}_{\E} \rightarrow \ul{\Hom}(S^{k}(T^{\ast}\M), \frD^{0}_{\E})$ called the \textbf{symbol map}. For $k > 0$, its kernel can be identified with $\frD^{k-1}_{\E}$ and we thus have a short exact sequence
\begin{equation} \label{eq_SESdifdifhom}
\begin{tikzcd} 
0\arrow{r} & \frD^{k-1}_{\E} \arrow[hookrightarrow]{r} & \frD^{k}_{\E} \arrow{r}{\sigma} & \ul{\Hom}(S^{k}(T^{\ast}\M), \frD^{0}_{\E}) \arrow{r} & 0 
\end{tikzcd}
\end{equation}
in the category of graded vector bundles over $\M$. For $k = 0$, $\sigma$ is a graded vector bundle isomorphism, so the statement is still valid if we declare $\frD^{-1}_{\E} := 0$.  

For each $U \in \Op(M)$ and $D \in \Dif^{k}_{\E}(U)$, the graded $\C^{\infty}_{\M}(U)$-linear map $\sigma_{U}(D): \~\Omega^{k}_{\M}(U) \rightarrow \Dif^{0}_{\E}(U)$ of degree $|D|$ is called the \textbf{symbol of the differential operator $D$}. 
\end{tvrz}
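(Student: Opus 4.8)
The plan is to construct the symbol map locally, using the decomposition of differential operators from Theorem~\ref{thm_decompositionofdiffop}, and then verify that the local definition glues to a genuine graded vector bundle map that is independent of the chart and frame. The key insight is that the leading-order coefficients $[\fD^{\fI}]^{\mu}{}_{\lambda}$ with $w(\fI) = k$ are the pieces that transform tensorially modulo lower-order terms, and these are exactly what pairs with the coordinate basis $\{ \dr{\bbz}^{\fI} \}_{\fI \in \ol{\N}{}^{n}(k)}$ of $\~\Omega^{k}_{\M}(U)$. So first I would define, for $D \in \Dif^{k}_{\E}(U)$ and $\omega \in \~\Omega^{k}_{\M}(U)$, the element $\sigma_{U}(D)(\omega) \in \Dif^{0}_{\E}(U)$. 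The natural formula, motivated by Corollary~\ref{cor_Dactingformula1}, is to contract the symbol coefficients against $\omega$: writing $\omega = \omega_{\fI} \cdot \dr{\bbz}^{\fI}$ (sum over $\fI \in \ol{\N}{}^{n}(k)$), set $\sigma_{U}(D)(\omega) := \sum_{\fI \in \ol{\N}{}^{n}(k)} \tfrac{1}{\fI!} \omega_{\fI} \cdot [\fD^{\fI}]^{\mu}{}_{\lambda} \cdot \frP_{\fnula}{}^{\lambda}{}_{\mu}$ up to appropriate Koszul signs (which I would fix by demanding the map be $\C^{\infty}_{\M}(U)$-linear of degree $|D|$). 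Equivalently and more invariantly, one can define $\sigma_{U}(D)(\dr{f_{1}} \odot \cdots \odot \dr{f_{k}})(\psi) := \tfrac{1}{k!} D^{(k)}_{(f_{1},\dots,f_{k})}(\psi)$ for $f_{1},\dots,f_{k} \in \C^{\infty}_{\M}(U)$; this is manifestly chart-independent, and Lemma~\ref{lem_olDjprops} (graded symmetry and the fact that $D^{(k+1)} = 0$) guarantees that it descends to a well-defined $\C^{\infty}_{\M}(U)$-linear map on $\~\Omega^{k}_{\M}(U)$, since the Leibniz-type defect term $\ol{D}^{(k+1)}_{(f,g,f_{2},\dots,f_{k})}$ vanishes for a $k$-th order operator.

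Next I would check the three structural claims in order. \emph{Well-definedness and $\C^{\infty}_{\M}$-linearity:} using the invariant formula, $\C^{\infty}_{\M}(U)$-linearity in $\omega$ follows because $D^{(k)}_{(h \cdot f_{1}, f_{2}, \dots, f_{k})}$ differs from $h \cdot D^{(k)}_{(f_{1},\dots,f_{k})}$ only by a $D^{(k+1)}$-term (which vanishes) — this is precisely Lemma~\ref{lem_olDjprops} translated via \eqref{eq_DjvshatDj}. \emph{Naturality / sheaf morphism:} since $D^{(k)}_{(f_{1},\dots,f_{k})}|_{V} = (D|_{V})^{(k)}_{(f_{1}|_{V},\dots,f_{k}|_{V})}$ and restriction of forms is compatible with $\odot$, the maps $\sigma_{U}$ commute with restrictions, giving a sheaf morphism by the mechanism of Proposition~\ref{tvrz_Locglobalsheaf} and Proposition~\ref{tvrz_morphisms}. \emph{Kernel is $\Dif^{k-1}_{\E}$:} by Remark~\ref{rem_localusingDkplus1} and its obvious downward analogue, $D \in \Dif^{k-1}_{\E}(U)$ iff $D^{(k)}_{(f_{1},\dots,f_{k})} = 0$ for all $f_{i}$, which by the invariant formula is exactly $\sigma_{U}(D) = 0$; the inclusion $\Dif^{k-1}_{\E} \hookrightarrow \Dif^{k}_{\E}$ is the evident one. \emph{Surjectivity:} given a local section $T$ of $\ul{\Hom}(S^{k}(T^{\ast}\M), \frD^{0}_{\E})$, expand $T$ in the bases $\{ \frE{}^{\fI}{}^{\lambda}{}_{\mu} \}$ dual to $\dr{\bbz}^{\fI}$ and $\frP_{\fnula}{}^{\lambda}{}_{\mu}$, read off coefficient functions, and use them as the top-order coefficients $[\fD^{\fI}]^{\mu}{}_{\lambda}$ in the decomposition \eqref{eq_Ddecompositionfinal} (with all lower-order coefficients zero); Theorem~\ref{thm_decompositionofdiffop} guarantees this assembles to a genuine $D \in \Dif^{k}_{\E}(U)$, and $\sigma_{U}(D) = T$ by \eqref{eq_Kmapsdual}. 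Local surjectivity plus the fact that the other two sheaves are vector bundles (hence the sequence is locally split) upgrades to an exact sequence of graded vector bundles; patching the local $D$'s is unnecessary because surjectivity of a sheaf morphism of vector bundles is a local condition. The $k = 0$ case is immediate since $\~\Omega^{0}_{\M} = \C^{\infty}_{\M}$, $\ul{\Hom}(S^{0}(T^{\ast}\M), \frD^{0}_{\E}) = \frD^{0}_{\E}$, and $\sigma_{U} = \id$.

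The main obstacle I anticipate is not any single step but the bookkeeping of Koszul signs: showing that the two candidate definitions of $\sigma_{U}(D)$ — the explicit one via the decomposition coefficients $[\fD^{\fI}]^{\mu}{}_{\lambda}$ and the invariant one via $\tfrac{1}{k!} D^{(k)}_{(f_{1},\dots,f_{k})}$ — agree requires carefully tracking the signs relating $\partial^{\tl}$ to $\partial^{\op}$, the signs in \eqref{eq_DjvshatDj}, and the signs built into the frame $\frP_{\fnula}{}^{\lambda}{}_{\mu}$ and into $\frE^{\lambda}{}_{\kappa}$ in \eqref{eq_frEstandardbasis}. I would handle this by working primarily with the invariant definition (where well-definedness, linearity, naturality, and the kernel computation are all clean consequences of Lemma~\ref{lem_olDjprops} and Remark~\ref{rem_localusingDkplus1}), and only invoke the explicit local formula at the one point where it is genuinely needed, namely surjectivity, where Theorem~\ref{thm_decompositionofdiffop} and \eqref{eq_Kmapsdual} do the heavy lifting and the sign conventions can be absorbed into the definition of the dual frame. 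A secondary, purely verificational point is confirming that $\sigma$ so defined is $\C^{\infty}_{\M}$-linear as a bundle map (not merely $\R$-linear), but this is immediate from \eqref{eq_iteratedcommislinear}/\eqref{eq_Kmapslinear}.
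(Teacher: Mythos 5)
Your proposal is correct and follows essentially the same route as the paper: the symbol is pinned down by requiring $[\sigma(D)](\dr{f}_{1}\odot\cdots\odot\dr{f}_{k})$ to be (a normalization of) $D^{(k)}_{(f_{1},\dots,f_{k})}$, realized in a chart on the basis $\{\dr{\bbz}^{\fI}\}$, with the kernel identified via Remark~\ref{rem_localusingDkplus1} and surjectivity obtained by building a preimage from the frame $\frP_{\fI}{}^{\lambda}{}_{\mu}$ and the dual functionals of Theorem~\ref{thm_decompositionofdiffop}. The only point where you are slightly quicker than warranted is the claim that Lemma~\ref{lem_olDjprops} alone makes the invariant formula descend to all of $\~\Omega^{k}_{\M}(U)$; strictly one also needs Proposition~\ref{tvrz_olfDformula} (the Taylor-expansion argument showing $D^{(k)}$ is determined by its values on coordinate monomials), which is exactly what the paper invokes for chart-independence.
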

\begin{proof}
Let $D \in \Dif^{k}_{\E}(U)$ for some $U \in \Op(M)$. For any $k$-tuple of functions $f_{1},\dots,f_{k} \in \C^{\infty}_{\M}(U)$, we want the symbol map to satisfy the equation
\begin{equation} \label{eq_symbolonktuple}
[\sigma(D)]( \dr{f}_{1} \odot \cdots \odot \dr{f}_{2}) = (-1)^{|f_{1}| + \cdots + |f_{k}|} D^{(k)}_{(f_{1},\dots,f_{k})} \in \Dif^{0}_{\E}(U),
\end{equation}
where we omit the explicit writing of the subscript $U$ throughout the proof. Now, suppose that $(U,\varphi)$ is a graded local chart, hence inducing a set $\{ \bbz^{A} \}_{A=1}^{n}$ of local coordinate functions. For each $\fI \in \ol{\N}{}^{n}(k)$, the formula (\ref{eq_symbolonktuple}) requires one to prescribe 
\begin{equation} \label{eq_sigmaUDonthebasis}
[\sigma(D)]( \dr{\bbz}^{\fI}) := (-1)^{|\bbz^{\fI}|} D^{(k)}_{(\bbz^{\fI}_{(k)})},
\end{equation}
see also (\ref{eq_zImultiindex}). Note that for $k = 0$, we declare $\dr{\bbz}^{\fnula} := 1$ and $D^{(0)}_{(\bbz^{\fnula}_{(0)})} := D$. 

The map $\sigma(D)$ is to be $\C^{\infty}_{\M}(U)$-linear of degree $|D|$. If we write a general $\omega \in \~\Omega^{k}_{\M}(U)$ as $\omega = \sum_{\fI \in \ol{\N}{}^{n}(k)} \frac{1}{\fI!} \omega_{\fI} \cdot \dr{\bbz}^{\fI}$, we thus have to define
\begin{equation} \label{eq_sigmaUDonomega}
[\sigma(D)](\omega) := \sum_{\fI \in \ol{\N}{}^{n}(k)} (-1)^{|D|(|\omega|-|\bbz^{\fI}|) + |\bbz^{\fI}|}\frac{1}{\fI!}  \omega_{\fI} \cdot D^{(k)}_{(\bbz^{\fI}_{(k)})}.
\end{equation}
It is obvious that such $\sigma(D)$ is $\C^{\infty}_{\M}(U)$-linear (in $\omega$) of degree $|D|$. Moreover, thanks to (\ref{eq_iteratedcommislinear}), this formula defines a degree zero $\C^{\infty}_{\M}(U)$-linear map
\begin{equation}
\sigma: \Dif^{k}_{\E}(U) \rightarrow \Lin^{\C^{\infty}_{\M}(U)}( \~\Omega^{k}_{\M}(U), \Dif^{0}_{\E}(U)). 
\end{equation}
Finally, if $(U',\varphi')$ is a different graded local chart, one can verify that the definitions agree on $U \cap U'$. This requires one to show that (\ref{eq_sigmaUDonomega}) is in fact independent of used local coordinates. We leave this for an interested reader. Note that one has to use the formula (\ref{eq_olfDformula}) together with (\ref{eq_olDjgsymmetry}) to find the transformation rules for the functions $\fK^{\fI}{}^{\mu}{}_{\lambda}(D)$. 

Since the formulas agree on the overlaps, they glue to
\begin{equation}
\sigma: \Gamma_{\frD^{k}_{\E}}(M) \rightarrow \Gamma_{\ul{\Hom}(S^{k}(T^{\ast}\M), \frD^{0}_{\E})}(M),
\end{equation}
which is of degree zero and $\C^{\infty}_{\M}(M)$. Hence by Proposition \ref{tvrz_morphisms}, this defines a degree zero graded vector bundle map $\sigma: \frD^{k}_{\E} \rightarrow \ul{\Hom}(S^{k}(T^{\ast}\M), \frD^{0}_{\E})$. Note that one can verify that such $\sigma$ satisfies (\ref{eq_symbolonktuple}) and for any graded local chart $(U,\varphi)$, the restriction of $\sigma$ to $U$ is given by the formula (\ref{eq_sigmaUDonomega}). 

Recall that $\sigma$ is surjective, if it is surjective as a sheaf morphism. This is equivalent to the surjectivity of the corresponding map of global sections. In fact, it suffices to show that each point $m \in M$ has a neighborhood $U \in \Op_{m}(M)$, such that the restriction of $\sigma$ to $U$ is surjective. Let $(U,\varphi)$ be a graded local chart for $\M$ and $\{ \Phi_{\lambda} \}_{\lambda=1}^{r}$ be a local frame for $\E$ over $U$, that is $\sigma$ is given by (\ref{eq_sigmaUDonomega}). Let $F \in \Lin^{\C^{\infty}_{\M}(U)}( \~\Omega^{k}_{\M}(U), \Dif^{0}_{\E}(U))$. For each $\fI \in \ol{\N}{}^{n}(k)$, one can thus write
\begin{equation}
F(\dr{\bbz}^{\fI}) = \fF^{\fI}{}^{\mu}{}_{\lambda} \cdot \frP_{\fnula}{}^{\lambda}{}_{\mu},
\end{equation}
where $\fF^{\fI}{}^{\mu}{}_{\lambda} \in \C^{\infty}_{\M}(U)$ have degree $|\fF^{\fI}{}^{\mu}{}_{\lambda}| = |F| + |\bbz^{\fI}| + |\vartheta_{\lambda}| - |\vartheta_{\mu}|$. Define $D \in \Dif^{k}_{\E}(U)$ by 
\begin{equation}
D := \sum_{\fJ \in \ol{\N}{}^{n}(k)} \frac{1}{\fJ!}(-1)^{|\bbz^{\fJ}|} \fF^{\fJ \mu}{}_{\lambda} \cdot \frP_{\fJ}{}^{\lambda}{}_{\mu}. 
\end{equation}
Note that $|D| = |F|$ and observe that $D^{(k)}_{(\bbz^{\fI}_{(k)})} = K^{\fI}{}^{\kappa}{}_{\rho}(D) \cdot \frP^{\fnula \rho}{}_{\kappa}$ for each $\fI \in \ol{\N}{}^{n}(k)$. 

It is then easy to utilize (\ref{eq_Kmapslinear}) and (\ref{eq_Kmapsdual}) to see that $[\sigma(D)](\dr{\bbz}^{\fI}) = F(\dr{\bbz}^{\fI})$. Hence the restriction of $\sigma$ to $U$ is surjective. 

Next, let $k > 0$. We have to prove that for every $U \in \Op(M)$, one has $\ker(\sigma) = \Dif^{k-1}_{\E}(U)$. Since we are comparing two subsheaves of $\Dif^{k}_{\E}$, one has to show that for each $m \in M$, there is $U \in \Op_{m}(M)$, such that this is true. But if $(U,\varphi)$ is a graded local chart, this statement is immediately verified thanks to the definition of $\Dif^{k-1}_{\E}(U)$ and the formulas (\ref{eq_symbolonktuple}, \ref{eq_sigmaUDonthebasis}). 

Finally, for $k = 0$, note that $\~\Omega^{0}_{\M} \equiv \C^{\infty}_{\M}$ and for each $D \in \Dif^{0}_{\E}(M)$ and $f \in \C^{\infty}_{\M}(M)$ one has $[\sigma(D)](f) = (-1)^{|D||f|} f \cdot D$, which is obviously an isomorphism. 
\end{proof} 
\begin{rem}
Let us rephrase the above result perhaps in a more standard way. Without going into details, there are canonical isomorphisms $\ul{\Hom}(\E,\E') \cong \E^{\ast} \otimes \E'$ and $S^{k}(T^{\ast}\M) \cong S^{k}(T\M)^{\ast}$, where the sheaf of sections of $S^{k}(T\M)$ is the sheaf of completely symmetric $k$-vector fields on $\M$. One can thus view the symbol map as a graded vector bundle map $\sigma: \frD^{k}_{\E} \rightarrow S^{k}(T\M) \otimes \E^{\ast} \otimes \E$. Since tensor products of graded vector bundles are a bit cumbersome to work with, we stick to the maps of sections.
\end{rem}
\section{Atiyah Lie algebroid of a graded vector bundle} \label{sec_atiyah}
In this section, we aim to construct a certain subbundle $\ol{\frD}{}^{k}_{\E} \subseteq \frD^{k}_{\E}$ for every $k \in \N_{0}$. It will come equipped with a canonical degree zero graded vector bundle map $\ell_{(k)}: \ol{\frD}{}^{k}_{\E} \rightarrow S^{k}(T\M)$. It turns out that this subclass of differential operators has a nice algebraic structure which is preserved by these maps. For $k = 1$, this will give us a non-trivial example of a graded Lie algebroid, see e.g. \S 6 of \cite{vysoky2022graded}. Let us start by some observations. 

\begin{tvrz}
Let $D \in \Dif^{k}_{\E}(U)$ and $D' \in \Dif^{m}_{\E}(U)$ for some $k,m \in \N_{0}$ and $U \in \Op(M)$. Then $D \circ D' \in \Dif^{k+m}_{\E}(U)$. 
\end{tvrz}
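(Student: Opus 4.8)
The plan is to prove this by a double induction, most naturally an induction on $m$ for fixed $k$ (or equivalently on $k+m$), using only the iterative Grothendieck-style definition of differential operators. The base case $m = 0$ is immediate: if $D' \in \Dif^{0}_{\E}(U)$ is $\C^{\infty}_{\M}(U)$-linear of degree $|D'|$, then for any $f \in \C^{\infty}_{\M}(U)$ one has $[D \circ D', \lambda_{f}] = D \circ [D', \lambda_{f}] + (-1)^{|D'||f|}[D,\lambda_{f}] \circ D'$, and since $[D',\lambda_{f}] = 0$ while $[D,\lambda_{f}] \in \Dif^{k-1}_{\E}(U)$ by assumption, a sub-induction on $k$ (the case $k=0$ being Example \ref{ex_CMlineararelocal}, as a composition of $\C^{\infty}_{\M}(U)$-linear maps is $\C^{\infty}_{\M}(U)$-linear) shows $D \circ D' \in \Dif^{k}_{\E}(U) \subseteq \Dif^{k+m}_{\E}(U)$.

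For the inductive step, assume the statement holds whenever the sum of orders is strictly less than $k+m$, with $m > 0$. Given $D \in \Dif^{k}_{\E}(U)$ and $D' \in \Dif^{m}_{\E}(U)$, I would compute, for arbitrary $f \in \C^{\infty}_{\M}(U)$, the graded commutator using the graded Leibniz rule for $[-, \lambda_{f}]$ (the same identity used repeatedly in the proofs of Section \ref{sec_diffop}):
\begin{equation}
[D \circ D', \lambda_{f}] = D \circ [D', \lambda_{f}] + (-1)^{|D'||f|} [D, \lambda_{f}] \circ D'.
\end{equation}
By definition $[D', \lambda_{f}] \in \Dif^{m-1}_{\E}(U)$ and $[D, \lambda_{f}] \in \Dif^{k-1}_{\E}(U)$. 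The induction hypothesis (applied to the pairs of orders $(k, m-1)$ and $(k-1, m)$, both summing to $k+m-1 < k+m$) gives $D \circ [D',\lambda_{f}] \in \Dif^{k+m-1}_{\E}(U)$ and $[D,\lambda_{f}] \circ D' \in \Dif^{k+m-1}_{\E}(U)$. Since $\Dif^{k+m-1}_{\E}(U)$ is a graded vector space, the sum $[D \circ D', \lambda_{f}]$ lies in $\Dif^{k+m-1}_{\E}(U)$. As $f$ was arbitrary, the definition of a $(k+m)$-th order differential operator yields $D \circ D' \in \Dif^{k+m}_{\E}(U)$.

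One has to be slightly careful about the edge cases of the induction so that the indices appearing in $\Dif^{k-1}$ and $\Dif^{m-1}$ never become negative: if $k = 0$ the first term is handled by the $m=0$-style argument above (and $D\circ[D',\lambda_f]$ with $D$ being $\C^{\infty}_{\M}(U)$-linear stays in $\Dif^{m-1}_{\E}(U)$), while $m > 0$ has already been assumed, so the second term's induction hypothesis with order $k-1 \geq 0$ is harmless — and the degenerate subcase is absorbed into the base of the induction. I do not expect a genuine obstacle here; the only point requiring mild attention is organizing the nested inductions (on $k+m$, with the base cases $m=0$ and $k=0$) so that every invocation of the hypothesis is on a strictly smaller value and every order index stays non-negative. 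The graded signs play no essential role beyond what is already encapsulated in the graded Leibniz identity for $[-,\lambda_f]$.
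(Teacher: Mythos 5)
Your proposal is correct and follows essentially the same route as the paper: induction on $k+m$, applying the graded Leibniz rule $[D \circ D', \lambda_{f}] = D \circ [D',\lambda_{f}] + (-1)^{|D'||f|}[D,\lambda_{f}] \circ D'$ and the induction hypothesis to each term. Your extra care with the edge cases $k=0$ or $m=0$ (where the relevant commutator simply vanishes) is sound but not a departure from the paper's argument.
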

\begin{proof}
Let us prove this statement by induction in $k + m \in \N_{0}$. For $k + m = 0$, one has $k = m = 0$, and the claim follows from the fact that the composition of two graded $\C^{\infty}_{\M}(U)$-linear maps is graded $\C^{\infty}_{\M}(U)$-linear. Hence assume that $k + m > 0$ and the claim is true for all pairs $k',m' \in \N_{0}$ with $k' + m' < k + m$. For any $f \in \C^{\infty}_{\M}(U)$, one has
\begin{equation}
[D \circ D', \lambda_{f}] = D \circ [D',\lambda_{f}] + (-1)^{|D'||f|} [D,\lambda_{f}] \circ D'.
\end{equation}
where we have used the Leibniz rule for $[\cdot,\cdot]$. But both terms are $(k+m-1)$-th order differential operators by the induction hypothesis. Since $f$ was arbitrary, one obtains $D \circ D' \in \Dif^{k+m}_{\E}(U)$. 
\end{proof}

Recall that $S^{k}(T\M)$ is a graded vector bundle, such that it sheaf of sections is a sheaf $\~\X^{k}_{\M}$ of completely symmetric $k$-vector fields. If $(U,\varphi)$ is a graded local chart inducing a set of local coordinate functions $\{ \bbz^{A} \}_{A=1}^{n}$, then the graded $\C^{\infty}_{\M}(U)$-module $\~\X^{k}_{\M}(U)$ is freely and finitely generated by a collection $\{ \partial_{\fI}^{\op} \}_{\fI \in \ol{\N}{}^{n}(k)}$, where for each $\fI \in \ol{\N}{}^{n}(k)$, one defines
\begin{equation} \label{eq_partialIop}
\partial^{\op}_{\fI} := \underbrace{\frac{\partial}{\partial \bbz^{n}} \odot \cdots \odot \frac{\partial}{\partial \bbz^{n}}}_{i_{n} \times} \odot \cdots \odot \underbrace{\frac{\partial}{\partial \bbz^{1}} \odot \cdots \odot \frac{\partial}{\partial \bbz^{1}}}_{i_{1} \times},
\end{equation}
where $\frac{\partial}{\partial \bbz^{A}} \in \~\X^{1}_{\M}(U) \equiv \X^{1}_{\M}(U)$ are the coordinate vector fields. There is a canonical identification of $S^{k}(T^{\ast}\M)$ with the dual graded vector bundle to $S^{k}(T\M)$. If $(U,\varphi)$ is a graded local chart, the action of the local frame $\{ \dr{\bbz}^{\fI} \}_{\fI \in \ol{\N}{}^{n}(k)}$ for $S^{k}(T^{\ast}\M)$ over $U$ on the above generators satisfies
\begin{equation}
\dr{\bbz}^{\fI}( \partial_{\fJ}^{\op}) = \fI! \cdot \delta^{\fI}_{\fJ},
\end{equation}
for all $\fI,\fJ \in \ol{\N}{}^{n}(k)$. We can use this to prove the following statement.
\begin{tvrz}
For each $k \in \N_{0}$, there is a canonical fiber-wise injective graded vector bundle map $I_{(k)}: S^{k}(T\M) \rightarrow \ul{\Hom}(S^{k}(T^{\ast}\M), \frD^{0}_{\E})$. 
\end{tvrz}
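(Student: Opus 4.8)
The plan is to assemble $I_{(k)}$ from two canonical pieces of data. The first is the identity endomorphism $\1_{\E}$, a distinguished degree-zero global section of $\frD^{0}_{\E} = \ul{\Hom}(\E,\E)$ — recall that $f \cdot \1_{\E} = \lambda_{f}$, so this really is an element of $\Dif^{0}_{\E}$. The second is the canonical pairing $\langle \cdot, \cdot \rangle \colon \~\Omega^{k}_{\M}(U) \times \~\X^{k}_{\M}(U) \to \C^{\infty}_{\M}(U)$ coming from the identification of $S^{k}(T^{\ast}\M)$ with the dual of $S^{k}(T\M)$, normalised so that $\langle \dr{\bbz}^{\fI}, \partial^{\op}_{\fJ} \rangle = \fI! \, \delta^{\fI}_{\fJ}$ in any graded local chart. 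For $X \in \~\X^{k}_{\M}(M)$ I would set
\[ [I_{(k)}(X)](\omega) := (-1)^{|X||\omega|} \, \langle \omega, X \rangle \cdot \1_{\E}, \qquad \omega \in \~\Omega^{k}_{\M}(M), \]
the sign being exactly what is needed to make $I_{(k)}(X)$ a $\C^{\infty}_{\M}(M)$-linear map of degree $|X|$ (a one-line check from $\C^{\infty}_{\M}$-bilinearity of the pairing). Then $I_{(k)}(X)$ is a global section of $\ul{\Hom}(S^{k}(T^{\ast}\M),\frD^{0}_{\E})$, the assignment $X \mapsto I_{(k)}(X)$ is $\C^{\infty}_{\M}(M)$-linear of degree zero, and Proposition \ref{tvrz_morphisms} upgrades it to a unique degree-zero graded vector bundle map $I_{(k)} \colon S^{k}(T\M) \to \ul{\Hom}(S^{k}(T^{\ast}\M),\frD^{0}_{\E})$. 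Since all ingredients are canonical, there is nothing to check regarding independence of auxiliary choices.

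For fibre-wise injectivity I would pass to a graded local chart $(U,\varphi)$ with coordinates $\{\bbz^{A}\}_{A=1}^{n}$ and a local frame $\{\Phi_{\lambda}\}_{\lambda=1}^{r}$ for $\E$. There $\{\partial^{\op}_{\fI}\}_{\fI \in \ol{\N}{}^{n}(k)}$ frames $S^{k}(T\M)$, $\{\dr{\bbz}^{\fI}\}_{\fI \in \ol{\N}{}^{n}(k)}$ frames $S^{k}(T^{\ast}\M)$, and $\{\frP_{\fnula}{}^{\lambda}{}_{\mu}\}$ frames $\frD^{0}_{\E}$; combining the latter two as in (\ref{eq_frEstandardbasis}) produces a local frame $\{\Theta_{\fI}{}^{\lambda}{}_{\mu}\}$ for $\ul{\Hom}(S^{k}(T^{\ast}\M),\frD^{0}_{\E})$, characterised by $\Theta_{\fI}{}^{\lambda}{}_{\mu}(\dr{\bbz}^{\fJ})$ being a fixed nonzero scalar times $\delta^{\fJ}_{\fI}\,\frP_{\fnula}{}^{\lambda}{}_{\mu}$. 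Using $\1_{\E} = \sum_{\lambda=1}^{r} \frP_{\fnula}{}^{\lambda}{}_{\lambda}$, which follows at once from (\ref{eq_frPfnula}), together with the normalisation of the pairing, one computes
\[ [I_{(k)}(\partial^{\op}_{\fI})](\dr{\bbz}^{\fJ}) = (-1)^{|\partial^{\op}_{\fI}||\dr{\bbz}^{\fJ}|}\, \fI!\, \delta^{\fJ}_{\fI} \cdot \1_{\E}, \]
so that $I_{(k)}(\partial^{\op}_{\fI})$ equals $\fI!$ (up to a fixed nonzero numerical factor) times $\sum_{\lambda=1}^{r}\Theta_{\fI}{}^{\lambda}{}_{\lambda}$. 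Thus the matrix of $I_{(k)}$ in these frames has constant (degree-zero real) entries, its $\fI$-th column is supported on the $\fI$-th block of the target frame, and columns belonging to different $\fI$ have disjoint supports; hence this matrix, and therefore its body evaluated at any $m \in U$, is injective. Since the fibres of source and target at $m$ are spanned by $\{[\partial^{\op}_{\fI}]\}$ and $\{[\Theta_{\fI}{}^{\lambda}{}_{\mu}]\}$ respectively, the induced map on fibres is injective at every $m$, which is exactly fibre-wise injectivity. (Here I tacitly assume $\trk(\E) \geq 1$; if $\E = 0$ then $\frD^{0}_{\E} = 0$ and the statement is vacuous.)

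The only delicate point is the sign and normalisation bookkeeping — both in fixing the sign in the definition of $I_{(k)}(X)$ and in matching $I_{(k)}(\partial^{\op}_{\fI})$ against the frame $\{\Theta_{\fI}{}^{\lambda}{}_{\mu}\}$ of the $\ul{\Hom}$-bundle — but these computations are routine; the entire mathematical content of fibre-wise injectivity is contained in the two facts $\fI! \neq 0$ and $\langle \dr{\bbz}^{\fI}, \partial^{\op}_{\fJ}\rangle = \fI!\,\delta^{\fI}_{\fJ}$. A more invariant route, which I would mention only in passing, is to use a canonical isomorphism $\ul{\Hom}(S^{k}(T^{\ast}\M),\frD^{0}_{\E}) \cong S^{k}(T\M) \otimes \ul{\Hom}(\E,\E)$ and to identify $I_{(k)}$ with $\1_{S^{k}(T\M)} \otimes \iota$, where $\iota \colon \C^{\infty}_{\M} \to \ul{\Hom}(\E,\E)$ sends $1 \mapsto \1_{\E}$ and has body at $m$ the unital inclusion $\R \hookrightarrow \End(\E_{m})$; fibre-wise injectivity of $\iota$ then propagates through tensoring with the locally free $S^{k}(T\M)$. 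This presumes the tensor-product formalism that the paper deliberately sidesteps, which is why the explicit frame computation above is the safer write-up.
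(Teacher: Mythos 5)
Your proposal is correct and follows essentially the same route as the paper: your definition $[I_{(k)}(X)](\omega) = (-1)^{|X||\omega|}\,\omega(X)\cdot\1_{\E}$ coincides with the paper's $(-1)^{|X||\omega|}\lambda_{\omega(X)}$, and fibre-wise injectivity is established in both cases by evaluating $I_{(k)}(\partial^{\op}_{\fI})$ against the standard-basis frame of $\ul{\Hom}(S^{k}(T^{\ast}\M),\frD^{0}_{\E})$ and observing that the resulting matrix has constant entries $\fI!\,\delta^{\fJ}_{\fI}$ up to signs. Your explicit caveat that injectivity of $I_{(k)}$ on sections does not by itself yield fibre-wise injectivity is precisely the point the paper also flags.
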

\begin{proof}
For every $X \in \~\X^{k}_{\M}(M)$ and every $\omega \in \~\Omega^{k}_{\M}(M)$, we declare
\begin{equation}
[I_{(k)}(X)](\omega) := (-1)^{|X||\omega|} \lambda_{\omega(X)} \in \Dif^{0}_{\E}(M). 
\end{equation}
It is straightforward to verify that $I_{(k)}(X)$ defines a graded $\C^{\infty}_{\M}(M)$-linear map of degree $|X|$, that is a global section of $\ul{\Hom}(S^{k}(T^{\ast}\M), \frD^{0}_{\E})$. Moreover, $I_{(k)}(X)$ is graded $\C^{\infty}_{\M}(M)$-linear of degree zero in $X$, hence it defines a graded vector bundle map $I_{(k)}$ by Proposition \ref{tvrz_morphisms}. One only has to prove that $I_{(k)}$ is fiber-wise injective. Note that this \textit{does not} follow automatically from the injectivity of $I_{(k)}$. 

Let $(U,\varphi)$ be a graded local chart for $\M$ and $\{ \Phi_{\lambda} \}_{\lambda=1}^{r}$ a local frame for $\E$ over $U$. Then we have the induced local frames:  $\{ \partial^{\op}_{\fI} \}_{\fI \in \ol{\N}{}^{n}(k)}$ for $S^{k}(T\M)$, the $k$-forms $\{ \dr{\bbz}^{\fI} \}_{\fI \in \ol{\N}{}^{n}(k)}$ for $S^{k}(T^{\ast}\M)$ and $\{ \frP_{\fnula}{}^{\lambda}{}_{\mu} \}_{\mu,\lambda=1}^{r}$ for $\frD^{0}_{\E}$. Finally, there is the induced ``standard basis'' local frame $\{ \frE_{\fI}{}^{\lambda}{}_{\mu} \}$ for $\ul{\Hom}(S^{k}(T^{\ast}\M), \frD^{0}_{\E})$ defined for each $\fI \in \ol{\N}{}^{n}(k)$ and $\mu,\lambda \in \{1,\dots,r\}$ to satisfy
\begin{equation}
\frE_{\fI}{}^{\lambda}{}_{\mu}(\omega) := \frac{1}{\fI!} (-1)^{(|\vartheta_{\mu}|-|\vartheta_{\lambda}| - |\bbz^{\fI}|)(|\omega| - |\bbz^{\fI}|)} \omega_{\fI} \cdot \frP_{\fnula}{}^{\lambda}{}_{\mu},
\end{equation} 
for all $\omega = \sum_{\fJ \in \ol{\N}{}^{n}(k)} \frac{1}{\fJ!} \omega_{\fJ} \cdot \dr{\bbz}^{\fJ}$. This is an explicit example of the maps (\ref{eq_frEstandardbasis}). One finds
\begin{equation}
\begin{split}
[I_{(k)}|_{U}(\partial_{\fI}^{\op})](\omega) = & \ (-1)^{|\bbz^{\fI}||\omega|} \omega(\partial_{\fI}^{\op}) \cdot \1_{\Gamma_{\E}(U)} = (-1)^{|\bbz^{\fI}||\omega|} w_{\fI} \cdot \1_{\Gamma_{\E}(U)} \\
= & \ (-1)^{|\bbz^{\fI}||\omega|} \omega_{\fI} \delta^{\mu}_{\lambda} \cdot \frP_{\fnula}{}^{\lambda}{}_{\mu} = (-1)^{|\bbz^{\fI}|} \fI!  \delta^{\mu}_{\lambda} \cdot \frE_{\fI}{}^{\lambda}{}_{\mu}(\omega). 
\end{split}
\end{equation}
For every $\fI \in \ol{\N}{}^{n}(k)$, we have thus obtained the formula
\begin{equation}
I_{(k)}|_{U}( \partial^{\op}_{\fI}) = \sum_{\lambda=1}^{r} (-1)^{|\bbz^{\fI}|} \fI! \cdot \frE_{\fI}{}^{\lambda}{}_{\lambda}
\end{equation}
For every $m \in U$, the induced graded linear map on the fibers thus takes the form 
\begin{equation}
[I_{(k)}]_{m}( \partial_{\fI}^{\op}|_{m}) = \sum_{\lambda=1}^{r} (-1)^{|\bbz^{\fI}|} \fI! \cdot \frE_{\fI}{}^{\lambda}{}_{\lambda}|_{m}. 
\end{equation}
It is not difficult to see that it is injective (that is injective in each degree). Recall that if $\{ \Phi_{\lambda} \}_{\lambda=1}^{r}$ is a local frame for $\E$ over $U$, then $\{ \Phi_{\lambda}|_{m} \}_{\lambda=1}^{r}$ forms a total basis of the fiber $\E_{m}$ for any $m \in U$. 
\end{proof}
\begin{tvrz}
Let $k \in \N_{0}$ be arbitrary. Then there is a graded vector bundle $\ol{\frD}{}^{k}_{\E}$ over $\M$, together with a pair of graded vector bundle maps $J_{(k)}: \ol{\frD}{}^{k}_{\E} \rightarrow \frD^{k}_{\E}$ and $\ell_{(k)}: \ol{\frD}{}^{k}_{\E} \rightarrow S^{k}(T\M)$, fitting into the universal pullback diagram
\begin{equation} \label{eq_pullbackfromdiff}
\begin{tikzcd}
\ol{\frD}{}^{k}_{\E} \arrow{r}{\ell_{(k)}} \arrow{d}{J_{(k)}} & S^{k}(T\M) \arrow{d}{I_{(k)}} \\
\frD^{k}_{\E} \arrow{r}{\sigma} & \ul{\Hom}(S^{k}(T^{\ast}\M), \frD^{0}_{\E}). 
\end{tikzcd}
\end{equation}
$\ell_{(k)}$ is (fiber-wise) surjective and $J_{(k)}$ is fiber-wise injective. Consequently $\ol{\frD}{}^{k}_{\E}$ can be viewed as a subbundle of $\frD^{k}_{\E}$, the corresponding sheaf of graded $\C^{\infty}_{\M}$-submodules $\ol{\Dif}{}^{k}_{\E}$ being
\begin{equation} \label{eq_olDifkEsheaf}
\ol{\Dif}{}^{k}_{\E}(U) = \{ D \in \Dif^{k}_{\E}(U) \; | \; \sigma(D) = I_{(k)}(X) \text{ for some } X \in \~\X^{k}_{\M}(U) \},
\end{equation}
for each $U \in \Op(M)$. Moreover, the new graded vector bundle fits into the short exact sequence of graded vector bundles over $\M$:
\begin{equation} \label{eq_SESdidifSkTM}
\begin{tikzcd}
0 \arrow{r} & \frD^{k-1}_{\E} \arrow[hookrightarrow]{r}& \ol{\frD}{}^{k}_{\E} \arrow{r}{\ell_{(k)}} & S^{k}(T\M) \arrow{r} & 0. 
\end{tikzcd}
\end{equation}
\end{tvrz}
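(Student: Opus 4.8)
The plan is to build $\ol{\frD}{}^{k}_{\E}$ directly as the subsheaf of $\frD^{k}_{\E}$ described by (\ref{eq_olDifkEsheaf}), with $J_{(k)}$ the inclusion and $\ell_{(k)}$ the map $D\mapsto X$; once this is seen to be a graded vector bundle, the square (\ref{eq_pullbackfromdiff}) is essentially tautological, and the only genuine computation is to produce a local frame. First I would note that $I_{(k)}$ is injective on sections, not merely fiber-wise: if $I_{(k)}(X)=0$ then $\omega(X)=0$ for all $\omega\in\~\Omega^{k}_{\M}$, and testing against the local generators $\dr{\bbz}^{\fI}$, $\fI\in\ol{\N}{}^{n}(k)$, forces all coefficients of $X$ in a local frame of $S^{k}(T\M)$ to vanish, so $X=0$. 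Granting this, $U\mapsto\ol{\Dif}{}^{k}_{\E}(U)$ is a sheaf of graded $\C^{\infty}_{\M}$-submodules of $\Dif^{k}_{\E}$: the condition $\sigma(D)=I_{(k)}(X)$ is stable under restriction and under the $\C^{\infty}_{\M}$-action since $\sigma$ and $I_{(k)}$ are $\C^{\infty}_{\M}$-linear sheaf morphisms, and it glues because on overlaps two local solutions $X_{\alpha},X_{\beta}$ satisfy $I_{(k)}(X_{\alpha}-X_{\beta})=0$, hence agree. The map $\ell_{(k)}\colon D\mapsto X$ is then well defined and $\C^{\infty}_{\M}$-linear (again by injectivity of $I_{(k)}$), and with $J_{(k)}$ the inclusion the square (\ref{eq_pullbackfromdiff}) commutes.

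The main step is to show $\ol{\Dif}{}^{k}_{\E}$ is locally freely and finitely generated of constant graded rank, so that it is the sheaf of sections of a graded vector bundle $\ol{\frD}{}^{k}_{\E}$. Fix a graded chart $(U,\varphi)$ and a local frame $\{\Phi_{\lambda}\}_{\lambda=1}^{r}$ for $\E$, and write $D\in\Dif^{k}_{\E}(U)$ in the canonical form of Theorem \ref{thm_decompositionofdiffop}. Combining the formula (\ref{eq_sigmaUDonthebasis}) for $\sigma(D)$ on the $\dr{\bbz}^{\fI}$ with the explicit local form of $I_{(k)}$ from the preceding proposition, one finds that $\sigma(D)=I_{(k)}(X)$ for some $X$ precisely when, for every $\fI\in\ol{\N}{}^{n}(k)$, the top-order coefficient matrix is scalar, $[\fD^{\fI}]^{\mu}{}_{\lambda}=c^{\fI}\,\delta^{\mu}_{\lambda}$ for a (necessarily unique) $c^{\fI}\in\C^{\infty}_{\M}(U)$, while the lower-order coefficients remain arbitrary; the $c^{\fI}$ record $X$. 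Hence $\ol{\Dif}{}^{k}_{\E}(U)$ is freely and finitely generated by the homogeneous collection consisting of all $\frP_{\fI}{}^{\lambda}{}_{\mu}$ with $w(\fI)\le k-1$, together with the operators $\sum_{\lambda=1}^{r}\frP_{\fI}{}^{\lambda}{}_{\lambda}$ (that is, $\partial^{\op}_{\fI}$ acting componentwise, of degree $-|\bbz^{\fI}|$) for $\fI\in\ol{\N}{}^{n}(k)$. Its graded rank in degree $j$ counts the triples $(\fI,\mu,\lambda)$ with $w(\fI)\le k-1$ and $|\vartheta_{\mu}|-|\vartheta_{\lambda}|-|\bbz^{\fI}|=j$, plus the indices $\fI\in\ol{\N}{}^{n}(k)$ with $|\bbz^{\fI}|=-j$; this depends only on $\gdim(\M)$ and $\grk(\E)$, so it is constant. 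Reading off this frame also shows $J_{(k)}$ is fiber-wise injective and that $\ell_{(k)}$ kills the $w(\fI)\le k-1$ part and sends $\sum_{\lambda}\frP_{\fI}{}^{\lambda}{}_{\lambda}$ to a nonzero multiple of $\partial^{\op}_{\fI}$, so $\ell_{(k)}$ is fiber-wise surjective. (Alternatively, once $\ol{\frD}{}^{k}_{\E}$ is known to be a bundle, (\ref{eq_pullbackfromdiff}) is the pullback of the fiber-wise surjective $\sigma$ along $I_{(k)}$, so these properties are automatic.)

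For the universal property, given graded vector bundle maps $a\colon\F\to\frD^{k}_{\E}$ and $b\colon\F\to S^{k}(T\M)$ with $\sigma\circ a=I_{(k)}\circ b$, every $\phi\in\Gamma_{\F}(U)$ satisfies $\sigma(a(\phi))=I_{(k)}(b(\phi))$, so $a_{U}$ takes values in $\ol{\Dif}{}^{k}_{\E}(U)$ and thus factors through a $\C^{\infty}_{\M}$-linear sheaf morphism $\F\to\ol{\frD}{}^{k}_{\E}$ whose composites with $J_{(k)}$ and $\ell_{(k)}$ recover $a$ and $b$; it is unique because $J_{(k)}$ is an inclusion of submodules over every open set, hence a monomorphism. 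Finally, for the short exact sequence (\ref{eq_SESdidifSkTM}) with $k>0$: by construction $\ell_{(k)}(D)=0$ iff $\sigma(D)=0$, i.e. $D\in\ker\sigma=\Dif^{k-1}_{\E}(U)$ by Proposition \ref{tvrz_symbolmap}, and conversely every $D\in\Dif^{k-1}_{\E}(U)$ has vanishing symbol, hence lies in $\ol{\Dif}{}^{k}_{\E}(U)$; since by Theorem \ref{thm_decompositionofdiffop} the local frame $\{\frP_{\fI}{}^{\lambda}{}_{\mu}:w(\fI)\le k-1\}$ of $\frD^{k-1}_{\E}$ is exactly the ``$w(\fI)\le k-1$'' part of the frame of $\ol{\frD}{}^{k}_{\E}$ found above, the inclusion $\frD^{k-1}_{\E}\hookrightarrow\ol{\frD}{}^{k}_{\E}$ is fiber-wise injective with image equal to $\ker\ell_{(k)}$ at every point, so the sequence is exact in each degree on fibers. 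The case $k=0$ is immediate: $S^{0}(T\M)=\C^{\infty}_{\M}$, the symbol $\sigma\colon\frD^{0}_{\E}\to\ul{\Hom}(\C^{\infty}_{\M},\frD^{0}_{\E})$ is the canonical isomorphism, $\ol{\frD}{}^{0}_{\E}$ is the sub-line-bundle of scalar operators $\lambda_{h}\in\ul{\Hom}(\E,\E)$, $\ell_{(0)}$ is an isomorphism onto $\C^{\infty}_{\M}$, and with $\frD^{-1}_{\E}:=0$ the sequence (\ref{eq_SESdidifSkTM}) reduces to $0\to0\to\ol{\frD}{}^{0}_{\E}\to\C^{\infty}_{\M}\to0$. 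The principal obstacle is the second paragraph — extracting from the explicit symbol formula and the frame $\{\frP_{\fI}{}^{\lambda}{}_{\mu}\}$ the linear-algebraic shape of condition (\ref{eq_olDifkEsheaf}) in coordinates, and thereby reading off a homogeneous local frame of constant graded rank; the remaining steps are formal sheaf- and pullback-theoretic bookkeeping.
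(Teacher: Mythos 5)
Your proof is correct, but it takes a genuinely different route from the paper. The paper works abstractly: it forms the graded vector bundle map $K := \sigma - I_{(k)} \circ \pr_{2}$ on the direct sum $\frD^{k}_{\E} \oplus S^{k}(T\M)$, defines $\ol{\Dif}{}^{k}_{\E} := \ker(K)$, and invokes the general fact that the kernel of a surjective graded vector bundle map is again a subbundle; the maps $J_{(k)}$ and $\ell_{(k)}$ are then the restricted projections, and surjectivity of $\ell_{(k)}$, fiber-wise injectivity of $J_{(k)}$, the pullback property, and the exactness of (\ref{eq_SESdidifSkTM}) are all read off from the abstract construction together with (\ref{eq_SESdifdifhom}). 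You instead realize $\ol{\frD}{}^{k}_{\E}$ directly as the subsheaf (\ref{eq_olDifkEsheaf}) of $\frD^{k}_{\E}$ and prove it is a bundle by exhibiting an explicit local frame, namely $\{ \frP_{\fI}{}^{\lambda}{}_{\mu} \; : \; w(\fI) \leq k-1 \}$ together with the trace operators $\sum_{\lambda} \frP_{\fI}{}^{\lambda}{}_{\lambda}$ for $\fI \in \ol{\N}{}^{n}(k)$; your characterization of the condition $\sigma(D) = I_{(k)}(X)$ as ``the top-order coefficient matrices $[\fD^{\fI}]^{\mu}{}_{\lambda}$ are scalar'' is correct, as one checks by comparing (\ref{eq_sigmaUDonthebasis}) with the local form of $I_{(k)}$ and using the fact that $\{\frP_{\fnula}{}^{\lambda}{}_{\mu}\}$ is a frame for $\frD^{0}_{\E}$. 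What your approach buys is concreteness: an explicit homogeneous local frame for $\ol{\frD}{}^{k}_{\E}$ (which the paper never writes down) and a hands-on verification of the rank count and of exactness on fibers; the cost is that you must separately establish the injectivity of $I_{(k)}$ on sections (to make $\ell_{(k)}$ well defined), the sheaf and submodule axioms, and the universal property, all of which the paper gets for free from the structure theory of $\gVBun^{\infty}_{\M}$. Both arguments are complete; yours leans on Theorem \ref{thm_decompositionofdiffop} and the explicit local computations of Section \ref{sec_symbol} where the paper leans on category-level facts about kernels and pullbacks of graded vector bundles.
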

\begin{proof}
The construction of the universal pullback diagram is the same as for ordinary vector bundles. One constructs a graded vector bundle map 
\begin{equation}
K: \frD^{k}_{\E} \oplus S^{k}(T\M) \rightarrow \ul{\Hom}(S^{k}(T^{\ast}\M), \frD^{0}_{\E}),
\end{equation}
by declaring $K(D,X) := \sigma(D) - I_{(k)}(X)$ for all $(D,X) \in \Dif^{k}_{\E}(M) \oplus \~\X^{k}_{\M}(M)$. By Proposition \ref{tvrz_morphisms}, this extends to the morphism of the corresponding sheaves of sections, hence one can define 
\begin{equation}
\ol{\Dif}{}^{k}_{\E} := \ker(K). 
\end{equation}
Since $K$ is surjective thanks to the surjectivity of $\sigma$, this defines a sheaf of sections of a subbundle $\ol{\frD}{}^{k}_{\E}$ of $\frD^{k}_{\E} \oplus S^{k}(T\M)$. The maps $J_{(k)}$ and $\ell_{(k)}$ are then simply the restrictions of the corresponding projections. It is straightforward to prove that (\ref{eq_pullbackfromdiff}) now forms a universal pullback diagram, $\ell_{(k)}$ inherits the surjectivity from $\sigma$ and $J_{(k)}$ its fiber-wise injectivity from $I_{(k)}$. The description (\ref{eq_olDifkEsheaf}) then follows from the construction. The exactness of (\ref{eq_SESdidifSkTM}) follows immediately from the exactness of (\ref{eq_SESdifdifhom}) and the fact that $\ker(\sigma) \subseteq \ol{\Dif}{}^{k}_{\E}$ and it coincides with $\ker(\ell_{(k)})$. 
\end{proof}

\begin{rem} \label{rem_lkDformula}
Observe that it follows from the definitions and (\ref{eq_symbolonktuple}) that for $D \in \ol{\Dif}{}^{k}_{\E}(U)$, the $k$-vector field $\ell_{(k)}(D)$ is uniquely determined by the formula
\begin{equation} \label{eq_lkDformula}
D^{(k)}_{(f_{1},\dots,f_{k})} = (-1)^{|f_{1}| + \dots + |f_{k}|} [\ell_{(k)}(D)]( \dr{f}_{1}, \dots, \dr{f}_{k}) \cdot \1_{\Gamma_{\E}(U)}.
\end{equation}
In particular, for $k = 0$, one has $D = [\ell_{(0)}](D) \cdot \1_{\Gamma_{\E}(U)}$. This shows that $D \in \ol{\Dif}{}^{0}_{\E}(U$), iff $D = \lambda_{f}$ for some $f \in \C^{\infty}_{\M}(U)$. Note that $\ell_{(0)}(\lambda_{f}) = f$. 
\end{rem}

We can now prove the main theorem of this section, relating graded commutators of (a subclass of) differential operators to a canonical algebraic structure on completely symmetric multivector fields induced by the Schouten-Nijenhuis bracket. We will provide all necessary details in course of the proof, but we refer the interested reader to Proposition A.12 in \cite{vysoky2022graded}. 

\begin{theorem}
Let $k,m \in \N_{0}$, $U \in \Op(M)$, $D \in \ol{\Dif}{}^{k}_{\M}(U)$ and $D' \in \ol{\Dif}{}^{m}_{\M}(U)$ be arbitrary. 

Then $[D,D'] \in \ol{\Dif}{}^{k+m-1}_{\M}(U)$ and there holds the formula 
\begin{equation} \label{eq_ellmapsofgcommareSN}
\ell_{(k+m-1)}([D,D']) = [\ell_{(k)}(D), \ell_{(m)}(D')]_{S},
\end{equation}
where $[\cdot,\cdot]_{S}$ is the Schouten-Nijenhuis bracket of completely symmetric multivector fields on $\M$. 
\end{theorem}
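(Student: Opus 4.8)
## Proof proposal

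The plan is to combine two facts: differential operators are closed under composition (with orders adding), hence under graded commutator (with order $k+m-1$); and the symbol map $\sigma$ intertwines the commutator with an algebraic operation on $\ul{\Hom}(S^{\bullet}(T^{\ast}\M),\frD^{0}_{\E})$ that, under the embedding $I_{(\bullet)}$, corresponds exactly to the Schouten--Nijenhuis bracket. First I would establish that $[D,D'] \in \Dif^{k+m-1}_{\E}(U)$: this does not quite follow from the composition proposition (that only gives order $k+m$), so one argues directly by induction on $k+m$, using the graded Leibniz rule $[[D,D'],\lambda_f] = [[D,\lambda_f],D'] \pm [D,[D',\lambda_f]]$ together with the fact that $[D,\lambda_f]$ drops the order of $D$ by one and likewise for $D'$; the two resulting terms are then of order $\le (k-1)+m-1$ and $k+(m-1)-1$ respectively, hence of order $k+m-2$, so $[D,D']$ has order $k+m-1$.

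Next I would compute $\sigma([D,D'])$. The cleanest route is to work locally in a graded chart $(U,\varphi)$ and use the characterization of the symbol via iterated commutators, namely (\ref{eq_symbolonktuple}) and the local formula (\ref{eq_sigmaUDonthebasis}): one needs $[D,D']^{(k+m-1)}_{(f_1,\dots,f_{k+m-1})}$ evaluated on functions, and the key combinatorial identity is that for $D$ of order $k$ and $D'$ of order $m$, the iterated commutator of $[D,D']$ with $k+m-1$ functions expands (via the graded Jacobi/Leibniz identities, with all terms where more than $k$ functions hit $D$ or more than $m$ hit $D'$ vanishing) into a sum over the ways of distributing $k{-}1$ of the functions to ``act through'' $D$ and the remaining $m{-}1{+}1$ through $D'$ and vice versa. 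Matching this against the local coordinate expression of the Schouten--Nijenhuis bracket of $\ell_{(k)}(D)$ and $\ell_{(m)}(D')$ — which in coordinates $\{\partial^{\op}_{\fI}\}$ is the obvious ``derivation on each slot'' bracket with the appropriate Koszul signs, cf.\ Proposition A.12 of \cite{vysoky2022graded} — gives the result. Along the way one sees that $\sigma([D,D'])$ lands in the image of $I_{(k+m-1)}$ (because both contributions do), which simultaneously proves $[D,D'] \in \ol{\Dif}{}^{k+m-1}_{\E}(U)$ and, by the defining pullback property (\ref{eq_pullbackfromdiff}) together with the fiber-wise injectivity of $I_{(k+m-1)}$, identifies $\ell_{(k+m-1)}([D,D'])$ with $[\ell_{(k)}(D),\ell_{(m)}(D')]_{S}$.

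An alternative, more conceptual organization that avoids the heaviest bookkeeping: verify the formula first for the ``elementary'' generators, i.e.\ for $D = f\cdot \frP_{\fI}{}^{\lambda}{}_{\lambda}$-type operators whose symbols are $f\cdot\partial^{\op}_{\fI}$ (more precisely, operators of the form $\lambda_f$ and first-order operators, from which all of $\ol{\Dif}{}^{k}_{\E}$ is built by composition up to lower order), then bootstrap to general $D,D'$ using that both sides of (\ref{eq_ellmapsofgcommareSN}) are biderivations in the appropriate sense: the left side via the Leibniz rule for $[\cdot,\cdot]$ and the compatibility $\ell_{(k+1)}(D\circ\lambda_g) = \dots$, the right side via the Leibniz property of $[\cdot,\cdot]_{S}$. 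Both sides also only depend on $D,D'$ through their symbols modulo lower order, so one reduces to the symbol level where the identity is the known graded Schouten--Nijenhuis calculus.

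I expect the main obstacle to be the sign bookkeeping in the combinatorial expansion of the iterated commutator $[D,D']^{(k+m-1)}_{(f_1,\dots,f_{k+m-1})}$ and in matching the Koszul signs to the standard presentation of $[\cdot,\cdot]_{S}$ on completely symmetric multivector fields; the structural claims ($[D,D']$ has order $k+m-1$, $\sigma$ is a symbol, $I$ is fiber-wise injective) are either already available in the excerpt or follow by the induction above, so the real work is purely the sign-tracking, which is why I would lean on the reduction-to-generators strategy and cite Proposition A.12 of \cite{vysoky2022graded} for the final comparison.
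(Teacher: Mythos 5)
Your proposal is correct in substance and its first route is close in spirit to the paper's, but the paper's actual argument is leaner in a way worth noting. The paper does not separately compute $\sigma([D,D'])$ and then invoke the pullback square (\ref{eq_pullbackfromdiff}) with the fiber-wise injectivity of $I_{(k+m-1)}$; instead it relies entirely on the pointwise characterization (\ref{eq_lkDformula}): an operator $E$ lies in $\ol{\Dif}{}^{j}_{\E}(U)$ with $\ell_{(j)}(E)=X$ precisely when $E^{(j)}_{(f_{1},\dots,f_{j})}$ equals the scalar $(-1)^{|f_{1}|+\dots+|f_{j}|}X(\dr f_{1},\dots,\dr f_{j})$ times $\1_{\Gamma_{\E}(U)}$. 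With that, a single induction on $k+m$ proves membership and the bracket formula simultaneously: the base case is the explicit verification that $\ell_{(k-1)}([D,\lambda_{f}]) = [\ell_{(k)}(D),f]_{S} = (-1)^{|f|(1+|D|)}j_{\dr f}(\ell_{(k)}(D))$ (property (i) of the Schouten--Nijenhuis bracket), and the inductive step peels off one function at a time, $[D,D']^{(k+m-1)}_{(f_{1},\dots)} = [D,D'^{(1)}_{(f_{1})}]^{(k+m-2)}_{(\dots)} \pm [D^{(1)}_{(f_{1})},D']^{(k+m-2)}_{(\dots)}$, so that the graded Jacobi identity for the commutator is matched term-for-term by the graded Jacobi identity (\ref{eq_SchoutengJI}) for $[\cdot,\cdot]_{S}$. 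This completely avoids the multinomial expansion you flag as the main obstacle (and whose description in your proposal -- distributing $k-1$ versus $m$ functions -- is where the surviving cross terms like $[D^{(k)}_{(\dots)},D'^{(m-1)}_{(\dots)}]$ would still require care, since $D'^{(m-1)}_{(\dots)}$ is only first order, not order zero). Your alternative reduction-to-generators route would also work but again requires more scaffolding (closure of $\ol{\Dif}{}^{\bullet}_{\E}$ under composition modulo lower order, biderivation properties on both sides) than the one-function-at-a-time induction. In short: your ingredients are all correct, but if you extend the induction you already use for the order statement to carry the formula (\ref{eq_ellmapsofgcommareSN}) along with it via (\ref{eq_lkDformula}), the sign bookkeeping collapses to the single base-case computation.
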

\begin{proof}
Recall that for each $k,m \in \N_{0}$ and $U \in \Op(M)$, $[\cdot,\cdot]_{S}$ is a degree zero $\R$-bilinear bracket
\begin{equation}
[\cdot,\cdot]_{S}: \~\X^{k}_{\M}(U) \times \~\X^{m}_{\M}(U) \rightarrow \~\X^{k+m-1}_{\M}(U),
\end{equation}
having the following properties:
\begin{enumerate}[(i)]
\item For every $f \in \C^{\infty}_{\M}(M)$ and $X \in \~\X^{k}_{\M}(U)$, one has $[f,X]_{S} = (-1)^{|f|+1} j_{\dr{f}}(X)$, where $j$ is the interior product. For $k=m=1$, it coincides with the graded commutator of vector fields.
\item It is graded skew-symmetric, that is $[X,Y]_{S} = -(-1)^{|X||Y|}[Y,X]_{S}$.
\item It satisfies the graded Leibniz rule with respect to the symmetric product $\odot$: 
\begin{equation}
[X,Y \odot Z]_{S} = [X,Y]_{S} \odot Z + (-1)^{|X||Y|} Y \odot [X,Z]_{S}.
\end{equation}
\item It satisfies the graded Jacobi identity:
\begin{equation} \label{eq_SchoutengJI}
[X,[Y,Z]_{S}]_{S} = [[X,Y]_{S},Z]_{S} + (-1)^{|X||Y|} [Y, [X,Z]_{S}]_{S}. 
\end{equation}
\end{enumerate}
$[\cdot,\cdot]_{S}$ is called the \textbf{Schouten-Nijenhuis bracket} and it is uniquely determined by the properties (i)-(iv). Note that they are significantly simpler then for the analogous bracket for completely \textit{skew}-symmetric multivector fields.

Now, for any $f \in \C^{\infty}_{\M}(U)$, one has $\lambda_{f} \in \ol{\Dif}{}^{0}_{\E}(U)$ by Remark \ref{rem_lkDformula}. For any $D \in \ol{\Dif}{}^{k}_{\E}(U)$, we thus expect $D^{(1)}_{(f)} \equiv [D,\lambda_{f}] \in \ol{\Dif}{}^{k-1}_{\M}(U)$. For any $g_{1},\dots,g_{k-1} \in \C^{\infty}_{\M}(U)$, one has 
\begin{equation}
\begin{split}
[D^{(1)}_{(f)}]^{(k-1)}_{(g_{1},\dots,g_{k-1})} = & \  D^{(k)}_{(f,g_{1},\dots,g_{k-1})} \\
= & \  (-1)^{|f|+|g_{1}|+\dots+|g_{k-1}|} [\ell_{(k)}(D)]( \dr{f}, \dr{g}_{1}, \dots, \dr{g}_{k-1}) \cdot \1_{\Gamma_{\E}(U)} \\
= & \ (-1)^{|f|(1 + |D|) + |g_{1}| + \dots + |g_{k-1}|} [j_{\dr{f}}( \ell_{(k)}(D))]( \dr{g}_{1}, \dots, \dr{g}_{k-1}) \cdot \1_{\Gamma_{\E}(U)},
\end{split}
\end{equation}
where we have used (\ref{eq_lkDformula}) for $D$ and the definition of the interior product. But using (\ref{eq_lkDformula}) again for $D^{(1)}_{(f)}$, this at once proves that $D^{(1)}_{(f)} \in \ol{\Dif}{}^{k-1}_{\E}(U)$ and 
\begin{equation} \label{eq_ellmapsofcommareSNforknula}
\ell_{(k-1)}( D^{(1)}_{(f)}) = (-1)^{|f|(1+|D|)} j_{\dr{f}}( \ell_{(k)}(D)) = [\ell_{(k)}(D), f]_{S}. 
\end{equation} 
Note that this is in accordance with (\ref{eq_ellmapsofgcommareSN}) since $f = \ell_{(0)}(\lambda_{f})$ by Remark \ref{rem_lkDformula}. 

Let us now prove the claim by induction in $k + m \in \N_{0}$. For $k + m = 0$, one has $D = \lambda_{f}$ and  $D' = \lambda_{g}$ by Remark \ref{rem_lkDformula}. The formula (\ref{eq_ellmapsofgcommareSN}) thus follows from $[\lambda_{f}, \lambda_{g}] = 0$. Next, let $k + m = 1$. Since the equation (\ref{eq_ellmapsofgcommareSN}) is graded skew-symmetric in $(D,D')$, we may assume that $k = 1$ and $m = 0$. But we have already proved this case above. 

Finally, suppose $k + m > 1$ and assume that the formula holds for all differential operators whose sum of degrees is strictly less then $k + m$. Let $f_{1},\dots,f_{k+m-1} \in \C^{\infty}_{\M}(U)$. Then 
\begin{equation}
\begin{split}
[D,D']^{(k+m-1)}_{(f_{1},\dots,f_{k+m-1})} = & \ [[D,D'], \lambda_{f_{1}}]^{(k+m-2)}_{(f_{2},\dots,f_{k+m-1})} \\
= & \ [D, D'^{(1)}_{(f_{1})} ]_{(f_{2}, \dots, f_{k+m-1})}^{(k+m-2)} +(-1)^{|D'||f_{1}|} [D^{(1)}_{(f_{1})}, D']^{(k+m-2)}_{(f_{2},\dots,f_{k+m-1})}, \\
\end{split}
\end{equation}
where we have used the graded Jacobi identity. Using the induction hypothesis together with the formulas (\ref{eq_lkDformula}, \ref{eq_ellmapsofcommareSNforknula}) and the graded Jacobi identity (\ref{eq_SchoutengJI}) for $[\cdot,\cdot]_{S}$ now leads to 
\begin{equation}
\begin{split}
[D,D'&]^{(k+m-1)}_{(f_{1},\dots,f_{k+m-1})} = \\
= & \ (-1)^{|f_{2}| + \dots + |f_{k+m-1}|} [[\ell_{(k)}(D),\ell_{(m)}(D')]_{S},f]_{S}(\dr{f}_{2}, \dots, \dr{f}_{k+m-1}) \cdot \1_{\Gamma_{\E}(U)} \\
= & \ (-1)^{|f_{1}| + \dots + |f_{k+m-1}|} [\ell_{(k)}(D), \ell_{(m)}(D')]_{S}( \dr{f}_{1}, \dots, \dr{f}_{k+m-1}) \cdot \1_{\Gamma_{\E}(U)}.
\end{split}
\end{equation}
But this and the (\ref{eq_lkDformula}) prove at once that $[D,D'] \in \ol{\Dif}{}^{k+m-1}_{\E}(U)$ and the formula (\ref{eq_ellmapsofgcommareSN}). This finishes the induction step and thus concludes the proof.
\end{proof}
\begin{cor}
The elements of $\ol{\Dif}{}^{1}_{\E}(U)$ are called the \textbf{derivative endomorphisms of $\E$ over $U$}. We also call $\ol{\frD}{}^{1}_{\E}$ the \textbf{Atiyah bundle of a graded vector bundle $\E$} and denote it as $\At_{\E}$. 

The triple $(\At_{\E}, \ell_{(1)}, [\cdot,\cdot])$ forms an example of a transitive graded Lie algebroid of degree zero, called the \textbf{Atiyah Lie algebroid of $\E$}. We have a short exact sequence
\begin{equation} \label{eq_SESAtiyah}
\begin{tikzcd}
0 \arrow{r} & \frD^{0}_{\E} \arrow[hookrightarrow]{r} & \At_{\E} \arrow{r}{\ell_{(1)}} & T\M \arrow{r} & 0. 
\end{tikzcd}
\end{equation}
\end{cor}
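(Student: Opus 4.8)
The first sentence records two definitions and needs no argument. For the substantive claims — that $(\At_\E,\ell_{(1)},[\cdot,\cdot])$ is a transitive graded Lie algebroid of degree zero and that (\ref{eq_SESAtiyah}) is exact — the plan is to verify the axioms of a graded Lie algebroid (in the sense of \S 6 of \cite{vysoky2022graded}) one by one, with nearly all of the analytic content already supplied by the preceding theorem.

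First I would check that the graded commutator is defined on sections of $\At_\E$. For $D,D'\in\ol{\Dif}{}^{1}_{\E}(U)$ the composite $D\circ D'$ is a second-order differential operator, so $[D,D']$ is a graded $\R$-linear endomorphism of $\Gamma_{\E}(U)$, and the case $k=m=1$ of the preceding theorem places it back in $\ol{\Dif}{}^{1}_{\E}(U)$. Compatibility with restrictions is a consequence of Proposition \ref{tvrz_locsheaf} (composition of local operators commutes with restriction), so the graded commutator is a degree zero $\R$-bilinear bracket on the sheaf $\ol{\Dif}{}^{1}_{\E}$; its graded skew-symmetry and graded Jacobi identity are inherited from the standard identities for graded commutators of endomorphisms and need no separate verification.

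Next I would take $\ell_{(1)}\colon\At_\E\to S^{1}(T\M)=T\M$ as the anchor. It is a degree zero graded vector bundle map, and the preceding proposition already shows it is fiber-wise surjective, which is exactly transitivity, with $\ker\ell_{(1)}=\ker\sigma=\frD^{0}_{\E}$. That the anchor is a morphism of brackets, $\ell_{(1)}([D,D'])=[\ell_{(1)}(D),\ell_{(1)}(D')]$, is the case $k=m=1$ of the preceding theorem together with property (i) of the Schouten--Nijenhuis bracket (on vector fields $[\cdot,\cdot]_{S}$ is the graded commutator of vector fields). For the graded Leibniz rule I would first observe that $f\cdot D'\in\ol{\Dif}{}^{1}_{\E}(U)$, since $\sigma(f\cdot D')=f\cdot I_{(1)}(\ell_{(1)}(D'))=I_{(1)}(f\cdot\ell_{(1)}(D'))$ by $\C^{\infty}_{\M}(U)$-linearity of $I_{(1)}$, and then expand $[D,f\cdot D']=[D,\lambda_{f}\circ D']$ using the graded Leibniz rule for the commutator and $[\lambda_{f},\lambda_{g}]=0$; the remaining term $[D,\lambda_{f}]\circ D'$ is controlled by Remark \ref{rem_lkDformula} and (\ref{eq_ellmapsofcommareSNforknula}), which identify $[D,\lambda_{f}]$ with $\lambda_{g}$ for $g=(-1)^{|f|(1+|D|)}j_{\dr{f}}(\ell_{(1)}(D))$, yielding the expected Leibniz identity with the anchor action of $\ell_{(1)}(D)$ on $f$. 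The exact sequence (\ref{eq_SESAtiyah}) is then just (\ref{eq_SESdidifSkTM}) specialized to $k=1$ via $S^{1}(T\M)=T\M$ and $\frD^{0}_{\E}=\frD^{k-1}_{\E}$.

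The one step I expect to be fiddly rather than difficult is the sign bookkeeping: one must confirm that the Leibniz identity obtained above and the skew-symmetry and Jacobi identities of the graded commutator match, symbol for symbol, the sign conventions used in the definition of a graded Lie algebroid of degree zero in \cite{vysoky2022graded}, in particular the placement of Koszul signs in $[D,f\cdot D']$ and the normalization relating $j_{\dr{f}}(\ell_{(1)}(D))$ to the anchor acting on a function. Everything else is a formal consequence of results already in hand.
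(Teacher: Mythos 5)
Your proposal is correct and follows essentially the same route as the paper: closure of the bracket and the anchor compatibility come from the preceding theorem together with (\ref{eq_ellmapsofgcommareSN}), the graded Leibniz rule is verified by expanding $[D,\lambda_{f}\circ D']$ and identifying $[D,\lambda_{f}]$ as a multiplication operator via Remark \ref{rem_lkDformula}, and the exact sequence is (\ref{eq_SESdidifSkTM}) specialized to $k=1$. The extra checks you include (that $f\cdot D'$ stays in $\ol{\Dif}{}^{1}_{\E}(U)$ and that the bracket respects restrictions) are correct and only make explicit what the paper leaves implicit.
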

\begin{proof}
See \S 6 of \cite{vysoky2022graded} for the definition of a graded Lie algebroid of degree zero. Being transitive means that its anchor map $\ell_{(1)}: \At_{\E} \rightarrow T\M$ is surjective. This will follow from the exactness of the sequence (\ref{eq_SESAtiyah}). $\ol{\Dif}{}^{1}_{\E}(M)$ forms a degree zero graded Lie algebra thanks to the above theorem and (\ref{eq_ellmapsofgcommareSN}). One only has to verify the graded Leibniz rule to finish the proof. For all $D,D' \in \ol{\Dif}{}^{1}_{\E}(M)$ and $f \in \C^{\infty}_{\M}(M)$, one has
\begin{equation}
\begin{split}
[D, f \cdot D'] = & \  [D, \lambda_{f} \circ D'] = [D,\lambda_{f}] \circ D' + (-1)^{|D||f|} \lambda_{f} \circ [D,D'] \\
= & \ \{ (-1)^{|f|} [\ell_{(1)}(D)](\dr{f}) \circ \1_{\Gamma_{\E}(U)} \} \circ D' + (-1)^{|D||f|} f \cdot [D,D'] \\
= & \ \ell_{1}(D)(f) \cdot D' + (-1)^{|D||f|} f \cdot [D,D']. 
\end{split}
\end{equation}
But this is precisely the graded Leibniz rule for $(\At_{\E}, \ell_{(1)}, [\cdot,\cdot])$. The sequence (\ref{eq_SESAtiyah}) is just (\ref{eq_SESdidifSkTM}). 
\end{proof}
As in ordinary differential geometry, splittings of (\ref{eq_SESAtiyah}) are of some importance. 
\begin{definice}
A \textbf{connection on $\E$} is a splitting $\cD: T\M \rightarrow \At_{\E}$ of the sequence (\ref{eq_SESAtiyah}). For each $X \in \X_{\M}(M)$, the first order differential operator $\cD_{X} := \cD(X) \in \ol{\Dif}{}^{1}_{\E}(M)$ is called a \textbf{covariant derivative} with with respect to $X$. 
\end{definice}

By definition, the covariant derivative operators adhere to the expected rules
\begin{equation}
\cD_{f \cdot X}(\psi) = f \cdot \cD_{X}\psi, \; \; \cD_{X}(f \cdot \psi) = X(f) \cdot \psi + (-1)^{|X||f|} f \cdot \cD_{X}\psi,
\end{equation}
for all $X \in \X_{\M}(M)$, $\psi \in \Gamma_{\E}(M)$ and $f \in \C^{\infty}_{\M}(M)$. Since the splittings of short exact sequences always exist in the category of graded vector bundles over $\M$, we immediately obtain the following statement:
\begin{tvrz}
There always exists some connection on $\E$. 
\end{tvrz}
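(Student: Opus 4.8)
The plan is to unwind the definition and appeal to the general splitting property quoted just above. By definition a connection on $\E$ is exactly a splitting $\cD \colon T\M \to \At_{\E}$ of the short exact sequence (\ref{eq_SESAtiyah}) in $\gVBun^{\infty}_{\M}$, i.e. a degree zero graded vector bundle map with $\ell_{(1)} \circ \cD = \1_{T\M}$; since $\ell_{(1)}$ is fiber-wise surjective this is a genuine splitting problem, and the assertion is immediate once one knows that every short exact sequence of graded vector bundles over $\M$ splits. So the real content is to recall the proof of this latter fact for the sequence at hand.

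First I would build local splittings. Take an open cover $\{ U_{\alpha} \}_{\alpha \in I}$ of $M$ such that each $U_{\alpha}$ carries a graded local chart (with coordinate vector fields $\{ \partial_{A} \}_{A=1}^{n}$, a local frame for $T\M$ over $U_{\alpha}$) and a local frame $\{ \Phi^{(\alpha)}_{\lambda} \}_{\lambda=1}^{r}$ for $\E$ over $U_{\alpha}$. Over $U_{\alpha}$ one defines the ``coordinate'' covariant derivative $\cD^{(\alpha)}$ by lifting each $\partial_{A}$ to the operator sending $\psi = \psi^{\lambda} \cdot \Phi^{(\alpha)}_{\lambda}$ to $\partial_{A}(\psi^{\lambda}) \cdot \Phi^{(\alpha)}_{\lambda}$ (up to the Koszul sign pinned down by Remark \ref{rem_lkDformula}) and extending $\C^{\infty}_{\M}|_{U_{\alpha}}$-linearly. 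By Theorem \ref{thm_decompositionofdiffop} these are first-order differential operators --- they are, up to sign, the frame operators $\frP_{\fI}{}^{\lambda}{}_{\lambda}$ with $w(\fI) = 1$ summed over $\lambda$ --- and a short computation with (\ref{eq_lkDformula}) shows that they lie in $\ol{\Dif}{}^{1}_{\E}(U_{\alpha})$ with $\ell_{(1)}(\cD^{(\alpha)}_{\partial_{A}}) = \partial_{A}$; hence $\cD^{(\alpha)}$ is a splitting of (\ref{eq_SESAtiyah}) over $U_{\alpha}$.

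Next I would patch these with a partition of unity. Since $M$ is a (paracompact) smooth manifold, there is a partition of unity $\{ \rho_{\alpha} \}_{\alpha \in I}$ subordinate to $\{ U_{\alpha} \}_{\alpha \in I}$ consisting of degree zero functions on $\M$ (bump functions of this kind were already used, e.g. in Example \ref{ex_CMlineararelocal}), so $\supp(\rho_{\alpha}) \subseteq U_{\alpha}$, the supports are locally finite, and $\sum_{\alpha} \rho_{\alpha} = 1$. Because $\supp(\rho_{\alpha}) \subseteq U_{\alpha}$, the $\C^{\infty}_{\M}|_{U_{\alpha}}$-linear sheaf morphism $\rho_{\alpha} \cdot \cD^{(\alpha)}$ extends by zero to a global degree zero $\C^{\infty}_{\M}$-linear sheaf morphism $T\M \to \At_{\E}$, the gluing being legitimate by the sheaf property of $\Gamma_{\At_{\E}}$ exactly as in the Extension Lemma. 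Setting
\begin{equation}
\cD := \sum_{\alpha \in I} \rho_{\alpha} \cdot \cD^{(\alpha)} \colon T\M \to \At_{\E},
\end{equation}
a locally finite (hence well-defined) sum of degree zero $\C^{\infty}_{\M}$-linear sheaf morphisms, one gets
\begin{equation}
\ell_{(1)} \circ \cD = \sum_{\alpha \in I} \rho_{\alpha} \cdot (\ell_{(1)} \circ \cD^{(\alpha)}) = \sum_{\alpha \in I} \rho_{\alpha} \cdot \1_{T\M} = \1_{T\M},
\end{equation}
using that $\ell_{(1)}$ is $\C^{\infty}_{\M}$-linear. Thus $\cD$ is a splitting of (\ref{eq_SESAtiyah}), that is, a connection on $\E$.

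The argument is routine; the only points that genuinely use the graded structure are (i) the well-definedness of the extension-by-zero of $\rho_{\alpha} \cdot \cD^{(\alpha)}$, which relies --- just as in the Extension Lemma --- on $\supp(\rho_{\alpha}) \subseteq U_{\alpha}$ together with the sheaf property, and (ii) checking that the local lifts of the coordinate vector fields actually land in the Atiyah subbundle $\ol{\frD}{}^{1}_{\E}$, a matter of bookkeeping Koszul signs via Remark \ref{rem_lkDformula}. I do not expect any real obstacle: all the local-to-global tools needed (bump functions, the Extension Lemma, the explicit local frames of $\frD^{1}_{\E}$) are already in place.
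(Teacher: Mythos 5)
Your proof is correct, but it takes a more explicit route than the paper. The paper disposes of the statement in one line by citing the general fact that short exact sequences split in the category $\gVBun^{\infty}_{\M}$ (a property of graded vector bundles established elsewhere), applied to the Atiyah sequence (\ref{eq_SESAtiyah}). You instead re-prove that splitting property in this particular instance: local ``coordinate connections'' $\cD^{(\alpha)}_{\partial_{A}}$ built from a chart and a frame (which are, up to sign, sums of the frame operators $\frP_{\fI}{}^{\lambda}{}_{\lambda}$ with $w(\fI)=1$, and land in $\ol{\Dif}{}^{1}_{\E}$ with $\ell_{(1)}(\cD^{(\alpha)}_{\partial_{A}})=\partial_{A}$ by Remark \ref{rem_lkDformula}), glued by a degree zero partition of unity subordinate to the cover. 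All the ingredients you invoke are available: partitions of unity and extension by zero work on graded manifolds exactly as in the Extension Lemma, $\ell_{(1)}$ is $\C^{\infty}_{\M}$-linear of degree zero so it commutes with multiplication by $\rho_{\alpha}$, and a $\C^{\infty}_{\M}(M)$-linear map on global sections determines the sheaf morphism by Proposition \ref{tvrz_Locglobalsheaf}. What your version buys is a constructive proof exhibiting concrete local connections (and in effect a proof of the splitting lemma itself); what the paper's version buys is brevity, since the splitting property is needed anyway throughout (e.g.\ for the decomposition $\frJ^{k}_{\E}\cong\frJ^{k-1}_{\E}\oplus\ul{\Hom}(S^{k}(T\M),\E)$ at the end). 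Either is acceptable.
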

To conclude this section, note that one can define the \textbf{curvature operator $R_{\cD}$ of $\cD$} in order to describe the failure of $\cD(T\M)$ to form an involutive subbundle of $\At_{\E}$, that is set 
\begin{equation}
[R_{\cD}(X,Y)](\psi) = [\cD_{X}, \cD_{Y}](\psi) - \cD_{[X,Y]}(\psi),
\end{equation}
for all $X,Y \in \X_{\M}(M)$ and $\psi \in \Gamma_{\E}(M)$. By exactness of (\ref{eq_SESAtiyah}), $R_{\cD}(X,Y)$ is in $\Dif^{0}_{\E}(M)$, that is $\C^{\infty}_{\M}(M)$-linear of degree $|X| + |Y|$. 
\section{Geometric presheaves} \label{sec_geometric}
In this section, we will identify and examine an important class of presheaves of graded $\C^{\infty}_{\M}$-modules. Those will be of vital importance in the following section.
Recall that for each $a \in M$, we have a sheaf of ideals $\J^{a}_{\M}$ of functions vanishing at $a$. For any $U \in \Op(M)$, it is defined as 
\begin{equation}
\J^{a}_{\M}(U) = \left\{ \begin{array}{cc} \{ f \in \C^{\infty}_{\M}(U) \; | \; f(a) = 0 \} & \text{ when } a \in U \\
\C^{\infty}_{\M}(U) & \text{ when } a \notin U \end{array} \right.
\end{equation}

These (sheaves of) ideals play an important role to characterize vanishing functions: $f \in \C^{\infty}_{\M}(U)$ is zero, iff $f \in (\J^{a}_{\M}(U))^{q}$ for all $q \in \N$ and $a \in U$. We have already used this, see (\ref{eq_fzerocondition}). 

Now, for any given graded $\C^{\infty}_{\M}(U)$-module $(P,\tr)$, let us write $P^{[q,a]} := (\J^{a}_{\M}(U))^{q} \tr P$ for the graded $\C^{\infty}_{\M}(U)$-submodule generated by the graded subset $\{ f \tr p \; | \; f \in (\J^{a}_{\M}(U))^{q}, \; p \in P \}$, and let 
\begin{equation}
P^{\bullet} := \bigcap_{q \in \N} \bigcap_{a \in U} P^{[q,a]}.
\end{equation}
We would like to have the following criterion: $p = 0$, iff $p \in P^{[q,a]} = 0$ for all $q \in \N$ and $a \in U$. Equivalently, $P^{\bullet} = 0$. However, for a general graded $\C^{\infty}_{\M}(U)$-module $P$, this is not true. This leads us to the following definition.
\begin{definice}
We say that a graded $\C^{\infty}_{\M}(U)$-module $P$ is \textbf{geometric}, if $P^{\bullet} = 0$. 

More generally, let $\F$ be a presheaf of graded $\C^{\infty}_{\M}$-modules. We say that $\F$ is \textbf{geometric}, if $\F(U)$ is a geometric graded $\C^{\infty}_{\M}(U)$-module for every $U \in \Op(M)$. 
\end{definice}
\begin{lemma} \label{lem_geometricislocal}
Let $\F$ be a sheaf of graded $\C^{\infty}_{\M}$-modules. Then $\F$ is geometric, iff there is an open cover $\{ U_{\alpha} \}_{\alpha \in I}$ of $M$, such that $\F|_{U_{\alpha}}$ is geometric for every $\alpha \in I$. 
\end{lemma}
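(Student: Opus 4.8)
The plan is to prove both implications of the equivalence, with the forward direction being essentially trivial and the reverse direction requiring the actual work of \emph{localizing} the condition $P^{\bullet}=0$. In one direction, suppose $\F$ is geometric. Given an open cover $\{U_{\alpha}\}_{\alpha\in I}$ of $M$ (take any; e.g. $\{M\}$), one needs $\F|_{U_{\alpha}}$ to be geometric, i.e. that $\F(V)$ is geometric for every $V\in\Op(U_{\alpha})$. But $\Op(U_{\alpha})\subseteq\Op(M)$, so this is immediate from the hypothesis. Hence the nontrivial content is the converse.

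For the converse, assume $\{U_{\alpha}\}_{\alpha\in I}$ is an open cover of $M$ with each $\F|_{U_{\alpha}}$ geometric, and fix an arbitrary $U\in\Op(M)$; I must show $\F(U)^{\bullet}=0$. Let $p\in\F(U)^{\bullet}$, so $p\in\bigl(\J^{a}_{\M}(U)\bigr)^{q}\tr\F(U)$ for every $q\in\N$ and every $a\in U$. The key step is to show that $p|_{U\cap U_{\alpha}}$ lies in $\F(U\cap U_{\alpha})^{\bullet}$ for each $\alpha$; since $U\cap U_{\alpha}\in\Op(U_{\alpha})$ and $\F|_{U_{\alpha}}$ is geometric, this forces $p|_{U\cap U_{\alpha}}=0$ for all $\alpha$, and then $p=0$ because $\F$ is a sheaf and $\{U\cap U_{\alpha}\}_{\alpha\in I}$ covers $U$. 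So everything reduces to the following local compatibility claim: if $p\in\bigl(\J^{a}_{\M}(U)\bigr)^{q}\tr\F(U)$ then $p|_{V}\in\bigl(\J^{a}_{\M}(V)\bigr)^{q}\tr\F(V)$ for any open $V\subseteq U$ and any $a\in V$.

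The compatibility claim is where I expect the only real subtlety. If $p=\sum_{i} f_{i}\tr s_{i}$ with $f_{i}\in\bigl(\J^{a}_{\M}(U)\bigr)^{q}$ and $s_{i}\in\F(U)$, then restricting gives $p|_{V}=\sum_{i} (f_{i}|_{V})\tr(s_{i}|_{V})$, and $f_{i}|_{V}\in\bigl(\J^{a}_{\M}(V)\bigr)^{q}$ because restriction of functions is a ring homomorphism sending $\J^{a}_{\M}(U)$ into $\J^{a}_{\M}(V)$ (the body of $f_{i}$ vanishes at $a$, hence so does the body of $f_{i}|_{V}$; and restriction respects products, so it maps the $q$-th power of the ideal into the $q$-th power). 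Thus $p|_{V}\in\bigl(\J^{a}_{\M}(V)\bigr)^{q}\tr\F(V)$, as needed. One small point worth spelling out: a priori $p\in\F(U)^{\bullet}$ only says $p$ lies in the intersection over $a\in U$, whereas for $p|_{U\cap U_{\alpha}}$ I need the intersection over $a\in U\cap U_{\alpha}\subseteq U$, which is a \emph{weaker} requirement, so the implication goes the right way. Assembling these observations yields $p|_{U\cap U_{\alpha}}\in\F(U\cap U_{\alpha})^{\bullet}=0$, hence $p=0$, proving $\F(U)^{\bullet}=0$ for all $U$ and therefore that $\F$ is geometric. The main obstacle is simply being careful that all the quantifiers over points $a$ and the passage between $\Op(M)$ and $\Op(U_{\alpha})$ line up; there is no deep ingredient beyond the sheaf axiom and functoriality of restriction.
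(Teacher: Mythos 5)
Your proof is correct and follows exactly the same route as the paper's: the only-if direction is immediate, and for the converse one restricts $p\in\F(U)^{\bullet}$ to $U\cap U_{\alpha}$, checks it lands in $\F(U\cap U_{\alpha})^{\bullet}$ (which the paper calls obvious and you usefully spell out via functoriality of restriction on the ideals $\J^{a}_{\M}$), and then invokes the sheaf axiom. No gaps.
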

\begin{proof}
The only if direction is trivial. To prove the if direction, suppose that there is an open cover $\{ U_{\alpha} \}_{\alpha \in I}$, such that $\F|_{U_{\alpha}}$ is a geometric. Let $U \in \Op(M)$ and $\psi \in \F(U)^{\bullet}$. It is obvious that $\psi|_{U \cap U_{\alpha}} \in \F(U \cap U_{\alpha})^{\bullet}$ for every $\alpha \in I$, hence $\psi|_{U \cap U_{\alpha}} = 0$ for every $\alpha \in I$, by assumption. Since $\F$ is a sheaf, this implies $\psi = 0$. Hence $\F$ is geometric. 
\end{proof}

The notion of geometric sheaves is justified by the following observation.
\begin{tvrz} \label{tvrz_sheaofsectionsisgeometric}
For every graded vector bundle $\E$, its sheaf of sections $\Gamma_{\E}$ is geometric. 
\end{tvrz}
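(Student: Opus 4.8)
The plan is to localize and then compute directly with a frame. By Lemma \ref{lem_geometricislocal} it suffices to produce an open cover of $M$ over which $\Gamma_{\E}$ restricts to a geometric sheaf, and any local trivialization $\{(U_{\alpha},\phi_{\alpha})\}_{\alpha\in I}$ of $\E$ supplies exactly such a cover. So I would fix $\alpha$, write $U:=U_{\alpha}$, pick the corresponding local frame $\{\Phi_{\lambda}\}_{\lambda=1}^{r}$ for $\E$ over $U$, and aim to show that $\Gamma_{\E}(V)$ is a geometric $\C^{\infty}_{\M}(V)$-module for every $V\in\Op(U)$, i.e.\ that $\Gamma_{\E}(V)^{\bullet}=0$.

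The key intermediate step is a component description of the submodules $(\J^{a}_{\M}(V))^{q}\tr\Gamma_{\E}(V)$. Writing a section as $\psi=\psi^{\lambda}\cdot\Phi_{\lambda}|_{V}$ via property (ii) of the local frame, I claim that for $q\in\N$ and $a\in V$ one has $\psi\in(\J^{a}_{\M}(V))^{q}\tr\Gamma_{\E}(V)$ if and only if $\psi^{\lambda}\in(\J^{a}_{\M}(V))^{q}$ for every $\lambda$. The ``if'' direction is immediate from $\psi=\sum_{\lambda}\psi^{\lambda}\tr(\Phi_{\lambda}|_{V})$. For the ``only if'' direction one writes $\psi=\sum_{i}f_{i}\tr\xi_{i}$ with $f_{i}\in(\J^{a}_{\M}(V))^{q}$, expands each $\xi_{i}=\xi_{i}^{\lambda}\cdot\Phi_{\lambda}|_{V}$, and observes that each resulting coefficient of $\Phi_{\lambda}|_{V}$ lies in $(\J^{a}_{\M}(V))^{q}$ because that graded ideal absorbs multiplication by arbitrary functions; by the uniqueness clause of property (ii) these coefficients are precisely the $\psi^{\lambda}$, tracked homogeneous-component by homogeneous-component. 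The edge case $a\notin V$ is harmless, since then $\J^{a}_{\M}(V)=\C^{\infty}_{\M}(V)$ and the submodule is all of $\Gamma_{\E}(V)$, imposing no condition.

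Granting this description, I would intersect over all $q\in\N$ and all $a\in V$: a section $\psi=\psi^{\lambda}\cdot\Phi_{\lambda}|_{V}$ lies in $\Gamma_{\E}(V)^{\bullet}$ precisely when $\psi^{\lambda}\in\bigcap_{q\in\N}\bigcap_{a\in V}(\J^{a}_{\M}(V))^{q}$ for each $\lambda$. By the characterization of vanishing functions recalled in \eqref{eq_fzerocondition} (Proposition~3.5 of \cite{Vysoky:2022gm}) this intersection is $\{0\}$, so every $\psi^{\lambda}$ vanishes and hence $\psi=0$. Therefore $\Gamma_{\E}(V)^{\bullet}=0$ for all $V\in\Op(U)$, i.e.\ $\Gamma_{\E}|_{U}$ is geometric; since the $U_{\alpha}$ cover $M$, Lemma \ref{lem_geometricislocal} concludes that $\Gamma_{\E}$ is geometric.

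I do not expect a real obstacle here: the only mildly delicate point is the ``only if'' half of the component description, and even that is routine once one is careful that submodule generation is compatible with the ideal structure of $(\J^{a}_{\M}(V))^{q}$ and that the frame decomposition is applied on the small open set $V$ rather than on $U$. The substance of the statement is really carried by \eqref{eq_fzerocondition} and by Lemma \ref{lem_geometricislocal}.
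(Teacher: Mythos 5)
Your proposal is correct and follows essentially the same route as the paper: reduce to the trivializing cover via Lemma \ref{lem_geometricislocal}, identify $(\J^{a}_{\M}(V))^{q}\tr\Gamma_{\E}(V)$ component-wise through the frame decomposition, and conclude with the vanishing criterion (\ref{eq_fzerocondition}). The only difference is that you spell out the ``only if'' half of the component description, which the paper leaves implicit.
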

\begin{proof}
Thanks to Lemma \ref{lem_geometricislocal}, it suffices to prove that sheaf of sections of a trivial graded vector bundle is geometric. We can thus assume that $\Gamma_{\E} = \C^{\infty}_{\M}[K]$ for some finite-dimensional graded vector space $K$, see (\ref{eq_trivialbundle}). For each $U \in \Op(M)$, every $\psi \in \Gamma_{\E}(U)$ can be \textit{uniquely} decomposed as $\psi \in \psi^{\lambda} \otimes \vartheta_{\lambda}$, if we fix some total basis $\{ \vartheta_{\lambda} \}_{\lambda=1}^{r}$ of $K$. Let $q \in \N$ and $a \in U$. It follows that $\psi \in \Gamma_{\E}(U)^{[q,a]}$, iff $\psi^{\lambda} \in (\J^{a}_{\M}(U))^{q}$ for each $\lambda \in \{1,\dots,r\}$. Consequently, one obtains
\begin{equation}
\Gamma_{\E}(U)^{\bullet} = \C^{\infty}_{\M}(U)^{\bullet} \otimes_{\R} K = 0,
\end{equation}
where we view $\C^{\infty}_{\M}$ as a sheaf of graded $\C^{\infty}_{\M}$-modules and use (\ref{eq_fzerocondition}). Hence $\Gamma_{\E}$ is geometric. 
\end{proof}
It turns out that there is a canonical procedure making graded presheaves of $\C^{\infty}_{\M}$-modules into geometric ones. This procedure is universal in the sense explained below. 
\begin{tvrz} \label{tvrz_geo}
To any presheaf $\F$ of graded $\C^{\infty}_{\M}$-modules, there is a geometric presheaf of graded $\C^{\infty}_{\M}$-modules $\Geo(\F)$, together with a $\C^{\infty}_{\M}$-linear presheaf morphism $\geo: \F \rightarrow \Geo(\F)$. 

It has the following universal property: To any geometric presheaf of graded $\C^{\infty}_{\M}$-modules $\G$ and any $\C^{\infty}_{\M}$-linear presheaf morphism $\varphi: \F \rightarrow \G$, there exists a unique presheaf morphism $\hat{\varphi}: \Geo(\F) \rightarrow \G$, such that $\hat{\varphi} \circ \geo = \varphi$. 

$\Geo(\F)$ is called the \textbf{geometrization} of $\F$. It is unique up to a $\C^{\infty}_{\M}$-linear presheaf isomorphism. If $\F$ is geometric, then $\geo: \F \rightarrow \Geo(\F)$ is an isomorphism. 
\end{tvrz}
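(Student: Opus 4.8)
The plan is to realize $\Geo(\F)$ as a quotient presheaf. For each $U \in \Op(M)$ set $\Geo(\F)(U) := \F(U)/\F(U)^{\bullet}$ and let $\geo_{U}: \F(U) \to \Geo(\F)(U)$ be the canonical projection. First I would check that $\F(U)^{\bullet}$ is a graded $\C^{\infty}_{\M}(U)$-submodule: since $\J^{a}_{\M}(U)$ is a graded ideal, so is $(\J^{a}_{\M}(U))^{q}$, hence each $\F(U)^{[q,a]}$ is a graded submodule, and an intersection of graded submodules is graded. Next I would verify that $U \mapsto \F(U)^{\bullet}$ is a sub-presheaf of $\F$: for $V \in \Op(U)$, $a \in V$ and $q \in \N$, an element of $\F(U)^{[q,a]}$ is a finite sum $\sum_{i} f_{i} \tr p_{i}$ with $f_{i} \in (\J^{a}_{\M}(U))^{q}$, and its restriction $\sum_{i} (f_{i}|_{V}) \tr (p_{i}|_{V})$ again has $f_{i}|_{V} \in (\J^{a}_{\M}(V))^{q}$ (restricting a product of functions each vanishing at $a$ yields a product of functions each vanishing at $a$; the nonzero-degree parts vanish automatically). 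Thus $\Geo(\F)$ is a presheaf of graded $\C^{\infty}_{\M}$-modules with the induced restrictions, and $\geo$ is a $\C^{\infty}_{\M}$-linear presheaf morphism.

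The crucial step is that a single quotient already produces a geometric presheaf. I would prove the module-theoretic identity $(P/N)^{[q,a]} = P^{[q,a]}/N$ whenever $N \subseteq P^{[q,a]}$ is a submodule, since the submodule of $P/N$ generated by $\{f \tr (p+N)\}$ is exactly the image of $(\J^{a}_{\M}(U))^{q} \tr P$. Taking $N = P^{\bullet}$, which is contained in every $P^{[q,a]}$, and using that an intersection of submodules all containing $N$ descends to the quotient, I get $(P/P^{\bullet})^{\bullet} = \big( \bigcap_{q,a} P^{[q,a]} \big)/P^{\bullet} = P^{\bullet}/P^{\bullet} = 0$. Hence $\Geo(\F)(U)$ is geometric for every $U$, so $\Geo(\F)$ is geometric.

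For the universal property, let $\G$ be a geometric presheaf and $\varphi: \F \to \G$ a $\C^{\infty}_{\M}$-linear presheaf morphism. Each $\varphi_{U}$ is $\C^{\infty}_{\M}(U)$-linear, so it maps $\F(U)^{[q,a]}$ into $\G(U)^{[q,a]}$ (the Koszul sign in $\varphi_{U}(f \tr p) = (-1)^{|\varphi||f|} f \tr \varphi_{U}(p)$ is irrelevant for submodule membership), whence $\varphi_{U}(\F(U)^{\bullet}) \subseteq \G(U)^{\bullet} = 0$. Therefore $\varphi_{U}$ factors uniquely through $\geo_{U}$ as a graded $\C^{\infty}_{\M}(U)$-linear map $\hat{\varphi}_{U}: \Geo(\F)(U) \to \G(U)$; surjectivity of $\geo_{U}$ gives both uniqueness of $\hat{\varphi}_{U}$ and, combined with naturality of $\varphi$ and $\geo$, naturality of $\hat{\varphi} = \{ \hat{\varphi}_{U} \}_{U}$. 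Uniqueness of $\Geo(\F)$ up to $\C^{\infty}_{\M}$-linear presheaf isomorphism is the usual consequence of the universal property, and if $\F$ is already geometric then $\F(U)^{\bullet} = 0$ for all $U$, so every $\geo_{U}$ is an isomorphism and hence so is $\geo$.

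I expect the only nontrivial point to be the self-improvement step in the second paragraph --- that forming the single quotient $\F/\F^{\bullet}$ lands in the geometric subcategory without needing to iterate --- which rests on $(P/P^{\bullet})^{[q,a]} = P^{[q,a]}/P^{\bullet}$ and on $P^{\bullet}$ lying in every $P^{[q,a]}$; the remaining items (gradedness, compatibility with restrictions, signs) are routine bookkeeping.
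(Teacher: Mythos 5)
Your proposal is correct and follows essentially the same route as the paper: define $\Geo(\F)(U) := \F(U)/\F(U)^{\bullet}$, show the quotient is already geometric by lifting membership in $(P/P^{\bullet})^{[q,a]}$ back to $P^{[q,a]}$ using $P^{\bullet} \subseteq P^{[q,a]}$, and factor any morphism into a geometric presheaf through the quotient map. Your packaging of the key step as the identity $(P/N)^{[q,a]} = P^{[q,a]}/N$ for $N \subseteq P^{[q,a]}$ is just a cleaner formulation of the paper's element-level argument, and the rest (submodule/sub-presheaf checks, universal property, uniqueness) matches the paper's proof.
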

\begin{proof}
The idea is to simply kill the unwanted submodule, hoping that this makes sense. Hence for any $U \in \Op(M)$, set
\begin{equation}
\Geo(\F)(U) := \F(U) / \F(U)^{\bullet}. 
\end{equation}
Obviously, we let $\geo_{U}: \F(U) \rightarrow \Geo(\F)(U)$ to be the canonical quotient map. For any $\psi \in \F(U)$, we shall write $[\psi]{}^{\bullet} := \geo_{U}(\psi)$ and call it the \textbf{geometric class} of $\psi$. There is a unique graded $\C^{\infty}_{\M}(U)$-module structure on $\Geo(\F)(U)$ making $\geo_{U}$ into a $\C^{\infty}_{\M}(U)$-linear map. 

Let $U \in \Op(M)$ and suppose $[\psi]^{\bullet} \in \Geo(\F)(U)^{\bullet}$. For every $q \in \N$ and $a \in U$, we thus have $[\psi]^{\bullet} = f^{\mu} \tr [\psi_{\mu}]^{\bullet} \equiv [f^{\mu} \tr \psi_{\mu}]^{\bullet}$ for some collection of $f^{\mu} \in (\J^{a}_{\M}(U))^{q}$ and $\psi_{\mu} \in \F(U)$. But this proves that $\psi - f^{\mu} \tr \psi_{\mu} \in \F(U)^{\bullet}$. In particular, this implies that $\psi \in \F(U)^{[q,a]}$. Since $q \in \N$ and $a \in U$ were arbitrary, we have $\psi \in \F(U)^{\bullet}$ and thus $[\psi]^{\bullet} = 0$. This proves that $\Geo(\F)(U)$ is a geometric graded $\C^{\infty}_{\M}(U)$-module. Since the restriction morphisms obviously map $\F(U)^{\bullet}$ to $\F(V)^{\bullet}$ for any $V \subseteq U$, there is a canonical way to make $\Geo(\F)$ into a presheaf of graded $\C^{\infty}_{\M}(U)$-modules and $\geo := \{ \geo_{U} \}_{U \in \Op(M)}$ into a $\C^{\infty}_{\M}$-linear presheaf morphism. Hence $\Geo(\F)$ is a geometric presheaf of graded $\C^{\infty}_{\M}$-modules

Suppose $\varphi: \F \rightarrow \G$ is a $\C^{\infty}_{\M}$-linear presheaf morphism into a geometric presheaf. For each $U \in \Op(M)$, $\hat{\varphi}_{U}$ must be given by $\hat{\varphi}_{U}[\psi]^{\bullet} := \varphi_{U}(\psi)$ for every $\psi \in \F(U)$. Since $\geo_{U}$ is surjective, one only has to verify that $\hat{\varphi}_{U}$ is well-defined. But this follows from the obvious property $\varphi_{U}( \F(U)^{\bullet}) \subseteq \G(U)^{\bullet} = 0$. Clearly $\hat{\varphi} := \{ \hat{\varphi}_{U} \}_{U \in \Op(M)}$ defines a $\C^{\infty}_{\M}$-linear presheaf morphism from $\Geo(\F)$ to $\G$. The rest of the claims follows easily. 
\end{proof}
\begin{rem}
The assignment $\F \mapsto \Geo(\F)$ can be viewed as a faithful functor from the category $\PSh^{\C^{\infty}_{\M}}$ of presheaves of graded $\C^{\infty}_{\M}$-modules into its full subcategory $\PSh^{\C^{\infty}_{\M}}_{\geo}$ of \textit{geometric} presheaves of graded $\C^{\infty}_{\M}$-modules. 
\end{rem}
\begin{rem}
Note that even if $\F$ is a sheaf, its geometrization is in general not a sheaf. Conversely, a sheafification of a geometric presheaf is not necessarily geometric. 
\end{rem}
There are some general statements about geometric presheaves, some of which will become useful in the next section.
\begin{tvrz} \label{tvrz_geoprops}
Let $\F,\G$ be any presheaves of graded $\C^{\infty}_{\M}$-modules. 

Then we have the following observations:
\begin{enumerate}[(i)]
\item Any restriction of a geometric presheaf is geometric.
\item If $\varphi: \F \rightarrow \G$ is an injective $\C^{\infty}_{\M}$-linear presheaf morphism and $\G$ is geometric, then so is $\F$.
\item If $\G$ is geometric, the presheaf $\ul{\PSh}^{\C^{\infty}_{\M}}( \F, \G)$ is geometric. 
\item The dual presheaf $\F^{\ast} := \ul{\PSh}^{\C^{\infty}_{\M}}(\F,\C^{\infty}_{\M})$ is always a geometric sheaf. 
\end{enumerate}
\end{tvrz}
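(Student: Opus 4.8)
The plan is to prove the four claims in roughly the order they are stated, building each on the previous ones. For (i), I would simply observe that if $\F$ is geometric and $V \subseteq U$, then for $W \in \Op(V)$ the restriction $(\F|_V)(W) = \F(W)$ and the submodule $(\F|_V)(W)^{\bullet}$ is computed from exactly the same ideals $\J^a_{\M}(W)$, so $(\F|_V)(W)^{\bullet} = \F(W)^{\bullet} = 0$; geometricity is a pointwise-on-opens condition and survives restriction verbatim.

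For (ii), the key point is that an injective $\C^{\infty}_{\M}$-linear presheaf morphism $\varphi \colon \F \to \G$ maps $\F(U)^{[q,a]}$ into $\G(U)^{[q,a]}$ for every $q \in \N$ and $a \in U$, since $\varphi_U(f \tr p) = (-1)^{|\varphi||f|} f \tr \varphi_U(p)$ and $f \in (\J^a_{\M}(U))^q$. Hence $\varphi_U(\F(U)^{\bullet}) \subseteq \G(U)^{\bullet} = 0$, and injectivity of $\varphi_U$ forces $\F(U)^{\bullet} = 0$. For (iii), I would take $U \in \Op(M)$ and a presheaf morphism $\Xi \in \ul{\PSh}^{\C^{\infty}_{\M}}(\F,\G)(U)$, i.e. a compatible family $\{\Xi_V\}_{V \subseteq U}$ of $\C^{\infty}_{\M}(V)$-linear maps $\F(V) \to \G(V)$; suppose $\Xi \in (\cdots)^{\bullet}$. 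Then for each $q, a$ one can write $\Xi = g^{\mu} \tr \Xi_{(\mu)}$ with $g^{\mu} \in (\J^a_{\M}(U))^q$, so evaluating at any $V$ and any $\psi \in \F(V)$ gives $\Xi_V(\psi) \in \G(V)^{[q,a']}$ for all $a' \in V$ (using $g^\mu|_V \in (\J^{a}_{\M}(V))^q$ when $a \in V$ and the fact that $\J^{a}_{\M}(V) = \C^{\infty}_{\M}(V)$ otherwise, which makes the condition vacuous there); intersecting over all $q$ and all points of $V$ and using that $\G(V)$ is geometric gives $\Xi_V(\psi) = 0$, so $\Xi = 0$. A small subtlety to handle carefully is the behaviour of the ideals $\J^a_{\M}(V)$ when $a \notin V$, which is why one works with points $a' \in V$ and the restricted functions $g^\mu|_V$.

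Finally, (iv) is the case $\G = \C^{\infty}_{\M}$ of (iii), noting that $\C^{\infty}_{\M}$ is geometric as a sheaf of graded $\C^{\infty}_{\M}$-modules by (\ref{eq_fzerocondition}); this gives that $\F^{\ast}$ is a geometric presheaf, and separately $\F^{\ast}$ is always a sheaf since $\ul{\Hom}$ into a sheaf is a sheaf (compatible families of morphisms glue because $\C^{\infty}_{\M}$ and $\F$ are presheaves and the gluing is forced componentwise), so $\F^{\ast}$ is a geometric sheaf. I expect the main obstacle to be (iii): one must be attentive about quantifier order (the submodule $(\cdots)^{[q,a]}$ is formed in the internal-hom presheaf over $U$, but geometricity must be checked over every $V \subseteq U$) and about the degenerate ideals at points outside $V$, rather than any deep difficulty — the other three parts are essentially formal once (ii) and the internal-hom description are in hand.
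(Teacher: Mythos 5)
Your proposal is correct and follows essentially the same route as the paper: (ii) via $\varphi_U(\F(U)^{\bullet}) \subseteq \G(U)^{\bullet}$ plus injectivity, (iii) by decomposing a morphism in the $\bullet$-submodule for each $(q,a)$, evaluating on $V \in \Op(U)$ and $\psi \in \F(V)$, and intersecting over all $q$ and all points of $V$ using geometricity of $\G(V)$, and (iv) as the special case $\G = \C^{\infty}_{\M}$ together with the standard fact that the internal hom into a sheaf is a sheaf. The care you take with the degenerate ideals $\J^{a}_{\M}(V)$ for $a \notin V$ matches the paper's quantifier handling ("choose any $q$ and $a \in V$").
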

\begin{proof}
The fact $(i)$ is obvious. Let us start by proving $(ii)$. For any $U \in \Op(M)$, one obviously has $\varphi_{U}( \F(U)^{\bullet}) \subseteq \G(U)^{\bullet}$. If $\G$ is geometric, we have $\varphi_{U}(\F(U)^{\bullet}) = 0$ and since $\varphi_{U}$ is injective, this proves $\F(U)^{\bullet} = 0$. Hence $\F$ is geometric. 

To prove $(iii)$, recall that $\ul{\PSh}^{\C^{\infty}_{\M}}(\F,\G)$ is a presheaf assigning to each $U \in \Op(M)$ the graded vector space of $\C^{\infty}_{\M}|_{U}$-linear presheaf morphisms from $\F|_{U}$ to $\G|_{U}$, with the obvious graded $\C^{\infty}_{\M}(U)$-module structure. The presheaf restrictions are restrictions of presheaf morphisms. If $\G$ is a sheaf, then it is also a sheaf. Hence suppose that $\varphi \in \ul{\PSh}^{\C^{\infty}_{\M}}(\F,\G)(U)^{\bullet}$. For any $q \in \N$ and $a \in U$, one can thus write it as some finite combination $\varphi = f^{\mu} \tr \varphi_{\mu}$, where $f^{\mu} \in (\J^{a}_{\M}(U))^{q}$ and $\varphi_{\mu}: \F|_{U} \rightarrow \G|_{U}$ are $\C^{\infty}_{\M}|_{U}$-linear presheaf morphisms. For any $V \in \Op(U)$ and $\psi \in \F(V)$, one thus has 
\begin{equation}
\varphi_{V}(\psi) = f^{\mu}|_{V} \tr (\varphi_{\mu})_{V}(\psi) \in \G(V)^{[q,a]},
\end{equation}
since $f^{\mu}|_{V} \in (\J^{a}_{\M}(V))^{q}$. In particular, we can choose any $q \in \N$ and $a \in V$, which proves that $\varphi_{V}(\psi) \in \G(V)^{\bullet}$. But since $\G$ is geometric, $\psi \in \F(V)$ and $V \in \Op(U)$ were arbitrary, we get $\varphi = 0$. 

The claim $(iv)$ follows from the fact that $\F^{\ast} \equiv \ul{\PSh}^{\C^{\infty}_{\M}}(\F, \C^{\infty}_{\M})$ and $\C^{\infty}_{\M}$ is a geometric \textit{sheaf} due to (\ref{eq_fzerocondition}). As already noted, $\F^{\ast}$ is automatically a sheaf. 
\end{proof}
\section{Graded jet bundles: construction} \label{sec_gJetconst}
To any graded vector bundle $\E$ and $k \in \N_{0}$, we intend to assign a $k$-th order graded jet bundle $\frJ^{k}_{\E}$. In the classical setting, this is usually done by constructing the $k$-th order jet space over each $m \in M$ and then obtaining the total space of the $k$-th jet bundle as a disjoint union of jet spaces over all points. This is a problematic approach in the graded setting. Instead, we will directly construct its sheaf of sections $\Jet^{k}_{\E} := \Gamma_{\frJ^{k}_{\E}}$. This turns out to be a bit complicated and requires one to employ the geometrization procedure described in the preceding section. 

We will do the construction in three steps. We will first construct a presheaf of graded $\C^{\infty}_{\M}$-modules denoted as $\pJet^{k}_{\E}$. Next, we will geometrize it to obtain a geometric presheaf $\gpJet^{k}_{\E}$. In the final step, we will sheafify it to obtain a sheaf of graded $\C^{\infty}_{\M}$-modules $\Jet^{k}_{\E}$, which will turn out to be locally freely and finitely generated of a constant graded rank. 

$U \in \Op(M)$ be arbitrary. Let us first consider a graded vector space
\begin{equation}
\cX(U) := \C^{\infty}_{\M}(U) \otimes_{\R} \Gamma_{\E}(U). 
\end{equation}
There are two graded $\C^{\infty}_{\M}(U)$-module actions on $\cX(U)$:
\begin{align}
f \tr (g \otimes \psi) := & \ (f \cdot g) \otimes \psi, \\
f \btr (g \otimes \psi) := & \ (-1)^{|f||g|} g \otimes (f \cdot \psi),
\end{align}
for all $g \otimes \psi \in \cX(U)$. In fact, with restricting morphisms defined in an obvious way, $\cX$ forms a presheaf of graded $\C^{\infty}_{\M}(U)$-modules. These two actions do not coincide. For each $f \in \C^{\infty}_{\M}(U)$, one can thus define a graded linear map $\delta_{f}: \cX(U) \rightarrow \cX(U)$ of degree $|f|$, given by
\begin{equation}
\delta_{f}(g \otimes \psi) := (L^{\tr}_{f} - L^{\btr}_{f})(g \otimes \psi) \equiv (f \cdot g) \otimes \psi - (-1)^{|f||g|} g \otimes (f \cdot \psi),
\end{equation}
for all $g \otimes \psi \in \cX(U)$, where $L^{\tr}_{f}$ and $L^{\btr}_{f}$ denote the left translations by $f$. It is straightforward to verify that $\delta_{f}$ is also graded $C^{\infty}_{\M}(U)$-linear with respect to \textit{both} $\tr$ and $\btr$. 

Now, let $k \in \N_{0}$ and let $\V_{(k)}(U) \subseteq \cX(U)$ be the graded subspace 
\begin{equation} \label{eq_VkU}
\V_{(k)}(U) := \R \{ \delta_{f_{1}} \dots \delta_{f_{k+1}}(g \otimes \psi) \; | \; f_{1},\dots,f_{k+1},g \in \C^{\infty}_{\M}(U), \; \psi \in \Gamma_{\E}(U) \}
\end{equation}
Thanks to the $\C^{\infty}_{\M}(U)$-linearity of $\delta_{f}$, it forms a graded $\C^{\infty}_{\M}(U)$-submodule of $\cX(U)$. In fact, one can view $\V_{(k)}$ as a presheaf of graded $\C^{\infty}_{\M}$-submodules of the presheaf $\cX$. This is true for both actions $\tr$ and $\btr$. Note that clearly $\V_{(k+1)}(U) \subseteq \V_{(k)}(U)$. Finally, let 
\begin{equation}
\pJet^{k}_{\E}(U) := \cX(U) / \V_{(k)}(U).
\end{equation}
Let $\natural^{(k)}_{U}: \cX(U) \rightarrow \pJet^{k}_{\E}(U)$ be the natural quotient map. There are two graded $\C^{\infty}_{\M}(U)$-module structures on $\pJet^{k}_{\E}(U)$ induced by $\tr$ and $\btr$, which we will denote by the same symbol as on $\cX(U)$. There are induced restrictions making $\pJet^{k}_{\E}$ into a presheaf of graded $\C^{\infty}_{\M}$-modules (with respect to two different actions) and $\natural^{(k)} := \{ \natural^{(k)}_{U} \}_{U \in \Op(M)}$ into a $\C^{\infty}_{\M}$-linear presheaf morphism. For every $f \in \C^{\infty}_{\M}(U)$ and $\psi \in \Gamma_{\E}(U)$, we will write 
\begin{equation}
f \otimes_{(k)} \psi := \natural^{(k)}_{U}(f \otimes \psi).
\end{equation}
For each $f \in \C^{\infty}_{\M}(U)$, one has $\delta_{f}(\V_{(k)}(U)) \subseteq \V_{(k)}(U)$. Consequently, there is an induced graded linear map $\delta_{f}: \pJet^{k}_{\E}(U) \rightarrow \pJet^{k}_{\E}(U)$, $\C^{\infty}_{\M}(U)$-linear with respect to both $\tr$ and $\btr$ actions, and again denoted by the same symbol. Explicitly, one finds
\begin{equation}
\delta_{f}(g \otimes_{(k)} \psi) = (f \cdot g) \otimes_{(k)} \psi - (-1)^{|f||g|} g \otimes_{(k)} (f \cdot \psi),
\end{equation}
for all generators $g \otimes_{(k)} \psi \in \pJet^{k}_{\E}(U)$. For every $j \in \N$ and all $f_{1},\dots,f_{j} \in \C^{\infty}_{\M}(U)$, we define 
\begin{equation}
\delta^{(j)}_{(f_{1},\dots,f_{j})} := \delta_{f_{1}} \circ \dots \circ \delta_{f_{j}}.
\end{equation}
The relations forced by the quotient can be then written as 
\begin{equation}
\delta_{(f_{1},\dots,f_{k+1})}^{(k+1)}(g \otimes_{(k)} \psi) = 0,
\end{equation}
for all $f_{1},\dots,f_{k+1},g \in \C^{\infty}_{\M}(U)$ and $\psi \in \Gamma_{\E}(U)$. It is not a coincidence that this resembles the equivalent definition of $\Dif^{k}_{\E}(U)$ discussed in Remark \ref{rem_localusingDkplus1}. This finishes the construction of the presheaf $\pJet^{k}_{\E}$. As already announced, we now define 
\begin{equation}
\gpJet^{k}_{\E} := \Geo( \pJet^{k}_{\E}).
\end{equation}
Before we proceed, recall that to any presheaf $\F$ of graded $\C^{\infty}_{\M}$-modules, there exists its sheafification $\Sff(\F)$, a sheaf of graded $\C^{\infty}_{\M}$-modules together with a $\C^{\infty}_{\M}$-linear presheaf morphism $\sff: \F \rightarrow \Sff(\F)$, having the following universal property: To any sheaf of graded $\C^{\infty}_{\M}$-modules $\G$ and any $\C^{\infty}_{\M}$-linear presheaf morphism $\varphi: \F \rightarrow \G$, there exists a unique presheaf morphism $\hat{\varphi}: \Sff(\F) \rightarrow \G$, such that $\hat{\varphi} \circ \sff = \varphi$. $\Sff(\F)$ is called the \textbf{sheafification of $\F$} and if $\F$ was already a sheaf, $\sff: \F \rightarrow \Sff(\F)$ is a sheaf isomorphism. In fact, this defines a faithful functor $\Sff: \PSh^{\C^{\infty}_{\M}} \rightarrow \Sh^{\C^{\infty}_{\M}}$ into the category of sheaves of graded $\C^{\infty}_{\M}$-modules. Let 
\begin{equation}
\Jet^{k}_{\E} := \Sff( \gpJet^{k}_{\E}). 
\end{equation}
\begin{definice}
Let $k \in \N_{0}$. For every $U \in \Op(M)$, the elements of $\Jet^{k}_{\E}(U)$ are called \textbf{$k$-th order jets of $\E$ over $U$}. $\Jet^{k}_{\E}$ is called the \textbf{sheaf of $k$-th order jets of $\E$}. 
\end{definice}

We will now spend the rest of this section proving that $\Jet^{k}_{\E}$ is locally freely and finitely generated of a constant graded rank, that is a sheaf of sections of a graded vector bundle which we will denote as $\frJ^{k}_{\E}$. To do so, suppose that $(U,\varphi)$ is a graded local chart on $\M$ and we have a local frame $\{ \Phi_{\lambda} \}_{\lambda=1}^{r}$ for $\E$ over $U$. Now, for every $j \in \N$, $f_{1},\dots,f_{j} \in \C^{\infty}_{\M}(U)$ and $\lambda \in \{1,\dots,r\}$, define
\begin{equation} \label{eq_fdelta}
\fdelta^{(j)}_{(f_{1},\dots,f_{j}),\lambda} := [ \delta^{(j)}_{(f_{1},\dots,f_{j})}(1 \otimes_{(k)} \Phi_{\lambda}) ]^{\bullet} \in \gpJet^{k}_{\E}(U).
\end{equation}

First, we obtain an analogue of Lemma \ref{lem_olDjprops}:
\begin{lemma}
For every $j \in \N$, one has 
\begin{equation} \label{eq_fdeltajgsymmetry}
\fdelta^{(j)}_{(f_{1},\dots,f_{i},f_{i+1}, \dots ,f_{j}),\lambda} = (-1)^{|f_{i}||f_{i+1}|} \fdelta^{(j)}_{(f_{1}, \dots, f_{i+1}, f_{i}, \dots, f_{j}),\lambda},
\end{equation}
for every $j$-tuple $(f_{1}, \dots, f_{j})$ of functions in $\C^{\infty}_{\M}(U)$ and $i \in \{1, \dots, j-1\}$. Moreover, for any $f,g \in \C^{\infty}_{\M}(U)$ and any $(j-1)$-tuple $(f_{2},\dots,f_{j})$ of functions in $\C^{\infty}_{\M}(U)$, one has
\begin{equation} \label{eq_fdeltagLeibniz}
\fdelta^{(j)}_{(f \cdot g, f_{2},\dots,f_{j}),\lambda} = f \tr \fdelta^{(j)}_{(g,f_{2},\dots,f_{j}), \lambda} + (-1)^{|f||g|} g \tr \fdelta^{(j)}_{(f,f_{2},\dots,f_{j}),\lambda} - \fdelta^{(j+1)}_{(f,g,f_{2},\dots,f_{j}), \lambda}.
\end{equation} 
\begin{proof}
The identity (\ref{eq_fdeltajgsymmetry}) follows immediately from the fact that $[\delta_{f},\delta_{g}] = 0$ for all $f,g \in \C^{\infty}_{\M}(U)$. The property (\ref{eq_fdeltagLeibniz}) can be obtained from the fact that for all $f,g \in \C^{\infty}_{\M}(U)$, one has
\begin{equation}
\delta_{f \cdot g} = L^{\tr}_{f} \circ \delta_{g} + (-1)^{|f||g|} L^{\btr}_{g} \circ \delta_{f}.
\end{equation}
This can be verified easily by evaluating both sides on the generators $h \otimes_{(k)} \psi \in \pJet^{k}_{\E}(U)$. Then use $\delta_{g} = L^{\tr}_{g} - L^{\btr}_{g}$, definitions, and (\ref{eq_fdeltajgsymmetry}), to arrive to (\ref{eq_fdeltagLeibniz}).  
\end{proof}
\end{lemma}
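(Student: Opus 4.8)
The plan is to prove both identities first at the level of the presheaf $\pJet^{k}_{\E}$ — i.e.\ for the maps $\delta_{f}$ acting on $\cX(U)$ and its quotient $\cX(U)/\V_{(k)}(U)$ — and then transport them to $\gpJet^{k}_{\E}(U)$ along the geometrization map $[\cdot]^{\bullet}$, which is $\C^{\infty}_{\M}(U)$-linear for \emph{both} the $\tr$- and $\btr$-actions (cf.\ Proposition \ref{tvrz_geo}, applied to the presheaf $\pJet^{k}_{\E}$ carrying two module structures). Since $\delta_{f}$, $L^{\tr}_{f}$, $L^{\btr}_{f}$ all commute with the quotient maps $\natural^{(k)}_{U}$ and $\geo_{U}=[\cdot]^{\bullet}$, it suffices to establish the corresponding operator relations on $\cX(U)$ and then apply $[\natural^{(k)}_{U}(\cdot)]^{\bullet}$ to the element $\delta^{(j)}_{(f_{1},\dots,f_{j})}(1\otimes\Phi_{\lambda})$.

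For \eqref{eq_fdeltajgsymmetry} I would first record the three commutation relations among the left translations, checked on generators $h\otimes\psi$: graded-commutativity of $\C^{\infty}_{\M}(U)$ gives $L^{\tr}_{f}L^{\tr}_{g}=(-1)^{|f||g|}L^{\tr}_{g}L^{\tr}_{f}$ and $L^{\btr}_{f}L^{\btr}_{g}=(-1)^{|f||g|}L^{\btr}_{g}L^{\btr}_{f}$, while the fact that the $\tr$- and $\btr$-actions Koszul-commute gives $L^{\tr}_{f}L^{\btr}_{g}=(-1)^{|f||g|}L^{\btr}_{g}L^{\tr}_{f}$. Substituting $\delta_{f}=L^{\tr}_{f}-L^{\btr}_{f}$ and expanding the four terms immediately yields $\delta_{f}\delta_{g}=(-1)^{|f||g|}\delta_{g}\delta_{f}$, i.e.\ $[\delta_{f},\delta_{g}]=0$. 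Because $\delta^{(j)}_{(f_{1},\dots,f_{j})}=\delta_{f_{1}}\circ\cdots\circ\delta_{f_{j}}$, transposing the $i$-th and $(i{+}1)$-th factors produces exactly the sign $(-1)^{|f_{i}||f_{i+1}|}$; evaluating on $1\otimes_{(k)}\Phi_{\lambda}$ and applying $[\cdot]^{\bullet}$ gives \eqref{eq_fdeltajgsymmetry}.

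For \eqref{eq_fdeltagLeibniz} the key input is the factorization $\delta_{f\cdot g}=L^{\tr}_{f}\circ\delta_{g}+(-1)^{|f||g|}L^{\btr}_{g}\circ\delta_{f}$, which I would verify by evaluating both sides on a generator $h\otimes\psi$: the two ``mixed'' summands proportional to $(f\cdot h)\otimes(g\cdot\psi)$ cancel, leaving $(f\cdot g\cdot h)\otimes\psi-(-1)^{(|f|+|g|)|h|}\,h\otimes(f\cdot g\cdot\psi)=\delta_{f\cdot g}(h\otimes\psi)$. Rewriting $L^{\btr}_{g}=L^{\tr}_{g}-\delta_{g}$ turns the factorization into $\delta_{f\cdot g}=L^{\tr}_{f}\circ\delta_{g}+(-1)^{|f||g|}L^{\tr}_{g}\circ\delta_{f}-(-1)^{|f||g|}\delta_{g}\circ\delta_{f}$. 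Applying both sides to $\delta_{f_{2}}\circ\cdots\circ\delta_{f_{j}}(1\otimes_{(k)}\Phi_{\lambda})$, then $[\cdot]^{\bullet}$, and using $[L^{\tr}_{h}(\cdot)]^{\bullet}=h\tr[\cdot]^{\bullet}$ together with the already-proven relation $\delta_{g}\circ\delta_{f}=(-1)^{|f||g|}\delta_{f}\circ\delta_{g}$ — which makes the last term collapse to $\fdelta^{(j+1)}_{(f,g,f_{2},\dots,f_{j}),\lambda}$, the two factors $(-1)^{|f||g|}$ cancelling — produces exactly \eqref{eq_fdeltagLeibniz}.

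The only genuine work is the Koszul-sign bookkeeping: both in the three commutation relations for the left translations and in the generator-level cancellation establishing the factorization of $\delta_{f\cdot g}$. Once those are in hand everything else is formal, since $\natural^{(k)}_{U}$ and $[\cdot]^{\bullet}$ intertwine the operators $\delta_{f},L^{\tr}_{f},L^{\btr}_{f}$ and respect both $\C^{\infty}_{\M}(U)$-module structures, so the relations descend unchanged to $\gpJet^{k}_{\E}(U)$.
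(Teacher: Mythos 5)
Your proposal is correct and follows essentially the same route as the paper: establish $[\delta_{f},\delta_{g}]=0$ and the factorization $\delta_{f\cdot g}=L^{\tr}_{f}\circ\delta_{g}+(-1)^{|f||g|}L^{\btr}_{g}\circ\delta_{f}$ on generators, then substitute $L^{\btr}_{g}=L^{\tr}_{g}-\delta_{g}$ and use the graded symmetry to collapse the last term. The sign bookkeeping you carry out is accurate, and only the $\tr$-linearity of $[\cdot]^{\bullet}$ (which holds by construction of the geometrization) is actually needed to descend the operator identities to $\gpJet^{k}_{\E}(U)$.
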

The above lemma is now vital for the proof of the following analogue of Proposition \ref{tvrz_olfDformula}. 
\begin{tvrz}
For each $j \in \{0, \dots, k-1\}$, one has the formula
\begin{equation} \label{eq_fdeltaformula}
\fdelta^{(k-j)}_{(f,g_{2},\dots,g_{k-j}), \lambda} = \sum_{q=1}^{j+1} (-1)^{q+1} \frac{1}{q!} \partial^{\tl}_{A_{1} \dots A_{q}}(f) \tr \fdelta^{(q+k-j-1)}_{(\bbz^{A_{1}}, \dots, \bbz^{A_{q}},g_{2},\dots,g_{k-j}),\lambda}
\end{equation}
for all $f,g_{2},\dots,g_{k-j} \in \C^{\infty}_{\M}(U)$ and $\lambda \in \{1, \dots, r\}$. 
\end{tvrz}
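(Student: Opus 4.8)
The plan is to mirror almost verbatim the proof of Proposition \ref{tvrz_olfDformula}, since the presheaf $\gpJet^{k}_{\E}$ is built so that the operators $\delta_{f}$ play exactly the role that the iterated commutators $\ol{D}^{(j)}_{(\cdots)}$ played for differential operators. First I would record the Leibniz rule \eqref{eq_fdeltagLeibniz} from the preceding lemma as the one structural input, and observe that the vanishing relation $\delta^{(k+1)}_{(f_{1},\dots,f_{k+1})}(g\otimes_{(k)}\psi)=0$ forced by the quotient defining $\pJet^{k}_{\E}$ implies that any $(k{+}1)$-fold $\fdelta$ vanishes; this is the analogue of using $D\in\Dif^{k}_{\E}(U)$ to kill the last term in \eqref{eq_olDjgLeibniz}. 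The proof then proceeds by induction on $j\in\{0,\dots,k-1\}$, and inside each step by a secondary induction on the polynomial order $q$ in the shifted coordinates $\bbz^{A}_{a}:=\bbz^{A}-\bbz^{A}(a)$.

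The key steps, in order: (1) For $j=0$, use \eqref{eq_fdeltagLeibniz} with the last term gone to get the $\R$-bilinear ``derivation'' behaviour of $\fdelta^{(k)}_{(-,\star),\lambda}$ in its first slot, deduce $\fdelta^{(k)}_{(1,\star),\lambda}=0$, and then prove \eqref{eq_fdeltaformula} for monomials $f=\bbz^{B_{1}}_{a}\cdots\bbz^{B_{q}}_{a}$ by induction on $q$, checking that the combinatorial identity relating $\partial^{\tl}_{A_{1}\dots A_{q}}$ to products works just as in Proposition \ref{tvrz_olfDformula}. (2) Pass from monomials to arbitrary $f$: note that if $f\in(\J^{a}_{\M}(U))^{q+1}$ then $\fdelta^{(k)}_{(f,\star),\lambda}\in(\J^{a}_{\M}(U))^{q}\tr\gpJet^{k}_{\E}(U)$, write $f=T^{q}_{a}(f)+R^{q}_{a}(f)$ using the graded Taylor expansion (Lemma 3.4 of \cite{Vysoky:2022gm}), and conclude that the difference between the two sides of \eqref{eq_fdeltaformula} lies in $\bigcap_{q}\bigcap_{a}(\J^{a}_{\M}(U))^{q}\tr\gpJet^{k}_{\E}(U)$, hence is zero precisely because $\gpJet^{k}_{\E}(U)$ is geometric. (3) For the inductive step $j>0$, combine \eqref{eq_fdeltagLeibniz} with the induction hypothesis to get $\fdelta^{(k-j)}_{(1,\star),\lambda}=0$, then run the same monomial induction in $q$ (now using the hypothesis in $j$ to replace $\hat{\fdelta}$ by $\fdelta$ in the term with one more entry) and the same Taylor-remainder argument with order $q+j$ to finish.

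The one genuinely new ingredient — and the place to be a little careful — is step (2): in Proposition \ref{tvrz_olfDformula} the ``vanishing at all points'' conclusion was drawn from the criterion \eqref{eq_fzerocondition} for functions, i.e. from $\C^{\infty}_{\M}$ being geometric. Here the objects live in $\gpJet^{k}_{\E}(U)$, so I must use the module-level statement $P^{\bullet}=0$, which holds by construction since $\gpJet^{k}_{\E}=\Geo(\pJet^{k}_{\E})$ and geometrizations are geometric (Proposition \ref{tvrz_geo}). Concretely, the implication ``$f\in(\J^{a}_{\M}(U))^{q+1}\Rightarrow\fdelta^{(k-j)}_{(f,\star),\lambda}\in(\J^{a}_{\M}(U))^{q}\tr\gpJet^{k}_{\E}(U)$'' must be checked using \eqref{eq_fdeltagLeibniz}: expanding $f$ as a $\C^{\infty}_{\M}(U)$-combination of $(q{+}1)$-fold products of generators of $\J^{a}_{\M}(U)$ and distributing $\delta$'s by the Leibniz rule produces terms each carrying at least $q$ factors from $\J^{a}_{\M}(U)$ acting via $\tr$, modulo $\fdelta^{(k-j+i)}$ terms which vanish for $i$ large by the order bound and are handled inductively for small $i$. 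This is the main obstacle; everything else is the same bookkeeping (Koszul signs, the passage from $\partial^{\tl}$-ordering to multi-index ordering) that already appeared in Section \ref{sec_diffopVB}, so I would simply say ``the rest goes exactly as in the proof of Proposition \ref{tvrz_olfDformula}'' and leave the sign chase to the reader.
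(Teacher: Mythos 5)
Your proposal is correct and follows essentially the same route as the paper: the paper's own proof is literally a line-to-line copy of the proof of Proposition \ref{tvrz_olfDformula}, with the single new point being exactly the one you isolate in step (2), namely that the final ``vanishing at all points'' conclusion must now be drawn from $\gpJet^{k}_{\E}(U)^{\bullet}=0$ (geometricity of the geometrization) rather than from the function-level criterion \eqref{eq_fzerocondition}. Nothing further is needed.
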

\begin{proof}
The proof is a line-to-line copy of the proof of Proposition \ref{tvrz_olfDformula} and we will not repeat it here. However, there is one notable difference we have to very carefully point out. In the proof of (\ref{eq_olfDformula}), we have used the fact that both its sides were in $\C^{\infty}_{\M}(U)$. In particular, we have used the criterion (\ref{eq_fzerocondition}) in the very last step of the proof. 

This time, both sides are elements of $\gpJet^{k}_{\E}(U)$. At the certain moment, one is able to show that the difference of two sides of (\ref{eq_fdeltaformula}) is in the graded $\C^{\infty}_{\M}(U)$-submodule $\gpJet^{k}_{\E}(U)^{\bullet}$. But since $\gpJet^{k}_{\E}(U)$ is geometric, it has to vanish. This is \textit{exactly} the reason why we geometrize the presheaf $\pJet^{k}_{\E}$, otherwise we were not able to deduce the formula (\ref{eq_fdeltaformula}). 
\end{proof}

We are almost ready to prove the main theorem of this section. However, we still need one more statement. It will also play an important role in the following section.
\begin{tvrz} \label{tvrz_diffopfactorization}
Let $k \in \N_{0}$ and $U \in \Op(M)$ be arbitrary. Let $D \in \Dif^{k}_{\E}(U)$. Define
\begin{equation}
\frj^{k}_{0}[D][f \otimes_{(k)} \psi]^{\bullet} := (-1)^{|D||f|} f \cdot D(\psi),
\end{equation}
for each $\psi \in \Gamma_{\E}(U)$ and $f \in \C^{\infty}_{\M}(U)$. Then $\frj^{k}_{0}[D]$ is a well-defined degree $|D|$ map 
\begin{equation}
\frj^{k}_{0}[D]: \gpJet^{k}_{\E}(U) \rightarrow \Gamma_{\E}(U),
\end{equation}
$\C^{\infty}_{\M}(U)$-linear with respect to the action $\tr$ on $\gpJet^{k}_{\E}(U)$ and the (given) one on $\Gamma_{\E}(U)$.
\end{tvrz}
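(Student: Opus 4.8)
The plan is to construct $\frj^{k}_{0}[D]$ by first lifting it to the tensor object $\cX(U)$ and then pushing it down through the two successive quotients $\cX(U) \twoheadrightarrow \pJet^{k}_{\E}(U) = \cX(U)/\V_{(k)}(U) \twoheadrightarrow \gpJet^{k}_{\E}(U) = \pJet^{k}_{\E}(U)/\pJet^{k}_{\E}(U)^{\bullet}$, checking $\tr$-linearity and the degree at each stage. On $\cX(U)$ I would set $\Theta_{D}(f\otimes\psi) := (-1)^{|D||f|} f\cdot D(\psi)$ and extend $\R$-linearly; this is well defined because the assignment $(f,\psi) \mapsto (-1)^{|D||f|} f\cdot D(\psi)$ is graded $\R$-bilinear, it has degree $|D|$ since $|f\cdot D(\psi)| = |f| + |D| + |\psi|$, and the short computation
\[
\Theta_{D}\big(h\tr(f\otimes\psi)\big) = (-1)^{|D|(|h|+|f|)}\,(h\cdot f)\cdot D(\psi) = (-1)^{|D||h|}\, h\cdot\Theta_{D}(f\otimes\psi)
\]
shows it is $\C^{\infty}_{\M}(U)$-linear with respect to $\tr$.

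The crux is to show $\Theta_{D}$ kills $\V_{(k)}(U)$, so that it descends to $\pJet^{k}_{\E}(U)$. For this I would establish the identity $\Theta_{D}\circ\delta_{f} = -\,\Theta_{[D,\lambda_{f}]}$ for every $f\in\C^{\infty}_{\M}(U)$, by evaluating both sides on a generator $g\otimes\psi$: expanding $\delta_{f}(g\otimes\psi) = (f\cdot g)\otimes\psi - (-1)^{|f||g|} g\otimes(f\cdot\psi)$, rewriting $D(f\cdot\psi) = [D,\lambda_{f}](\psi) + (-1)^{|D||f|} f\cdot D(\psi)$, and using graded-commutativity of functions, the two terms proportional to $D(\psi)$ cancel and what remains is exactly $-\,\Theta_{[D,\lambda_{f}]}(g\otimes\psi)$. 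Iterating yields $\Theta_{D}\circ\delta^{(k+1)}_{(f_{1},\dots,f_{k+1})} = (-1)^{k+1}\,\Theta_{D^{(k+1)}_{(f_{1},\dots,f_{k+1})}}$, and since $D\in\Dif^{k}_{\E}(U)$ we have $D^{(k+1)}_{(f_{1},\dots,f_{k+1})} = 0$ by Remark \ref{rem_localusingDkplus1}; hence $\Theta_{D}$ annihilates every generator of $\V_{(k)}(U)$ in (\ref{eq_VkU}) and, being $\R$-linear, all of $\V_{(k)}(U)$. It therefore descends to a degree $|D|$, $\tr$-linear map $\ol{\Theta}_{D} : \pJet^{k}_{\E}(U)\to\Gamma_{\E}(U)$ with $\ol{\Theta}_{D}(f\otimes_{(k)}\psi) = (-1)^{|D||f|} f\cdot D(\psi)$.

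Finally I would descend $\ol{\Theta}_{D}$ through the geometrization. Because $\ol{\Theta}_{D}$ is $\C^{\infty}_{\M}(U)$-linear with respect to $\tr$, for each $q\in\N$ and $a\in U$ it maps $\pJet^{k}_{\E}(U)^{[q,a]} = (\J^{a}_{\M}(U))^{q}\tr\pJet^{k}_{\E}(U)$ into $(\J^{a}_{\M}(U))^{q}\cdot\Gamma_{\E}(U) = \Gamma_{\E}(U)^{[q,a]}$, hence maps $\pJet^{k}_{\E}(U)^{\bullet}$ into $\Gamma_{\E}(U)^{\bullet}$. But $\Gamma_{\E}$ is geometric by Proposition \ref{tvrz_sheaofsectionsisgeometric}, so $\Gamma_{\E}(U)^{\bullet} = 0$ and $\ol{\Theta}_{D}$ factors uniquely through $\gpJet^{k}_{\E}(U)$. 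The induced map on geometric classes $[f\otimes_{(k)}\psi]^{\bullet}$ is precisely $\frj^{k}_{0}[D]$ of the statement, and it inherits degree $|D|$ and $\tr$-linearity. (One could instead invoke the universal property of geometrization, Proposition \ref{tvrz_geo}, but since everything takes place over a fixed $U$ this direct quotient argument is cleaner.)

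I expect no genuine difficulty here; the only delicate point is the Koszul-sign bookkeeping in the identity $\Theta_{D}\circ\delta_{f} = -\,\Theta_{[D,\lambda_{f}]}$ and its iteration. The conceptual point worth stressing is that well-definedness on $\gpJet^{k}_{\E}(U)$ is automatic once $\tr$-linearity is in hand, precisely because the target $\Gamma_{\E}(U)$ is a geometric module --- which is the whole reason the geometrization step was inserted into the construction of $\gpJet^{k}_{\E}$ in the first place.
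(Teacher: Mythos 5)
Your proposal is correct and follows essentially the same route as the paper: define the map on generators, use the identity $\Theta_{D}\circ\delta_{f}=-\Theta_{[D,\lambda_{f}]}$ and its iteration together with Remark \ref{rem_localusingDkplus1} to kill $\V_{(k)}(U)$, and then pass through the geometrization using that $\Gamma_{\E}$ is geometric. The only cosmetic difference is that you descend through the geometric quotient by hand rather than citing the universal property of Proposition \ref{tvrz_geo}, which amounts to the same computation.
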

\begin{proof}
It suffices to prove that $\hat{\frj}^{k}_{0}[D](f \otimes_{(k)} \psi) := (-1)^{|D||f|} f \cdot D(\psi)$ is a well-defined graded $\C^{\infty}_{\M}(U)$-linear map (with respect to $\tr$) from $\pJet^{k}_{\E}(U)$ to $\Gamma_{\E}(U)$ of degree $|D|$. Since $\Gamma_{\E}(U)$ is geometric, $\frj^{k}_{0}[D]$ is then obtained by the universal property of geometrization. To see that $\hat{\frj}^{k}_{0}[D]$ is well-defined, observe that 
\begin{equation}
\hat{\frj}^{k}_{0}[D] \circ \delta_{f} = -\hat{\frj}^{k}_{0}[D^{(1)}_{(f)}],
\end{equation} 
for any $f \in \C^{\infty}_{\M}(U)$. This is easily verified on generators. Whence
\begin{equation} \label{eq_factorizedoperatorondelta}
\hat{\frj}^{k}_{0}[D] \circ \delta^{(j)}_{(f_{1},\dots,f_{j})} = (-1)^{j} \hat{\frj}^{k}_{0}[D^{(j)}_{(f_{1},\dots,f_{j})}]
\end{equation}
for any $j \in \N$ and $f_{1},\dots,f_{j} \in \C^{\infty}_{\M}(U)$. In particular, $\hat{\frj}^{k}_{0}[D] \circ \delta^{(k+1)}_{(f_{1},\dots,f_{k+1})} = 0$, since $D \in \Dif^{k}_{\E}(U)$. 

This shows that $\hat{\frj}^{k}_{0}[D]$ is well-defined. The fact that it is $\C^{\infty}_{\M}(U)$-linear of degree $|D|$ is easily checked by a direct verification. 
\end{proof}
\begin{theorem} 
Let $\psi = \psi^{\lambda} \cdot \Phi_{\lambda} \in \Gamma_{\E}(U)$ be arbitrary. Then one has 
\begin{equation} \label{eq_1otimespsigclassdecomposition}
[1 \otimes_{(k)} \psi]^{\bullet} = \sum_{q=0}^{k} \sum_{\fI \in \ol{\N}{}^{n}(q)} (-1)^{q} \frac{1}{\fI!} \partial^{\tl}_{\fI}(\psi^{\lambda}) \tr \fdelta^{\fI}{}_{\lambda},
\end{equation}
where $\fdelta^{\fnula}{}_{\lambda} := [1 \otimes_{(k)} \Phi_{\lambda}]^{\bullet}$ and for every $q \in \N$ and $\fI \in \ol{\N}{}^{n}(q)$, one sets 
\begin{equation} \label{eq_fdeltafI}
\fdelta^{\fI}{}_{\lambda} := \fdelta^{(q)}_{(\bbz^{\fI}_{(q)}), \lambda}. 
\end{equation}
Finally, $\partial_{\fI}^{\tl} := (\partial^{\tl}_{1})^{i_{1}} \circ \dots \circ (\partial^{\tl}_{n})^{i_{n}}$, see also (\ref{eq_partialtl}). 
\end{theorem}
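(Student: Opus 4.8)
The plan is to prove the decomposition formula (\ref{eq_1otimespsigclassdecomposition}) by the same strategy that established Corollary \ref{cor_Dactingformula1}, only now working inside $\gpJet^{k}_{\E}(U)$ instead of acting on sections. First I would observe the ``base'' identity: for any $\psi \in \Gamma_{\E}(U)$ and any $f \in \C^{\infty}_{\M}(U)$, the definition of $\delta_{f}$ and the quotient relations give
\begin{equation}
[1 \otimes_{(k)} (f \cdot \psi)]^{\bullet} = (-1)^{|f||1|}\, f \tr [1 \otimes_{(k)} \psi]^{\bullet} - [\delta_{f}(1 \otimes_{(k)} \psi)]^{\bullet},
\end{equation}
which after rewriting with $\partial^{\tl}$ is exactly the $\fdelta$-analogue of the first line in the proof of Corollary \ref{cor_Dactingformula1}. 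Applying this with $\psi \rightsquigarrow \Phi_{\lambda}$ and $f \rightsquigarrow \psi^{\lambda}$ yields
\begin{equation}
[1 \otimes_{(k)} \psi]^{\bullet} = \psi^{\lambda} \tr \fdelta^{\fnula}{}_{\lambda} - \fdelta^{(1)}_{(\psi^{\lambda}),\lambda},
\end{equation}
and then I would feed the term $\fdelta^{(1)}_{(\psi^{\lambda}),\lambda}$ into the formula (\ref{eq_fdeltaformula}) with $j = k-1$ and $f = \psi^{\lambda}$, exactly mirroring the final sentence of the proof of Corollary \ref{cor_Dactingformula1}.

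The second step is the bookkeeping: I would combine the resulting sum over $q$-tuples $(A_{1},\dots,A_{q})$ of indices into a sum over multi-indices $\fI \in \ol{\N}{}^{n}(q)$. This is exactly the passage governed by equation (\ref{eq_twosummations}): the expression $\partial^{\tl}_{A_{1}\dots A_{q}}(\psi^{\lambda}) \tr \fdelta^{(q)}_{(\bbz^{A_{1}},\dots,\bbz^{A_{q}}),\lambda}$ is completely symmetric in $(A_{1},\dots,A_{q})$ — the $\partial^{\tl}$ part because partial derivatives commute up to the Koszul sign that precisely matches (\ref{eq_partialtlLeibniz}), and the $\fdelta$ part by the graded symmetry (\ref{eq_fdeltajgsymmetry}) — so that $\sum_{A_{1},\dots,A_{q}} \tfrac{1}{q!}(\cdots) = \sum_{\fI \in \ol{\N}{}^{n}(q)} \tfrac{1}{\fI!}(\cdots)$, and only $\fI \in \ol{\N}{}^{n}(q)$ survive since any monomial $\bbz^{\fI}$ with an odd coordinate repeated vanishes. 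Tracking the sign $(-1)^{q}$ through the iterated commutators (coming from the $(-1)^{q+1}$ in (\ref{eq_fdeltaformula}) together with the single minus sign from the $q=0$ displayed step) and collecting the $q = 0$ term $\psi^{\lambda}\tr\fdelta^{\fnula}{}_{\lambda}$ into the sum gives precisely (\ref{eq_1otimespsigclassdecomposition}) with $\fdelta^{\fI}{}_{\lambda}$ as defined in (\ref{eq_fdeltafI}).

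The only genuinely delicate point — and the one I would flag explicitly — is the validity of invoking (\ref{eq_fdeltaformula}), which is a statement about $\gpJet^{k}_{\E}(U)$ and hence relies on that module being geometric; this is why the geometrization step was introduced, and the proof of the proposition preceding this theorem already handled it. Apart from that, there is no obstacle: the argument is a verbatim transcription of the $\Dif^{k}_{\E}$ computation with $D(\Phi_{\lambda})$ replaced by $[1 \otimes_{(k)} \Phi_{\lambda}]^{\bullet}$, the operators $\ol{\fD}^{(j)}$ replaced by the classes $\fdelta^{(j)}_{\cdots,\lambda}$, and module multiplication replaced by the $\tr$-action. I would therefore present the proof compactly: state the base identity, apply (\ref{eq_fdeltaformula}), perform the multi-index resummation via (\ref{eq_twosummations}), and note the sign count, remarking that the details are identical to those in Corollary \ref{cor_Dactingformula1}.
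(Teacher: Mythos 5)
Your proposal is correct and follows essentially the same route as the paper: the base identity $[1 \otimes_{(k)} \psi]^{\bullet} = \psi^{\lambda} \tr \fdelta^{\fnula}{}_{\lambda} - \fdelta^{(1)}_{(\psi^{\lambda}),\lambda}$, then (\ref{eq_fdeltaformula}) with $j = k-1$, then the multi-index resummation via (\ref{eq_fdeltajgsymmetry}) and the argument of (\ref{eq_twosummations}). The sign bookkeeping and the remark about geometrization being the prerequisite for invoking (\ref{eq_fdeltaformula}) are both accurate.
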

\begin{proof}
It follows from definitions and the formula (\ref{eq_fdeltaformula}) that 
\begin{equation}
\begin{split}
[1 \otimes_{(k)} \psi]^{\bullet} = & \  \psi^{\lambda} \tr [1 \otimes_{(k)} \Phi_{\lambda}]^{\bullet} - [\delta_{\psi^{\lambda}}(1 \otimes_{(k)} \Phi_{\lambda})]^{\bullet} \\
= & \ \psi^{\lambda} \tr \fdelta^{0}{}_{\lambda} + \sum_{q=1}^{k} (-1)^{q} \frac{1}{q!} \partial^{\tl}_{A_{1} \dots A_{q}} (\psi^{\lambda}) \tr \fdelta^{(q)}_{(\bbz^{A_{1}}, \dots, \bbz^{A_{q}}), \lambda} \\
= & \ \sum_{q=0}^{k} \sum_{\fI \in \ol{\N}{}^{n}(q)} (-1)^{q} \frac{1}{\fI!} \partial^{\tl}_{\fI}(\psi^{\lambda}) \tr \fdelta^{\fI}{}_{\lambda},
\end{split}
\end{equation}
where we have used (\ref{eq_fdeltajgsymmetry}) and the argument similar to (\ref{eq_twosummations}) to rewrite the sums in the last step. 
\end{proof}
This theorem is essential for the main statement of this section (and also of this paper).
\begin{theorem} \label{thm_generatorsgpJet}
The collection $\{ \fdelta^{\fI}{}_{\lambda}\}$ freely and finitely generates $\gpJet^{k}_{\E}|_{U}$.

Consequently, $\gpJet^{k}_{\E}|_{U}$ is a sheaf and $\Jet^{k}_{\E}$ is locally freely and finitely generated sheaf of graded $\C^{\infty}_{\M}$-modules of a constant graded rank. 
\end{theorem}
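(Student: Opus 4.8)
The plan is to establish that $\{\fdelta^{\fI}{}_{\lambda}\}_{\fI \in \ol{\N}{}^{n}(q), \, 0 \le q \le k, \, 1 \le \lambda \le r}$ is a $\tr$-basis of $\gpJet^{k}_{\E}(V)$ for every $V \in \Op(U)$ (with respect to the coordinate-restricted frame), and then invoke the local characterization of graded vector bundles. First I would prove that the $\fdelta^{\fI}{}_{\lambda}$ generate: by the previous theorem, equation (\ref{eq_1otimespsigclassdecomposition}) expresses $[1 \otimes_{(k)} \psi]^{\bullet}$ as a $\tr$-combination of the $\fdelta^{\fI}{}_{\lambda}$ for any $\psi \in \Gamma_{\E}(U)$. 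Since $\gpJet^{k}_{\E}(U)$ is generated as a $\tr$-module by classes of the form $[g \otimes_{(k)} \psi]^{\bullet} = g \tr [1 \otimes_{(k)} \psi]^{\bullet}$, the generation statement over $U$ follows immediately; the same argument applied to the restricted chart and frame gives generation over every $V \in \Op(U)$.

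Next I would prove freeness, i.e. that a relation $\sum_{q=0}^{k} \sum_{\fI \in \ol{\N}{}^{n}(q)} h^{\fI}{}_{\lambda} \tr \fdelta^{\fI}{}_{\lambda} = 0$ with $h^{\fI}{}_{\lambda} \in \C^{\infty}_{\M}(U)$ (of the appropriate degrees) forces all $h^{\fI}{}_{\lambda} = 0$. The natural strategy is to pair the relation against a suitable family of differential operators via the factorization map $\frj^{k}_{0}[D]$ from Proposition \ref{tvrz_diffopfactorization}: for $D \in \Dif^{k}_{\E}(U)$, the map $\frj^{k}_{0}[D]$ is $\tr$-linear, and using (\ref{eq_factorizedoperatorondelta}) one computes
\begin{equation}
\frj^{k}_{0}[D](\fdelta^{\fI}{}_{\lambda}) = (-1)^{w(\fI)} \frj^{k}_{0}[D](\, [\delta^{(w(\fI))}_{(\bbz^{\fI}_{(w(\fI))})}(1 \otimes_{(k)} \Phi_{\lambda})]^{\bullet}\,) = D^{(w(\fI))}_{(\bbz^{\fI}_{(w(\fI))})}(\Phi_{\lambda}) = [\ol{\fD}^{\fI}]^{\mu}{}_{\lambda} \cdot \Phi_{\mu}
\end{equation}
up to the established signs, which after composing with the frame-dual projections reproduces exactly the functions $\fK^{\fJ}{}^{\kappa}{}_{\rho}(D)$ of Theorem \ref{thm_decompositionofdiffop}. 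Choosing $D = \frP_{\fJ}{}^{\rho}{}_{\kappa}$ and applying (\ref{eq_Kmapsdual}) then isolates each coefficient $h^{\fJ}{}^{\kappa}{}_{\rho}$, proving linear independence. One must also note that $\frj^{k}_{0}[D]$ is well-defined precisely on the geometrized presheaf, which is why the relation being tested lives in $\gpJet^{k}_{\E}$ and not merely in $\pJet^{k}_{\E}$.

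Having shown $\{\fdelta^{\fI}{}_{\lambda}\}$ is a free $\tr$-basis over $U$ and over every open subset of $U$, it follows that $\gpJet^{k}_{\E}|_{U} \cong \C^{\infty}_{\M}|_{U}[K']$ for the graded vector space $K'$ spanned by symbols $\vartheta^{\fI}_{\lambda}$ of degree $|\vartheta_{\lambda}| - |\bbz^{\fI}|$, hence $\gpJet^{k}_{\E}|_{U}$ is in particular a sheaf. Covering $M$ by such charts, the sheafification $\Jet^{k}_{\E} = \Sff(\gpJet^{k}_{\E})$ restricts on each $U$ to $\gpJet^{k}_{\E}|_{U}$ (a sheaf is its own sheafification), so $\Jet^{k}_{\E}|_{U}$ is freely and finitely generated by $\{\sff_U(\fdelta^{\fI}{}_{\lambda})\}$. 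This exhibits a local trivialization of $\Jet^{k}_{\E}$ with typical fiber $K'$; since the graded rank $(\dim K'_j)_j$ is computed purely from the combinatorics of $\ol{\N}{}^{n}(q)$ and the fixed degrees $|\vartheta_{\lambda}|, |\bbz^{A}|$, it is independent of the chart, so the graded rank is constant. Thus $\Jet^{k}_{\E}$ is the sheaf of sections of a graded vector bundle $\frJ^{k}_{\E}$. The main obstacle is the freeness argument: one must carefully track the Koszul signs relating $\fdelta^{\fI}{}_{\lambda}$, the operators $D^{(j)}_{(\cdots)}$ versus $\ol{D}^{(j)}_{(\cdots)}$ (via (\ref{eq_DjvshatDj})) and $\partial^{\op}$ versus $\partial^{\tl}$, so that the pairing with $\frP_{\fJ}{}^{\rho}{}_{\kappa}$ genuinely reproduces the Kronecker-delta pattern (\ref{eq_Kmapsdual}) rather than something off by a unit; the generation step and the descent to a vector bundle are then routine.
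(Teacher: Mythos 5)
Your proposal follows the paper's proof essentially step for step: generation via the decomposition formula (\ref{eq_1otimespsigclassdecomposition}), freeness by applying $\frj^{k}_{0}[D]$ with $D = \frP_{\fJ}{}^{\kappa}{}_{\rho}$ and invoking (\ref{eq_factorizedoperatorondelta}) together with the duality (\ref{eq_Kmapsdual}), and the descent to a locally free sheaf of constant graded rank by chart-independence of the combinatorics. The only slip is the degree of the generators: $|\fdelta^{\fI}{}_{\lambda}| = |\bbz^{\fI}| + |\vartheta_{\lambda}|$ (each $\delta_{f}$ raises degree by $|f|$), not $|\vartheta_{\lambda}| - |\bbz^{\fI}|$, but this does not affect the argument.
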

\begin{proof}
The formula (\ref{eq_1otimespsigclassdecomposition}) shows that a finite collection $\{ \fdelta^{\fI}{}_{\lambda} \}$, where $\fI$ goes over $\cup_{q = 0}^{k} \ol{\N}{}^{n}(q)$ and $\lambda \in \{1,\dots,r\}$, generates $\gpJet^{k}_{\E}|_{U}$. One only has to prove that it generates it freely. Let $V \in \Op(U)$ and suppose that 
\begin{equation} \label{eq_toprovethatitgeneratesfreely}
0 = \sum_{q=0}^{k} \sum_{\fI \in \ol{\N}^{n}(q)} (-1)^{q} \frac{1}{\fI!} f_{\fI}{}^{\lambda} \tr \fdelta^{\fI}{}_{\lambda}|_{V}
\end{equation}
is a zero element of $\gpJet^{k}_{\E}(V)$ of some degree $\ell \in \Z$ and some functions $f_{\fI}{}^{\lambda} \in \C^{\infty}_{\M}(V)$. Note that
\begin{equation}
|f_{\fI}{}^{\lambda}| = \ell - |\bbz^{\fI}| - |\vartheta_{\lambda}|,
\end{equation}
for each $\fI \in \cup_{q = 0}^{k} \ol{\N}{}^{n}(q)$ and each $\lambda \in \{1,\dots,r\}$. We must argue that all of those functions are zero. We will do so by utilizing the results of Proposition \ref{tvrz_diffopfactorization}. Indeed, let $D \in \Dif^{k}_{\E}(V)$ be arbitrary. Let us apply the operator $\frj^{k}_{0}[D]$ on both sides of the equation (\ref{eq_toprovethatitgeneratesfreely}). Since $\frj^{k}_{0}[D]$ is $\C^{\infty}_{\M}(V)$-linear of degree $|D|$, one finds
\begin{equation}
0 = \sum_{q=0}^{k} \sum_{\fI \in \ol{\N}{}^{n}(q)} (-1)^{q + |D|(\ell - |\bbz^{\fI}| - |\vartheta_{\lambda}|)} \frac{1}{\fI!} f_{\fI}{}^{\lambda} \cdot \frj^{k}_{0}[D]( \fdelta^{\fI}{}_{\lambda}|_{V}). 
\end{equation}
Now, it follows from the formula (\ref{eq_factorizedoperatorondelta}) that for each $\fI \in \ol{\N}{}^{n}(q)$ and $\lambda \in \{1, \dots, r\}$, one has 
\begin{equation}
\frj^{k}_{0}[D]( \fdelta^{\fI}{}_{\lambda}|_{V}) = (-1)^{q} [\fD^{\fI}]^{\mu}{}_{\lambda} \cdot \Phi_{\mu}|_{V}. 
\end{equation}
See Theorem \ref{thm_decompositionofdiffop} and above for the notation. We thus obtain the expression 
\begin{equation}
0 = \sum_{q=0}^{k} \sum_{\fI \in \ol{\N}{}^{n}(q)} (-1)^{|D|(\ell - |\bbz^{\fI}| - |\vartheta_{\lambda}|)} \frac{1}{\fI!} f_{\fI}{}^{\lambda} \cdot [\fD^{\fI}]^{\mu}{}_{\lambda} \cdot \Phi_{\mu}|_{V},
\end{equation}
for every $D \in \Dif^{k}_{\E}(V)$. Now, fix $\fJ \in \cup_{q=0}^{k} \ol{\N}{}^{n}(q)$ and $\kappa,\rho \in \{1,\dots,r \}$. Choose $D = \frP_{\fJ}{}^{\kappa}{}_{\rho}|_{V}$ and use the fact that then $[\fD^{\fI}]^{\mu}{}_{\lambda} = \fI! \delta_{\fJ}^{\fI} \delta^{\mu}_{\rho} \delta^{\kappa}_{\lambda}$. See (\ref{eq_operatorsframe}, \ref{eq_Kmapsdual}) for definitions and details. This gives
\begin{equation}
0 = (-1)^{(|\vartheta_{\rho}| - |\vartheta_{\kappa}| - |\bbz^{\fJ}|)(\ell - |\bbz^{\fJ}| - |\vartheta_{\kappa}|)} f_{\fJ}{}^{\kappa} \cdot \Phi_{\rho}|_{V}. 
\end{equation}
Note that there is no longer any summation on the right-hand side. Since the collection $\{ \Phi_{\lambda}|_{V} \}_{\lambda=1}^{r}$ freely generates $\Gamma_{\E}(V)$, this implies $f_{\fJ}{}^{\kappa} = 0$. Since $\fJ$ and $\kappa$ were arbitrary, this proves the claim. 

Now, since we have just proved that $\gpJet^{k}_{\E}|_{U}$ is a finitely and freely generated presheaf, it is in fact a sheaf. There is thus a canonical sheaf isomorphism $\Jet^{k}_{\E}|_{U} \cong \gpJet^{k}_{\E}|_{U}$. We will henceforth use this identification. This proves that $\Jet^{k}_{\E}$ is locally freely and finitely generated. Finally, it is of a constant graded rank, since if $(q_{j})_{j \in \Z} := \grk( \Jet^{k}_{\E}|_{U})$, one has 
\begin{equation}
q_{j} = \# \{ (\fI,\lambda) \in \cup_{q = 0}^{k} \ol{\N}{}^{n}(q) \times \{1, \dots, r \} \; | \; |\bbz^{\fI}| + |\vartheta_{\lambda}| = j \},
\end{equation}
which is a number independent of the used graded local chart $(U,\varphi)$ and a local frame $\{ \Phi_{\lambda} \}_{\lambda=1}^{r}$. 
\end{proof}

\begin{definice}
$\Jet^{k}_{\E}$ is a sheaf of sections a graded vector bundle $\frJ^{k}_{\E}$ called the \textbf{$k$-th order jet bundle of a graded vector bundle $\E$}.
\end{definice}

Similarly to the graded vector bundle of $k$-th order differential operators, one can view the assignment of the $k$-th order jet bundle as a functor in the category of graded vector bundles over a given graded manifold $\M$.

\begin{tvrz}
For each $k \in \N_{0}$, the assignment $\E \mapsto \frJ^{k}_{\E}$ defines a functor 
\begin{equation}
\frJ^{k}: \gVBun^{\infty}_{\M} \rightarrow \gVBun^{\infty}_{\M}.
\end{equation}
\end{tvrz}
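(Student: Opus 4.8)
The plan is to lift any graded vector bundle map $F\colon\E\to\E'$ — that is, a $\C^{\infty}_{\M}$-linear sheaf morphism $F\colon\Gamma_{\E}\to\Gamma_{\E'}$ of some degree $|F|$ — to a map $\frJ^{k}F\colon\frJ^{k}_{\E}\to\frJ^{k}_{\E'}$ by pushing $F$ through the entire three-step construction (first on $\cX$, then on $\pJet^{k}_{\E}$, then on $\gpJet^{k}_{\E}$, then on $\Jet^{k}_{\E}$) and finally invoking the fact that geometrization and sheafification are functors.

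First I would define, for each $U\in\Op(M)$, the $\R$-linear degree $|F|$ map $\tilde{F}_{U}\colon\C^{\infty}_{\M}(U)\otimes_{\R}\Gamma_{\E}(U)\to\C^{\infty}_{\M}(U)\otimes_{\R}\Gamma_{\E'}(U)$ by $\tilde{F}_{U}(g\otimes\psi):=(-1)^{|F||g|}\,g\otimes F(\psi)$ on simple tensors. This is $\R$-bilinear, hence well defined, and a short computation shows it is $\C^{\infty}_{\M}(U)$-linear of degree $|F|$ for both the $\tr$ and the $\btr$ action. The only genuine point is the graded intertwining relation $\tilde{F}_{U}\circ\delta_{f}=(-1)^{|F||f|}\,\delta'_{f}\circ\tilde{F}_{U}$ for every $f\in\C^{\infty}_{\M}(U)$ (with $\delta'_{f}$ the analogous operator on the $\E'$-side), which follows immediately by moving $F$ past a function using its $\C^{\infty}_{\M}$-linearity. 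Iterating this identity gives $\tilde{F}_{U}\big(\V_{(k)}(U)\big)\subseteq\V'_{(k)}(U)$, so $\tilde{F}_{U}$ descends to a degree $|F|$ map $(\pJet^{k}F)_{U}\colon\pJet^{k}_{\E}(U)\to\pJet^{k}_{\E'}(U)$, explicitly $f\otimes_{(k)}\psi\mapsto(-1)^{|F||f|}f\otimes_{(k)}F(\psi)$. Since $F$ commutes with restrictions, so do these maps, whence $\pJet^{k}F:=\{(\pJet^{k}F)_{U}\}_{U}$ is a $\C^{\infty}_{\M}$-linear presheaf morphism $\pJet^{k}_{\E}\to\pJet^{k}_{\E'}$ of degree $|F|$, with respect to the $\tr$ action — the one for which $\{\fdelta^{\fI}{}_{\lambda}\}$ freely generates, cf. (\ref{eq_1otimespsigclassdecomposition}).

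Next I would set $\frJ^{k}F:=\Sff\big(\Geo(\pJet^{k}F)\big)$. Geometrization (Proposition \ref{tvrz_geo}, functorial by the subsequent remark) produces $\Geo(\pJet^{k}F)\colon\gpJet^{k}_{\E}\to\gpJet^{k}_{\E'}$, and applying the sheafification functor $\Sff$ yields a $\C^{\infty}_{\M}$-linear sheaf morphism $\Jet^{k}F\colon\Jet^{k}_{\E}\to\Jet^{k}_{\E'}$ of degree $|F|$. By Theorem \ref{thm_generatorsgpJet} and the definition of $\frJ^{k}_{\E}$, both $\Jet^{k}_{\E}$ and $\Jet^{k}_{\E'}$ are sheaves of sections of graded vector bundles, so this is by definition a morphism $\frJ^{k}F\colon\frJ^{k}_{\E}\to\frJ^{k}_{\E'}$ in $\gVBun^{\infty}_{\M}$. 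Functoriality is then formal: on simple tensors one checks that $\tilde{F}$ is the identity when $F=\1_{\E}$, and that $\widetilde{G\circ F}=\tilde{G}\circ\tilde{F}$ (the Koszul signs $(-1)^{|F||g|}$ compose correctly), so $\pJet^{k}\1_{\E}=\1_{\pJet^{k}_{\E}}$ and $\pJet^{k}(G\circ F)=\pJet^{k}G\circ\pJet^{k}F$; applying the functors $\Geo$ and $\Sff$ then gives $\frJ^{k}\1_{\E}=\1_{\frJ^{k}_{\E}}$ and $\frJ^{k}(G\circ F)=\frJ^{k}G\circ\frJ^{k}F$.

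I expect the construction to be almost entirely formal sign-bookkeeping; the only step needing real care is the well-definedness at the very first stage, namely verifying $\tilde{F}_{U}(\V_{(k)}(U))\subseteq\V'_{(k)}(U)$ via the intertwining of $\tilde{F}_{U}$ with the $\delta_{f}$, together with the check that $\pJet^{k}F$ is linear for the correct ($\tr$) module structure — harmless once one records that $\tilde{F}_{U}$ is in fact linear for \emph{both} actions. Everything downstream is just functoriality of $\Geo$ and $\Sff$, which is already established in the excerpt.
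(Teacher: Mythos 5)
Your proposal is correct and follows essentially the same route as the paper: define the map on simple tensors with the Koszul sign $(-1)^{|F||g|}$, verify the intertwining relation with $\delta_{f}$ to get well-definedness on the quotient $\pJet^{k}_{\E}$, check $\C^{\infty}_{\M}$-linearity and naturality, and then invoke functoriality of $\Geo$ and $\Sff$. The only (harmless) cosmetic difference is that you first define the map on $\cX(U)$ and descend, whereas the paper writes the formula directly on the generators $g\otimes_{(k)}\psi$ of $\pJet^{k}_{\E}(U)$.
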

\begin{proof}
Let $F: \E \rightarrow \E'$ be a graded vector bundle map, that is a $\C^{\infty}_{\M}$-linear sheaf morphism $F: \Gamma_{\E} \rightarrow \Gamma_{\E'}$ of any given degree $|F|$. We must produce a $\C^{\infty}_{\M}$-linear sheaf morphism 
\begin{equation}
\frJ^{k}F: \Jet^{k}_{\E} \rightarrow \Jet^{k}_{\E'}. 
\end{equation}
We do so by first producing a $\C^{\infty}_{\M}$-linear presheaf morphism $\frJ^{k}_{0}F: \pJet^{k}_{\E} \rightarrow \pJet^{k}_{\E}$. Let 
\begin{equation}
(\frJ^{k}_{0}F)_{U}(g \otimes_{(k)} \psi) := (-1)^{|F||g|} g \otimes_{(k)} F_{U}(\psi),
\end{equation} 
for all $U \in \Op(M)$, $g \in \C^{\infty}_{\M}(U)$ and $\psi \in \Gamma_{\E}(U)$. For each $f \in \C^{\infty}_{\M}(U)$, one has
\begin{equation}
(\frJ^{k}_{0}F)_{U} \circ \delta_{f} = (-1)^{|F||f|} \delta_{f} \circ (\frJ^{k}_{0}F)_{U}.
\end{equation}
This is easy to verify. This ensures that $(\frJ^{k}_{0}F)_{U}$ is well-defined. It is easily seen to be $\C^{\infty}_{\M}(U)$-linear of degree $|F|$ and natural in $V$. Moreover, one finds
\begin{equation}
\frJ^{k}_{0}\1_{\E} = \1_{\pJet^{k}_{\E}}, \; \; \frJ^{k}_{0}(G \circ F) = \frJ^{k}_{0}(G) \circ \frJ^{k}_{0}(F).
\end{equation}
This makes $\E \mapsto \pJet^{k}_{\E}$ into a functor from $\gVBun^{\infty}_{\M}$ to the category of presheaves of $\C^{\infty}_{\M}$-modules. Since $\Jet^{k}_{\E} = \Sff(\Geo(\pJet^{k}_{\E})$, where $\Geo$ and $\Sff$ are functors, let $\frJ^{k}F := \Sff(\Geo(\frJ^{k}_{0}F))$. 
\end{proof}
\begin{rem}
As already noted in Remark \ref{rem_totalspace}, one can construct a unique (up to a diffeomorphism) total space graded manifold $\frJ^{k}_{\E}$, together with a surjective submersion $\varpi_{k}: \frJ^{k}_{\E} \rightarrow \M$. In particular, one can use the trivial ``line bundle'' $\E = \M \times \R$ to obtain the graded manifold
\begin{equation}
\frJ^{k}_{[\M]} := \frJ^{k}_{\M \times \R},
\end{equation}
called the \textbf{$k$-th order jet manifold of $\M$}. Recall that $\Gamma_{\M \times \R} := \C^{\infty}_{\M}$. 
\end{rem}
\section{Graded jet bundles: properties} \label{sec_gJetprops}
So far it is not clear why $\Jet^{k}_{\E}$ should be called a sheaf of $k$-th order jets. We will now attempt to justify this construction by proving its properties which are obvious generalizations of their counterparts in ordinary geometry. Let us start by making the following list of expected results:
\begin{enumerate}[(1)]
\item For each $k \in \N_{0}$, there is a canonical ``jet prolongation'' sheaf morphism $\frj^{k}: \Gamma_{\E} \rightarrow \Jet^{k}_{\E}$.
\item $\frj^{0}$ is a $\C^{\infty}_{\M}$-linear sheaf isomorphism. Consequently, there is a canonical isomorphism $\E \cong \frJ^{0}_{\E}$. 
\item For each $k \in \N_{0}$, there is a canonical isomorphism $\frD^{k}_{\E} \cong \ul{\Hom}( \frJ^{k}_{\E}, \E)$. 
\item For any $\ell \leq k$, there is a surjective graded vector bundle map $\pi^{k,\ell}: \frJ^{k}_{\E} \rightarrow \frJ^{\ell}_{\E}$. 
\item The fiber of $\frJ^{k}_{\E}$ at any $a \in M$ can be interpreted as a graded set of equivalence classes of germs of sections whose components have the same Taylor polynomials of the order $k$ at $a$. 
\item For ordinary vector bundles over ordinary manifolds, this gives ordinary jet bundles.
\end{enumerate}
We will now discuss these items one by one.
\begin{enumerate}[(1)]
\item \textbf{Jet prolongations}. For each $k \in \N_{0}$, let us first construct a presheaf morphism $\hat{\frj}^{k}: \Gamma_{\E} \rightarrow \gpJet^{k}_{\E}$ as follows. For each $U \in \Op(M)$ and $\psi \in \Gamma_{\E}(U)$, set
\begin{equation} \label{eq_hatjmap}
\hat{\frj}^{k}_{U}(\psi) := [1 \otimes_{(k)} \psi]^{\bullet} 
\end{equation}
This map is $\C^{\infty}_{\M}(U)$-linear with respect to the $\btr$ action on $\gpJet^{k}_{\E}(U)$, but not with respect to the other action $\tr$. It is natural in $U$, whence it defines a presheaf morphism $\hat{\frj}^{k}: \Gamma_{\E} \rightarrow \gpJet^{k}_{\E}$. 

The sheaf morphism $\frj^{k}: \Gamma_{\E} \rightarrow \Jet^{k}_{\E}$ is then obtained as $\frj^{k} := \sff \circ \hat{\frj}^{k}$, where $\sff: \gpJet^{k}_{\E} \rightarrow \Jet^{k}_{\E}$ is a canonical presheaf morphism coming from the sheafification procedure. For each $\psi \in \Gamma_{\E}(U)$, $\frj^{k}_{U}(\psi)$ is called the \textbf{$k$-th order jet prolongation of the section $\psi$}. 

Note that the image of $\frj^{k}$ ``locally generates'' $\Jet^{k}_{\E}$. Indeed, for every point $m \in M$, there is $U \in \Op_{m}(M)$ where $\Jet^{k}_{\E}|_{U} \cong \gpJet^{k}_{\E}|_{U}$. For any $V \in \Op(U)$, every element of $\Jet^{k}_{\E}(V)$ can be then written as a combination $f^{j} \tr \frj^{k}_{V}(\psi_{j})$ for some finite collection $\{ \psi_{j} \}_{j} \subseteq \Gamma_{\E}(V)$ and functions $f^{j} \in \C^{\infty}_{\M}(V)$. 

\item \textbf{Degree zero jets.} First, observe that already $\pJet^{0}_{\E}$ is a geometric sheaf isomorphic to $\Gamma_{\E}$. To see this, note that for each $U \in \Op(M)$, one has 
\begin{equation}
\V_{(0)}(U) = \R \{ (f \cdot g) \otimes \psi - (-1)^{|f||g|} f \otimes (g \cdot \psi) \; | \; f,g \in \C^{\infty}_{\M}(U), \; \psi \in \Gamma_{\E}(U) \}.
\end{equation}
It follows that the quotient is precisely the tensor product of two graded $\C^{\infty}_{\M}(U)$-modules:
\begin{equation}
\pJet^{0}_{\E}(U) \equiv \cX(U) / \V_{(0)}(U) \equiv \C^{\infty}_{\M}(U) \otimes_{\C^{\infty}_{\M}(U)} \Gamma_{\E}(U). 
\end{equation}
Since $\C^{\infty}_{\M}(U)$ is the unit with respect to the monoidal product $\otimes_{\C^{\infty}_{\M}(U)}$, there must be a canonical isomorphism of the graded $\C^{\infty}_{\M}(U)$-modules $\Gamma_{\E}(U)$ and $\pJet^{0}_{\E}(U)$. It is given by
\begin{equation}
\hat{\frj}'^{0}_{U}(\psi) := 1 \otimes_{(0)} \psi.
\end{equation}
This is easily seen to define a $\C^{\infty}_{\M}$-linear presheaf morphism $\hat{\frj}'^{0}: \Gamma_{\E} \rightarrow \pJet^{0}_{\E}$. Note that $\tr$ and $\btr$ coincide. In particular, $\pJet^{0}_{\E}$ is in fact a geometric sheaf, see Proposition \ref{tvrz_geo} and Proposition \ref{tvrz_geoprops}. Hence $\hat{\frj}^{0} \equiv \geo \circ \hat{\frj}'^{0}$ is a $\C^{\infty}_{\M}$-linear sheaf isomorphism of $\Gamma_{\E}$ and $\gpJet^{0}_{\E}$ and $\frj^{0} \equiv \sff \circ \hat{\frj}^{0}$ is a $\C^{\infty}_{\M}$-linear sheaf isomorphism of $\Gamma_{\E}$ and $\Jet^{0}_{\E}$.
\begin{tvrz}
$\frj^{0}: \Gamma_{\E} \rightarrow \Jet^{0}_{\E}$ is a $\C^{\infty}_{\M}$-linear sheaf isomorphism, which can be viewed as a canonical graded vector bundle isomorphism $\frj^{0}: \E \rightarrow \frJ^{0}_{\E}$. 
\end{tvrz}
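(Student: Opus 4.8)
The plan is to assemble the statement directly from the pieces developed in item (2) of this section, so the ``proof'' is essentially a short bookkeeping argument. First I would recall that $\pJet^{0}_{\E}$ was identified with the tensor product $\C^{\infty}_{\M}(U) \otimes_{\C^{\infty}_{\M}(U)} \Gamma_{\E}(U)$ over each $U \in \Op(M)$, and hence that the presheaf morphism $\hat{\frj}'^{0}: \Gamma_{\E} \rightarrow \pJet^{0}_{\E}$, $\psi \mapsto 1 \otimes_{(0)} \psi$, is a $\C^{\infty}_{\M}$-linear presheaf isomorphism because $\C^{\infty}_{\M}(U)$ is the unit for $\otimes_{\C^{\infty}_{\M}(U)}$. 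Here the two actions $\tr$ and $\btr$ on $\pJet^{0}_{\E}(U)$ coincide, so there is no ambiguity about which module structure we use.

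Next I would observe that $\pJet^{0}_{\E}$, being isomorphic to $\Gamma_{\E}$, is a \emph{geometric sheaf}: it is a sheaf because $\Gamma_{\E}$ is, and it is geometric by Proposition \ref{tvrz_sheaofsectionsisgeometric} together with Proposition \ref{tvrz_geoprops}(ii) applied to the isomorphism $\hat{\frj}'^{0}$. By the last assertion of Proposition \ref{tvrz_geo}, the canonical morphism $\geo: \pJet^{0}_{\E} \rightarrow \gpJet^{0}_{\E}$ is therefore an isomorphism, and likewise by the corresponding property of sheafification the morphism $\sff: \gpJet^{0}_{\E} \rightarrow \Jet^{0}_{\E}$ is an isomorphism. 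Composing, $\frj^{0} = \sff \circ \geo \circ \hat{\frj}'^{0} = \sff \circ \hat{\frj}^{0}$ is a composite of $\C^{\infty}_{\M}$-linear sheaf isomorphisms, hence itself a $\C^{\infty}_{\M}$-linear sheaf isomorphism. One should double-check that on sections $\frj^{0}_{U}(\psi) = [1 \otimes_{(0)} \psi]^{\bullet}$ (after the identifications), matching the general definition (\ref{eq_hatjmap}) of the jet prolongation for $k = 0$.

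Finally, since $\Jet^{0}_{\E}$ is a sheaf of sections of the graded vector bundle $\frJ^{0}_{\E}$ (Theorem \ref{thm_generatorsgpJet}) and $\Gamma_{\E}$ is the sheaf of sections of $\E$, a degree zero $\C^{\infty}_{\M}$-linear sheaf isomorphism between them is precisely a graded vector bundle isomorphism $\frj^{0}: \E \rightarrow \frJ^{0}_{\E}$ in $\gVBun^{\infty}_{\M}$; this is the content of (the $\1_{\M}$ case of) Proposition \ref{tvrz_morphisms} applied to an invertible morphism. I do not expect any genuine obstacle here: the only mild subtlety is being careful that the three functorial steps (tensor-unit identification, geometrization, sheafification) are each applied to an object that is already of the appropriate type, so that each step contributes an isomorphism rather than a mere morphism; everything else is a direct unwinding of the constructions in item (2).

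\begin{proof}
By item (2) above, for each $U \in \Op(M)$ there is a canonical isomorphism of graded $\C^{\infty}_{\M}(U)$-modules $\pJet^{0}_{\E}(U) \cong \C^{\infty}_{\M}(U) \otimes_{\C^{\infty}_{\M}(U)} \Gamma_{\E}(U)$, under which the two actions $\tr$ and $\btr$ agree, and the presheaf morphism $\hat{\frj}'^{0}: \Gamma_{\E} \rightarrow \pJet^{0}_{\E}$ given by $\hat{\frj}'^{0}_{U}(\psi) = 1 \otimes_{(0)} \psi$ is a $\C^{\infty}_{\M}$-linear presheaf isomorphism, since $\C^{\infty}_{\M}(U)$ is the unit for $\otimes_{\C^{\infty}_{\M}(U)}$. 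In particular $\pJet^{0}_{\E}$ is a sheaf, and it is geometric: by Proposition \ref{tvrz_sheaofsectionsisgeometric} the sheaf $\Gamma_{\E}$ is geometric, and $\hat{\frj}'^{0}$ is in particular an injective $\C^{\infty}_{\M}$-linear presheaf morphism, so Proposition \ref{tvrz_geoprops}(ii) (or directly the isomorphism) shows $\pJet^{0}_{\E}$ is geometric.

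Since $\pJet^{0}_{\E}$ is a geometric presheaf, the canonical morphism $\geo: \pJet^{0}_{\E} \rightarrow \gpJet^{0}_{\E}$ is a $\C^{\infty}_{\M}$-linear presheaf isomorphism by Proposition \ref{tvrz_geo}. As $\gpJet^{0}_{\E} \cong \pJet^{0}_{\E}$ is already a sheaf, the sheafification morphism $\sff: \gpJet^{0}_{\E} \rightarrow \Jet^{0}_{\E}$ is a $\C^{\infty}_{\M}$-linear sheaf isomorphism. Therefore
\begin{equation}
\frj^{0} = \sff \circ \hat{\frj}^{0} = \sff \circ \geo \circ \hat{\frj}'^{0}
\end{equation}
is a composite of $\C^{\infty}_{\M}$-linear (pre)sheaf isomorphisms, hence a $\C^{\infty}_{\M}$-linear sheaf isomorphism $\frj^{0}: \Gamma_{\E} \rightarrow \Jet^{0}_{\E}$. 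On sections it is given by $\frj^{0}_{U}(\psi) = [1 \otimes_{(0)} \psi]^{\bullet}$ after the above identifications, in agreement with (\ref{eq_hatjmap}).

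Finally, $\Gamma_{\E}$ is the sheaf of sections of $\E$ and, by Theorem \ref{thm_generatorsgpJet}, $\Jet^{0}_{\E}$ is the sheaf of sections of the graded vector bundle $\frJ^{0}_{\E}$. A degree zero $\C^{\infty}_{\M}$-linear sheaf isomorphism between their sheaves of sections is, by Proposition \ref{tvrz_morphisms}, precisely an isomorphism in $\gVBun^{\infty}_{\M}$. Thus $\frj^{0}$ is a canonical graded vector bundle isomorphism $\frj^{0}: \E \rightarrow \frJ^{0}_{\E}$.
\end{proof}
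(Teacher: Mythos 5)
Your argument is correct and follows essentially the same route as the paper: identify $\pJet^{0}_{\E}(U)$ with $\C^{\infty}_{\M}(U)\otimes_{\C^{\infty}_{\M}(U)}\Gamma_{\E}(U)$ via the unit property of the monoidal product, conclude that $\pJet^{0}_{\E}$ is already a geometric sheaf so that both $\geo$ and $\sff$ are isomorphisms, and then invoke Proposition \ref{tvrz_morphisms} to read the sheaf isomorphism as a graded vector bundle isomorphism. No gaps; the only stylistic difference is that you make the appeal to Proposition \ref{tvrz_geoprops}(ii) explicit where the paper leaves it implicit.
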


\item \textbf{Factorization of differential operators}. 
There is a crucial consequence of Proposition \ref{tvrz_diffopfactorization}. 
\begin{tvrz} \label{tvrz_diffopfactorization2}
Let $k \in \N_{0}$ and $U \in \Op(M)$ be arbitrary. Let $D \in \Dif^{k}_{\E}(U)$. Then there is a unique $\C^{\infty}_{\M}(U)$-linear map $\frj^{k}[D]: \Jet^{k}_{\E}(U) \rightarrow \Gamma_{\E}(U)$ satisfying $D = \frj^{k}[D] \circ \frj^{k}_{U}$. 
\end{tvrz}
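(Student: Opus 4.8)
The plan is to build $\frj^{k}[D]$ out of the map $\frj^{k}_{0}[D]$ already produced in Proposition \ref{tvrz_diffopfactorization}, using the identification $\Jet^{k}_{\E}|_{U} \cong \gpJet^{k}_{\E}|_{U}$ from Theorem \ref{thm_generatorsgpJet}. Concretely, cover $U$ by graded local charts carrying local frames, so that on each piece $\Jet^{k}_{\E}$ agrees with $\gpJet^{k}_{\E}$; on such a piece set $\frj^{k}[D] := \frj^{k}_{0}[D]$. Since $\frj^{k}_{0}[D]$ is already a well-defined, $\C^{\infty}_{\M}$-linear (with respect to $\tr$) map of degree $|D|$ from $\gpJet^{k}_{\E}(U)$ to $\Gamma_{\E}(U)$, the only genuine work is to (a) check these local definitions glue to a global map on $U$, and (b) verify the defining identity $D = \frj^{k}[D] \circ \frj^{k}_{U}$, and (c) establish uniqueness.

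For the gluing step (a), I would observe that on the overlap of two charts the two candidate maps both send $[f \otimes_{(k)} \psi]^{\bullet}$ to $(-1)^{|D||f|} f \cdot D(\psi)$ — this is a chart-independent formula — so they agree on the generators $[f \otimes_{(k)} \psi]^{\bullet}$ and hence, by $\tr$-linearity and the fact that such generators span, agree everywhere; by the sheaf property of $\Jet^{k}_{\E}$ they patch to a single $\C^{\infty}_{\M}(U)$-linear map $\frj^{k}[D]: \Jet^{k}_{\E}(U) \to \Gamma_{\E}(U)$. Alternatively, and more cleanly, one can avoid charts entirely: note that $\hat{\frj}^{k}_{0}[D]$ is defined on all of $\pJet^{k}_{\E}(U)$ before any choice of chart, it kills $\V_{(k)}(U)$ by equation (\ref{eq_factorizedoperatorondelta}), and it kills $\pJet^{k}_{\E}(U)^{\bullet}$ because $\Gamma_{\E}(U)$ is geometric (Proposition \ref{tvrz_sheaofsectionsisgeometric}), so by the universal property of geometrization it descends to $\gpJet^{k}_{\E}(U) \to \Gamma_{\E}(U)$; then using the local identification $\gpJet^{k}_{\E}|_{U} \cong \Jet^{k}_{\E}|_{U}$ (valid on chart domains) together with the sheaf property transports it to a map out of $\Jet^{k}_{\E}(U)$. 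Either route works; I would present the universal-property version as the main argument.

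For (b), unwind the definitions: $\frj^{k}_{U}(\psi) = [1 \otimes_{(k)} \psi]^{\bullet}$ (pushed through sheafification, which is an isomorphism on chart domains), and $\frj^{k}[D]$ applied to this is, by the formula defining $\frj^{k}_{0}[D]$ with $f = 1$, just $(-1)^{|D|\cdot 0}\, 1 \cdot D(\psi) = D(\psi)$. Hence $\frj^{k}[D] \circ \frj^{k}_{U} = D$ as required. For uniqueness (c), suppose $T: \Jet^{k}_{\E}(U) \to \Gamma_{\E}(U)$ is another $\C^{\infty}_{\M}(U)$-linear map with $T \circ \frj^{k}_{U} = D$. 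Then $T$ agrees with $\frj^{k}[D]$ on every element of the form $f \tr \frj^{k}_{V}(\psi)$ for $V \in \Op(U)$, $f \in \C^{\infty}_{\M}(V)$, $\psi \in \Gamma_{\E}(V)$, by $\tr$-linearity; but as noted after equation (\ref{eq_hatjmap}) the image of $\frj^{k}$ locally $\tr$-generates $\Jet^{k}_{\E}$, so these elements generate $\Jet^{k}_{\E}(V)$ for $V$ small enough, forcing $T|_{V} = \frj^{k}[D]|_{V}$, and then the sheaf property of $\Jet^{k}_{\E}$ gives $T = \frj^{k}[D]$ on all of $U$.

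I expect the only subtlety — the "main obstacle" — to be bookkeeping around the two module structures $\tr$ and $\btr$ and making sure the $\tr$-linearity is the relevant one throughout (the jet prolongation $\frj^{k}$ is $\btr$-linear, not $\tr$-linear, yet the factored map is asserted $\tr$-linear), together with carefully handling the sheafification $\sff$: since $\gpJet^{k}_{\E}|_{U}$ is already a sheaf on chart domains, $\sff$ restricts to an isomorphism there, so no information is lost, but this should be stated explicitly. None of this is deep; the content is entirely carried by Proposition \ref{tvrz_diffopfactorization} and Theorem \ref{thm_generatorsgpJet}, and the proof is essentially a one-paragraph assembly of those two results.
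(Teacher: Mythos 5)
Your proposal is correct and follows essentially the same route as the paper: descend $\hat{\frj}^{k}_{0}[D]$ through geometrization using that $\Gamma_{\E}(U)$ is geometric, pass to $\Jet^{k}_{\E}$ via the identification with $\gpJet^{k}_{\E}$ on chart domains (equivalently, the universal property of sheafification applied to the presheaf morphism $V \mapsto \frj^{k}_{0}[D|_{V}]$), check the factorization on $[1\otimes_{(k)}\psi]^{\bullet}$, and get uniqueness from the fact that the image of $\frj^{k}$ locally $\tr$-generates $\Jet^{k}_{\E}$. The only step you leave implicit in the uniqueness argument is why a competing map $T$ on $\Jet^{k}_{\E}(U)$ admits restrictions $T|_{V}$ still satisfying $T|_{V}\circ\frj^{k}_{V}=D|_{V}$; this is supplied by the locality machinery of Propositions \ref{tvrz_locsheaf} and \ref{tvrz_Locglobalsheaf}, which is exactly how the paper closes that gap.
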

\begin{proof}
Let $D \in \Dif^{k}_{\E}(U)$. Note that in Proposition \ref{tvrz_diffopfactorization}, we have in fact constructed a \textit{unique} $\C^{\infty}_{\M}(U)$-linear map $\frj^{k}_{0}[D]: \gpJet^{k}_{\E}(U) \rightarrow \Gamma_{\E}(U)$ satisfying the relation 
\begin{equation}
\frj^{k}_{0}[D] \circ \hat{\frj}^{k}_{U} = D. 
\end{equation}
One can define a $\C^{\infty}_{\M}|_{U}$-linear presheaf morphism $\ol{\frj^{k}_{0}[D]}: \gpJet^{k}_{\E}|_{U} \rightarrow \Gamma_{\E}|_{U}$ by declaring
\begin{equation}
\ol{\frj^{k}_{0}[D]}_{V} := \frj^{k}_{0}[D|_{V}],
\end{equation}
for each $V \in \Op(M)$. There thus exists a unique $\C^{\infty}_{\M}|_{U}$-linear sheaf morphism 
\begin{equation}
\ol{\frj^{k}[D]}: \Jet^{k}|_{U} \rightarrow \Gamma_{\E}|_{U}
\end{equation}
having the property $\ol{\frj^{k}[D]} \circ \sff|_{U} = \ol{\frj^{k}_{0}[D]}$. We declare $\frj^{k}[D] := \ol{\frj^{k}[D]}_{U}$. But then 
\begin{equation}
\frj^{k}[D] \circ \frj^{k}_{U} = \ol{\frj^{k}[D]}_{U} \circ (\sff_{U} \circ \hat{\frj}^{k}_{U}) = \ol{ \frj^{k}_{0}[D]}_{U} \circ \hat{\frj}^{k}_{U} = \frj^{k}_{0}[D] \circ \hat{\frj}^{k}_{U} = D. 
\end{equation}

Let $F: \Jet^{k}_{\E}(U) \rightarrow \Gamma_{\E}(U)$ be another $\C^{\infty}_{\M}(U)$-linear map having the required property. By a straightforward generalization of Proposition \ref{tvrz_Locglobalsheaf}, there exists a unique sheaf morphism $\ol{F}: \Jet^{k}_{\E}|_{U} \rightarrow \Gamma_{\E}|_{U}$ satisfying $F = \ol{F}_{U}$. We claim that for any $V \in \Op(U)$, one has $\ol{F}_{V} \circ \frj^{k}_{V} = D|_{V}$. First, observe that the left-hand side is in $\Loc_{\E}(V)$. This is because it is $\R$-linear and whenever $\psi|_{W} = 0$, one finds
\begin{equation}
(\ol{F}_{V} \circ \frj^{k}_{V})(\psi)|_{W} = (\ol{F}_{W} \circ \frj^{k}_{W})(\psi|_{W}) = 0. 
\end{equation}
Moreover, for every $\psi \in \Gamma_{\E}(U)$, one has 
\begin{equation}
(\ol{F}_{V} \circ \frj^{k}_{V})(\psi|_{V}) = (\ol{F}_{U} \circ \frj^{k}_{U})(\psi)|_{V} = (F \circ \frj^{k}_{U})(\psi)|_{V} = D(\psi)|_{V}. 
\end{equation}
The uniqueness of a local operator satisfying (\ref{eq_Flocalrestrictionproperty}) now implies $\ol{F}_{V} \circ \frj^{k}_{V} = D|_{V}$. Whence
\begin{equation}
(\ol{F}_{V} \circ \sff_{V}) \circ \hat{\frj}^{k}_{V} = D|_{V},
\end{equation}
which in turn implies $\ol{F}_{V} \circ \sff_{V} = \frj^{k}_{0}[D|_{V}]$ by the uniqueness claim of Proposition \ref{tvrz_diffopfactorization}. Since $V \in \Op(U)$ is arbitrary, this means that $\ol{F} \circ \sff|_{U} = \ol{\frj^{k}_{0}[D]}$ and thus $\ol{F} =  \ol{\frj^{k}[D]}$. This obviously implies $F = \frj^{k}[D]$ and the uniqueness claim follows. 
\end{proof}
One may utilize this statement to prove that this correspondence is one-to-one and well-behaved with respect to restrictions and graded module structures. 
\begin{tvrz} \label{tvrz_frDkashomfrJktoE}
For each $k \in \N_{0}$, there exists a canonical graded vector bundle isomorphism 
\begin{equation} 
\Psi^{\E}: \frD^{k}_{\E} \rightarrow \ul{\Hom}(\frJ^{k}_{\E},\E). 
\end{equation}
\end{tvrz}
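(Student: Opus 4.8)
The plan is to assemble the desired isomorphism from the pointwise factorization established in Proposition~\ref{tvrz_diffopfactorization2}. First I would define, for each $U \in \Op(M)$, a map
\begin{equation}
\Psi^{\E}_{U}: \Dif^{k}_{\E}(U) \to \Gamma_{\ul{\Hom}(\frJ^{k}_{\E},\E)}(U) = \Lin^{\C^{\infty}_{\M}(U)}( \Jet^{k}_{\E}(U), \Gamma_{\E}(U))
\end{equation}
by $\Psi^{\E}_{U}(D) := \frj^{k}[D]$, using the unique $\C^{\infty}_{\M}(U)$-linear map guaranteed by Proposition~\ref{tvrz_diffopfactorization2} satisfying $D = \frj^{k}[D] \circ \frj^{k}_{U}$. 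Since $|\frj^{k}[D]| = |D|$, this is a degree-preserving assignment, and its $\C^{\infty}_{\M}(U)$-linearity in $D$ follows from the uniqueness clause of Proposition~\ref{tvrz_diffopfactorization2} together with $\C^{\infty}_{\M}(U)$-linearity of the composition $(-) \circ \frj^{k}_{U}$: indeed $(f\cdot D) = (f \cdot \frj^{k}[D]) \circ \frj^{k}_{U}$, so $\frj^{k}[f\cdot D] = f\cdot\frj^{k}[D]$.

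Next I would verify naturality in $U$, i.e.\ that $\Psi^{\E}$ is a $\C^{\infty}_{\M}$-linear sheaf morphism. For $V \in \Op(U)$ one checks $\frj^{k}[D]|_{V} = \frj^{k}[D|_{V}]$ where the restriction on the left is the restriction of $\C^{\infty}_{\M}|_{U}$-linear sheaf morphisms (from $\Jet^{k}_{\E}|_{U}$ to $\Gamma_{\E}|_{U}$); this is exactly the content of the sheaf morphism $\ol{\frj^{k}[D]}$ constructed in the proof of Proposition~\ref{tvrz_diffopfactorization2}, restricted to $V$, combined with the compatibility $D|_{V} = \frj^{k}[D|_{V}] \circ \frj^{k}_{V}$ and uniqueness. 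Hence $\Psi^{\E}$ glues to a morphism of sheaves, which by Proposition~\ref{tvrz_morphisms} is a graded vector bundle map $\frD^{k}_{\E} \to \ul{\Hom}(\frJ^{k}_{\E},\E)$ of degree zero.

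Then I would prove that $\Psi^{\E}$ is an isomorphism. Injectivity of $\Psi^{\E}_{U}$ is immediate: if $\frj^{k}[D] = 0$ then $D = \frj^{k}[D] \circ \frj^{k}_{U} = 0$. For surjectivity, given a $\C^{\infty}_{\M}(U)$-linear $G: \Jet^{k}_{\E}(U) \to \Gamma_{\E}(U)$, define $D := G \circ \frj^{k}_{U}: \Gamma_{\E}(U) \to \Gamma_{\E}(U)$; one must show $D \in \Dif^{k}_{\E}(U)$. This is where I would work locally: over a graded local chart $(U,\varphi)$ with a local frame $\{\Phi_{\lambda}\}_{\lambda=1}^{r}$ for $\E$, Theorem~\ref{thm_generatorsgpJet} gives the free local frame $\{\fdelta^{\fI}{}_{\lambda}\}$ for $\Jet^{k}_{\E}$, and one has the expansion~(\ref{eq_1otimespsigclassdecomposition}) of $\frj^{k}_{U}(\psi) = [1\otimes_{(k)}\psi]^{\bullet}$ in terms of the $\partial^{\tl}_{\fI}(\psi^{\lambda})$. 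Writing $G(\fdelta^{\fI}{}_{\lambda}) =: G^{\fI}{}^{\mu}{}_{\lambda}\cdot\Phi_{\mu}$ and substituting, $D(\psi) = \sum_{q,\fI}(-1)^{q}\frac{1}{\fI!}\partial^{\tl}_{\fI}(\psi^{\lambda}) \cdot G^{\fI}{}^{\mu}{}_{\lambda}\cdot\Phi_{\mu}$ exhibits $D$ as a $\C^{\infty}_{\M}(U)$-combination of the operators $\frP_{\fI}{}^{\lambda}{}_{\mu}$ (after converting $\partial^{\tl}_{\fI}$ to $\partial^{\op}_{\fI}$ via the sign identity used in the proof of Theorem~\ref{thm_decompositionofdiffop}), hence $D \in \Dif^{k}_{\E}(U)$ by that theorem; and $\Psi^{\E}_{U}(D) = G$ by uniqueness of the factorization. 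Thus $\Psi^{\E}_{U}$ is bijective for $U$ in a cover, so $\Psi^{\E}$ is an isomorphism of graded vector bundles. The main obstacle is the surjectivity step: one has to descend from a $\C^{\infty}_{\M}(U)$-linear map out of $\Jet^{k}_{\E}(U)$ to an honest $k$-th order differential operator, and this genuinely requires the explicit local frame of $\Jet^{k}_{\E}$ from Theorem~\ref{thm_generatorsgpJet} together with the decomposition Theorem~\ref{thm_decompositionofdiffop}; everything else is bookkeeping with signs and restrictions.
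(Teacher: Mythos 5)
Your proposal is correct, and its overall architecture (define $\Psi^{\E}_{U}(D):=\frj^{k}[D]$ via Proposition \ref{tvrz_diffopfactorization2}, get linearity and naturality from the uniqueness clause, then invert by precomposition with $\frj^{k}_{U}$) is exactly the paper's. The one place you genuinely diverge is the surjectivity step, i.e.\ showing that $D := G \circ \frj^{k}_{U}$ is a $k$-th order differential operator. The paper does this without coordinates: for any $\C^{\infty}_{\M}(U)$-linear $\hat F: \gpJet^{k}_{\E}(U)\to\Gamma_{\E}(U)$ it sets $K[\hat F]:=\hat F\circ\hat{\frj}^{k}_{U}$, proves the identity $K[\hat F]^{(1)}_{(f)} = -K[\hat F\circ\delta_{f}]$, and iterates to get $K[\hat F]^{(k+1)}_{(f_{1},\dots,f_{k+1})} = (-1)^{k+1}K[\hat F\circ\delta^{(k+1)}_{(f_{1},\dots,f_{k+1})}]=0$ directly from the defining relations of the quotient $\pJet^{k}_{\E}(U)$, so Remark \ref{rem_localusingDkplus1} applies over arbitrary $U$. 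You instead work over a chart domain, expand $\frj^{k}_{U}(\psi)$ in the frame $\{\fdelta^{\fI}{}_{\lambda}\}$ via (\ref{eq_1otimespsigclassdecomposition}), and exhibit $D$ as a $\C^{\infty}_{\M}(U)$-combination of the $\frP_{\fI}{}^{\lambda}{}_{\mu}$; this is valid (the omitted Koszul signs, including the $(-1)^{|G||\partial^{\tl}_{\fI}\psi^{\lambda}|}$ from moving $G$ past the coefficient and the $\psi^{\lambda}$-dependent sign in (\ref{eq_operatorsframe}), do reconcile, by the same bookkeeping as in Theorem \ref{thm_decompositionofdiffop} run backwards), but it only gives bijectivity over a cover, after which you must invoke that a sheaf morphism which is an isomorphism on a basis of opens is an isomorphism. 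The paper's argument buys a cleaner, chart-free inverse defined uniformly on every $U$ and makes transparent that the whole statement is forced by the quotient relations $\delta^{(k+1)}=0$ mirroring $D^{(k+1)}=0$; your argument buys an explicit matrix description of the correspondence $D\leftrightarrow G$ in local frames, essentially reproving the duality between the frames $\{\frP_{\fI}{}^{\lambda}{}_{\mu}\}$ and $\{\fdelta^{\fI}{}_{\lambda}\}$ that Theorem \ref{thm_generatorsgpJet} already exploited. Both are sound; yours is slightly heavier only because it leans on the two local-frame theorems rather than on the algebra of $\delta_{f}$.
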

\begin{proof}
A graded vector bundle isomorphism is a sheaf isomorphism of the corresponding sheaves of sections. For each $U \in \Op(M)$, one thus has to construct a $\C^{\infty}_{\M}(U)$-linear map 
\begin{equation}
\Psi^{\E}_{U}: \Dif^{k}_{\E}(U) \rightarrow \Lin^{\C^{\infty}_{\M}(U)}( \Jet^{k}_{\E}(U), \Gamma_{\E}(U)), 
\end{equation}
and prove that it is natural in $U$. For each $D \in \Dif^{k}_{\E}(U)$, we use Proposition \ref{tvrz_diffopfactorization2} and define
\begin{equation}
\Psi^{\E}_{U}(D) := \frj^{k}[D].
\end{equation}
First of all, one must argue that it is $\C^{\infty}_{\M}(U)$-linear in $D$. But this follows immediately from the uniqueness claim of Proposition \ref{tvrz_diffopfactorization2}. Next, let $V \in \Op(U)$. We claim that $\frj^{k}[D]|_{V} \circ \frj^{k}_{V} = D|_{V}$. It is easy to check that the left-hand side is in $\Loc_{\E}(V)$, and for any $\psi \in \Gamma_{\E}(U)$, one has 
\begin{equation}
(\frj^{k}[D]|_{V} \circ \frj^{k}_{V})(\psi|_{V}) = (\frj^{k}[D] \circ \frj^{k}_{U})(\psi)|_{V} = D(\psi)|_{V}.
\end{equation}
Hence, by Proposition \ref{tvrz_locsheaf}, we have $\frj^{k}[D]|_{V} \circ \frj^{k}_{V} = D|_{V}$. But it follows from the uniqueness claim of Proposition \ref{tvrz_diffopfactorization2} that necessarily $\frj^{k}[D|_{V}] = \frj^{k}[D]|_{V}$. This can be translated as 
\begin{equation}
\Psi^{\E}_{V}(D|_{V}) = \Psi^{\E}_{U}(D)|_{V},
\end{equation}
for all $D \in \Dif^{k}_{\E}(U)$. This proves that $\Psi^{\E}$ is a $\C^{\infty}_{\M}$-linear sheaf morphism. 

It remains to prove that is is an isomorphism. For each $U \in \Op(M)$, we will construct the inverse map $(\Psi^{\E}_{U})^{-1}$. If $F: \Jet^{k}_{\E}(U) \rightarrow \Gamma_{\E}(U)$ is $\C^{\infty}_{\M}(U)$-linear of degree $|F|$, we define
\begin{equation}
(\Psi^{\E}_{U})^{-1}(F) := F \circ \frj^{k}_{U}. 
\end{equation}
We must argue that the right-hand side is an element of $\Dif^{k}_{\E}(U)$. To do so, for any $\C^{\infty}_{\M}(U)$-linear map  $\hat{F}: \gpJet^{k}_{\E}(U) \rightarrow \Gamma_{\E}(U)$ of degree $|F|$, let $K[\hat{F}] := \hat{F} \circ \hat{\frj}^{k}_{U}: \Gamma_{\E}(U) \rightarrow \Gamma_{\E}(U)$. For any $f \in \C^{\infty}_{\M}(U)$, it is straightforward to prove the relation
\begin{equation}
K[\hat{F}]^{(1)}_{(f)} = -K[\hat{F} \circ \delta_{f}],
\end{equation}
This can be easily iterated to prove that for every $f_{1},\dots,f_{k+1} \in \C^{\infty}_{\M}(U)$, one has 
\begin{equation}
K[\hat{F}]^{(k+1)}_{(f_{1},\dots,f_{k+1})} = (-1)^{k+1} K( \hat{F} \circ \delta^{(k+1)}_{(f_{1},\dots,f_{k+1})}) = 0. 
\end{equation}
We mildly abuse the notation and use the symbol $\delta_{f}$ also for the induced map on $\gpJet^{k}_{\E}(U)$. This proves that $K[\hat{F}] \in \Dif^{k}_{\E}(U)$. Consequently, observe that $(\Psi^{\E}_{U})^{-1}(F) = (F \circ \sff_{U}) \circ \hat{\frj}^{k}_{U} = K[F \circ \sff_{U}] \in \Dif^{k}_{\E}(U)$. The fact that $(\Psi^{\E}_{U})^{-1}$ is a two-sided inverse to $\Psi_{U}^{\E}$ is obvious. 
\end{proof} 
\item \textbf{The inverse system over $\N_{0}$}. Suppose $\ell \leq k$ be two non-negative integers. We aim to construct a surjective $\C^{\infty}_{\M}$-linear sheaf morphism $\pi^{k,\ell}: \Jet^{k}_{\E} \rightarrow \Jet^{\ell}_{\E}$.

For each $U \in \Op(M)$, let us define a $\C^{\infty}_{\M}(U)$-linear map $\hat{\pi}^{k,\ell}_{U}: \gpJet^{k}_{\E}(U) \rightarrow \gpJet^{\ell}_{\E}(U)$ as
\begin{equation}
\hat{\pi}^{k,\ell}_{U}[ g \otimes_{(k)} \psi]^{\bullet} := [g \otimes_{(\ell)} \psi]^{\bullet},
\end{equation}
for each $g \in \C^{\infty}_{\M}(U)$ and $\psi \in \Gamma_{\E}(U)$. It is easy to check that it is well-defined, since $\V_{(k)}(U) \subseteq \V_{(\ell)}(U)$. It is $\C^{\infty}_{\M}(U)$-linear with respect to both $\tr$ and $\btr$, and it is natural in $U$. It defines a $\C^{\infty}_{\M}$-linear presheaf morphism fitting into the commutative diagram
\begin{equation}
\begin{tikzcd}
& \cX \arrow{ld}[swap]{\natural^{(k)}} \arrow{rd}{\natural^{(\ell)}} & \\
\pJet^{k}_{\E} \arrow{d}{\geo} & & \pJet^{\ell}_{\E} \arrow{d}{\geo} \\
\gpJet^{k}_{\E} \arrow[dashed]{rr}{\hat{\pi}^{k,\ell}} & & \gpJet^{\ell}_{\E} 
\end{tikzcd}.
\end{equation} 
Since all vertical arrows are surjective presheaf morphisms, so is $\hat{\pi}^{k,\ell}$. Finally, one can construct $\pi^{k,\ell}: \Jet^{k}_{\E} \rightarrow \Jet^{\ell}_{\E}$ as a unique $\C^{\infty}_{\M}$-linear morphism fitting into the commutative diagram
\begin{equation} \label{eq_piklfinaldiagram}
\begin{tikzcd}
\gpJet^{k}_{\E} \arrow{d}{\sff} \arrow{r}{\hat{\pi}^{k,\ell}} & \gpJet^{\ell}_{\E} \arrow{d}{\sff} \\
\Jet^{k}_{\E} \arrow[dashed]{r}{\pi^{k,\ell}} & \Jet^{\ell}_{\E} 
\end{tikzcd}
\end{equation}
One only has to argue that $\pi^{k,\ell}$ is surjective. Since for sheaves of graded $\C^{\infty}_{\M}$-modules, cokernels happen to be sheaves, one can argue that $\pi^{k,\ell}$ is surjective, iff the induced map $(\pi^{k,\ell})_{m}$ of stalks is surjective for every $m \in M$. Stalks of $\gpJet^{k}_{\E}$ and its sheafification $\Jet^{k}_{\E}$ are canonically identified via the isomorphism of the stalks induced by $\sff$. The claim thus follows from the surjectivity of the morphism $(\hat{\pi}^{k,\ell})_{m}$ for every $m \in M$, ensured by the surjectivity of $\hat{\pi}^{k,\ell}$. 

By definition, $\pi^{k,\ell}$ can be interpreted as a surjective degree zero graded vector bundle map $\pi^{k,\ell}: \frJ^{k}_{\E} \rightarrow \frJ^{\ell}_{\E}$ over the identity. We can examine additional properties of these maps.

\begin{tvrz}
For each $\ell \leq k$, let $\pi^{k,\ell}: \Jet^{k}_{\E} \rightarrow \Jet^{\ell}_{\E}$ be constructed as above. 
\begin{enumerate}[(i)]
\item For each $\ell \leq k$, $\pi^{k,\ell}$ fits into the equation
\begin{equation} \label{eq_pisandjs}
\pi^{k,\ell} \circ \frj^{k} = \frj^{\ell}. 
\end{equation}
\item $( \{ \Jet^{k}_{\E} \}_{k}, \{ \pi^{k,\ell} \}_{\ell \leq k} )$ forms an inverse system over $\N_{0}$ in the category of sheaves of graded $\C^{\infty}_{\M}$-modules. More explicitly, the ``transition maps'' $\pi^{k,\ell}$ satisfy the the conditions
\begin{equation} \label{eq_transitionmaps}
\pi^{k,k} = \1_{\Jet^{k}_{\E}}, \; \; \pi^{\ell,q} \circ \pi^{k,\ell} = \pi^{k,q} \text{ for any } q \leq \ell \leq k.
\end{equation}
\item There is a sheaf $\Jet^{\infty}_{\E}$ of graded $\C^{\infty}_{\M}$-modules, defined as an inverse limit of the above inverse system:
\begin{equation} \label{eq_Jetinfty}
\Jet^{\infty}_{\E} := \Ilim \Jet^{k}_{\E},
\end{equation}
together with a collection of surjective degree zero $\C^{\infty}_{\M}$-linear sheaf morphisms $\pi^{\infty,k}: \Jet^{\infty}_{\E} \rightarrow \Jet^{k}_{\E}$, such that for every $\ell \leq k$, one has  
\begin{equation}
\pi^{k,\ell} \circ \pi^{\infty,k} = \pi^{\infty,\ell}.
\end{equation}
\item For each $k \in \N$, one has a short exact sequence of graded vector bundles over $\M$ and vector bundle maps over $\1_{\M}$
\begin{equation} \label{eq_SESwithjets}
\begin{tikzcd}
0 \arrow{r} & \ul{\Hom}( S^{k}(T\M), \E) \arrow{r}{J_{(k)}} & \frJ^{k}_{\E} \arrow{r}{\pi^{k,k-1}} &[2em] \frJ^{k-1}_{\E} \arrow{r} & 0 
\end{tikzcd}.
\end{equation}
If one declares $\frJ^{-1}_{\E} = 0$, the statement is true also for $k = 0$. 
\end{enumerate}
\end{tvrz}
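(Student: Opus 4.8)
The plan is to settle (i) and (ii) by a check on generators followed by sheafification, to reduce (iii) to a single surjectivity statement handled via (iv), and to prove (iv) by computing $\ker(\pi^{k,k-1})$ in a local frame and matching it canonically with $\ul{\Hom}(S^{k}(T\M),\E)$. For (i) and (ii): straight from the definition of $\hat{\pi}^{k,\ell}$ one has $\hat{\pi}^{k,\ell}_{U}[g \otimes_{(k)} \psi]^{\bullet} = [g \otimes_{(\ell)} \psi]^{\bullet}$ on generators of $\gpJet^{k}_{\E}(U)$, whence at once $\hat{\pi}^{k,k} = \1$, $\hat{\pi}^{\ell,q} \circ \hat{\pi}^{k,\ell} = \hat{\pi}^{k,q}$ for $q \leq \ell \leq k$, and $\hat{\pi}^{k,\ell}_{U} \circ \hat{\frj}^{k}_{U} = \hat{\frj}^{\ell}_{U}$ (both sides send $\psi \mapsto [1 \otimes_{(\ell)} \psi]^{\bullet}$). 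Applying the sheafification functor and using $\pi^{k,\ell} \circ \sff = \sff \circ \hat{\pi}^{k,\ell}$ and $\frj^{k} = \sff \circ \hat{\frj}^{k}$ promotes these identities to $\pi^{k,k} = \1$, $\pi^{\ell,q} \circ \pi^{k,\ell} = \pi^{k,q}$, and $\pi^{k,\ell} \circ \frj^{k} = \frj^{\ell}$, which are exactly (ii) and (i).

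For (iii): inverse limits over $\N_{0}$ exist in the category of sheaves of graded $\C^{\infty}_{\M}$-modules and are computed sectionwise (the inclusion into presheaves is right adjoint to sheafification, hence preserves limits, and a sectionwise inverse limit of sheaves is again a sheaf), so $\Jet^{\infty}_{\E} := \Ilim_{k} \Jet^{k}_{\E}$ is well defined and the universal property of the limit supplies the degree zero $\C^{\infty}_{\M}$-linear morphisms $\pi^{\infty,k}$ with $\pi^{k,\ell} \circ \pi^{\infty,k} = \pi^{\infty,\ell}$. Only surjectivity of $\pi^{\infty,k}$ needs an argument: by (iv) each short exact sequence $0 \to \ul{\Hom}(S^{k+1}(T\M),\E) \to \frJ^{k+1}_{\E} \to \frJ^{k}_{\E} \to 0$ splits in $\gVBun^{\infty}_{\M}$, so pick splittings $s_{k'} \colon \frJ^{k'}_{\E} \to \frJ^{k'+1}_{\E}$ of $\pi^{k'+1,k'}$; given $j \in \Jet^{k}_{\E}(U)$, the family $j_{k'} := \pi^{k,k'}(j)$ for $k' \leq k$ and $j_{k'} := (s_{k'-1} \circ \cdots \circ s_{k})(j)$ for $k' > k$ is compatible by the transition relations of (ii), hence an element of $\Jet^{\infty}_{\E}(U)$ mapping onto $j$. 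Thus $\pi^{\infty,k}$ is surjective on sections over every open set, a fortiori a surjective sheaf morphism.

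For (iv): fix a graded local chart $(U,\varphi)$ and a local frame $\{ \Phi_{\lambda} \}_{\lambda=1}^{r}$ of $\E$ over $U$, so that $\Jet^{k}_{\E}|_{U} \cong \gpJet^{k}_{\E}|_{U}$ carries the local frame $\{ \fdelta^{\fI}{}_{\lambda} : \fI \in \cup_{q=0}^{k} \ol{\N}{}^{n}(q) \}$ of Theorem \ref{thm_generatorsgpJet}. Since $\hat{\pi}^{k,k-1}$ is linear for both module actions it commutes with each $\delta_{f}$, so $\hat{\pi}^{k,k-1}( \fdelta^{\fI}{}_{\lambda})$ is the class of $\delta^{(w(\fI))}_{(\bbz^{\fI}_{(w(\fI))})}(1 \otimes_{(k-1)} \Phi_{\lambda})$ in $\gpJet^{k-1}_{\E}(U)$, which equals $\fdelta^{\fI}{}_{\lambda}$ when $w(\fI) \leq k-1$ and vanishes when $w(\fI) = k$, because $\delta^{(k)}$ applied to any generator is already $0$ in $\gpJet^{k-1}_{\E}(U)$. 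Hence $\pi^{k,k-1}$ maps the local frame of $\frJ^{k}_{\E}$ onto that of $\frJ^{k-1}_{\E}$ by deleting the top-order part (re-proving its surjectivity), $\ker(\pi^{k,k-1})|_{U}$ is freely generated by $\{ \fdelta^{\fI}{}_{\lambda} : \fI \in \ol{\N}{}^{n}(k) \}$ (so $\ker(\pi^{k,k-1})$ is a subbundle of $\frJ^{k}_{\E}$ and the sequence splits locally), and counting generators shows its graded rank agrees with that of $\ul{\Hom}(S^{k}(T\M),\E) \cong S^{k}(T\M)^{\ast} \otimes \E \cong S^{k}(T^{\ast}\M) \otimes \E$.

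It remains to identify $\ker(\pi^{k,k-1})$ with $\ul{\Hom}(S^{k}(T\M),\E)$ \emph{canonically}. I would define $J_{(k)}$, through the isomorphism $\ul{\Hom}(S^{k}(T\M),\E) \cong S^{k}(T^{\ast}\M) \otimes \E$, on a chart by $\dr{\bbz}^{\fI} \otimes \Phi_{\lambda} \mapsto \tfrac{(-1)^{w(\fI)}}{\fI!}\, \fdelta^{\fI}{}_{\lambda}$ for $\fI \in \ol{\N}{}^{n}(k)$, and check chart- and frame-independence. The crucial point is that inside the subsheaf $\ker(\pi^{k,k-1})$ every section is written in a frame using only the top-order generators $\{ \fdelta^{\fJ}{}_{\mu} \}_{w(\fJ)=k}$ — the lower-order ones are linearly independent already in $\frJ^{k-1}_{\E}$ — so the transition of the $\fdelta^{\fI}{}_{\lambda}$ between charts is governed purely by the ``top symbol'', and by the Leibniz rule (\ref{eq_fdeltagLeibniz}) (whose correction term $\fdelta^{(k+1)}$ vanishes on $\gpJet^{k}_{\E}(U)$) together with the symmetry (\ref{eq_fdeltajgsymmetry}) this top-symbol transformation coincides with that of $\dr{\bbz}^{\fI} \otimes \Phi_{\lambda}$ in $S^{k}(T^{\ast}\M) \otimes \E$ (coordinate one-forms via the Jacobian, $\Phi_{\lambda}$ via the transition functions of $\E$ through the $\btr$-linearity in $\psi$). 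The charts then glue to a graded vector bundle map $J_{(k)}$ which is fibre-wise injective with image $\ker(\pi^{k,k-1})$, yielding (\ref{eq_SESwithjets}); for $k = 0$ one has $\pi^{0,-1} = 0$ and $\ul{\Hom}(S^{0}(T\M),\E) = \ul{\Hom}(\C^{\infty}_{\M},\E) \cong \Gamma_{\E} \cong \frJ^{0}_{\E}$ via $\frj^{0}$, so the sequence collapses to this isomorphism, valid with $\frJ^{-1}_{\E} := 0$. The main obstacle is precisely this last step — constructing the canonical $J_{(k)}$ and proving that the top symbol of the local generators $\fdelta^{\fI}{}_{\lambda}$ transforms as a section of $S^{k}(T^{\ast}\M) \otimes \E$ — which is exactly where the geometrization is indispensable (it annihilates $\fdelta^{(k+1)}$ and reduces the Leibniz rule to the symbol identity), paralleling the analogous step in the construction of the symbol map in Proposition \ref{tvrz_symbolmap}.
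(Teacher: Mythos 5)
Your proposal is correct, and for parts (i), (ii) and (iv) it follows essentially the same route as the paper: check the identities on the generators $g \otimes_{(k)} \psi$ at the presheaf level and push them through geometrization and sheafification; for (iv), compute $\hat{\pi}^{k,k-1}$ on the local frame $\{ \fdelta^{\fI}{}_{\lambda} \}$, observe that the top-order generators $w(\fI)=k$ die (since $\delta^{(k)}$ already vanishes in $\gpJet^{k-1}_{\E}$) while the lower-order ones map to the frame of $\frJ^{k-1}_{\E}$, and then identify the kernel with $\ul{\Hom}(S^{k}(T\M),\E)$ by sending the standard basis to $\tfrac{1}{\fI!}(\pm 1)\,\fdelta^{\fI}{}_{\lambda}$. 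Like the paper, you defer the chart- and frame-independence of $J_{(k)}$ to a ``tedious but straightforward'' computation via the Leibniz rule (\ref{eq_fdeltagLeibniz}) and the symmetry (\ref{eq_fdeltajgsymmetry}), so you are not less rigorous than the source there.

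The one genuinely different step is the surjectivity of $\pi^{\infty,k}$ in (iii). The paper argues locally: over a trivializing $U$ every section of $\Jet^{k}_{\E}(U)$ is a combination $f^{j} \tr \frj^{k}_{U}(\psi_{j})$, and the compatible family $\sigma_{\ell} := f^{j} \tr \frj^{\ell}_{U}(\psi_{j})$ (compatible by (i)) lifts it; this only uses (i) and the local generation by jet prolongations, so it is independent of (iv). You instead choose global splittings $s_{k'}$ of the sequences (\ref{eq_SESwithjets}) and build a compatible family by iterated splitting above level $k$ and by the transition maps below it. This is logically sound provided (iv) is established first (it does not depend on (iii), so there is no circularity), and it buys a slightly stronger conclusion — surjectivity of $\pi^{\infty,k}_{U}$ over \emph{every} open $U$, not only over trivializing ones — at the cost of invoking the splitting lemma in $\gVBun^{\infty}_{\M}$ and of reordering the proof so that (iv) precedes (iii). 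Either argument is acceptable.
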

\begin{proof}
To prove $(i)$, note that for any $\ell \leq k$, one has $\hat{\pi}^{k,\ell} \circ \hat{\frj}^{k} = \hat{\frj}^{\ell}$, where $\hat{\frj}^{k}: \Gamma_{\E} \rightarrow \gpJet^{k}_{\E}$ are defined by (\ref{eq_hatjmap}). Since $\frj^{k}$ is uniquely determined by the equation $\frj^{k} = \sff \circ \hat{\frj}^{k}$ and $\pi^{k,\ell}$ are uniquely determined by (\ref{eq_piklfinaldiagram}), the equation (\ref{eq_pisandjs}) follows. 

The equations in (\ref{eq_transitionmaps}) follow immediately from the same ones satisfied by the presheaf morphisms $\hat{\pi}^{k,\ell}$, together with the uniqueness of the map completing the diagram (\ref{eq_piklfinaldiagram}). This proves $(ii)$. Next, it is easy to see that all products and equalizers exist in the category of sheaves of graded $\C^{\infty}_{\M}$-modules. This implies that all limits exist in this category. Consequently, the inverse limit (\ref{eq_Jetinfty}) exists. The collection $\{ \pi^{\infty,k} \}_{k \in \N_{0}}$ is then nothing but its universal limiting cone. 

We have to argue that $\pi^{\infty,k}$ are surjective. To do so, observe that one can describe $\Jet^{\infty}_{\E}$ explicitly. Indeed, for each $U \in \Op(M)$ and $i \in \Z$, one has 
\begin{equation}
(\Jet^{\infty}_{\E}(U))_{i} = \{ (\sigma_{\ell})_{\ell \in \N_{0}} \in \prod_{k \in \Z} (\Jet^{\ell}_{\E}(U))_{i} \; | \; \pi^{q,s}_{U}(\sigma_{q}) = \sigma_{s} \text{ for all } s \leq q \}.
\end{equation}
Moreover, one has $\pi^{\infty,k}_{U}[(\sigma_{\ell})_{\ell \in \Z}] := \sigma_{k}$ for each $k \in \N_{0}$. 

Since $\Jet^{\infty}_{\E}$ is a sheaf of graded $\C^{\infty}_{\M}$-modules, it suffices to argue that for each $m \in M$, there exists $U \in \Op_{m}(M)$, such that $\pi^{\infty,k}_{U}: \Jet^{\infty}_{\E}(U) \rightarrow \Jet^{k}_{\E}(U)$ is surjective. As noted in the last paragraph of (1), for each $m \in M$, there exists $U \in \Op_{m}(M)$, such that $\Jet^{k}_{\E}(U)$ is generated by the image of $\frj^{k}_{U}: \Gamma_{\E}(U) \rightarrow \Jet^{k}_{\E}(U)$. Let $\sigma_{k} \in \Jet^{k}_{\E}(U)$ be an arbitrary given section. We can thus find a finite collection of sections $\{ \psi_{j} \}_{j}$ in $\Gamma_{\E}(U)$, such that 
\begin{equation}
\sigma_{k} = f^{j} \tr \frj^{k}_{U}(\psi_{j}) 
\end{equation}
for some functions $f^{j} \in \C^{\infty}_{\M}(U)$ of degree $|f^{j}| = |\sigma_{k}| - |\psi_{j}|$. For every $\ell \in \N_{0}$, let us define 
\begin{equation}
\sigma_{\ell} := f^{j} \tr \frj^{\ell}_{U}(\psi_{j}) \in \Jet^{\ell}_{\E}(U). 
\end{equation}
Now, suppose that $s \leq q$ is a pair of arbitrary integers. Since $\pi^{q,s}_{U}$ is $\C^{\infty}_{\M}(U)$-linear, we can utilize the equation (\ref{eq_pisandjs}) to obtain 
\begin{equation}
\pi^{q,s}_{U}(\sigma_{q}) = \pi^{q,s}_{U}( f^{j} \tr \frj^{q}_{U}(\psi_{j})) = f^{j} \tr (\pi^{q,s}_{U} \circ \frj^{q}_{U})(\psi_{j}) = f^{j} \tr \frj^{s}_{U}(\psi_{j}) = \sigma_{s}. 
\end{equation}
This proves that $(\sigma_{\ell})_{\ell \in \N_{0}} \in \Jet^{\infty}_{\E}(U)$. By construction, one has $\pi^{\infty,k}_{U}[(\sigma_{\ell})_{\ell \in \Z}] = \sigma_{k}$. Since $\sigma_{k}$ was arbitrary, this proves that $\pi^{\infty,k}_{U}$ is surjective. This concludes the proof of $(iii)$. 

To prove $(iv)$, we have to construct a fiber-wise injective graded vector bundle map 
\begin{equation}
J_{(k)}: \ul{\Hom}(S^{k}(T\M), \E) \rightarrow \frJ^{k}_{\E},
\end{equation}
for every $k \in \N$. Suppose that $(U,\varphi)$ is a graded local chart for $\M$ and let $\{ \Phi_{\lambda} \}_{\lambda=1}^{r}$ be a local frame for $\E$ over $U$. We will construct a degree zero $\C^{\infty}_{\M}(U)$-linear map 
\begin{equation}
(J_{(k)})_{U}: \Lin^{\C^{\infty}_{\M}(U)}( \~\X_{\M}(U), \Gamma_{\E}(U)) \rightarrow \Jet^{k}_{\E}(U). 
\end{equation}
Recall that $\~\X_{\M}(U)$ is freely and finitely generated by a collection $\{ \partial^{\op}_{\fI} \}_{\fI \in \ol{\N}{}^{n}(k)}$, see (\ref{eq_partialIop}). Suppose $\phi: \~\X_{\M}(U) \rightarrow \Gamma_{\E}(U)$ is a given $\C^{\infty}_{\M}(U)$-linear map of degree $|\phi|$. We declare
\begin{equation} \label{eq_Jkmap}
(J_{(k)})_{U}(\phi) := \sum_{\fI \in \ol{\N}{}^{n}(k)} \frac{1}{\fI!} (-1)^{|\vartheta_{\lambda}|(1+|\phi|) + |\bbz^{\fI}|} \Phi^{\lambda}( \phi(\partial_{\fI}^{\op})) \tr \fdelta^{\fI}{}_{\lambda},
\end{equation}
where $\fdelta^{\fI}{}_{\lambda} \in \Jet^{k}_{\E}(U)$ are the generators defined by (\ref{eq_fdelta}) and (\ref{eq_fdeltafI}). See also Theorem \ref{thm_generatorsgpJet}. It is easy to see that $(J_{(k)})_{U}(\phi)$ is $\C^{\infty}_{\M}(U)$-linear in $\phi$. If $(U',\varphi')$ is a different graded local chart for $\M$ and $\{ \Phi'_{\lambda} \}_{\lambda=1}^{r}$ is a different local frame for $\E$ over $U'$, one has to verify that the definitions (\ref{eq_Jkmap}) agree on $U \cap U'$. To do so, one has to derive how $\partial_{\fI}^{\op}$ and $\fdelta^{\fI}{}_{\lambda}$ transform on $U \cap U'$ under the change of coordinates (and local frames). To derive the latter, one has to employ (\ref{eq_fdeltaformula}) together with (\ref{eq_fdeltajgsymmetry}). The calculation is straightforward, albeit a bit tedious, so we omit it here. 

Now, the local frame for $\ul{\Hom}(S^{k}(T\M), \E)$ over $U$ can be chosen to be the ``standard basis'' $\{ \frE^{\fI}{}_{\lambda} \}$, see (\ref{eq_frEstandardbasis}), acting on each $X = \frac{1}{\fI!} X^{\fI} \cdot \partial^{\op}_{\fI} \in \~\X^{k}_{\M}(U)$ as 
\begin{equation}
\frE^{\fI}{}_{\lambda}(X) = \frac{1}{\fI!} (-1)^{(|\vartheta_{\lambda}| - |\bbz^{\fI}|)(|X| + |\bbz^{\fI}|)} X^{\fI} \cdot \Phi_{\lambda}. 
\end{equation}
In other words, one has $\frE^{\fI}{}_{\lambda}(\partial_{\fJ}^{\op}) = \delta_{\fJ}^{\fI} \cdot \Phi_{\lambda}$, for every $\fI,\fJ \in \ol{\N}{}^{n}(k)$ and $\lambda \in \{1, \dots, r \}$. Plugging this into (\ref{eq_Jkmap}), one finds the expression
\begin{equation}
(J_{(k)})_{U}(\frE^{\fI}{}_{\lambda}) = \frac{1}{\fI!} (-1)^{|\bbz^{\fI}|(1 + |\vartheta_{\lambda}|)} \fdelta^{\fI}{}_{\lambda},
\end{equation}
for every $\fI \in \ol{\N}{}^{n}(k)$ and $\lambda \in \{1, \dots, r\}$. This proves that $J_{(k)}$ is fiber-wise injective and its image is precisely the kernel of $\pi^{k,k-1}$. The exactness of (\ref{eq_SESwithjets}) follows. 

Finally, for $k = 0$, we have $\~X^{0}_{\M} \cong \C^{\infty}_{\M}$ and a canonical vector bundle isomorphism $\E \cong \ul{\Hom}(S^{0}(T\M),\E)$, sending each $\psi \in \Gamma_{\E}(U)$ to a $\C^{\infty}_{\M}(U)$-linear map $\phi(f) := (-1)^{|\psi||f|} f \cdot \psi$. Observe that the only element of $\ol{\N}{}^{n}(0)$ is $\fnula$, we declare $\partial_{\fnula}^{\op} := 1$, and $\fdelta^{\fnula}{}_{\lambda} = \frj^{0}_{U}(\Phi_{\lambda})$. One has 
\begin{equation}
(J_{(0)})_{U}( \frE^{\fnula}{}_{\lambda}) = \frj^{0}_{U}(\Phi_{\lambda}),
\end{equation}
if we use (\ref{eq_Jkmap}) also for the $k = 0$ case. This means that $J_{(0)}$ is the unique vector bundle isomorphism fitting into the commutative diagram
\begin{equation}
\begin{tikzcd}
& \arrow{ld}[swap]{\cong} \E \arrow{rd}{\frj^{0}} & \\
\ul{\Hom}(S^{0}(T\M),\E) \arrow[dashed]{rr}{J_{(0)}} & & \frJ^{0}_{\E}
\end{tikzcd}
\end{equation}
Hence (\ref{eq_SESwithjets}) is exact also for $k = 0$, if we declare $\frJ^{-1}_{\E} = 0$. This finishes the proof. 
\end{proof}
\begin{rem}
For a non-trivial $\E$, $\Jet^{\infty}_{\E}$ is not a sheaf of sections of a graded vector bundle, except when $\M$ has only odd coordinates. Indeed, suppose $\Jet^{\infty}_{\E} = \Gamma_{\frJ^{\infty}_{\E}}$ for a graded vector bundle $\frJ^{\infty}_{\E}$. Then for each $k \in \N_{0}$, we have a surjective vector bundle map $\pi^{\infty,k}: \frJ^{\infty}_{\E} \rightarrow \frJ^{k}_{\E}$, which forces the inequality $\trk( \frJ^{\infty}_{\E}) \geq \trk( \frJ^{k}_{\E})$. On the other hand, one has
\begin{equation}
\trk( \frJ^{k}_{\E}) = \trk( \frJ^{k-1}_{\E}) + \trk( \ul{\Hom}(S^{k}(T\M),\E)),
\end{equation}
which follows from the exactness of (\ref{eq_SESwithjets}). This shows that $\trk(\frJ^{k}_{\E})$ grows strictly in $k$, iff $\trk( S^{k}(T\M))$ is non-zero for all $k$. If $\M$ has some coordinates of even degree (including $0$), this is the case. The above inequality then proves that $\frJ^{\infty}_{\E}$ cannot have a finite total rank, which cannot happen (in our setting).

The only exception is $\M$ having all coordinates odd. In particular, one has $M = \{ \ast \}$. Let $n := \sum_{j \in \Z} n_{j}$, where $(n_{j})_{j \in \Z} := \gdim(\M)$. Since $n_{j} = 0$ whenever $j$ is even, it is easy to see that $S^{k}(T\M) = 0$ for all $k > n$. It follows from the exactness of (\ref{eq_SESwithjets}) that $\pi^{k,n}: \Jet^{k}_{\E} \rightarrow \Jet^{n}_{\E}$ are $\C^{\infty}_{\M}$-linear sheaf isomorphisms for all $k > n$. But this ensures that $\pi^{\infty,n}: \Jet^{\infty}_{\E} \rightarrow \Jet^{n}_{\E}$ is a $\C^{\infty}_{\M}$-linear sheaf isomorphism. We see that in this case $\frJ^{\infty}_{\E}$ is a sheaf of sections of a graded vector bundle isomorphic to $\frJ^{n}_{\E}$. 
\end{rem}
\item \textbf{Fibers are jet spaces}. Let $a \in M$ be a fixed point of the underlying manifold $M$. 

Consider the Jacobson radical $\J_{\M,a} \subseteq \C^{\infty}_{\M,a}$ of the stalk of the sheaf $\C^{\infty}_{\M}$ at $a$, that is the unique maximal graded ideal, having the explicit form
\begin{equation}
\J_{\M,a} = \{ [f]_{a} \in \C^{\infty}_{\M,a} \; | \; f(a) = 0 \}.
\end{equation}
Let us denote the canonical action of $\C^{\infty}_{\M,a}$ on $\Gamma_{\E,a}$ as $\btr$, that is let $[f]_{a} \btr [\psi]_{a} := [f \cdot \psi]_{a}$, for all $[f]_{a} \in \C^{\infty}_{\M,a}$ and $[\psi]_{a} \in \Gamma_{\E,a}$. The reason for this will become clear later. There is another action $\tr$, given for each $[f]_{a} \in \C^{\infty}_{\M,a}$ and $[\psi]_{a} \in \Gamma_{\E,a}$ by
\begin{equation}
[f]_{a} \tr [\psi]_{a} := f(a) \cdot [\psi]_{a}. 
\end{equation}
For each $k \in \N_{0}$, consider the graded $\C^{\infty}_{\M,a}$-submodule $\Gamma_{\E,a}^{[k]}$ of the stalk $\Gamma_{\E,a}$ defined as 
\begin{equation}
\Gamma^{[k]}_{\E,a} := (\J_{\M,a})^{k+1} \btr \Gamma_{\E,a},
\end{equation}
that is the graded $\C^{\infty}_{\M,a}$-submodule (with respect to $\btr$) generated by elements of the form $[f]_{a} \btr [\psi]_{a}$, where $[f]_{a} \in (\J_{\M,a})^{k+1}$ and $[\psi]_{a} \in \Gamma_{\E,a}$. Note that $\Gamma^{[k]}_{\E,a}$ forms a graded submodule also with respect to the action $\tr$.

For each $k \in \N_{0}$, define the \textbf{$k$-th order jet space $\frJ^{k}_{\E,a}$ of $\E$ at $a$} as the quotient $\C^{\infty}_{\M,a}$-module
\begin{equation}
\frJ^{k}_{\E,a} := \Gamma_{\E,a} / \Gamma^{[k]}_{\E,a},
\end{equation}
and denote the respective quotient map as $\frj^{k}_{a}: \Gamma_{\E,a} \rightarrow \frJ^{k}_{\E,a}$. We say that the $[\psi]_{a}, [\psi']_{a} \in \Gamma_{\E,a}$ \textbf{have the same $k$-th order jet at $a$}, if 
\begin{equation}
\frj^{k}_{a}[\psi]_{a} = \frj^{k}_{a}[\psi']_{a}. 
\end{equation}
Note that $\frJ^{k}_{\E,a}$ inherits both $\C^{\infty}_{\M,a}$ actions, which we shall again denote as $\btr$ and $\tr$. 
Let us now argue that this definition can be restated in a more standard way.
\begin{tvrz} \label{tvrz_jetspacelocally}
Let $(U,\varphi)$ be a graded local chart for $\M$ over $U \in \Op_{a}(M)$, and let $\{ \Phi_{\lambda} \}_{\lambda =1}^{r}$ be a local frame for $\E$ over $U$. Let $k \in \N_{0}$.

Suppose $[\psi]_{a} \in \Gamma_{\E,a}$ is represented by $\psi \in \Gamma_{\E}(U)$ and write $\psi = \psi^{\lambda} \cdot \Phi_{\lambda}$ for unique functions $\psi^{\lambda} \in \C^{\infty}_{\M}(U)$. Then $[\psi]_{a} \in \Gamma^{[k]}_{\E,a}$, iff 
\begin{equation} \label{eq_vanishingderivativesatm}
(\partial_{\fI}^{\op} \psi^{\lambda})(a) = 0, 
\end{equation}
for all $\lambda \in \{1,\dots,r\}$ and $\fI \in \cup_{q=0}^{k} \ol{\N}{}^{n}(q)$. See Section \ref{sec_diffopVB} for the notation. 
\end{tvrz}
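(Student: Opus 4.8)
The plan is to reduce the statement to a claim purely about stalks of the structure sheaf, namely that $[f]_a \in (\J_{\M,a})^{k+1}$ iff $(\partial^{\op}_{\fI} f)(a) = 0$ for all $\fI \in \cup_{q=0}^{k} \ol{\N}{}^{n}(q)$, and then transport this to $\Gamma_{\E}$ using the local frame. Since $\Gamma_{\E}|_U$ is freely generated by $\{\Phi_\lambda\}$, any germ $[\psi]_a$ has a unique expansion $[\psi]_a = [\psi^\lambda]_a \btr [\Phi_\lambda]_a$, and $[\psi]_a \in \Gamma^{[k]}_{\E,a} = (\J_{\M,a})^{k+1} \btr \Gamma_{\E,a}$ should be equivalent to $[\psi^\lambda]_a \in (\J_{\M,a})^{k+1}$ for each $\lambda$ — this equivalence is the module-theoretic analogue of Proposition \ref{tvrz_sheaofsectionsisgeometric}'s argument, using that a free module decomposes ideal submodules componentwise. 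So the first step is to establish that componentwise criterion; the second step is the scalar statement about $(\J_{\M,a})^{k+1}$.

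For the scalar statement, I would argue as follows. The ideal $\J_{\M,a}$ is generated (as a germ ideal) by the shifted coordinate germs $[\bbz^A_a]_a := [\bbz^A - \bbz^A(a)]_a$, so $(\J_{\M,a})^{k+1}$ is generated by monomials $[\bbz^{B_1}_a \cdots \bbz^{B_{k+1}}_a]_a$ with $k+1$ factors. For the ``only if'' direction: if $[f]_a \in (\J_{\M,a})^{k+1}$, then the Leibniz rule (\ref{eq_partialIopLeibniz}) shows that applying $\partial^{\op}_{\fI}$ with $w(\fI) \le k$ to a product of $\ge k+1$ coordinate germs $\bbz^{B_i}_a$ always leaves at least one undifferentiated factor $\bbz^{B_i}_a$, which vanishes at $a$; by $\R$-linearity and the Leibniz rule this forces $(\partial^{\op}_{\fI} f)(a) = 0$. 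For the ``if'' direction: given $f$ with all those derivatives vanishing at $a$, use the graded Taylor expansion (Lemma 3.4 of \cite{Vysoky:2022gm}, already invoked in the proof of Proposition \ref{tvrz_olfDformula}) to write $f = T^k_a(f) + R^k_a(f)$ with $R^k_a(f) \in (\J^a_{\M}(U))^{k+1}$; the hypothesis together with the evaluation formula (\ref{eq_partialIoponzJ}), which says $\partial^{\op}_{\fI}(\bbz^{\fJ}) = \fI! \delta^{\fJ}_{\fI}$, forces every coefficient of the Taylor polynomial $T^k_a(f)$ to vanish, hence $T^k_a(f) = 0$ and $[f]_a = [R^k_a(f)]_a \in (\J_{\M,a})^{k+1}$.

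Putting the two steps together gives the proposition: $[\psi]_a \in \Gamma^{[k]}_{\E,a}$ iff each $[\psi^\lambda]_a \in (\J_{\M,a})^{k+1}$ iff $(\partial^{\op}_{\fI}\psi^\lambda)(a) = 0$ for all $\lambda$ and all $\fI$ with $w(\fI) \le k$, which is exactly (\ref{eq_vanishingderivativesatm}). One small point of care: the derivatives $\partial^{\op}_{\fI}$ act on representatives $\psi^\lambda \in \C^\infty_{\M}(U)$, and one should note that the value $(\partial^{\op}_{\fI}\psi^\lambda)(a)$ depends only on the germ $[\psi^\lambda]_a$ — this is immediate since $\partial^{\op}_{\fI}$ is a sheaf morphism of $\C^\infty_{\M}|_U$ and commutes with restriction, so it descends to stalks.

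I expect the main obstacle to be the clean justification of the componentwise splitting of $\Gamma^{[k]}_{\E,a}$ in a free module over the (non-reduced, graded) local ring $\C^\infty_{\M,a}$ — that is, showing that $(\J_{\M,a})^{k+1} \btr \Gamma_{\E,a} = \bigoplus_\lambda (\J_{\M,a})^{k+1} \btr [\Phi_\lambda]_a$ rather than something larger. This is true because for a free module $\C^\infty_{\M,a}[K]$ and any ideal $\mathfrak{a}$, one has $\mathfrak{a} \cdot (R \otimes K) = \mathfrak{a} \otimes K$, but in the graded setting one must check degrees match up and that the generating set of $\Gamma^{[k]}_{\E,a}$ (products $[f]_a \btr [\psi]_a$ with $[f]_a \in (\J_{\M,a})^{k+1}$ and arbitrary $[\psi]_a$) indeed only produces elements with each component in $(\J_{\M,a})^{k+1}$; expanding $[\psi]_a$ in the frame makes this transparent. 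Everything else is the combinatorial bookkeeping with (\ref{eq_partialIoponzJ}) and (\ref{eq_partialIopLeibniz}), which is routine.
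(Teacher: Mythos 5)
Your proposal is correct and follows essentially the same route as the paper's proof: reduce to the componentwise statement $[\psi^{\lambda}]_{a} \in (\J_{\M,a})^{k+1}$ via the free-module structure, prove the ``only if'' direction by applying the Leibniz rule to monomial generators of $(\J_{\M,a})^{k+1}$ (an undifferentiated factor $\bbz^{B_{i}}_{a}$ always survives and kills the value at $a$), and prove the ``if'' direction by the graded Taylor expansion of Lemma 3.4 in the cited reference together with (\ref{eq_partialIoponzJ}). The only difference is cosmetic: you isolate and justify the componentwise splitting of $\Gamma^{[k]}_{\E,a}$ explicitly, whereas the paper treats it as immediate.
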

\begin{proof}
First, suppose that $[\psi]_{a} \in \Gamma^{[k]}_{\E,a}$. It follows that $[\psi^{\lambda}]_{a} \in (\J_{\M,a})^{k+1}$ for every $\lambda \in \{1,\dots,r\}$. This ideal is generated by the monomials in $[\bbz^{A}_{a}]_{a}$ of order $k+1$, where $\bbz^{A}_{a} := \bbz^{A} - \bbz^{A}(a)$. Consequently, there is $V \in \Op_{m}(U)$, such that $\psi^{\lambda}|_{V}$ is a finite linear combination of some terms of the form 
\begin{equation}
\bbz^{B_{1}}_{a} \cdots \bbz^{B_{k+1}}_{a} \cdot h 
\end{equation}
for some $h \in \C^{\infty}_{\M}(V)$. For any $\fI \in \cup_{q=0}^{k} \ol{\N}{}^{n}(q)$, we have $w(\fI) < k+1$ and thus clearly $(\partial^{\op}_{\fI} \psi^{\lambda})(a) = 0$. Conversely, suppose that $\psi$ representing $[\psi]_{a}$ satisfies (\ref{eq_vanishingderivativesatm}) for all $\lambda \in \{1,\dots,r\}$ and $\fI \in \cup_{q=0}^{k} \ol{\N}{}^{n}(q)$. This implies that for each $\lambda \in \{1,\dots,r\}$ , the $k$-th order Taylor polynomial of $\psi^{\lambda}$ at $a$ vanishes, $T_{a}^{k}(\psi^{\lambda}) = 0$. Consequently, one has 
\begin{equation}
\psi^{\lambda} = R_{a}^{k}(\psi^{\lambda}) \in (\J^{a}_{\M}(U))^{k+1},
\end{equation}
see Lemma 3.4 in \cite{Vysoky:2022gm}. This immediately implies $[\psi]_{a} = [\psi^{\lambda}]_{a} \cdot [\Phi_{\lambda}]_{a} \in \Gamma^{[k]}_{\E,a}$. 
\end{proof}
\begin{cor}
The collection $\{ \frj^{k}_{a}[ \bbz^{\fI}_{a} \cdot \Phi_{\lambda}]_{a} \}$, where $\fI$ goes over $\cup_{q=0}^{k} \ol{\N}{}^{n}(q)$ and $\lambda$ goes over $\{1, \dots, r\}$, forms a total basis for the graded vector space $\frJ^{k}_{\E,a}$. 

In particular, one has $\gdim( \frJ^{k}_{\E,a}) = \grk( \frJ^{k}_{\E})$. 
\end{cor}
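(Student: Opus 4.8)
The plan is to combine Proposition \ref{tvrz_jetspacelocally} with the stalk-level analogue of the decomposition formula (\ref{eq_1otimespsigclassdecomposition}). First I would note that $\frj^k_a$ is surjective, so the elements $\frj^k_a[\bbz^{\fI}_a \cdot \Phi_\lambda]_a$ (with $\fI \in \cup_{q=0}^k \ol{\N}{}^n(q)$, $\lambda \in \{1,\dots,r\}$) span $\frJ^k_{\E,a}$: given any $[\psi]_a \in \Gamma_{\E,a}$ represented by $\psi = \psi^\lambda \cdot \Phi_\lambda \in \Gamma_\E(U)$, the Taylor expansion $\psi^\lambda = T^k_a(\psi^\lambda) + R^k_a(\psi^\lambda)$ of Lemma 3.4 of \cite{Vysoky:2022gm} writes $\psi^\lambda$ as a polynomial of order $\le k$ in the $\bbz^A_a$ plus a remainder in $(\J^a_\M(U))^{k+1}$. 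The remainder lies in $\Gamma^{[k]}_{\E,a}$, and the polynomial part is (after reindexing monomials via an argument like (\ref{eq_twosummations})) an $\R$-linear combination — with coefficients the Taylor coefficients $(\partial^{\op}_{\fI}\psi^\lambda)(a)$ — of the claimed generators, since any monomial in the $\bbz^A_a$ of degree $q$ corresponds to some $\fI \in \ol{\N}{}^n(q)$.

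Next I would prove linear independence. Suppose a finite $\R$-linear combination $\sum_{q,\fI,\lambda} c^{\fI}_\lambda \, \frj^k_a[\bbz^{\fI}_a \cdot \Phi_\lambda]_a = 0$ in $\frJ^k_{\E,a}$. This means $\psi := \sum c^{\fI}_\lambda\, \bbz^{\fI}_a \cdot \Phi_\lambda$ (a representative built over $U$) satisfies $[\psi]_a \in \Gamma^{[k]}_{\E,a}$. Writing $\psi = \psi^\lambda \cdot \Phi_\lambda$ with $\psi^\lambda = \sum_{\fI} c^{\fI}_\lambda \bbz^{\fI}_a$, Proposition \ref{tvrz_jetspacelocally} gives $(\partial^{\op}_{\fJ}\psi^\lambda)(a) = 0$ for all $\fJ \in \cup_{q=0}^k \ol{\N}{}^n(q)$. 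But by (\ref{eq_partialIoponzJ}) one has $\partial^{\op}_{\fJ}(\bbz^{\fI}_a) = \fJ!\,\delta^{\fJ}_{\fI}$ whenever $w(\fI) \le w(\fJ)$ (and $\bbz^A_a = \bbz^A$ in odd degree so this is unaffected by the shift), hence $(\partial^{\op}_{\fJ}\psi^\lambda)(a) = \fJ!\, c^{\fJ}_\lambda$; this forces every $c^{\fJ}_\lambda = 0$. So the generating set is a basis. (One should also remark that these elements are homogeneous: $|\frj^k_a[\bbz^{\fI}_a \cdot \Phi_\lambda]_a| = |\bbz^{\fI}| + |\vartheta_\lambda|$, so "total basis" is literal.)

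For the final sentence, counting the basis by degree gives $\gdim(\frJ^k_{\E,a})_j = \#\{(\fI,\lambda) \in \cup_{q=0}^k \ol{\N}{}^n(q)\times\{1,\dots,r\} \mid |\bbz^{\fI}| + |\vartheta_\lambda| = j\}$, which is exactly the formula for $q_j = \grk(\frJ^k_\E)$ computed at the end of the proof of Theorem \ref{thm_generatorsgpJet}, since $|\Phi_\lambda| = |\vartheta_\lambda|$. Hence $\gdim(\frJ^k_{\E,a}) = \grk(\frJ^k_\E)$.

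The main obstacle I anticipate is purely bookkeeping: making sure the reindexing from sums over ordered tuples $(A_1,\dots,A_q)$ (which is how $\partial^{\op}$ and Taylor polynomials naturally appear) to sums over multi-indices $\fI \in \ol{\N}{}^n(q)$ is done cleanly, with the $1/\fI!$ factors and the restriction to $\ol{\N}{}^n$ (rather than all of $(\N_0)^n$) accounted for — exactly the manipulation already used in (\ref{eq_twosummations}) and around (\ref{eq_1otimespsigclassdecomposition}). There is no genuine difficulty beyond that, since Proposition \ref{tvrz_jetspacelocally} has already done the hard analytic work of identifying $\Gamma^{[k]}_{\E,a}$ in coordinates.
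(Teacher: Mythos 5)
Your proposal is correct and follows essentially the same route as the paper: spanning via the Taylor decomposition $\psi^{\lambda} = T^{k}_{a}(\psi^{\lambda}) + R^{k}_{a}(\psi^{\lambda})$ together with Proposition \ref{tvrz_jetspacelocally}, independence via evaluating $\partial^{\op}_{\fJ}$ at $a$, and the rank count from Theorem \ref{thm_generatorsgpJet}. The only point to tighten is that for independence you also meet terms with $w(\fI) > w(\fJ)$, where $\partial^{\op}_{\fJ}(\bbz^{\fI}_{a})$ is a nonzero polynomial without constant term in the $\bbz^{A}_{a}$; one needs the evaluated identity $(\partial^{\op}_{\fJ}(\bbz^{\fI}_{a}))(a) = \fJ!\,\delta^{\fJ}_{\fI}$ valid for \emph{all} $\fI,\fJ \in \ol{\N}{}^{n}$, which is exactly the modification of (\ref{eq_partialIoponzJ}) the paper invokes.
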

\begin{proof}
The first statement follows easily from definitions and the previous proposition. One has to utilize a modification of (\ref{eq_partialIoponzJ}), namely the fact that
\begin{equation}
(\partial_{\fI}^{\op}(\bbz^{\fJ}_{a}))(a) = \fI! \cdot \delta_{\fI}^{\fJ},
\end{equation}
for all $\fI,\fJ \in \ol{\N}{}^{n}$. Next, note that $| \frj^{k}_{a}[ \bbz^{\fI}_{a} \cdot \Phi_{\lambda}]_{a}| = |\bbz^{\fI}| + |\vartheta_{\lambda}|$. This is the same degree as $|\fdelta^{\fI}{}_{\lambda}|$, see (\ref{eq_fdeltafI}). The second statement then immediately follows from Theorem \ref{thm_generatorsgpJet}. 
\end{proof}

Let us now argue that $\frJ^{k}_{\E,a}$ has a structure not dissimilar from the one constructed in Section \ref{sec_gJetconst}. First, to each $[f]_{a} \in \C^{\infty}_{\M,a}$, one can assign a $\C^{\infty}_{\M,a}$-linear (with respect to both $\btr$ and $\tr$) map $\delta'_{[f]_{a}}: \Gamma_{\E,a} \rightarrow \Gamma_{\E,a}$ of degree $|f|$ given by
\begin{equation} \label{eq_deltaprimeexplicit}
\delta'_{[f]_{a}}[\psi]_{a} := [f]_{a} \tr [\psi]_{a} - [f]_{a} \btr [\psi]_{a} \equiv [(f(a) - f) \cdot \psi]_{a},
\end{equation}
where we assume that the respective germs are represented by some $f \in \C^{\infty}_{\M}(U)$ and $\psi \in \Gamma_{\E}(U)$ for some $U \in \Op_{a}(M)$. We immediately see the following statement:
\begin{lemma}
One can write
\begin{equation}
\Gamma_{\E,a}^{[k]} = \R\{ \delta'_{[f_{1}]_{a}} \dots \delta'_{[f_{k+1}]_{a}}[\psi]_{a} \; | \; [f_{1}]_{a}, \dots, [f_{k+1}]_{a} \in \C^{\infty}_{\M,a}, [\psi]_{a} \in \Gamma_{\E,a} \}.
\end{equation}
Compare this formula to (\ref{eq_VkU}). 
\end{lemma}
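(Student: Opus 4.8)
The plan is to show the two inclusions between $\Gamma_{\E,a}^{[k]}$ and the span
$$
\R\{ \delta'_{[f_{1}]_{a}} \dots \delta'_{[f_{k+1}]_{a}}[\psi]_{a} \; | \; [f_{1}]_{a}, \dots, [f_{k+1}]_{a} \in \C^{\infty}_{\M,a}, [\psi]_{a} \in \Gamma_{\E,a} \},
$$
exploiting the explicit form of $\delta'$ from \eqref{eq_deltaprimeexplicit}, namely $\delta'_{[f]_{a}}[\psi]_{a} = [(f(a)-f)\cdot \psi]_{a}$.

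For the inclusion ``$\supseteq$'', I would first observe that if $f(a)=0$ then $\delta'_{[f]_{a}}[\psi]_{a} = -[f\cdot\psi]_{a} \in \J_{\M,a}\btr\Gamma_{\E,a}$; and more generally $g := f(a)-f$ always lies in $\J_{\M,a}$ (it vanishes at $a$ by construction), so $\delta'_{[f]_{a}}[\psi]_{a} = [g\cdot\psi]_{a} \in \J_{\M,a}\btr\Gamma_{\E,a}$ for an arbitrary $[f]_{a}$. Iterating, each generator $\delta'_{[f_{1}]_{a}}\cdots\delta'_{[f_{k+1}]_{a}}[\psi]_{a}$ equals $[g_{1}\cdots g_{k+1}\cdot\psi]_{a}$ with each $g_{i}=f_{i}(a)-f_{i}\in\J_{\M,a}$, hence lies in $(\J_{\M,a})^{k+1}\btr\Gamma_{\E,a} = \Gamma_{\E,a}^{[k]}$. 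Since $\Gamma_{\E,a}^{[k]}$ is a graded $\C^{\infty}_{\M,a}$-submodule (with respect to $\btr$, and indeed $\tr$), taking $\R$-linear combinations stays inside it, giving one inclusion.

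For ``$\subseteq$'', I would start from a generator $[h]_{a}\btr[\psi]_{a}$ of $\Gamma_{\E,a}^{[k]}$ with $[h]_{a}\in(\J_{\M,a})^{k+1}$. Writing $h$ locally as a sum of products $h_{1}\cdots h_{k+1}\cdot \ell$ with each $h_{i}\in\J^{a}_{\M}(U)$ (i.e.\ $h_{i}(a)=0$) — this is exactly the statement that $(\J_{\M,a})^{k+1}$ is generated by $(k+1)$-fold products of elements vanishing at $a$ — it suffices to handle $[h_{1}\cdots h_{k+1}\cdot(\ell\cdot\psi)]_{a}$. Since $h_{i}(a)=0$ we have $h_{i} = -(h_{i}(a)-h_{i}) = -g_{i}$ with $g_{i}$ of the form $f_{i}(a)-f_{i}$ (take $f_{i}=h_{i}$), so $[h_{1}\cdots h_{k+1}\cdot(\ell\cdot\psi)]_{a} = (-1)^{k+1}\delta'_{[h_{1}]_{a}}\cdots\delta'_{[h_{k+1}]_{a}}[\ell\cdot\psi]_{a}$, using the fact (already recorded in the excerpt) that the $\delta'$ are $\C^{\infty}_{\M,a}$-linear and mutually commuting, so they can be applied one at a time and in any order. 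Every element of $\Gamma_{\E,a}^{[k]}$ is an $\R$-linear combination of such terms, which puts it in the claimed span.

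The only mildly delicate point — the ``main obstacle'', though it is routine — is the passage from the abstract description $(\J_{\M,a})^{k+1}\btr\Gamma_{\E,a}$ to an explicit presentation of elements of $(\J_{\M,a})^{k+1}$ as sums of $(k+1)$-fold products of coordinate-type functions vanishing at $a$; this is exactly the local structure of the graded maximal ideal generated by $\{[\bbz^{A}_{a}]_{a}\}$ already used in Proposition \ref{tvrz_jetspacelocally}, so I would simply cite that. One must also be a little careful about the degree bookkeeping (the $\delta'$ have degree $|f_{i}|$, which matches the degree of the generator it produces), but no sign subtleties arise because the equality $\delta'_{[f]_{a}}[\psi]_{a}=[(f(a)-f)\cdot\psi]_{a}$ is manifestly sign-free. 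I would close the proof by noting the symmetry with \eqref{eq_VkU}: $\delta'$ plays the role of $\delta_{f}$ and $\btr$, $\tr$ the roles of the two module actions, so the argument is formally the stalk-level shadow of the construction in Section \ref{sec_gJetconst}.
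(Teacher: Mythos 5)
Your proof is correct, and it is essentially the argument the paper intends (the lemma is stated without proof as an immediate consequence of the explicit formula $\delta'_{[f]_{a}}[\psi]_{a}=[(f(a)-f)\cdot\psi]_{a}$): iterating that formula shows each generator of the span lies in $(\J_{\M,a})^{k+1}\btr\Gamma_{\E,a}$, and conversely writing elements of $(\J_{\M,a})^{k+1}$ as sums of $(k+1)$-fold products of germs vanishing at $a$ recovers them, up to the sign $(-1)^{k+1}$, as iterated $\delta'$'s. Your aside that the $\delta'$ ``mutually commute'' should strictly read ``graded-commute'', but no reordering is actually needed in your computation, so this does not affect the argument.
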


\begin{tvrz} \label{tvrz_fibers}
For every $a \in M$, there is a canonical $\C^{\infty}_{\M,a}$-linear isomorphism of the fiber $(\frJ^{k}_{\E})_{a}$ of the graded vector bundle $\frJ^{k}_{\E}$ and the $k$-th order jet space $\frJ^{k}_{\E,a}$ of $\E$ at $a$. 
\end{tvrz}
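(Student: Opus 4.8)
The plan is to exhibit the isomorphism locally, using the generators constructed in Theorem \ref{thm_generatorsgpJet}, and then to check that the local identifications are independent of all choices, hence glue. First I would recall that the fiber $(\frJ^{k}_{\E})_{a}$ of a graded vector bundle at $a$ is by definition $\Gamma_{\frJ^{k}_{\E},a} / \J_{\M,a} \btr \Gamma_{\frJ^{k}_{\E},a}$, i.e. $\Jet^{k}_{\E,a} / \J_{\M,a} \btr \Jet^{k}_{\E,a}$ where now $\btr$ denotes the $\C^{\infty}_{\M,a}$-action coming from $\btr$ on sections. (One must be slightly careful: $\Jet^{k}_{\E}$ carries two actions $\tr$ and $\btr$; the one relevant for the ``vector bundle'' structure — the one under which $\frJ^{k}_{\E}$ is locally freely finitely generated — is $\tr$, since Theorem \ref{thm_generatorsgpJet} shows the $\fdelta^{\fI}{}_{\lambda}$ form a $\tr$-frame. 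So the fiber is $\Jet^{k}_{\E,a}/\J_{\M,a}\tr\Jet^{k}_{\E,a}$, and the target $\frJ^{k}_{\E,a}$ carries correspondingly the $\tr$-action; this matches the remark in the excerpt that $\Gamma^{[k]}_{\E,a}$ is a submodule for both actions and why the roles of $\tr,\btr$ were deliberately swapped relative to Section \ref{sec_gJetconst}.)

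Next I would define the candidate map. On stalks, $\hat{\frj}^{k}$ induces a $\C^{\infty}_{\M,a}$-linear map $(\hat{\frj}^{k})_{a}: \Gamma_{\E,a} \to \Jet^{k}_{\E,a}$ which is $\btr$-linear (as noted after \eqref{eq_hatjmap}). Composing with the quotient to the fiber $(\frJ^{k}_{\E})_{a}$ gives a map $\Gamma_{\E,a} \to (\frJ^{k}_{\E})_{a}$; by Theorem \ref{thm_generatorsgpJet} and formula \eqref{eq_1otimespsigclassdecomposition}, modulo $\J_{\M,a}\tr(\cdot)$ the classes $[\fdelta^{\fI}{}_{\lambda}]$ span $(\frJ^{k}_{\E})_{a}$, and \eqref{eq_1otimespsigclassdecomposition} read at the point $a$ shows that $(\hat{\frj}^{k})_{a}[\psi]_{a}$ reduces to $\sum (-1)^{q}\frac{1}{\fI!}(\partial^{\tl}_{\fI}\psi^{\lambda})(a)\,[\fdelta^{\fI}{}_{\lambda}]$. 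Comparing with the Corollary after Proposition \ref{tvrz_jetspacelocally}, which gives the basis $\{\frj^{k}_{a}[\bbz^{\fI}_{a}\cdot\Phi_{\lambda}]_{a}\}$ of $\frJ^{k}_{\E,a}$ and the identity $(\partial^{\op}_{\fI}(\bbz^{\fJ}_{a}))(a)=\fI!\,\delta^{\fJ}_{\fI}$, I would define $\Theta_{a}: \frJ^{k}_{\E,a}\to (\frJ^{k}_{\E})_{a}$ on the basis by sending $\frj^{k}_{a}[\bbz^{\fI}_{a}\cdot\Phi_{\lambda}]_{a}$ to (a fixed sign times) $[\fdelta^{\fI}{}_{\lambda}]$, and check this is the unique $\C^{\infty}_{\M,a}$-linear map (for the $\tr$-action, which factors through evaluation at $a$, so $\C^{\infty}_{\M,a}$-linearity is just $\R$-linearity plus degree) making the triangle with $(\hat{\frj}^{k})_{a}$ and $\frj^{k}_{a}$ commute.

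Then I would prove $\Theta_{a}$ is an isomorphism: it sends a total basis to a total basis (both the source and target have dimension $\#\{(\fI,\lambda)\}$ by the Corollary and by Theorem \ref{thm_generatorsgpJet}), so bijectivity is immediate once well-definedness is established. Well-definedness is the only real content, and it is where I expect the main obstacle to lie: one must show the assignment on basis elements is independent of the choice of graded chart $(U,\varphi)$ and local frame $\{\Phi_{\lambda}\}$. The clean way is to characterize $\Theta_{a}$ chart-independently as the map determined by $\Theta_{a}\circ\frj^{k}_{a} = (\text{quotient to the fiber})\circ(\hat{\frj}^{k})_{a}$ together with the fact (Proposition \ref{tvrz_jetspacelocally} and its Corollary, plus the ``image of $\frj^{k}$ locally generates'' remark at the end of item (1)) that $\frj^{k}_{a}$ is surjective; then one checks the kernel condition $\Gamma^{[k]}_{\E,a}\subseteq\ker\big((\text{fiber quotient})\circ(\hat{\frj}^{k})_{a}\big)$, which by the Lemma expressing $\Gamma^{[k]}_{\E,a}$ via the operators $\delta'_{[f]_{a}}$ reduces to the compatibility $(\hat{\frj}^{k})_{a}\circ\delta'_{[f]_{a}} = $ (action by $f(a)-f$) applied after $(\hat{\frj}^{k})_{a}$, i.e. to the relation $\delta_{f}$ satisfies on $\pJet^{k}_{\E}$ passed to stalks, noting that $[f]_{a}\tr(\cdot)$ becomes trivial modulo $\J_{\M,a}\tr(\cdot)$ precisely when applied at the fiber level. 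The converse inclusion, $\ker\subseteq\Gamma^{[k]}_{\E,a}$, then follows by a dimension count (both quotients have the same finite graded dimension) or directly from \eqref{eq_1otimespsigclassdecomposition} evaluated at $a$. Finally, naturality of $\Theta_{a}$ in $\E$ and compatibility with the $\pi^{k,\ell}$ is routine and I would only remark on it. The one subtlety to flag explicitly in the writeup is keeping straight which of $\tr,\btr$ governs the vector-bundle structure of $\frJ^{k}_{\E}$ versus which governs the module structure on the jet space $\frJ^{k}_{\E,a}$; getting these aligned is what makes the statement ``$\C^{\infty}_{\M,a}$-linear isomorphism'' meaningful.
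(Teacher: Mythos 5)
Your proposal is correct and follows essentially the same route as the paper: the map you build from $\frJ^{k}_{\E,a}$ to the fiber is exactly the paper's $\Psi_{a}^{-1}$, with the same well-definedness argument (expanding $\delta^{(k+1)}_{(f_{1},\dots,f_{k+1})}[1\otimes_{(k)}\psi]^{\bullet}=0$ and noting that $\J^{a}_{\M}(U)\tr(\cdot)$ dies in the fiber), and your handling of the $\tr$ versus $\btr$ actions matches the paper's. The only difference is that the paper also writes down the explicit two-sided inverse $\Psi_{a}$ on $\gpJet^{k}_{\E,a}$, whereas you deduce bijectivity from the basis count $\gdim(\frJ^{k}_{\E,a})=\grk(\frJ^{k}_{\E})$ and the identification of basis images via \eqref{eq_1otimespsigclassdecomposition} evaluated at $a$; both are sound.
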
 
\begin{proof}
Recall that the fiber $\E_{a}$ of a graded vector bundle $\E$ at $a \in M$ is defined as 
\begin{equation}
\E_{a} := \R \otimes_{\C^{\infty}_{\M,a}} \Gamma_{\E,a},
\end{equation}
where the action of $\C^{\infty}_{\M,a}$ on $\R$ is given by $[f]_{a} \tr \lambda := f(a) \cdot \lambda$. It is easy to see that there is a canonical isomorphism 
\begin{equation} \label{eq_fiberasqotient}
\E_{a} \cong \Gamma_{\E,a} / (\J_{\M,a} \cdot \Gamma_{\E,a}).  
\end{equation}
For any $U \in \Op_{a}(M)$ and $\psi \in \Gamma_{\E}(U)$, \textbf{the value of $\psi$ at $a$} is the element $\psi|_{a} := 1 \otimes [\psi]_{a} \in \E_{a}$. Equivalently, this is precisely the equivalence class of $[\psi]_{a}$ in the quotient (\ref{eq_fiberasqotient}). 

To define a $\C^{\infty}_{\M,a}$-linear isomorphism $\Psi_{a}: (\frJ^{k}_{\E})_{a} \rightarrow \frJ^{k}_{\E,a}$, we thus start by defining
\begin{equation}
\Psi_{a}^{0}: \gpJet^{k}_{\E,a} \rightarrow \frJ^{k}_{\E,a}. 
\end{equation}
Fix any $U \in \Op_{a}(M)$. Let $[h \otimes_{(k)} \psi]^{\bullet}_{a}$ be the germ of the section $[h \otimes_{(k)} \psi]^{\bullet} \in \gpJet^{k}_{\E}(U)$, where $h \in \C^{\infty}_{\M}(U)$ and $\psi \in \Gamma_{\E}(U)$. We declare
\begin{equation}
\Psi_{a}^{0}[h \otimes_{(k)} \psi]^{\bullet}_{a} := h \tr \frj^{k}_{a}[\psi]_{a} \equiv h(a) \cdot \frj^{k}_{a}[\psi]_{a}. 
\end{equation}
Observe that $\Psi^{0}_{a}$ will be $\C^{\infty}_{\M,a}$-linear with respect to the actions $\tr$ on $\gpJet^{k}_{\E,a}$ and $\tr$ on $\frj^{k}_{\E,a}$, respectively. One has to check that this is a well-defined map. The definition clearly does not depend on the choice of the section $[h \otimes_{(k)} \psi]^{\bullet}$ representing the element $[h \otimes_{(k)} \psi]^{\bullet}_{a}$. It also does not depend on the representative $h \otimes_{(k)} \psi$ of the geometric class $[h \otimes_{(k)} \psi]^{\bullet}$, as every element of $\pJet^{k}_{\E}(U)^{\bullet}$ is also in $\J^{a}_{\M}(U) \tr \pJet^{k}_{\E}(U)$ and the rest is taken care of by the $\C^{\infty}_{\M,a}$-linearity of $\Psi^{0}_{a}$. Moreover, for every $f \in \C^{\infty}_{\M}(U)$, one has 
\begin{equation} \label{eq_psiadeltacommutes}
\Psi^{0}_{a}[ \delta_{f}(h \otimes_{(k)} \psi)]^{\bullet}_{a} = \delta'_{[f]_{a}}( \Psi^{0}_{a}[h \otimes_{(k)} \psi]^{\bullet}_{a}).
\end{equation}
This ensures that the image of a germ of a geometric class of an element of $\V_{(k)}(U)$ vanishes. Hence $\Psi^{0}_{a}$ is a well-defined $\C^{\infty}_{\M,a}$-linear map from $\gpJet^{k}_{\E,a}$ to $\frJ^{k}_{\E,a}$. Moreover, observe that 
\begin{equation}
\Psi^{0}_{a}( \J_{\M,a} \tr \gpJet^{k}_{\E,a}) = 0. 
\end{equation}
Finally, since $\Jet^{k}_{\E}$ is just a sheafification of $\gpJet^{k}_{\E}$, there is a canonical isomorphism $\Jet^{k}_{\E,a} \cong \gpJet^{k}_{\E,a}$ of the respective stalks, and $\Psi^{0}_{a}$ induces a degree zero $\C^{\infty}_{\M,a}$-linear map 
\begin{equation}
\Psi_{a}: (\frJ^{k}_{\E})_{a} \equiv \Jet^{k}_{\E,a} / (\J_{\M,a} \tr \Jet^{k}_{\E,a}) \rightarrow \frJ^{k}_{\E,a}
\end{equation}
The inverse to $\Psi_{a}$ is for each $[\psi]_{a} \in \Gamma_{\E,a}$ defined by 
\begin{equation}
\Psi_{a}^{-1}( \frj^{k}_{a}[\psi]_{a}) := \{ (\frj^{k})_{a}[\psi]_{a} \}|_{a},
\end{equation}
where $(\frj^{k})_{a}: \Gamma_{\E,a} \rightarrow \Jet^{k}_{\E,a}$ is the $\R$-linear mapping induced by a sheaf morphism $\frj^{k}: \Gamma_{\E} \rightarrow \Jet^{k}_{\E}$. One has to check that it is well-defined. 

Suppose $[\psi]_{a} \in \Gamma^{(k)}_{\E,a}$. But this means that there is $U \in \Op_{a}(M)$, such that $\psi \in \Gamma_{\E}(U)$ and it is a finite linear combination of sections of the form
\begin{equation}
f_{1} \cdots f_{k+1} \cdot \psi',
\end{equation}
where $f_{1},\dots,f_{k+1} \in \J^{a}_{\M}(U)$. It suffices to check that the value of the $k$-th jet prolongation $\frj^{k}_{U}( f_{1} \cdots f_{k+1} \cdot \psi')$ at $a$ vanishes. One can certainly choose $U$ so that $\Jet^{k}_{\E}(U) \cong \gpJet^{k}_{\E}(U)$ and 
\begin{equation}
\frj^{k}_{U}(f_{1} \cdots f_{k+1} \cdot \psi') = [1 \otimes_{(k)} (f_{1} \cdots f_{k+1} \cdot \psi')]^{\bullet}.
\end{equation}
By expanding the relation $\delta^{(k+1)}_{(f_{1},\dots,f_{k+1})} [1 \otimes_{(k)} \psi]^{\bullet} = 0$, it is easy to see that 
\begin{equation}
[1 \otimes_{(k)} (f_{1} \cdots f_{k+1} \cdot \psi')]^{\bullet}_{a} \in \J_{\M,a} \tr \Jet^{k}_{\E,a},
\end{equation}
which proves the claim. Hence $\Psi_{a}^{-1}$ is well-defined. It is straightforward to prove that it is a two-sided inverse to $\Psi_{a}$. This finishes the proof. 
\end{proof}
\item \textbf{Ordinary limit}. We have to check that for an ordinary vector bundle $E$ over an ordinary manifold $M$, $\frJ^{k}_{E}$ constructed in this paper is isomorphic to the usual notion of $k$-th order jet bundle. See e.g. the standard reference \cite{saunders1989geometry}. Let us denote the ``standard'' version as $\ol{\frJ}{}^{k}_{\E}$. Recall that its total space is defined to be the disjoint union of its jet spaces:
\begin{equation}
\ol{\frJ}{}^{k}_{E} := \bigsqcup_{a \in M} \frJ^{k}_{E,a}.
\end{equation}
The projection $\ol{\varpi}: \ol{\frJ}{}^{k}_{E} \rightarrow M$ is defined in an obvious way. Choose some local chart $(U,\varphi)$ and a local frame $\{ \Phi_{\lambda} \}_{\lambda=1}^{r}$ for $E$. The induced local trivialization chart $\phi: \ol{\varpi}{}^{-1}(U) \rightarrow U \times \R^{\ell}$, where $\ell := \rk(\ol{\frJ}{}^{k}_{E}) = r \cdot \binom{n+k}{k}$ is defined for each $a \in U$ by
\begin{equation}
\phi(\frj^{k}_{a}[\psi]_{a}) := (a, (( \partial^{\op}_{\fI} \psi^{\lambda})(a))_{\lambda,\fI}),
\end{equation}
where $[\psi]_{a}$ is represented by some $\psi = \psi^{\lambda} \cdot \Phi_{\lambda} \in \Gamma_{E}(U)$, and the indices in the sequence go over all $\lambda \in \{1,\dots,r\}$ and all $\fI \in \cup_{q=0}^{k} \ol{\N}{}^{n}(q)$. By Proposition \ref{tvrz_jetspacelocally}, this is well-defined. 

\begin{tvrz}
There is a canonical vector bundle isomorphism $\Psi: \frJ^{k}_{E} \rightarrow \ol{\frJ}{}^{k}_{E}$ over $\1_{M}$ for each $k \in \N_{0}$. 
\end{tvrz}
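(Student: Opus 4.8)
The plan is to exhibit a sheaf isomorphism between the sheaves of sections and then invoke the fact that sheaf isomorphisms correspond to vector bundle isomorphisms over $\1_{M}$. Recall that the total space $\ol{\frJ}{}^{k}_{E}$ comes with its sheaf of (smooth) sections $\Gamma_{\ol{\frJ}{}^{k}_{E}}$, and by Proposition \ref{tvrz_fibers} its fiber over $a \in M$ is canonically identified with $\frJ^{k}_{E,a} \cong (\frJ^{k}_{E})_{a}$, the fiber of our bundle. So both $\frJ^{k}_{E}$ and $\ol{\frJ}{}^{k}_{E}$ have the same fibers; the content is to check the identification is smooth and respects the $\C^{\infty}_{M}$-module structure, i.e. is a genuine isomorphism of sheaves of sections.

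First I would construct the map on sections. Given $U \in \Op(M)$ and a jet $\Xi \in \Jet^{k}_{E}(U) \cong \gpJet^{k}_{E}(U)$ (using Theorem \ref{thm_generatorsgpJet} locally, and gluing), define a section $\Psi_{U}(\Xi)$ of $\ol{\varpi}$ over $U$ by $a \mapsto \Psi_{a}(\Xi_{a})$, where $\Xi_{a}$ is the germ of $\Xi$ at $a$ passed to $(\frJ^{k}_{E})_{a}$ and $\Psi_{a}$ is the fiberwise isomorphism from Proposition \ref{tvrz_fibers}. To see that $\Psi_{U}(\Xi)$ is a \emph{smooth} section, one works in a local chart $(U,\varphi)$ with a local frame $\{\Phi_{\lambda}\}$. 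By Theorem \ref{thm_generatorsgpJet} we may write $\Xi = \sum_{q=0}^{k}\sum_{\fI \in \ol{\N}{}^{n}(q)} \frac{1}{\fI!} f_{\fI}{}^{\lambda} \tr \fdelta^{\fI}{}_{\lambda}$ for unique $f_{\fI}{}^{\lambda} \in \C^{\infty}_{M}(U)$; one checks, using the explicit description of $\Psi_{a}$ on the generators $\fdelta^{\fI}{}_{\lambda}$ (these map to $\frj^{k}_{a}[\bbz^{\fI}_{a}\cdot\Phi_{\lambda}]_{a}$ up to a nonzero scalar, by the $k=0$-type computation in the proof of Proposition \ref{tvrz_fibers} together with the Corollary after Proposition \ref{tvrz_jetspacelocally}), that in the trivialization $\phi$ the section $a \mapsto \Psi_{a}(\Xi_{a})$ has coordinate components given by $\R$-linear combinations of the $f_{\fI}{}^{\lambda}(a)$, hence is smooth. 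Conversely, given a smooth section $s$ of $\ol{\varpi}$ over $U$, its coordinate components $s_{\fI}{}^{\lambda} \in \C^{\infty}_{M}(U)$ in a trivialization are smooth, and we set $\Psi_{U}^{-1}(s) := \sum \frac{1}{\fI!} s_{\fI}{}^{\lambda}\tr\fdelta^{\fI}{}_{\lambda}$ (appropriately rescaled); this is well-defined independent of the trivialization because on fibers it inverts $\Psi_{a}$, which is chart-independent.

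Next I would verify $\Psi := \{\Psi_{U}\}$ is $\C^{\infty}_{M}$-linear and natural in $U$. Naturality is immediate since the construction is fiberwise and germs restrict. $\C^{\infty}_{M}(U)$-linearity follows because $\Psi_{a}$ is $\C^{\infty}_{M,a}$-linear for the action $\tr$ (note that for an ordinary, trivially graded manifold $\btr$ and $\tr$ coincide on fibers, so there is no ambiguity), and multiplication by $f \in \C^{\infty}_{M}(U)$ on $\Jet^{k}_{E}(U)$ corresponds fiberwise to multiplication by $f(a)$. Since $\Psi_{a}$ is bijective for every $a$, $\Psi_{U}$ is bijective for every $U$, so $\Psi$ is a $\C^{\infty}_{M}$-linear sheaf isomorphism; by the graded-vector-bundle dictionary (Proposition \ref{tvrz_morphisms}, specialized to degree zero over $\1_{M}$) it is a vector bundle isomorphism $\Psi : \frJ^{k}_{E} \to \ol{\frJ}{}^{k}_{E}$ over $\1_{M}$. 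One should also record that $\Psi$ intertwines the two jet prolongations, $\Psi \circ \frj^{k}_{U} = $ (the classical prolongation $a \mapsto \frj^{k}_{a}[\psi]_{a}$), which is immediate from the definition of $\Psi_{a}^{-1}$ in Proposition \ref{tvrz_fibers} and identifies $\Psi$ as the canonical map.

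The main obstacle is the smoothness check: matching our algebraically-defined generators $\fdelta^{\fI}{}_{\lambda}$ against the classical trivialization $\phi$ given by iterated partial derivatives $(\partial^{\op}_{\fI}\psi^{\lambda})(a)$. Here one must carefully combine the decomposition formula \eqref{eq_1otimespsigclassdecomposition}, the pairing identity $(\partial_{\fI}^{\op}(\bbz^{\fJ}_{a}))(a) = \fI!\,\delta_{\fI}^{\fJ}$ from the Corollary after Proposition \ref{tvrz_jetspacelocally}, and the explicit form of $\Psi^{0}_{a}$ on generators, keeping track of the (sign-free, since everything is in degree zero) combinatorial factors $1/\fI!$. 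This is exactly analogous to the computation already done in the proof of Proposition \ref{tvrz_fibers} for the $k=0$ piece, so it is routine but needs to be carried out; I would present it compactly by remarking that in the trivialization $\phi$ the composite $\phi \circ \Psi_{U}$ sends $\Xi = \sum \frac{1}{\fI!}f_{\fI}{}^{\lambda}\tr\fdelta^{\fI}{}_{\lambda}$ to $(a, (f_{\fI}{}^{\lambda}(a))_{\lambda,\fI})$ up to reindexing, which visibly makes $\Psi_{U}$ and its inverse preserve smoothness.
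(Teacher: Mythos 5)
Your proposal is correct and follows essentially the same route as the paper: both use the fiberwise isomorphisms $\Psi_{a}$ from Proposition \ref{tvrz_fibers} and reduce the remaining work to a smoothness check, carried out by computing $\Psi_{a}(\fdelta^{\fI}{}_{\lambda}|_{a}) = \frj^{k}_{a}[\bbz^{\fI}_{a}\cdot\Phi_{\lambda}]_{a}$ and composing with the classical trivialization $\phi$. The paper simply phrases this as "an ordinary vector bundle map is determined by its fiberwise restrictions" rather than working through the sheaves of sections, but the content is identical and your trivialization computation $\phi\circ\Psi_{U}(\Xi) = (a,(f_{\fI}{}^{\lambda}(a))_{\lambda,\fI})$ is exactly the verification needed.
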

\begin{proof}
Since both $\frJ^{k}_{E}$ and $\ol{\frJ}{}^{k}_{E}$ are ordinary vector bundles, $\Psi$ is uniquely determined by its restriction to each fiber. For each $a \in M$, we thus have to define a linear isomorphism 
\begin{equation}
\Psi_{a}: (\frJ^{k}_{E})_{a} \rightarrow (\ol{\frJ}^{k}_{E})_{a} \equiv \frJ^{k}_{E,a}.
\end{equation}
But we constructed such a map for each $a \in M$ in Proposition \ref{tvrz_fibers}. One only has to prove that $\Psi$ is smooth. Let $(U,\varphi)$ be a local chart and $\{ \Phi_{\lambda} \}_{\lambda=1}^{r}$ a local frame for $E$ over $U$. As noted in Theorem (\ref{thm_generatorsgpJet}), the collection $\{ \fdelta^{\fI}{}_{\lambda}\}$, where $\lambda \in \{1,\dots,r\}$ and $\fI \in \cup_{q=0}^{k} \ol{\N}{}^{n}(q)$, forms a local frame for $\frJ^{k}_{\E}$ over $U$. It is easy to see from (\ref{eq_deltaprimeexplicit}, \ref{eq_psiadeltacommutes}) that one has
\begin{equation}
\Psi_{a}( \fdelta^{\fI}{}_{\lambda} |_{a}) = \frj^{k}_{a}[ \bbz^{\fI}_{a} \cdot \Phi_{\lambda}]_{a},
\end{equation}
for each $a \in U$, $\lambda \in \{1,\dots,r\}$ and $\fI \in \bigcup_{q=0}^{k} \ol{\N}{}^{n}(q)$. Composing this with the above local trivialization chart $\phi$ then shows that $\Psi$ is smooth. 
\end{proof}
\end{enumerate}

To conclude this paper, let us discuss a bit the uniqueness of $\frJ^{k}_{\E}$. First, up to a graded vector bundle isomorphisms, it is determined uniquely by the collection of short exact sequences (\ref{eq_SESwithjets}). Since short exact sequences split in the category $\gVBun^{\infty}_{\M}$ of graded vector bundles over $\M$, we get the isomorphism $\frJ^{k}_{\E} \cong \frJ^{k-1}_{\E} \oplus \ul{\Hom}(S^{k}(T\M),\E)$ for each $k \in \N_{0}$. We can thus iteratively construct $\frJ^{k}_{\E}$, starting with $\frJ^{0}_{\E} \cong \ul{\Hom}(S^{0}(T\M),\E) \cong \E$. 

Second, one can easily generalize Section \ref{sec_diffop} and Section \ref{sec_diffopVB} to construct a graded vector bundle $\frJ^{k}_{\E,\E'}$ of $k$-th order differential operators from $\E$ to $\E'$. It is then straightforward to modify the proof of Proposition \ref{tvrz_frDkashomfrJktoE} to obtain a canonical graded vector bundle isomorphism
\begin{equation} \label{eq_PsiEEprime}
\Psi^{\E,\E'}: \frD^{k}_{\E,\E'} \rightarrow \ul{\Hom}(\frJ^{k}_{\E},\E'). 
\end{equation}
natural in both $\E$ and $\E'$. Since the ``internal'' Yoneda functor $\E \mapsto \ul{\Hom}(\E,\cdot)$ is in a sense fully faithful, the existence of the isomorphism (\ref{eq_PsiEEprime}) determines the ``representing'' object $\frJ^{k}_{\E}$ up to a graded vector bundle isomorphism. 

\bibliography{bib}

 \end{document}